\documentclass{amsart}
\usepackage{amssymb
,amsthm
,amsmath,enumerate
,amscd
,mathtools,
hyperref,esvect,
}
\usepackage[shortlabels]{enumitem}
\usepackage[all]{xy}
\usepackage[top=30truemm,bottom=30truemm,left=25truemm,right=25truemm]{geometry}

\newcommand{\A}{\mathbb{A}}

\newcommand{\Ind}{\mathrm{Ind}}
\newcommand{\Res}{\mathrm{Res}}

\newcommand{\Hom}{\mathrm{Hom}}
\newcommand{\Gal}{\mathrm{Gal}}

\newcommand{\Sp}{\mathrm{Sp}}
\newcommand{\SL}{\mathrm{SL}}
\newcommand{\GL}{\mathrm{GL}}
\newcommand{\Mp}{\mathrm{Mp}}
\newcommand{\Oo}{\mathrm{O}}
\newcommand{\SO}{\mathrm{SO}}

\newcommand{\GSO}{\mathrm{GSO}}
\newcommand{\GSp}{\mathrm{GSp}}
\newcommand{\PGSp}{\mathrm{PGSp}}
\newcommand{\GO}{\mathrm{GO} }
\newcommand{\GU}{\mathrm{GU}}

\newcommand{\Ext}{\mathrm{Ext}}

\newcommand{\PGU}{\mathrm{PGU}}
\newcommand{\PGL}{\mathrm{PGL}}

\newcommand{\resp}{resp.~}

\renewcommand{\1}{{\bf 1}}

\newtheorem{theorem}{Theorem}[section]
\newtheorem{proposition}[theorem]{Proposition}
\newtheorem{corollary}[theorem]{Corollary}

\newtheorem{thm}{Theorem}[subsection]
\newtheorem{lem}[thm]{Lemma}
\newtheorem{prop}[thm]{Proposition}
\newtheorem{coro}[thm]{Corollary}

\theoremstyle{remark}
\newtheorem{remark}[theorem]{Remark}

\newtheorem{rem}[thm]{Remark}

\theoremstyle{definition}
\newtheorem{defn}[thm]{Definition}

\numberwithin{equation}{section}

\makeatletter
\def\iddots{\mathinner{\mkern1mu\raise\p@
	\hbox{.}\mkern2mu\raise4\p@\hbox{.}\mkern2mu
	\raise7\p@\vbox{\kern7\p@\hbox{.}}\mkern1mu}}
\def\adots{\mathinner{\mkern2mu\raise\p@\hbox{.}
 \mkern2mu\raise4\p@\hbox{.}\mkern1mu
 \raise7\p@\vbox{\kern7\p@\hbox{.}}\mkern1mu}}
\makeatother

\allowdisplaybreaks



\setcounter{tocdepth}{1}

\begin{document}
	\title{The Prasad conjectures for $\GSp_4$ and $\PGSp_4$}
	\author{Hengfei Lu}
	\date{}
	\address{Department of Mathematics, Weizmann Institute of Science, 234 Herzl St. P.O.B. 26, Rehovot 7610001, Israel}
	\email{hengfei.lu@weizmann.ac.il}
	
\subjclass[2010]{Primary 22E50;
	Secondary 11F27}

\keywords{Theta lift, Langlands correspondence, see-saw diagrams, quaternionic Hermitian groups, the Prasad conjecture }

\begin{abstract}
In this paper, we use the theta correspondence between $\GSp_4(E)$ and $\GO(V)$ to study the $\GSp_4$-distinction problems over  a quadratic extension $E/F$ of nonarchimedean local fields of characteristic $0$.  With a similar strategy, we investigate the distinction problem for the pair $(\GSp_4(E),\GSp_{1,1}(F) )$, where $\GSp_{1,1}$ is the unique inner form of $\GSp_4$ defined over $F$. Then we verify the Prasad conjecture for a discrete series representation $\tau$ of $\PGSp_4(E)$. 
\end{abstract}

\maketitle

\tableofcontents

\section{Introduction}
Let $F$ be a finite field extension over $\mathbb{Q}_p$
and $E$ be  a quadratic extension over $F$ with associated  Galois group
$\Gal(E/F)=\{1,\sigma\}$ and associated quadratic character $\omega_{E/F}$ of $F^\times$. Let $W_F$ be the Weil group of $F$ and $WD_F$ be the Weil-Deligne group. Then $\omega_{E/F}$ is a quadratic character of $W_F$ with kernel $W_E.$
 Let $\mathbf{G}$ be a connected reductive group defined over $F$ and $\mathbf{G}(F)$ (resp. $\mathbf{G}(E)$) be the $F$-rational (resp. $E$-rational) points. 
Given a smooth representation $\tau$ of $\mathbf{G}(E)$ and a character $\chi$ of $\mathbf{G}(F)$, we say that $\tau$ is $(\mathbf{G}(F),\chi)$-distinguished or has a nonzero $(\mathbf{G}(F),\chi)$-period if
\[\Hom_{\mathbf{G}(F)}(\tau,\chi)\neq0. \]
If $\chi$ is the trivial character, then $\tau$ is called $\mathbf{G}(F)$-distinguished.
There exists a rich literature, such as \cite{beuzart2017distinguished,flicker1991ondist,gan22arithmeticity,hengfei2016new,matringe2009distinction,prasad2015arelative}, trying to classify all $\mathbf{G}(F)$-distinguished representations  of $\mathbf{G}(E)$. The method often used to study the distinction problems  is the relative trace formula, such as \cite{beuzart2017distinguished,flicker1994quaternionic}, which is  powerful especially for the global period problems. This paper focuses on the local period problems for $\mathbf{G}=\GSp_4,\PGSp_4$ and their inner forms. The main tool in this paper is the local theta correspondence appearing in \cite{gan2011theta,Kudla1992,yamana2011deg}.
\par
Let $V$ be the unique non-split quaternion algebra $D_E$ with quadratic form $N_{D_E}$ over $E,$
or  the split  $6$-dimensional  quadratic space $\mathbb{H}_E^3$ over $E.$
Then 
\[\GSO(V)\cong \begin{cases}\GSO_{4,0}(E)=D_E^\times(E)\times D_E^\times(E)/\{(t,t^{-1})\},&\mbox{  if  }V=D_E,\\
\GSO_{3,3}(E)=\GL_4(E)\times E^\times/\{(t^{-1},t^2) \},&\mbox{  if  }V=\mathbb{H}_E^3,
\end{cases} \]
and any irreducible representation of $\GSO(V)$ must be of the form
\begin{itemize}
	\item $\pi_1\boxtimes\pi_2$ with $\omega_{\pi_1}=\omega_{\pi_2}$ if $V=D_E;$
	\item $\Pi\boxtimes\mu$ with $\omega_\Pi=\mu^2$ if $V=\mathbb{H}_E^3.$
\end{itemize}
Here for each $i$, $\pi_i$  is an irreducible representation of $D_E^\times(E)$.

In \cite{gan2011theta}, Gan-Takeda have proved that
any irreducible representation $\tau$ of
$\GSp_4(E)$ falls into one of the following two disjoint families of representations:
\begin{itemize}
	\item $\tau=\theta(\pi_1\boxtimes\pi_2)$ with $\omega_{\pi_1}=\omega_{\pi_2};$
	\item $\tau=\theta(\Pi\boxtimes\mu)$ with $\mu=\omega_\tau$
	and $\omega_\Pi=\mu^2.$
\end{itemize}

In order to use the see-saw identities, we need to study the big theta lift to $\GO(V)$ of a generic representation $\tau$ of $\GSp_4(E)$. In fact, we have studied the general (almost equal rank) case for the irreducibility of big theta lifts to $\GO_{n+1,n+1}(F)$ of a generic representation  of $\GSp_{2n}(F)$ in \S\ref{subsect:big}.
After computing the big theta lifts following \cite{gan2014formal,gan2011theta},
we use the local theta correspondences between $\GSp_4(E)$ and $\GSO(V)$ and the see-saw identities to
discuss $\GSp_4$-period problems, by transferring the period problem for $\GSp_4$ to various analogous period problems for $\GL_2, \GL_4$ and their various forms (not necessarily inner). Then we obtain the following results:
\begin{theorem}[Theorem \ref{localgspperiod}]\label{maingsp(4)theorem}
	Suppose that $\tau$ is an irreducible representation of $\GSp_4(E)$ with a central character $\omega_\tau$ and $\omega_\tau|_{F^\times}=\mathbf{1}$. 
	\begin{enumerate}[(i)]
		\item If $\tau=\theta(\Sigma)$ is an irreducible representation of $\GSp_4(E),$ where $\Sigma$ is an irreducible representation of $\GO_{4,0}(E),$ 
		then the representation $\tau$ is not $\GSp_4(F)$-distinguished.
		\item If $\tau=\theta(\pi_1\boxtimes\pi_2),$ where  $\pi_1\boxtimes\pi_2$ is a generic representation of $\GSO_{2,2}(E),$
		then
		\[\dim \Hom_{\GSp_4(F)}(\tau,\mathbb{C})=\begin{cases}
		2,&\mbox{ if }\pi_i\ncong\pi_0 \mbox{  are both }\GL_2(F)\mbox{-distinguished};\\
		1,&\mbox{ if }\pi_1\ncong\pi_2\mbox{ but }\pi_1^\sigma\cong\pi_2^\vee;\\
		1,&\mbox{ if }\pi_1\cong\pi_2 \mbox{ is }\GL_2(F)\mbox{-distinguished but not }(\GL_2(F),\omega_{E/F})\mbox{-distinguished};\\
		1,&\mbox{ if }\pi_2 \mbox{ is }\GL_2(F)\mbox{-distinguished and }\pi_1\cong\pi_0;
		\\
		0,&\mbox{ the other cases.}
		\end{cases}\]
		Here 
		$\pi_0=\pi(\chi_1,\chi_2)$  with $\chi_1\neq\chi_2,\chi_1|_{F^\times}=\chi_2|_{F^\times}=\mathbf{1}$ is a principal series representation of $\GL_2(F)$.\\
		Note that these conditions are mutually exclusive.
		\item Assume that $\tau$ is not in case (i) or (ii) and that $\tau=\theta(\Pi\boxtimes\chi)$ is generic, where $\Pi\boxtimes\chi$ is a representation of $\GSO_{3,3}(E)$. 		Then
	\[\dim \Hom_{\GSp_4(F) }(\tau,\mathbb{C})=\begin{cases}
	1,&\mbox{ if }\Pi\mbox{ is }\GL_4(F)\mbox{-distinguished;}\\
	0,&\mbox{ otherwise.}
	\end{cases} \]
	\end{enumerate}
\end{theorem}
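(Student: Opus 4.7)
My plan is to handle the three cases uniformly via the see-saw dual pair mechanism, transferring each $\GSp_4(F)$-period on $\tau$ to a period problem on the orthogonal side for $\sigma$. For each $V/E$ appearing as an orthogonal partner, let $V_F$ denote the quadratic space over $F$ obtained from $V$ by restriction of scalars (with form $\tr_{E/F}\circ q_V$), so that $\Res_{E/F}\GO(V)\hookrightarrow\GO(V_F)$. The relevant see-saw
\[
\begin{array}{ccc}
\GO(V_F) & & \GSp_4(E)\\
\cup & \bowtie & \cup\\
\Res_{E/F}\GO(V) & & \GSp_4(F)
\end{array}
\]
applied to the trivial character of $\GSp_4(F)$ yields, via the theta realisation $\tau=\theta_V(\sigma)$,
\[
\Hom_{\GSp_4(F)}\bigl(\tau,\mathbf{1}\bigr)\cong\Hom_{\GO(V)(E)}\bigl(\Theta_{V_F}(\mathbf{1}),\sigma\bigr).
\]
The whole problem then reduces to computing the big theta lift $\Theta_{V_F}(\mathbf{1})$ of the trivial representation of $\GSp_4(F)$ and restricting it along $\GO(V)(E)\subset\GO(V_F)$.

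For case (iii), where $V=\mathbb{H}_E^3$ and $V_F$ is the split $12$-dimensional quadratic space over $F$, the object $\Theta_{V_F}(\mathbf{1})$ is an explicit (degenerate) principal series on $\GO_{6,6}(F)$. A Mackey analysis of the double cosets $P(F)\backslash\GO(V_F)/\GO(V)(E)$, combined with the central-character constraint $\mu|_{F^\times}=\mathbf{1}$ forced by $\omega_\tau|_{F^\times}=\mathbf{1}$, collapses the right-hand side to $\GL_4(F)$-distinction of $\Pi$, which has dimension at most one by Flicker's uniqueness of the linear period on $\GL_4(E)$. Case (i) will be treated as a complementary vanishing statement: when $V=D_E$ is the anisotropic four-dimensional quadratic space, $V_F$ has Hasse--Witt invariant and dimension incompatible with supporting the theta lift of the trivial character of $\GSp_4(F)$; equivalently, by the Gan--Takeda dichotomy and the conservation relation, the representations $\theta(\Sigma)$ with $\Sigma$ on $\GO_{4,0}(E)$ lie in the ``anisotropic branch'' of the Howe correspondence, and a direct see-saw forces the Hom to vanish.

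Case (ii) will be the delicate one. With $V=\mathbb{H}_E^2$, $\sigma=\pi_1\boxtimes\pi_2$, and $V_F=\mathbb{H}_F^4$, I expect the Mackey orbit decomposition of $\Theta_{V_F}(\mathbf{1})$ along $(\GL_2(E)\times\GL_2(E))/\Delta\subset\GO_{4,4}(F)$ to split into two types of contributions. A \emph{diagonal} orbit will force the Galois-conjugate-dual relation $\pi_1^\sigma\cong\pi_2^\vee$ and provide one functional; two \emph{split} orbits will each contribute a functional provided the corresponding $\pi_i$ is $\GL_2(F)$-distinguished. When $\pi_1\ncong\pi_2$ and neither equals the exceptional principal series $\pi_0$, the two split functionals will be linearly independent, producing multiplicity $2$. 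The distinguished role of $\pi_0=\pi(\chi_1,\chi_2)$ enters through the Flicker--Hakim dichotomy: $\pi_0$ is the unique irreducible representation of $\GL_2(E)$ that is simultaneously $\GL_2(F)$-distinguished \emph{and} $(\GL_2(F),\omega_{E/F})$-distinguished, and the extra hidden functional this provides will compensate exactly for the orbit coincidences in the subcases $\pi_1\cong\pi_2$ and $\pi_1\cong\pi_0$, yielding multiplicity $1$. The main obstacle will be executing this orbit book-keeping cleanly and, crucially, ruling out double-counting between the diagonal and split families; this requires the sharp $\GL_2(E)/\GL_2(F)$ period results of Flicker, Hakim, and Prasad, together with a careful tracking of central characters throughout.
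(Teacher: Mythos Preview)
Your overall see-saw framework is correct, and case (i) matches the paper: $\Res_{E/F}D_E$ is an $8$-dimensional space with Hasse invariant $-1$, so it sits in the Witt tower $D(F)\oplus\mathbb{H}^r$, where the first occurrence of $\mathbf{1}_{\GSp_4(F)}$ is at $r=2$, not $r=0$; hence $\Theta_{V_F}(\mathbf{1})=0$.

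For case (ii), however, there is a genuine gap. The Mackey filtration of $\Theta_{V_F}(\mathbf{1})\subset I_{Q_4}^{H_4}(1/2)$ restricted to $\GO_{2,2}(E)^\natural$ only yields a \emph{lower} bound for $\dim\Hom_{\GSp_4(F)}(\tau,\mathbb{C})$ (via the open-orbit contribution and a holomorphic-continuation argument \`a la Blanc--Delorme). You give no mechanism for an \emph{upper} bound, and without one you cannot conclude the multiplicity is $\le 2$ or $\le 1$. The paper obtains the upper bound by running the \emph{dual} see-saw with $\GSp_8(F)$: restricting the degenerate principal series $\mathcal{I}(1/2)$ of $\GSp_8(F)$ to $\GSp_4(E)^\natural$ and using the structure $0\to R^{3,3}(\mathbf{1})\to\mathcal{I}(1/2)\to R^{4,0}(\mathbf{1})\to 0$ gives
\[
\dim\Hom_{\GSp_4(F)}(\tau,\mathbb{C})\ \le\ \dim\Hom_{\GO_{3,3}(F)}(\Theta_6^+(\tau),\mathbb{C})+\dim\Hom_{\GO_{4,0}(F)}(\Theta_4^+(\tau),\mathbb{C})\ \le\ 1+1,
\]
invoking Flicker's Gelfand-pair result for $(\GL_4(E),\GL_4(F))$. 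Matching this against the lower bound is what pins down the answer.

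Your orbit picture is also off. The double coset $Q_4\backslash\GO_{4,4}(F)/\GO_{2,2}(E)^\natural$ has \emph{three} open orbits, with stabilisers $\GO_{2,2}(F)$, $\GO_{4,0}(F)$, and $\GO_{3,1}(F)$. The multiplicity $2$ in the first subcase arises because the $\GO_{2,2}(F)$- and $\GO_{4,0}(F)$-orbits \emph{each} contribute $1$ (both $\pi_i$ being simultaneously $\GL_2(F)$- and $D^\times(F)$-distinguished), not because two orbits test $\pi_1$ and $\pi_2$ separately. The $\GO_{3,1}(F)$-orbit is the one detecting $\pi_1^\sigma\cong\pi_2^\vee$. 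Finally, your description of $\pi_0$ is incorrect: since $\chi_i|_{F^\times}=\mathbf{1}$, the parameter $\phi_{\pi_0}$ is conjugate-orthogonal, so $\pi_0$ is $\GL_2(F)$-distinguished but \emph{not} $(\GL_2(F),\omega_{E/F})$-distinguished. Its actual role is that it is $\GL_2(F)$-distinguished but not $D^\times(F)$-distinguished, which kills the $\GO_{4,0}(F)$-contribution and drops the multiplicity from $2$ to $1$.

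For case (iii) the paper does not use your $\GO_{6,6}(F)$ see-saw at all; it again works on the $\GSp_8(F)$ side, combining the irreducibility of $\Theta_6^+(\tau)$ (established separately via the generalised standard-module conjecture) with a boundary-vanishing analysis of $\mathcal{I}(-1/2)|_{\GSp_4(E)^\natural}$ using Casselman's criterion to show that only the open orbit contributes. Your proposed route via $\GO_{6,6}(F)$ may be workable, but you would need the analogous filtration and boundary control there, which you have not addressed.
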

Then we can verify the Prasad conjecture for $\GSp_4$ in \S\ref{subsect:GSp(4)conj}. More precisely,
 let $G_0$ be a quasi-split group defined over $F$ such that
\[{}^LG_0=\GSp_4(\mathbb{C})\rtimes\Gal(E/F), \]
where the nontrivial element $\sigma\in\Gal(E/F)$ acts on $\GSp_4(\mathbb{C})$ by
\[\sigma(g)=\text{sim}(g)^{-1}\cdot g. \]
Here sim$:\GSp_4(\mathbb{C})\longrightarrow\mathbb{C}^\times$ is the similitude character.
\begin{theorem}[The Prasad conjecture for $\GSp_4$]\label{thm1.2}
	Let $\tau$ be an irreducible smooth representation of $\GSp_4(E)$ with enhanced Langlands parameter $(\phi_{\tau},\lambda_\tau)$. Assume that the $L$-packet $\Pi_{\phi_{\tau}}$ is generic. Then
	\[\dim\Hom_{\GSp_4(F)}(\tau,\omega_{E/F})=\begin{cases}
	|F(\phi_{\tau})|,&\mbox{ if }\tau\mbox{ is generic},\\
	0,&\mbox{ otherwise},
	\end{cases}  \]  
	where the finite set $F(\phi_\tau )$ is given by
	$$F(\phi_{\tau})=\{\tilde{\phi}:WD_F\longrightarrow {}^LG_0|~\tilde{\phi}|_{WD_E}=\phi_{\tau}  \}$$
	and $|F(\phi_{\tau})|$ denotes the cardinality of the set $F(\phi_{\tau})$.
\end{theorem}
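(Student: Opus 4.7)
My plan is to parallel the proof of Theorem \ref{maingsp(4)theorem} but for the $\omega_{E/F}$-twisted period $\Hom_{\GSp_4(F)}(\tau,\omega_{E/F})$, and then to match the resulting multiplicity with the combinatorial count $|F(\phi_\tau)|$. The underlying strategy is again the Gan-Takeda trichotomy: every irreducible $\tau$ of $\GSp_4(E)$ is a theta lift from $\GO_{4,0}(E)$, $\GSO_{2,2}(E)$, or $\GSO_{3,3}(E)$, and the see-saw identities reduce $\GSp_4(F)$-distinction to analogous distinction problems on $\GL_2$ or $\GL_4$, for which we may invoke the $\omega_{E/F}$-twisted results of Flicker and Kable-Anandavardhanan-Rajan that express distinction in terms of Asai $L$-functions.

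First I would dispose of the non-generic case: if $\tau$ lies in a generic packet but is itself non-generic, then $\tau$ is a theta lift from the compact inner form $\GO_{4,0}(E)$, and the see-saw diagram forces the twisted Hom-space to vanish, matching the zero on the right. In the generic cases I would carry out the see-saw computation with $\omega_{E/F}$ inserted. In case (ii) the multiplicity becomes a sum of contributions from $(\GL_2(F),\omega_{E/F})$-distinction of the pair $\pi_1,\pi_2$; in case (iii) it reduces to $(\GL_4(F),\omega_{E/F})$-distinction of $\Pi$, which by Kable's theorem is equivalent to a pole at $s=0$ of the Asai $L$-function $L(s,\Pi,\mathrm{As}^-)$, i.e.\ to a $\sigma$-twisted symplectic self-duality on the Langlands parameter of $\Pi$.

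On the Galois side, an extension $\tilde\phi \in F(\phi_\tau)$ amounts to choosing an element $\tilde\phi(\sigma) \in \GSp_4(\mathbb{C}) \rtimes \sigma$ whose square realizes $\phi_\tau(\sigma^2)$ and whose conjugation action intertwines $\phi_\tau|_{WD_E}$ with the twisted self-dual parameter dictated by the similitude. The count of such extensions is controlled by the component group of the centralizer of $\phi_\tau$, and in each of the three cases I would verify that it matches the distinction multiplicity extracted in step two. The main obstacle is the bookkeeping in the borderline subcases analogous to those appearing in Theorem \ref{maingsp(4)theorem} (for instance $\pi_1 \cong \pi_2$, or $\pi_1 \cong \pi_0$), where the parameter carries additional internal symmetry and $|F(\phi_\tau)|$ can jump; here one must match the component-group combinatorics on the Galois side with the precise automorphic multiplicity, which is exactly where the similitude twist in the definition of $G_0$ plays its crucial role.
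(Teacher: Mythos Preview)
Your proposal is correct and follows essentially the same route as the paper: Gan--Takeda trichotomy, see-saw reduction to $\GL_2$/$\GL_4$ distinction, then case-by-case matching with the count of lifts $\tilde\phi$ on the Galois side. The only tactical difference is that the paper does not redo the see-saw with $\omega_{E/F}$ inserted; instead it observes once that $\tau$ is $(\GSp_4(F),\omega_{E/F})$-distinguished iff $\tau\otimes(\chi_E\circ\lambda_W)$ is $\GSp_4(F)$-distinguished (for any $\chi_E$ with $\chi_E|_{F^\times}=\omega_{E/F}$), and then applies the already-proved Theorem~\ref{maingsp(4)theorem} to the twisted representation, which converts your $(\GL_4(F),\omega_{E/F})$-distinction of $\Pi$ into ordinary $\GL_4(F)$-distinction of $\Pi\otimes\chi_E$ and replaces Kable/Asai with Matringe's criterion directly; the Galois-side enumeration in \S\ref{7.4.1}--\S\ref{7.4.4} is then carried out exactly as you sketch.
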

We have proved  analogous results for the inner form.
Let $ D$ be the $4$-dimensional quaternion division algebra of $F$. In a similar way,
we study the period problem for the inner form $\GU_2(D)=\GSp_{1,1}$, i.e. try to figure out the multiplicity
\[\dim \Hom_{\GSp_{1,1}(F)}(\tau,\mathbb{C}). \]
We will not state the results of the inner form case in the introduction; the precise results can be found in Theorem \ref{innerformperiod}.
\par
Combining Theorem \ref{maingsp(4)theorem} and its analog for inner forms, we can verify the  conjecture of Dipendra Prasad \cite[Conjecture 2]{prasad2015arelative}  for $\PGSp_4$.
Given a quasi-split reductive group $\mathbf{G}$ defined over  $F$ and a quadratic extension $E/F,$ assuming the Langlands-Vogan conjectures for $\mathbf{G}$, Dipendra Prasad \cite{prasad2015arelative} uses the information from the Galois side to give a formula for the individual multiplicity
\[\dim\Hom_{G_\alpha(F)}(\tau,\chi_\mathbf{G} ), \]
where
\begin{itemize}
	\item  $\tau$ is an irreducible discrete series representation of $\mathbf{G}(E);$
\item  $\chi_\mathbf{G}$ is a quadratic character of $\mathbf{G}(F)$ depending on $\mathbf{G}$ and $E;$ 
\item $G_\alpha$ is any pure inner form of $\mathbf{G}$ defined over $F$ satisfying $G_\alpha(E)=\mathbf{G}(E).$
\end{itemize}
\par
In \S \ref{secpgsp}, we will focus on the case $\mathbf{G}=\PGSp_4$. Then $\chi_\mathbf{G}=\omega_{E/F}$ and $H^1(F,\mathbf{G})=\{\PGSp_4,\PGU_2(D) \}.$
More precisely,
 given a discrete series representation $\tau$ of $\PGSp_4(E)$ with the enhanced L-parameter 
$(\phi_\tau,\lambda)$ (called the Langlands-Vogan parameter), where $\lambda$ is a character of the component group $\pi_0(Z(\phi_\tau)),$ set $$F(\phi_{\tau})=\{\tilde{\phi}:WD_F\longrightarrow \Sp_4(\mathbb{C})|~\tilde{\phi}|_{WD_E}=\phi_\tau \} .$$
Up to the twisting by the quadratic character $\omega_{E/F},$ there are several orbits in $F(\phi_\tau),$ denoted by $\sqcup_{i=1}^r\mathcal{O}(\tilde{\phi}_i).$ Each orbit $\mathcal{O}(\tilde{\phi}_i)$ corresponds to a unique subset $\mathcal{C}_i$ of $H^1(W_F,\mathbf{G})$. (See \S \ref{subsect:prasad}  for more details.)
\begin{theorem}\label{prasadfordisc}
	Let the notation be as above. Given a discrete series representation $\tau$ of $\PGSp_4(E),$ we have
	\begin{equation}\label{equfordist}
		\dim\Hom_{G_\alpha(F)}(\tau,\omega_{E/F})=\sum_{i=1}^rm(\lambda,\tilde{\phi}_i)\mathbf{1}_{\mathcal{C}_i}(G_\alpha)/d_0(\tilde{\phi}_i),
	\end{equation}	where
	\begin{itemize}
		\item $\mathbf{1}_{\mathcal{C}_i}$ is the characteristic function of the set $\mathcal{C}_i;$
		\item  $m(\lambda,\tilde{\phi})$ is the multiplicity for the trivial representation contained in the restricted representation $\lambda|_{\pi_0(Z(\tilde{\phi}))}$;
		\item  $d_0(\tilde{\phi})=|Coker\{\pi_0(Z(\tilde{\phi}))\rightarrow\pi_0(Z(\phi_\tau))^{\Gal(E/F)} \}|$, where $|-|$ denotes its cardinality.
	\end{itemize}
\end{theorem}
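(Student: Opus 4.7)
The plan is to deduce Theorem \ref{prasadfordisc} from the explicit multiplicity formulas of Theorem \ref{maingsp(4)theorem} and its inner form analogue Theorem \ref{innerformperiod}, matching them case by case against the Galois side prediction. An irreducible discrete series $\tau$ of $\PGSp_4(E)$ is the same as an irreducible $\tilde{\tau}$ of $\GSp_4(E)$ with trivial central character, and for each pure inner form $G_\alpha\in\{\PGSp_4,\PGU_2(D)\}$ one has $\Hom_{G_\alpha(F)}(\tau,\omega_{E/F})=\Hom_{\tilde{G}_\alpha(F)}(\tilde{\tau},\omega_{E/F}\circ\mathrm{sim})$ for the similitude cover $\tilde{G}_\alpha\in\{\GSp_4,\GSp_{1,1}\}$, so the left hand side of \eqref{equfordist} is already computable from the theta correspondence results established in the paper.

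First I would classify the discrete series parameters $\phi_\tau:WD_E\to\Sp_4(\mathbb{C})$ by the decomposition type of the underlying $4$-dimensional representation (irreducible; sum of two inequivalent irreducible $2$-dimensional pieces; etc.), compute $\pi_0(Z(\phi_\tau))$, and enumerate the enhanced parameters $(\phi_\tau,\lambda)$ together with the members of the L-packet distributed across $\PGSp_4(F)$ and $\PGU_2(D)(F)$ via Vogan's recipe. Next I would enumerate $F(\phi_\tau)$, partition it into $\omega_{E/F}$-twist orbits $\mathcal{O}(\tilde{\phi}_i)$, and for each representative $\tilde{\phi}_i$ compute $d_0(\tilde{\phi}_i)$, $m(\lambda,\tilde{\phi}_i)$, and the subset $\mathcal{C}_i\subseteq H^1(F,\PGSp_4)$ prescribed in \S\ref{subsect:prasad}. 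Finally I would verify \eqref{equfordist} in each type by comparing the right hand side with the multiplicity extracted from Theorems \ref{maingsp(4)theorem} and \ref{innerformperiod}: in the stable/supercuspidal cases the L-packets are singletons and the match is transparent, while in the endoscopic cases Theorem \ref{maingsp(4)theorem}(ii) enumerates the $\omega_{E/F}$-distinguished members via conditions like $\pi_i$ being $\GL_2(F)$-distinguished or $\pi_1^\sigma\cong\pi_2^\vee$, each corresponding to a distinct extension $\tilde{\phi}_i$ on the Galois side.

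The hard part will be the endoscopic and mixed parameters: one must track exactly how $|F(\phi_\tau)|$ partitions into $\omega_{E/F}$-orbits, which orbits pair with $\PGSp_4$ versus $\PGU_2(D)$ via $\mathcal{C}_i$, and how the character $\lambda$ selects through the factor $m(\lambda,\tilde{\phi}_i)/d_0(\tilde{\phi}_i)$ which member of the packet carries the distinguished period on each inner form. The built-in compatibility constraint is that summing \eqref{equfordist} over all $\lambda$ in the packet and both forms $G_\alpha$ must recover $|F(\phi_\tau)|$ as predicted by Theorem \ref{thm1.2} together with its inner form analogue; this consistency requirement essentially pins down the combinatorial bookkeeping and constitutes the real technical content of the proof.
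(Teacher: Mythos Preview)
Your proposal is correct and matches the paper's own approach: the paper's proof of Theorem \ref{prasadfordisc} is precisely a case-by-case verification, carried out in \S\ref{secpgsp}, comparing the multiplicities supplied by Theorems \ref{localgspperiod} and \ref{innerformperiod} (after the $\omega_{E/F}$-twist) against the explicit enumeration of $F(\phi_\tau)$, its $\omega_{E/F}$-orbits, and the quantities $m(\lambda,\tilde{\phi}_i)$, $d_0(\tilde{\phi}_i)$, $\mathcal{C}_i$ on the Galois side. For a discrete parameter into $\Sp_4(\mathbb{C})$ the only decomposition types that occur are $\phi_\tau$ irreducible and $\phi_\tau=\phi_1\oplus\phi_2$ with $\phi_1\ne\phi_2$ both irreducible $2$-dimensional symplectic, so your ``etc.'' collapses to just these two cases, which is exactly what the paper checks.
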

\begin{remark} We would like to highlight the fact that
	Theorem \ref{prasadfordisc} provides the first example in the Prasad conjecture that the square-integrable representation $\tau$ may be nongeneric and so $\tau$ is not $\PGSp_4(F)$-distinguished (see Theorem \ref{innerformperiod}) but $\tau$ contains a nonzero period for the pure inner form $\PGSp_{1,1}(F)$. It is different from the case $\mathbf{G}=\PGL_2$ that if a representation $\pi$ of $\PGL_2(E)$ is $\mathrm{PD}^\times(F)$-distinguished, then $\pi$ must be $\PGL_2(F)$-distinguished. (See Lemma \ref{GL:period}.)
\end{remark}
\par
In fact, we have shown that the equality \eqref{equfordist} holds for almost all generic representations in \S\ref{secpgsp}, except that the Langlands parameter
	 $\phi_\tau=2\chi_F|_{W_E}\oplus\phi_2$ with $\phi_2$ conjugate-symplectic and $\chi_F^2=\omega_{E/F}.$
However,  there is a weaker version of the Prasad conjecture which determines the sum of $\dim\Hom_{G_\alpha(F)}(\tau,\chi_\mathbf{G})$
as $G_\alpha$ runs over all pure inner forms of $\mathbf{G}$ satisfying $G_\alpha(E)=\mathbf{G}(E)$. It involves the degree of the base change map $$\Phi:\Hom(WD_F,\Sp_4(\mathbb{C}))\longrightarrow\Hom(WD_E,\Sp_4(\mathbb{C}))$$ for the exception case, i.e., the following identity
\begin{equation}\label{equaforsum}
\dim\Hom_{\PGSp_4(F)}(\tau,\omega_{E/F})+\dim\Hom_{\PGSp_{1,1}(F) }(\tau,\omega_{E/F})=\sum_{\tilde{\phi}\in F(\phi_\tau) }m(\lambda,\tilde{\phi})\frac{\deg\Phi(\tilde{\phi})}{d_0(\tilde{\phi})}\end{equation}
when the $L$-parcket $\Pi_{\phi_{\tau}}$ is generic, 
which is the original identity formulated by Dipendra Prasad in \cite{prasad2015arelative}. 
\par
There is a brief introduction to the proof of Theorem \ref{prasadfordisc}. After introducing the local theta correspondence between quaternionic unitary groups following \cite{yamana2011deg}, we use the Morita equivalence $\GU_2(R)=\GSp_{1,1}(E)\cong\GSp_4(E),$  where $R\cong Mat_{2,2}(E)$ is the split quaternion algebra over $E,$
to embed the group $\GSp_{1,1}(F)$ into
$\GSp_4(E).$ Then one can use the see-saw identity to transfer the inner form $\GSp_{1,1}$-period problem to $\GO^\ast_{3,0}$ or $\GO_{1,1}^\ast$ side,  which are closely related to  $\GL_n$-period problems. But we need to be very careful when we use the see-saw identity for a pair of quaternionic unitary groups. (See Remark \ref{counterforseesaw}.) Once the see-saw identity for the quaternionic unitary groups has been set up, the rest of the proof for the inner form case is similar to the  case for $\GSp_4$-period. Then we obtain the results for the distinction problems for the automorphic side. For the Galois side, i.e., the right hand side of \eqref{equaforsum}, it will be checked  case by case in \S\ref{secpgsp}.
\begin{remark}  
	 Rapha\"{e}l Beuzart-Plessis \cite[Theorem 1]{beuzart2017distinguished} used the local trace formula to deal with the distinction problems for the Galois pair $(G'(E),G'(F))$ for the stable square-integrable representations, where $G'$ is an inner form of $\mathbf{G}$ defined over $F$, which generalizes \cite[Theorem C]{prasad1992gl(2)}.
\end{remark}
The paper is organized as follows.
In \S $2$, we set up the notation about the local theta correspondence. In \S\ref{sect:generic}, we will study the irreducibility for the big theta lift of a generic representation in the almost equal rank case, which generalizes the results of Gan-Ichino \cite[Proposition C.4]{gan2014formal} for the tempered representations.
In \S\ref{sect:GSp(4)},   we will study the distinction problems for $\GSp_4$ over a quadratic  extension $E/F.$ The proof of Theorem \ref{maingsp(4)theorem} will be given in \S\ref{subsect:proofof1.1}. The analogous results for the inner form $\GSp_{1,1}$ will be given in \S\ref{sect:GSp(1,1)}. In \S\ref{subsect:prasad}, we will introduce the Prasad conjecture for a reductive quasi-split group $\mathbf{G}$ defined over $F$. Then we will verify the Prasad conjecture for $\GSp_4$ in \S\ref{subsect:GSp(4)conj}.
Finally, the proof of Theorem \ref{prasadfordisc} will be given in \S\ref{secpgsp}.
\subsection*{Acknowledgments} This paper contains part of the author's Ph.D. thesis \cite{hengfei2017}.
He is grateful to   Wee Teck Gan for his guidance and numerous discussions when he was studying in Singapore. He would like to thank  Dipendra Prasad for proposing this conjecture and fruitful discussions. He wants to thank Dmitry Gourevitch and Lei Zhang for useful comments as well.

\section{The Local Theta Correspondences for Similitudes}
In this section, we will briefly recall some results about the local theta correspondence, following \cite{gan2011theta,kudla1996notes,roberts2001global}.
\par
Let $F$ be a local field of characteristic zero.
Consider the dual pair $\Oo(V)\times \Sp(W).$
For simplicity, we may assume that $\dim V$ is even. Fix a nontrivial additive character $\psi$ of $F.$
Let $\omega_\psi$ be the Weil representation for $\Oo(V)\times \Sp(W).$ 
If $\pi$ is an irreducible smooth representation of $\Oo(V)$ (\resp $\Sp(W)$), the maximal $\pi$-isotypic quotient of $\omega_\psi$ has the form 
\[\pi\boxtimes\Theta_\psi(\pi) \]
for some smooth  representation $\Theta_\psi(\pi)$ of $\Sp(W)$ (resp. some smooth representation $\Theta_\psi(\pi)$ of $\Oo(V)$). We call $\Theta_\psi(\pi )$ or $\Theta_{V,W,\psi}(\pi)$
the big theta lift of $\pi.$ It is known that $\Theta_\psi(\pi)$ is of finite length and hence is admissible. Let $\theta_\psi(\pi)$ or $\theta_{V,W,\psi}(\pi)$ be the maximal semisimple quotient of $\Theta_\psi(\pi),$ which is called the small theta lift of $\pi.$ 
\begin{theorem}[Howe duality conjecture]\cite{gan2014howe,gan2014proof}\label{localhowe}
 One has
 \begin{itemize}
 	\item $\theta_\psi(\pi)$ is irreducible whenever $\Theta_\psi(\pi)$ is non-zero.
 	\item the map $\pi\mapsto \theta_\psi(\pi)$ is injective on its domain.
 \end{itemize}
\end{theorem}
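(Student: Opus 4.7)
The plan is to prove both statements by an induction on the rank of the dual pair, using Kudla's filtration on Jacquet modules of the Weil representation $\omega_\psi$ as the central technical tool. Kudla's filtration describes the normalized Jacquet module of $\omega_\psi$ along a maximal parabolic $P = MN \subset \Sp(W)$ with Levi $M \cong \GL_k \times \Sp(W_0)$ as a successive extension whose graded pieces are built from Weil representations for the smaller dual pairs $\Oo(V) \times \Sp(W_0)$ tensored with parabolically induced representations of $\GL_k$; an analogous filtration holds on the orthogonal side. Together these two filtrations let one read off the cuspidal support of any irreducible constituent of $\Theta_\psi(\pi)$ from that of $\pi$ plus combinatorial data coming from the filtration.

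For the first bullet, I would argue that any two irreducible quotients of $\Theta_\psi(\pi)$ share the same cuspidal support: applying Bernstein's second adjointness and Kudla's filtration, together with the inductive hypothesis for smaller dual pairs, one computes $R_P(\Theta_\psi(\pi))$ in terms of theta lifts of Jacquet modules of $\pi$, which already pins down the cuspidal support of any quotient. This yields a ``weak'' Howe duality statement. To upgrade it to irreducibility of $\theta_\psi(\pi)$, I would invoke the Sun--Zhu conservation relation for first occurrence indices across the two Witt towers, which forces the socle of $\Theta_\psi(\pi)$ to be multiplicity-free and isotypic, so that the cosocle is as well.

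For the second bullet, suppose $\theta_\psi(\pi_1) \cong \theta_\psi(\pi_2) =: \tau$. By Frobenius reciprocity, both $\pi_1$ and $\pi_2$ occur as quotients of the big theta lift $\Theta_\psi(\tau)$ computed from the orthogonal side, in whichever Witt tower contains $V$. Applying the first bullet to $\tau$ (on the symplectic side), $\theta_\psi(\tau)$ is irreducible and non-zero, whence $\pi_1 \cong \theta_\psi(\tau) \cong \pi_2$; non-vanishing of this descent is again guaranteed by the conservation relation.

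The hardest step is the first bullet: even after pinning down cuspidal supports, it is a priori conceivable that two distinct irreducible representations sharing that cuspidal support arise as quotients of $\Theta_\psi(\pi)$. Ruling this out is the deepest point and is where the argument must go beyond purely formal manipulations of Kudla's filtration. I would handle it following the Gan--Takeda strategy, combining the functional equation of the doubling zeta integral (which relates $\Theta_\psi(\pi)$ to a representation of $\GL$-type on the doubled group) with a Mackey-style analysis in the spirit of M\'inguez, thereby reducing the problem to an already known Howe duality statement for type-II dual pairs of general linear groups.
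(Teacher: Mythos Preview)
The paper does not give its own proof of this theorem: it is stated as a cited result (attributed to \cite{gan2014howe,gan2014proof}, with Waldspurger's earlier argument \cite{waldspurger1990demonstration} noted for $p\neq 2$), and is used throughout as a black box. So there is nothing in the paper against which to compare your argument; you have supplied a proof outline where the author deliberately supplies none.

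That said, your outline is in the right general neighborhood of the Gan--Takeda argument, though several points are imprecise. Kudla's filtration and an inductive scheme on the rank are indeed the backbone, and the reduction to type~II (general linear) dual pairs via M\'inguez's method is the correct endgame. However, the way you invoke the Sun--Zhu conservation relation is not quite how it enters: it does not directly force the socle of $\Theta_\psi(\pi)$ to be multiplicity-free and isotypic. Rather, conservation is used to control first occurrence and to handle the supercuspidal base case; the passage from ``same cuspidal support'' to actual irreducibility of $\theta_\psi(\pi)$ goes through a more delicate analysis (in Gan--Takeda, via the MVW involution and a careful study of submodules versus quotients of $\Theta$). Your second-bullet argument also implicitly assumes that $\Theta_\psi(\tau)$, computed from the other side, has $\pi_1$ and $\pi_2$ as \emph{quotients}, which is exactly the two-sided theta adjunction one has, so that step is fine once the first bullet is in hand. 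If you intend this as more than a pointer to the literature, you should tighten the role of conservation and make explicit the MVW/contragredient step that underlies the injectivity direction.
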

It has been proved by Waldspurger \cite{waldspurger1990demonstration} when $p\neq2$.
\par
Then we 
extend the Weil representation to the case of similitude groups. Let $\lambda_V$ and $\lambda_W$
be the similitude factors of $\GO(V)$ and $\GSp(W)$ respectively. 
We shall consider the group
\[R=\GO(V)\times \GSp^+(W) \]
where $\GSp^+(W)$ is the subgroup of $\GSp(W)$ consisting of elements $g$ such that $\lambda_W(g)$
lies in the image of $\lambda_V.$ Define
\[R_0=\{(h,g)\in R|~\lambda_V(h)\lambda_W(g) =1\} \]
to be the subgroup of $R.$ The Weil representation $\omega_\psi$ extends naturally to the group $R_0$
via \[\omega_\psi(g,h)\phi=|\lambda_V(h)|_F^{-\frac{1}{8}\dim V\cdot\dim W } \omega(g_1,1)(\phi\circ h^{-1}) \]
where $|-|_F$ is the absolute value on $F$ and \[g_1=g\begin{pmatrix}
\lambda_W(g)^{-1}&0\\0&1
\end{pmatrix}\in \Sp(W). \]
Here the central elements $(t,t^{-1})\in R_0$ acts by the quadratic character $\chi_V(t)^{\frac{\dim W}{2}},$
which is slightly different from the normalization used in \cite{roberts2001global}.
\par
Now we consider the compactly induced representation
\[\Omega=ind_{R_0}^R\omega_\psi. \]
As  a representation of $R,$ $\Omega$ depends only on the orbit of $\psi$ under the evident action of $Im\lambda_V\subset F^\times.$ For example, if $\lambda_V$ is surjective, then $\Omega$ is independent of $\psi.$ For any irreducible representation $\pi$ of $\GO(V)$ (\resp $\GSp^+(W)$),
the maximal $\pi$-isotropic quotient of $\Omega$ has the form
\[\pi\otimes\Theta_\psi(\pi), \]
where $\Theta_\psi(\pi)$ is some smooth representation of $\GSp^+(W)$ (resp. $\GO(V)$). Similarly, we let
$\theta_\psi(\pi)$ be the maximal semisimple quotient of $\Theta_\psi(\pi).$
Note that though $\Theta_\psi(\pi)$ may be reducible, it has a central character $\omega_{\Theta_\psi(\pi)}$
given by
\[\omega_{\Theta_\psi(\pi) }=\chi_V^{\frac{\dim W}{2}}\omega_\pi . \] 
There is an extended Howe conjecture for similitude groups, which says that $\theta_\psi(\pi)$ is irreducible whenever $\Theta_\psi(\pi)$
is non-zero and the map $\pi\mapsto\theta_\psi(\pi)$ is injective on its domain.
It was shown by Roberts \cite{roberts1996theta} that this follows from Theorem \ref{localhowe}. 
\par
Since $\lambda_V$ is surjective in this paper, we have $\GSp^+(W)=\GSp(W).$
 \begin{proposition}\cite[Proposition 2.3]{gan2011locallanglands}
	Suppose that $\pi$ is a supercuspidal representation of $\GO(V)$ (resp. $\GSp(W)$). Then 
	$\Theta_\psi(\pi)$ is either zero or is an irreducible representation of $\GSp(W)$ (\resp $\GO(V)$).
\end{proposition}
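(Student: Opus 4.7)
The plan is to reduce the statement to the similitude version of Howe duality (Theorem \ref{localhowe} extended by Roberts \cite{roberts1996theta}) by exploiting the fact that a supercuspidal representation is both projective and injective in the category of smooth representations: in Bernstein's decomposition, a supercuspidal $\pi$ is the unique irreducible object of its own component, so the $\pi$-isotypic functor is exact and isolates a direct summand.

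Concretely, assume $\pi$ is a supercuspidal representation of $\GO(V)$ and that $\Theta_\psi(\pi)\neq 0$; set $R=\GO(V)\times\GSp^+(W)$. Using the projectivity of $\pi$ in its Bernstein block, the maximal $\pi$-isotypic quotient of $\Omega$ splits as a direct summand of $R$-modules,
$$\Omega \;=\; \bigl(\pi\boxtimes\Theta_\psi(\pi)\bigr)\;\oplus\;\Omega'',$$
where $\Omega''$ has no $\pi$-isotypic subquotient. Equivalently, $\Theta_\psi(\pi)$ is realized as $\Hom_{\GO(V)}(\pi,\Omega)$ rather than merely as a quotient of $\Omega$, so the usual pitfall distinguishing big and small lifts disappears.

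Next I would show that every irreducible subquotient $\sigma$ of $\Theta_\psi(\pi)$ is isomorphic to $\theta_\psi(\pi)$. If $\sigma$ is such a subquotient, then $\pi\boxtimes\sigma$ is an irreducible subquotient of $\Omega$; because $\pi$ is the unique simple object of its block, injectivity of $\pi$ lets us lift the inclusion, while projectivity of $\pi$ lets us lift the projection, so $\pi\boxtimes\sigma$ appears both as a subrepresentation and as a quotient of $\pi\boxtimes\Theta_\psi(\pi)$. As a quotient, it must factor through the maximal semisimple quotient $\pi\boxtimes\theta_\psi(\pi)$, and the similitude Howe duality forces $\sigma\cong\theta_\psi(\pi)$. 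The same argument, applied to the multiplicity space $\Hom_R(\pi\boxtimes\theta_\psi(\pi),\Omega)$, bounds the multiplicity of $\theta_\psi(\pi)$ in $\Theta_\psi(\pi)$ by one, using that the maximal $\pi$-isotypic quotient of $\Omega$ is by definition a single copy of $\pi\boxtimes\Theta_\psi(\pi)$. Since $\Theta_\psi(\pi)$ has finite length (Kudla's admissibility) and each Jordan--H\"older factor is $\theta_\psi(\pi)$ occurring with total multiplicity one, we conclude $\Theta_\psi(\pi)=\theta_\psi(\pi)$, which is irreducible. The symmetric case, where $\pi$ is a supercuspidal representation of $\GSp(W)$, is handled identically by swapping the roles of the two factors.

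The main obstacle is the middle step: turning an abstract subquotient of $\Theta_\psi(\pi)$ into an actual quotient, so that Howe duality for similitudes may be applied. This is precisely the point where the supercuspidal hypothesis on $\pi$ is indispensable, as generic or discrete non-supercuspidal representations live in larger Bernstein blocks with nontrivial $\Ext^1$, and the splitting of $\Omega$ used above can fail. Once the splitting and the multiplicity-one bound are in place, Theorem \ref{localhowe} (together with Roberts' reduction to the linear case) closes the argument.
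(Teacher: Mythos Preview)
The paper does not give its own proof of this proposition; it is cited from \cite{gan2011locallanglands}. So the comparison is with the standard argument in the literature (Kudla, MVW, Gan--Takeda).

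Your first step is correct and is indeed the crucial use of supercuspidality: the $\pi$-isotypic functor on smooth $\GO(V)$-modules (with the relevant central character) is exact, so $\Omega = (\pi\boxtimes\Theta_\psi(\pi))\oplus\Omega''$ as $R$-modules. The gap is in the second step. Projectivity and injectivity of $\pi$ are properties in the category of $\GO(V)$-modules, not of $R=\GO(V)\times\GSp^+(W)$-modules. Since the Bernstein block of $\pi$ is semisimple, the functor $M\mapsto \pi\boxtimes M$ is an \emph{equivalence} from $\GSp^+(W)$-modules to the $\pi$-isotypic part of $R$-modules; in particular it preserves, rather than collapses, the distinction between subquotients, submodules, and quotients. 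Thus knowing that $\pi\boxtimes\sigma$ is an $R$-subquotient of $\pi\boxtimes\Theta_\psi(\pi)$ gives you back exactly the hypothesis that $\sigma$ is a $\GSp^+(W)$-subquotient of $\Theta_\psi(\pi)$, and nothing more. Your ``lifting'' produces only $\GO(V)$-equivariant splittings, which carry no $\GSp^+(W)$-equivariance, so you cannot conclude that $\sigma$ is a quotient (or a sub) of $\Theta_\psi(\pi)$, and hence cannot invoke Howe duality at that point. The same problem afflicts the multiplicity bound: $\dim\Hom_R(\pi\boxtimes\theta_\psi(\pi),\Omega)=\dim\Hom_{\GSp^+(W)}(\theta_\psi(\pi),\Theta_\psi(\pi))$ controls the \emph{socle} multiplicity, not the Jordan--H\"older multiplicity.

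What the standard proof adds is a way to turn subrepresentations of $\Omega$ into quotients. One uses that the smooth contragredient of the Weil representation is again a Weil representation (for $\bar\psi$, and for similitudes with surjective $\lambda_V$ this is the same $\Omega$), together with the MVW involution, to show that $\Hom_{\GSp^+(W)}(\sigma,\Theta_\psi(\pi))$ and $\Hom_{\GSp^+(W)}(\Theta_\psi(\pi),\sigma)$ have the same dimension. Combined with Howe duality this pins down both the socle and the cosocle of $\Theta_\psi(\pi)$ as a single copy of $\theta_\psi(\pi)$; one then finishes either by a second application of the same symmetry or by a direct length count. Your outline would be repaired by inserting this contragredient/MVW step in place of the appeal to projectivity--injectivity of $\pi$.
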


 \subsection{First Occurence Indices for pairs of orthogonal Witt Towers} Let $W_n~ (n\geq1)$ be the $2n$-dimensional symplectic vector space with associated symplectic group $\Sp(W_n)$ and consider the two towers of orthogonal groups attached to the quadratic spaces with trivial discriminant. More precisely, let $\mathbb{H}$ be the split $2$-dimensional quadratic space over $F$ and $D$ be the quaternion division algebra over $F$. Let
\[V_{2r}^+= \mathbb{H}^{r}\quad\mbox{and}\quad V_{2r}^-= D(F)\oplus\mathbb{H}^{r} \]
and denote the orthogonal groups by $\Oo(V_{2r}^+)=\Oo_{r,r}$ and $\Oo(V_{2r}^-)=\Oo_{r+4,r}$ respectively. For an irreducible representation $\pi$ of $\Sp(W_n),$ one may consider the theta lifts $\theta_{2r}^+(\pi)$ and $\theta_{2r}^-(\pi)$ to
$\Oo(V^+_{2r})$ and $\Oo(V_{2r}^-)$ respectively, with respect to a fixed non-trivial additive character $\psi.$ Set
\[\begin{cases}
r^+(\pi)=\inf\{r:\theta_{2r}^+(\pi)\neq0 \};\\
r^-(\pi)=\inf\{r:\theta_{2r}^-(\pi)\neq0 \}.
\end{cases} \]
Then Kudla-Rallis \cite{kudla2005first} and Sun-Zhu \cite{sun2012conservation} showed:
\begin{theorem}
	[Conservation Relation] For any irreducible representation $\pi$ of $\Sp(W_n),$ we have
	\[r^+(\pi)+r^-(\pi)=2n=\dim W_n. \]
\end{theorem}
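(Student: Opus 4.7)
The plan is to prove the equality by establishing two complementary inequalities: the lower bound $r^+(\pi)+r^-(\pi)\geq 2n$ (due to Kudla-Rallis) and the matching upper bound $r^+(\pi)+r^-(\pi)\leq 2n$ (completed by Sun-Zhu). Both rest on Rallis' tower property, which guarantees that once $\theta_{2r}^\epsilon(\pi)$ is nonzero it persists for every higher $r$ in the same Witt tower, so that the first occurrence indices are the only things one has to control.

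For the lower bound I would use the doubling method of Piatetski-Shapiro and Rallis. The key tool is the regularized doubling zeta integral for $\pi\otimes\pi^\vee$, whose polar behaviour at a distinguished point is dictated, via the Rallis inner product formula, by the non-vanishing of the theta lifts in both towers. Set $r=r^+(\pi)-1$ and $s=r^-(\pi)-1$, so that both $\theta_{2r}^+(\pi)$ and $\theta_{2s}^-(\pi)$ vanish. Packaging this through the see-saw diagram relating $\Oo(V_{2r}^+\oplus V_{2s}^-)$ with $\Sp(W_n)\times\Sp(W_n)$, the vanishing forces the relevant matrix coefficient integrals to be holomorphic at that point. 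Comparing with the known location of the pole of the doubling Eisenstein series then yields $r^+(\pi)+r^-(\pi)\geq 2n$.

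For the upper bound I would follow the Sun-Zhu strategy. Consider the Jacquet module of the Weil representation for $(\Sp(W_n),\Oo(V_{2r}^\epsilon))$ along a maximal parabolic of the orthogonal factor. The Kudla filtration expresses this Jacquet module as an iterated extension whose graded pieces are Weil representations for dual pairs of smaller rank. Assuming $r^+(\pi)+r^-(\pi)>2n$ for contradiction, one uses this filtration together with the see-saw identity to build a nonzero intertwining map ``descending'' between the two towers at consecutive levels. Sun-Zhu's core technical input is that such a descent produces a nonzero transfer whenever both lifts are below first occurrence; combined with the tower property and the irreducibility of small theta lifts (Theorem \ref{localhowe}), this contradicts the standing assumption.

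The hard part is clearly the upper bound, since the Fourier-Jacobi/Kudla filtration argument of Sun-Zhu requires a detailed analysis of the mixed model of the Weil representation and careful control of the Jacquet module at every level in both towers simultaneously. The lower bound, by comparison, is mostly a bookkeeping exercise once the analytic theory of the doubling zeta integral and the see-saw formalism are in hand.
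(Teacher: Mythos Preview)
The paper does not give its own proof of this theorem: it simply cites it as a known result of Kudla--Rallis \cite{kudla2005first} (one inequality) and Sun--Zhu \cite{sun2012conservation} (the other), so there is nothing in the paper to compare your argument against line by line. That said, since you have attempted to sketch the literature proofs, a few remarks are in order.

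Your outline of the Kudla--Rallis direction is broadly right in spirit, but the language of ``poles of the doubling Eisenstein series'' and the ``Rallis inner product formula'' belongs to the global theory; the local argument for $r^+(\pi)+r^-(\pi)\geq 2n$ proceeds instead through the structure of the local degenerate principal series of the doubled group and the doubling see-saw (this is what underlies the global pole computation, but the local statement needs the local input, namely the reducibility points and submodule structure of the induced representation).

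Your description of the Sun--Zhu direction is less accurate. Their proof of $r^+(\pi)+r^-(\pi)\leq 2n$ is not primarily a Kudla-filtration/Jacquet-module computation of the type you describe. The core idea is a see-saw identity obtained by adding a one-dimensional quadratic space so as to pass between the two towers, combined with the MVW involution (which identifies $\pi^\vee$ with a conjugate of $\pi$) and a careful analysis of $\Hom$-spaces into degenerate principal series at specific parameters. The contradiction one reaches is not with the Howe duality/irreducibility of small theta as you suggest, but rather with a non-vanishing forced by the see-saw at a level where vanishing is assumed. So while your two-inequality framework is correct, the mechanism you propose for the upper bound does not match Sun--Zhu's actual argument and, as written, would not go through: the Kudla filtration alone does not produce the ``descent between towers'' you need without the additional see-saw and MVW ingredients.
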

On the other hand, one may consider the mirror situation, where one fixes an irreducible representation $\pi$ of $\Oo(V_{2r})$ and consider its theta lift $\theta_n(\pi)$ to the tower of symplectic groups $\Sp(W_n).$ Then,
with $n(\pi)$ defined in the analogous fashion
\[n(\pi)=\inf\{n:\theta_n(\pi)\neq0 \},  \]
 one has
\[n(\pi)+n(\pi\otimes\det )=2r=\dim V_{2r}. \] 
For similitude groups, this implies that
\[n(\pi)+n(\pi\otimes\nu)=2r, \]
where $\nu$ is the nontrivial character of $\GO(V_{2r})/\GSO(V_{2r}).$

\section{The irreducibility of the big theta lift}\label{sect:generic}
Let $\tau$ be an irreducible representation of $\Sp_{2n}(F)$.
Thanks to  \cite[Proposition C.4]{gan2014formal}, Gan-Ichino
show that the big theta lift $\Theta_{2n+2}^+(\tau)$ to $\Oo_{n+1,n+1}(F)$ (called the almost equal rank case) is irreducible if $\tau$ is tempered. We will use the generalized standard module  \cite[Theorem 3.2]{heiermann2016vogan} to study the case when $\Pi_{\phi_{\tau}}$ is generic. (See Theorem \ref{genericthetalift}.)

In \S\ref{subsect:big}, we mainly study the big theta lift to the split group $\Oo_{n+1,n+1}(F)$ from a representation $\tau$ of $\Sp_{2n}(F)$ when the associated $L$-packet $\Pi_{\phi_\tau}$ is generic. Then  we will focus on the computation for $n=2$. 
\subsection{Notation}
Let us introduce the notation used in this section.
\begin{itemize}
	\item $|-|_F$ (resp. $|-|_E$) is an absolute value defined on $F$ (resp. $E$).
	\item $P_{\vv{n}}$ (resp. $Q_{\vv{n}}$) is a parabolic subgroup of $\Sp_{2n}$ (resp. $\Oo_{n+1,n+1}$) defined over $F$.
	\item $\phi_{\tau}$ is the Langlands parameter or $L$-parameter of $\tau$ and $\phi^\vee_\tau$ is the dual parameter of $\phi_\tau$.
	\item $\tau^\vee$ is the contragredient representation of $\tau$.
	\item $\Pi_{\phi_{\tau}}$ is the $L$-packet containing $\tau$.
	\item $\mathcal{W}_r$ is the symplectic vector space over $E$ of dimension $2r$.
	\item $Z$ is a line in $\mathcal{W}_2$ and $Y$ is a maximal isotropic subspace in $\mathcal{W}_2$.
	\item $Q(Z)$ (resp. $P(Y)$) is the Klingen (resp. Siegel) parabolic subgroup of $\GSp_4(E)=\GSp(\mathcal{W}_2)$.
	\item $B$ (resp. $B_0$) is the Borel subgroup of $\GSp_4(E)$ (resp. $\GL_4(E)$).
	\item $P$ is the parabolic subgroup of $\GL_4(E)$ with Levi component $\GL_2(E)\times\GL_2(E)$. 
	\item $\Theta^+_{2r}(\tau)$ (resp. $\Theta_{6}(\tau)$) is the big theta lift to $\GO_{r,r}(E)$ (resp. $\GSO_{3,3}(E)$) of $\tau$ of $\GSp_4(E)$.
	\item $\theta_6^+(\tau)$ (resp. $\theta_6(\tau)$) is the small theta lift to $\GO_{3,3}(E)$ (resp. $\GSO_{3,3}(E)$) of $\tau$ of $\GSp_4(E)$.
\end{itemize}

\subsection{The standard module conjecture} Let $\mathbf{G}$ be a quasi-split reductive group defined over $F$. Fix a Borel subgroup $\mathbf{B}=\mathbf{T}\mathbf{U}$ of $\mathbf{G}$.  Let $\pi$ be an irreducible smooth representation of $\mathbf{G}(F)$. If there exists a nondegenerate character $\psi_U$ of $\mathbf{U}(F)$ such that $\Hom_{\mathbf{U}(F)}(\pi,\psi_U)\neq0$, then we say $\pi$ is $\psi_U$-generic or generic. If the $L$-packet $\Pi_{\phi_{\pi}}$ contains a generic representation, then
we call $\Pi_{\phi_{\pi}}$ a generic $L$-packet. Let $\mathbf{P}=\mathbf{M}\mathbf{N}$ be a standard parabolic subgroup of $\mathbf{G}$.
Suppose that there exists a generic tempered representation $\rho$ of $\mathbf{M}(F)$ such that
$\pi$ is isomorphic to the Langlands quotient $J(\rho,\chi)$, where $\chi$ is a character of $\mathbf{M}(F)$ and lies in the positive Weyl chamber with respect to $\mathbf{P}(F)$. (See \cite[Page 777]{Heiermann2013standard} for more details.)
\begin{theorem}[the standard module conjecture]\label{stand}
	If $\pi=J(\rho,\chi)$ is a generic representation of $\mathbf{G}(F)$, then $\Ind_{\mathbf{P}(F)}^{\mathbf{G}(F)}(\rho\otimes\chi )$(normalized induction) is irreducible.
	Moreover, for any irreducible representation $\rho'$ of $\mathbf{M}(F)$ lying inside the $L$-packet $\Pi_{\phi_\rho}$, $\Ind_{\mathbf{P}(F)}^{\mathbf{G}(F)}(\rho'\otimes\chi )$ is irreducible.
\end{theorem}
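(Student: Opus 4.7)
The plan is to reduce the statement to the now-established standard module conjecture of Shahidi, proved by Heiermann and Mui\'c for classical groups and by Heiermann in the generality needed here. The starting observation is that by Rodier's heredity of Whittaker functionals together with the Casselman-Shalika analysis, the space of Whittaker functionals on $\Ind_{\mathbf{P}(F)}^{\mathbf{G}(F)}(\rho\otimes\chi)$ is one-dimensional, since $\rho$ is generic on $\mathbf{M}(F)$. Consequently the induced module possesses exactly one generic irreducible subquotient, and the hypothesis that $\pi=J(\rho,\chi)$ is generic identifies this subquotient with the Langlands quotient itself; any other irreducible subquotient would therefore be non-generic.

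The next step is to pass to the maximal-parabolic case by induction on $\dim(A_M/A_G)$, using the compatibility of Langlands quotients with transitivity of parabolic induction. In that case reducibility of the standard module is controlled by the single long intertwining operator $M(w_0,\rho\otimes\chi)$, whose poles are encoded by Shahidi's local coefficient $C_\psi(w_0,\rho\otimes\chi)$. Shahidi's formula expresses this coefficient as a product of $L$- and $\varepsilon$-factors attached to the adjoint action of ${}^L\mathbf{M}$ on $\Lie({}^L\mathbf{N})$, evaluated at the $L$-parameter of $\rho$. For $\chi$ strictly inside the positive Weyl chamber, the relevant $L$-factors in the numerator are regular and those in the denominator are non-vanishing, which forces $M(w_0,\rho\otimes\chi)$ to be an isomorphism, and therefore $\Ind_{\mathbf{P}(F)}^{\mathbf{G}(F)}(\rho\otimes\chi)$ is irreducible.

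For the second assertion I would invoke the $L$-packet invariance of the Harish-Chandra $\mu$-function and of the associated $R$-group, which together govern all reducibilities of $\Ind_{\mathbf{P}(F)}^{\mathbf{G}(F)}(\rho'\otimes\chi)$ as $\rho'$ varies through the tempered packet $\Pi_{\phi_\rho}$. Since the local coefficient formula exhibits $\mu$ as a product of $L$- and $\varepsilon$-factors depending only on $\phi_\rho$, the reducibility locus is common to every member of $\Pi_{\phi_\rho}$, so irreducibility at $(\rho,\chi)$ transfers to irreducibility at $(\rho',\chi)$. The main obstacle, in my view, is precisely this $L$-packet invariance: in full generality it requires the local Langlands correspondence for $\mathbf{M}$ together with a matching of Plancherel measures, but in the applications of the present paper the Levi subgroups of $\GSp_4$ that appear are products of $\GL_n$'s with $\GL_1$, so the invariance can be read off directly from the known local Langlands correspondence for $\GL_n$ and Shahidi's explicit formulas.
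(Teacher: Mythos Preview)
The paper does not prove this theorem; it is quoted as a known result, with the first assertion attributed to Heiermann--Mui\'c and the ``moreover'' clause to Heiermann \cite[Theorem 3.2]{heiermann2016vogan}. So there is no argument in the paper to compare your sketch against.

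That said, your second paragraph contains a genuine gap. You claim that for $\chi$ strictly in the open positive chamber the relevant $L$-factors in the numerator are regular and those in the denominator non-vanishing, and conclude that $M(w_0,\rho\otimes\chi)$ is an isomorphism. As written, this argument never uses the hypothesis that the Langlands quotient $J(\rho,\chi)$ is \emph{generic}; if it were correct it would prove that every standard module induced from a tempered representation is irreducible, which is false (already for $\GL_2$, the standard module with trivial quotient is reducible). The actual mechanism in Heiermann--Mui\'c is more delicate: one uses the genericity of the Langlands quotient, via the Casselman--Shahidi theory of local coefficients, to force holomorphy of the tempered $L$-functions $L(s,\rho,r_i)$ for $\operatorname{Re}(s)>0$ (Shahidi's tempered $L$-function conjecture), and it is this holomorphy --- not a general positivity argument --- that then feeds back into irreducibility. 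Your sketch has the right ingredients but inverts the logic at the crucial step.

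On the ``moreover'' part: your instinct that reducibility depends only on $\phi_\rho$ through the Plancherel measure is exactly what Heiermann establishes in the cited reference. One small correction: the paper invokes this theorem in \S\ref{subsect:big} for $\Sp_{2n}$ and $\Oo_{n+1,n+1}$ in general, where the Levi factors are $\GL_{n_1}\times\cdots\times\GL_{n_r}\times\Sp_{2n_0}$ (respectively $\times\,\Oo_{n_0+1,n_0+1}$), not just products of $\GL$'s. So the inner classical factor is genuinely present, and the $L$-packet invariance cannot be read off from $\GL_n$ alone; Heiermann's general result is really needed.
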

Heiermann-Mui\'{c} \cite{Heiermann2013standard} proved the standard module conjecture. Later Volker Heiermann proved its generalized version in \cite[Theorem 3.2]{heiermann2016vogan}, i.e., the "moreover" part of Theorem \ref{stand}. The following subsection will focus on the cases $\mathbf{G}=\Sp_{2n}$ and $\mathbf{G}=\Oo_{n+1,n+1}$.

\subsection{The theta lift from $\Sp_{2n}(F)$ to $\Oo_{n+1,n+1}(F)$}\label{subsect:big}
Suppose that $\tau$ is a generic irreducible admissible representation of $\Sp_{2n}(F)$. Assume that there exists a parabolic subgroup $P_{\vv{n}}=M_{\vv{n}}N_{\vv{n}}$ of $\Sp_{2n}$ and an irreducible  representation $\pi_1\otimes\pi_2\otimes\cdots\otimes\pi_r\otimes\tau_0$ of $M_{\vv{n}}(F)\cong \GL_{n_1}(F)\times\cdots\times\GL_{n_r}(F)\times\Sp_{2n_0}(F) $  $(n_1+n_2+\cdots+n_r+n_0=n)$ such that $\tau$ is the unique irreducible quotient of the standard module
\begin{equation}\label{standard}
\Ind_{P_{\vv{n}}(F)}^{\Sp_{2n}(F)}(\pi_1|-|_F^{s_1}\otimes\cdots\otimes \pi_r|-|_F^{s_r}\otimes\tau_0)(\mbox{ normalized induction}), \end{equation}
where $s_1> s_2> \cdots > s_r>0$, $n\geq n_0$, each $\pi_i$ is a tempered representation of $\GL_{n_i}(F)$ and $\tau_0$ is a tempered representation of $\Sp_{2n_0}(F)$.
Moreover, the Langlands parameter $\phi_{\tau}:WD_F\longrightarrow\SO_{2n+1}(\mathbb{C})$ is given by
\[\phi_{\tau}=\phi_{\pi_1}|-|_F^{s_1}\oplus\cdots\oplus\phi_{\pi_r}|-|_F^{s_r}\oplus\phi_{\tau_0}\oplus\phi_{\pi_r}^\vee|-|_F^{-s_r}\oplus\cdots\oplus \phi_{\pi_1}^\vee|-|_F^{-s_1} \]
where each $\phi_{\pi_i}$ is the Langlands parameter of $\pi_i$ and $\phi_{\tau_0}$ is the Langlands parameter of $\tau_0$. Here we identify the characters of $F^\times$ and the characters of the Weil group $W_F$ by the local class field theory.
 Due to Theorem \ref{stand}, the generic representation $\tau$ is isomorphic to the standard module, i.e. the standard module is irreducible. Thanks to \cite[Proposition C.4]{gan2014formal}, the small theta lift $\theta_{2n+2}^+(\tau)$ is the unique irreducible quotient of the standard module
\begin{equation}\label{st:O}
\Ind_{Q_{\vv{n}}(F)}^{\Oo_{n+1,n+1}(F)}(\pi_1|-|_F^{s_1}\otimes\cdots\otimes\pi_r|-|_F^{s_r}\otimes\Theta^+_{2n_0+2}(\tau_0) ), \end{equation}
where $Q_{\vv{n}}(F)$ is the parabolic subgroup of $\Oo_{n+1,n+1}(F)$ with Levi component $\GL_{n_1}(F)\times\cdots\times\GL_{n_r}(F)\times\Oo_{n_0+1,n_0+1}(F)$. We will show that \eqref{st:O} equals to $\theta_{2n+2}^+(\tau)$ under certain conditions.

\begin{theorem}
	Let $P_{\vv{n}}$(resp. $Q_{\vv{n}}$) be a parabolic subgroup of $\Sp_{2n}$(resp. $\Oo_{n+1,n+1}$) defined as above. If $\tau$ is generic and so $\tau$ is isomorphic to the standard module (\ref{standard}), and the standard $L$-function of $\tau$ is regular at $s=1$, then $\Theta_{2n+2}^+(\tau)$ is irreducible.\label{genericthetalift}
\end{theorem}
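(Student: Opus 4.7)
The plan is to reduce the generic case to the tempered case of \cite[Proposition C.4]{gan2014formal} by combining the standard module conjecture with a Kudla--Rallis filtration of the Weil representation along the parabolic $Q_{\vv{n}}$. First, since $\tau$ is generic, Theorem \ref{stand} forces the standard module \eqref{standard} itself to be irreducible, so $\tau$ is isomorphic to this induced representation. Applying \cite[Proposition C.4]{gan2014formal} to the tempered $\tau_0$ gives that $\sigma_0:=\Theta_{2n_0+2}^+(\tau_0)$ is irreducible, with Langlands parameter $\phi_{\tau_0}\oplus\mathbf{1}$; the paragraph preceding the theorem already identifies
\[J := \Ind_{Q_{\vv{n}}(F)}^{\Oo_{n+1,n+1}(F)}\bigl(\pi_1|-|_F^{s_1}\otimes\cdots\otimes\pi_r|-|_F^{s_r}\otimes\sigma_0\bigr)\]
as the standard module on the orthogonal side whose unique irreducible quotient is $\theta_{2n+2}^+(\tau)$.

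Next, I would apply the Kudla filtration to the Jacquet module of $\omega_\psi$ along the unipotent radical of $Q_{\vv{n}}$ to produce a filtration of $\Theta_{2n+2}^+(\tau)$ whose top quotient is $J$ and whose remaining graded pieces are theta lifts of Jacquet modules of $\tau_0$ to smaller orthogonal Witt towers, tensored with Jacquet constituents of the segments $\pi_i|-|_F^{s_i}$. Factoring
\[L(s,\tau,\mathrm{std}) = L(s,\tau_0,\mathrm{std})\cdot\prod_{i=1}^r L(s+s_i,\pi_i)L(s-s_i,\pi_i^\vee),\]
regularity at $s=1$ forbids the exponents $|-|_F^{\pm 1}$ from occurring in the relevant Jacquet-module data, which is exactly what is needed to annihilate every non-top graded piece of the filtration. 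Hence the natural surjection $\Theta_{2n+2}^+(\tau)\twoheadrightarrow J$ is an isomorphism.

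It then remains to prove that $J$ itself is irreducible. Since $\sigma_0$ is tempered and the $s_i$ are strictly decreasing positive reals, the cross-factors $\pi_i|-|_F^{s_i}\times\pi_j|-|_F^{s_j}$ induce irreducibly on $\GL$ by the Zelevinsky classification, so any reducibility of $J$ must come from one of the elementary factors $\pi_i|-|_F^{s_i}\rtimes\sigma_0$. By Shahidi's analysis via Plancherel measures, such reducibility is governed by poles of $L(s_i,\pi_i\times\tau_0)\,L(2s_i,\pi_i,\wedge^2)$ at $s_i=1/2$ or $s_i=1$, and the hypothesis that $L(s,\tau,\mathrm{std})$ is regular at $s=1$ eliminates precisely these poles. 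Combining the two conclusions yields that $\Theta_{2n+2}^+(\tau)\cong J$ is irreducible.

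The main obstacle is the careful bookkeeping for the intermediate graded pieces of the Kudla filtration: one must verify that every exponent forbidden by the $L$-function hypothesis indeed kills the corresponding piece, and carry this out in the similitude setting, where the filtration using the extended Weil representation $\Omega$ of \S2 is slightly more delicate than in the isometry case of Kudla--Rallis.
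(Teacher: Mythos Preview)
Your outline has the right ingredients but two steps are off, and the paper's argument is both simpler and avoids exactly those pitfalls.

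First, the direction of the Kudla--Rallis output is reversed. What the filtration actually produces (and what \cite[Proposition C.4]{gan2014formal} records) is a surjection
\[
J=\Ind_{Q_{\vv{n}}(F)}^{\Oo_{n+1,n+1}(F)}\bigl(\pi_1|\,{-}\,|_F^{s_1}\otimes\cdots\otimes\pi_r|\,{-}\,|_F^{s_r}\otimes\Theta_{2n_0+2}^+(\tau_0)\bigr)\ \twoheadrightarrow\ \Theta_{2n+2}^+(\tau),
\]
not a filtration of $\Theta_{2n+2}^+(\tau)$ with $J$ as top quotient. So your ``kill the lower graded pieces'' step is misformulated and in fact unnecessary: once $J\twoheadrightarrow\Theta_{2n+2}^+(\tau)$ is in hand, irreducibility of $J$ alone finishes the proof.

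Second, your Shahidi analysis of $J$ does not follow from the standard $L$-function hypothesis alone. Since $\phi_{\sigma_0}=\phi_{\tau_0}\oplus\mathbf{1}$, the reducibility of $\pi_i|\,{-}\,|_F^{s_i}\rtimes\sigma_0$ is governed by $L(s,\pi_i\times\tau_0)$, $L(s,\pi_i)$ and $L(2s,\pi_i,\wedge^2)$. Only the middle factor sits inside $L(s,\tau,\mathrm{std})$; the cross term $L(s,\pi_i\times\tau_0)$ and the exterior-square factor are pieces of the \emph{adjoint} $L$-function $L(s,Ad_{\SO_{2n+1}}\circ\phi_\tau)$, whose regularity at $s=1$ you never invoke. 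It is automatic (by Liu, because $\tau$ is generic), but it is the missing hypothesis in your argument.

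The paper short-circuits both issues via the single identity
\[
L\bigl(s,Ad_{\SO_{2n+2}}\circ\phi_{\theta_{2n+2}^+(\tau)}\bigr)=L\bigl(s,Ad_{\SO_{2n+1}}\circ\phi_\tau\bigr)\cdot L(s,\phi_\tau,\mathrm{std}).
\]
The first factor on the right is regular at $s=1$ by genericity of $\tau$ (Liu for $\Sp_{2n}$), the second by hypothesis; hence the $L$-packet of $\theta_{2n+2}^+(\tau)$ is generic (Liu for $\SO_{2n+2}$), and then Heiermann's generalized standard module conjecture (Theorem~\ref{stand}) gives irreducibility of $J$ in one stroke. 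Combined with the surjection $J\twoheadrightarrow\Theta_{2n+2}^+(\tau)$ this is the whole proof. So your Shahidi route is essentially a hands-on re-derivation of a special case of Heiermann's theorem, and it only works once you supplement the standard $L$-function hypothesis with the adjoint one.
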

There is another key input in the proof of Theorem \ref{genericthetalift}.
\begin{theorem} Let $\mathbf{G}$ be $\Sp_{2n}$ or $\SO_{n+1,n+1}$. Let $\pi$ be an irreducible representation of $\mathbf{G}(F)$.
	The $L$-packet $\Pi_{\phi_{\pi}}$ is generic if and only if the adjoint $L$-function $L(s,\phi_{\pi},Ad)$
	is regular at $s=1$.
\end{theorem}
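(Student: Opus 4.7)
The plan is to reduce the statement to Shahidi's tempered $L$-function conjecture for classical groups via the Langlands classification and the standard module conjecture (Theorem~\ref{stand}).

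First, using the Langlands classification, write $\pi$ as the Langlands quotient of a standard module
\[
\Ind_{P_{\vv{n}}(F)}^{\mathbf{G}(F)}\bigl(\pi_1|\cdot|_F^{s_1}\otimes\cdots\otimes\pi_r|\cdot|_F^{s_r}\otimes\tau_0\bigr)
\]
with $s_1>\cdots>s_r>0$ and $\tau_0$ tempered on the classical Levi factor. Then
\[
\phi_\pi=\phi_{\pi_1}|\cdot|^{s_1}\oplus\cdots\oplus\phi_{\pi_r}|\cdot|^{s_r}\oplus\phi_{\tau_0}\oplus\phi_{\pi_r}^\vee|\cdot|^{-s_r}\oplus\cdots\oplus\phi_{\pi_1}^\vee|\cdot|^{-s_1}.
\]
In both cases the adjoint representation of $\widehat{\mathbf{G}}$ is $\Lambda^2$ of the standard representation, so decomposing $\Lambda^2$ along the dual parabolic expresses $L(s,\phi_\pi,\Ad)$ as a product of the adjoint $L$-factor $L(s,\phi_{\tau_0},\Ad)$ of the tempered core together with a collection of Rankin-Selberg factors of the shape $L(s+s_i,\phi_{\pi_i}\otimes\phi_{\tau_0})$ and $L(s+s_i\pm s_j,\phi_{\pi_i}\otimes\phi_{\pi_j}^\vee)$, plus exterior-square factors $L(s+2s_i,\Lambda^2\phi_{\pi_i})$, arising from the branching rule.

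Second, combine three equivalences. For the tempered core, Shahidi's tempered $L$-function conjecture (established for quasi-split classical groups) gives that $\Pi_{\phi_{\tau_0}}$ is generic if and only if $L(s,\phi_{\tau_0},\Ad)$ is regular at $s=1$. For the induced part, Casselman-Shahidi's analysis of Knapp-Stein intertwining operators together with Theorem~\ref{stand} shows that the standard module is irreducible if and only if each of the extra $L$-factors listed above is regular at $s=1$, since these factors are precisely the normalizing factors of the long intertwining operator along the relevant set of roots. Finally, by Rodier's theorem on Whittaker models of parabolic inductions, a generic member of $\Pi_{\phi_\pi}$, if it exists at all, must coincide with the full standard module rather than with a proper Langlands quotient; hence $\Pi_{\phi_\pi}$ is generic if and only if $\Pi_{\phi_{\tau_0}}$ is generic \emph{and} the standard module is irreducible. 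Assembling these three equivalences yields the desired criterion.

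The main obstacle is matching the pole structure of each Rankin-Selberg and exterior-square factor at $s=1$ with the precise reducibility condition on the standard module. This requires a careful case analysis distinguishing $\mathbf{G}=\Sp_{2n}$ (dual $\SO_{2n+1}(\mathbb{C})$) from $\mathbf{G}=\SO_{n+1,n+1}$ (dual $\SO_{n+1,n+1}(\mathbb{C})$), and within each case one must verify that no accidental cancellations occur when several reducibility conditions combine simultaneously at the same point on the unitary axis.
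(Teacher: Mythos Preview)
The paper does not prove this statement: its proof consists solely of citations to Liu's papers \cite{liu2011sp,liu2014SO(2n)}, where the result is established. So there is no argument in the paper to compare yours against; you are supplying what the paper treats as a black box.

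Your roadmap is essentially the one Liu follows, and the skeleton is sound: decompose $\Lambda^2\phi_\pi$ along the Levi, invoke holomorphy for the tempered core, and match the remaining Rankin--Selberg and exterior-square factors to the reducibility points of the long intertwining operator. One refinement: your phrasing ``$\Pi_{\phi_{\tau_0}}$ is generic if and only if $L(s,\phi_{\tau_0},\Ad)$ is regular at $s=1$'' is not quite what Shahidi's tempered $L$-function conjecture says. For quasi-split classical groups the tempered $L$-packet $\Pi_{\phi_{\tau_0}}$ is \emph{always} generic (this is Shahidi's genericity conjecture, settled via Arthur's endoscopic classification), and correspondingly the adjoint $L$-function of a tempered parameter is always holomorphic at $s=1$. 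So the tempered equivalence is vacuously true on both sides, and the entire content of the theorem sits in the non-tempered factors you list.

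The ``main obstacle'' you flag is exactly the substance of Liu's argument: one must show that irreducibility of the standard module is controlled precisely by the absence of poles at $s=1$ of those Rankin--Selberg and exterior-square factors, with no cancellation. This is not a routine bookkeeping step; it uses Mui\'c's and Heiermann--Mui\'c's work on standard modules together with a case analysis of reducibility points coming from Casselman--Shahidi. Your proposal correctly identifies this as the crux but does not carry it out, so as written it is a plan rather than a proof.
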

\begin{proof}
	See \cite[Theorem 1.2]{liu2011sp} and \cite[Theorem 1.5]{liu2014SO(2n)}.
\end{proof}
\begin{proof}[Proof of Theorem \ref{genericthetalift}]
	We will show that $\Theta_{2n+2}^+(\tau)|_{\SO_{n+1,n+1}(F)}$ is irreducible. If $n=n_0$, then it follows from \cite[Proposition C.4]{gan2014formal}. Assume that $s_1>0$. Then there exists a surjection
	\[\xymatrix{\Ind_{Q_{\vv{n}} (F)} ^{\Oo_{n+1,n+1}(F)}(\pi_1|-|_F^{s_1}\otimes\cdots\otimes\pi_r|-|_F^{s_r}\otimes\Theta_{2n_0+2}^+(\tau_0) )\ar@{->>}[r]&\Theta^+_{2n+2}(\tau). }\]
	Due to \cite[Proposition C.4]{gan2014formal}, if $\tau_0$ is tempered, then
	$\Theta^+_{2n_0+2}(\tau_0)$ is irreducible and generic. Moreover, if $$\phi_{\tau_0}:WD_F\longrightarrow \SO_{2n_0+1}(\mathbb{C}) $$ is the Langlands parameter of $\tau_0$, then $\phi_{\theta_{2n_0+2}^+(\tau_0)}=\phi_{\tau_0}\oplus\mathbb{C}$.
	Assume that $\phi_{\tau}=\phi_0\oplus \phi_{\tau_0}\oplus\phi_0^\vee$ with $\phi_{\tau_0}$ tempered and $\phi_0=\bigoplus_i \phi_{\pi_i}|-|^{s_i}$. Then $\phi_{\theta_{2n+2}^+(\tau)}=\phi_0\oplus (\phi_{\tau_0}\oplus\mathbb{C})\oplus \phi_0^\vee$ due to \cite[Proposition C.4]{gan2014formal}.
	Observe that
	\[L(s,Ad_{\SO_{2n+2}}\circ \phi_{\theta_{2n+2}^+(\tau)})=  L(s,Ad_{\SO_{2n+1}}\circ\phi_{\tau})\cdot L(s,\phi_{\tau},Std) \]
	where $L(s,\phi_{\tau},Std)$ is the standard $L$-function of $\tau$. By  \cite[Theorem 1.2]{liu2011sp} and the assumption that $\tau$ is generic, we obtain that $L(s,Ad_{\SO_{2n+1}}\circ\phi_{\tau})$ is regular at $s=1$. So $L(s, Ad_{\SO_{2n+2}}\circ \phi_{\theta_{2n+2}^+(\tau)} )$ is regular at $s=1$.
	Thanks to \cite[Theorem 1.5]{liu2014SO(2n)}, the $L$-packet $\Pi_{\phi_{ \theta_{2n+2}^+(\tau)}}$ is generic. By the generalization of the standard module conjecture  \cite[Theorem 3.2]{heiermann2016vogan} that the standard module with a generic quotient is irreducible,
	$$\theta_{2n+2}^+(\tau)=\Theta_{2n+2}^+(\tau)=\Ind_{Q_{\vv{n}}(F)}^{\Oo_{n+1,n+1}(F)}(\pi_1|-|_F^{s_1}\otimes\cdots\otimes\pi_r|-|_F^{s_r}\otimes\Theta_{2n_0+2}^+(\tau_0) )$$ i.e., $\Theta_{2n+2}^+(\tau)$ is irreducible.
\end{proof}
\begin{remark}
	Similarly, if $\Sigma$ is a generic representation of $\Oo_{n,n}(F)$ and $L(s,\Sigma,Std)$ is regular at $s=1$, then the big theta lift $\Theta_{n}(\Sigma)$ to $\Sp_{2n}(F)$ is irreducible. However, if $\tau$ is a generic  representation of $\Sp_{2n}(F)$ and $L(s,\tau,Std)$ is regular at $s=1$, the big theta lift to nonsplit group $\Oo(V_F)$ may be reducible when $V_F$ is a $(2n+2)$-dimensional quadratic space over $F$ with nontrivial discriminant. (See \cite[Proposition 3.8(iii)]{hengfei2016new}.)
\end{remark}
\begin{remark}
	In fact, there exists an isomorphism between the characters $\lambda_{\theta_{2n+2}^+(\tau)}\cong\lambda_{\theta_{2n_0+2}^+(\tau_0)} $, where the latter one is given by Atobe-Gan \cite[Theorem 4.3]{atobe2016local} in term of the character $\lambda_{\tau_0}$, conjectured by  Prasad in \cite{prasad1993local}.
\end{remark}

\begin{corollary} Let $\Pi_{\phi_{\tau}}$ be the $L$-packet of $\Sp_{2n}(F)$ containing $\tau$. Suppose that $\Pi_{\phi_{\tau}}$ is generic.
	If the standard $L$-function $L(s,\phi_\tau,Std)$  is a factor of the adjoint $L$-function $L(s,Ad\circ\phi_{\tau} )$, then the big theta lift $\Theta_{2n+2}^+(\tau)$ to $\Oo_{n+1,n+1}(F)$ is irreducible for any $\tau\in\Pi_{\phi_{\tau}}$.
\end{corollary}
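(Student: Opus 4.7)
The plan is to reduce to the argument already carried out in the proof of Theorem \ref{genericthetalift}, but using Heiermann's generalized standard module conjecture \cite[Theorem 3.2]{heiermann2016vogan} in place of the classical one, so that the irreducibility statement applies uniformly to every member of $\Pi_{\phi_\tau}$, not just the generic one.

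First I would derive the analytic input. Since $\Pi_{\phi_\tau}$ is generic, \cite[Theorem 1.2]{liu2011sp} gives that $L(s, Ad\circ\phi_\tau)$ is regular at $s=1$; the factorization hypothesis then forces $L(s,\phi_\tau, Std)$ to be regular at $s=1$ as well. Next, I would unpack the Langlands data. Write
\[\phi_\tau=\phi_{\pi_1}|-|_F^{s_1}\oplus\cdots\oplus\phi_{\pi_r}|-|_F^{s_r}\oplus\phi_{\tau_0}\oplus\phi_{\pi_r}^\vee|-|_F^{-s_r}\oplus\cdots\oplus\phi_{\pi_1}^\vee|-|_F^{-s_1}\]
as in \S\ref{subsect:big}, with $\phi_{\tau_0}$ tempered. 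Because the $L$-packet $\Pi_{\phi_\tau}$ contains a generic member $\tau^{\gen}$, and the tempered component in the Langlands data of a generic representation must itself be generic, the tempered $L$-packet $\Pi_{\phi_{\tau_0}}$ of $\Sp_{2n_0}(F)$ is a generic packet.

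Now fix an arbitrary $\tau\in\Pi_{\phi_\tau}$. By the local Langlands correspondence for $\Sp_{2n}(F)$, $\tau$ is the Langlands quotient of a standard module
\[\Ind_{P_{\vv{n}}(F)}^{\Sp_{2n}(F)}(\pi_1|-|_F^{s_1}\otimes\cdots\otimes\pi_r|-|_F^{s_r}\otimes\tau_0')\]
for a unique $\tau_0'\in\Pi_{\phi_{\tau_0}}$, and \cite[Theorem 3.2]{heiermann2016vogan} guarantees that this standard module is already irreducible, hence equals $\tau$. Invoking \cite[Proposition C.4]{gan2014formal} for the tempered parameter $\phi_{\tau_0}$, the big theta lift $\Theta_{2n_0+2}^+(\tau_0')$ is irreducible with parameter $\phi_{\tau_0}\oplus\mathbb{C}$, yielding a surjection
\[\Ind_{Q_{\vv{n}}(F)}^{\Oo_{n+1,n+1}(F)}\bigl(\pi_1|-|_F^{s_1}\otimes\cdots\otimes\pi_r|-|_F^{s_r}\otimes\Theta_{2n_0+2}^+(\tau_0')\bigr)\twoheadrightarrow\Theta_{2n+2}^+(\tau).\]
Just as in the proof of Theorem \ref{genericthetalift}, one computes
\[L(s, Ad_{\SO_{2n+2}}\circ\phi_{\theta_{2n+2}^+(\tau)})=L(s, Ad_{\SO_{2n+1}}\circ\phi_\tau)\cdot L(s,\phi_\tau,Std),\]
which is regular at $s=1$ by the two regularity statements established above. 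Then \cite[Theorem 1.5]{liu2014SO(2n)} shows that $\Pi_{\phi_{\theta_{2n+2}^+(\tau)}}$ is generic, and a second application of \cite[Theorem 3.2]{heiermann2016vogan}, this time on the split orthogonal side, forces the induced representation on the left to be irreducible. Consequently the displayed surjection is an isomorphism, which gives the desired irreducibility of $\Theta_{2n+2}^+(\tau)$.

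The main obstacle is conceptual rather than computational: Theorem \ref{genericthetalift} is stated and proved using the classical standard module conjecture, which pertains only to generic Langlands quotients, whereas the corollary asserts irreducibility for every $\tau\in\Pi_{\phi_\tau}$, including nongeneric ones. What makes the corollary work is precisely Heiermann's generalization, which extends irreducibility of the standard module from the generic tempered representative on the Levi to every element of its tempered $L$-packet; once this is available, the two-step induction/theta-lift argument of Theorem \ref{genericthetalift} transfers verbatim to arbitrary members of the packet.
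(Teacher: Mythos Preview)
Your proof is correct and is precisely the argument the paper has in mind. In the paper the corollary is stated without a separate proof, as it is intended to follow from the proof of Theorem~\ref{genericthetalift}: the only change needed to pass from the generic $\tau$ to an arbitrary member of $\Pi_{\phi_\tau}$ is to replace the classical standard module conjecture on the symplectic side by Heiermann's generalized version \cite[Theorem 3.2]{heiermann2016vogan} (the ``moreover'' part of Theorem~\ref{stand}), exactly as you do, while the orthogonal side already invoked the generalized version in the proof of Theorem~\ref{genericthetalift}. Your explicit verification that the tempered packet $\Pi_{\phi_{\tau_0}}$ is generic and that the regularity of $L(s,\phi_\tau,Std)$ at $s=1$ follows from the factorization hypothesis are the only points left implicit in the paper, and you handle them correctly.
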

For the rest of this section, we will compute the big theta lifts between $\GSp_4(E)$ and $\GO(V)$ explicitly when $\dim_E V=4$ or $6$.
	\subsection{Representations of $\GO(V)$}
	Let $\pi_i$ be an irreducible representations of $\GL_2(E)$ with central character $\omega_{\pi_i}$ and $\omega_{\pi_1}=\omega_{\pi_2}$.
	 Then  $\pi_1\boxtimes\pi_2$ is an irreducible representation of the similitude group $$\GSO_{2,2}(E)\cong\frac{ \GL_2(E)\times \GL_2(E)}{\{(t,t^{-1}) \}}.$$ If $\pi_1\neq\pi_2,$ then $\Sigma=\Ind_{\GSO_{2,2}(E)}^{\GO_{2,2}(E)}(\pi_1\boxtimes\pi_2)$ is an irreducible smooth representation of $\GO_{2,2}(E)$ and $\Sigma\cong\Sigma\otimes\nu,$ where $\nu|_{\Oo_{2,2}(E)}=\det.$ If $\pi_1=\pi_2,$
then there are two extensions $(\pi_1\boxtimes\pi_1)^\pm$ and only one of them participates in the theta lift between $\GSp_4(E)$ and $\GO_{2,2}(E),$ denoted by $(\pi_1\boxtimes\pi_1)^+=\Sigma^+.$ Moreover, we have $(\pi_1\boxtimes\pi_1)^+\otimes\nu\cong (\pi_1\boxtimes\pi_1)^-.$ (See \cite[\S6]{gan2011theta}.)

Any irreducible representation of $$\GSO_{3,3}(E)=\frac{\GL_4(E)\times\GL_1(E)}{\{(t,t^{-2}):t\in E^\times \} }$$ is of the form
\[\Pi\boxtimes \chi \]
where $\Pi$ is a representation of $\GL_4(E)$ with central character $\omega_\Pi$, $\chi$ is a character of $E^\times$ and $\chi^2=\omega_\Pi$.
\subsection{Representations of $\GSp_4(E)$}
Assume that ${\tau}=\theta({\Sigma})$ is a representation of $\GSp_4(E)$ and ${\Sigma}$ is a representation of $\GSO_{2,2}(E)$. Then ${\tau}$ is  generic if and only if ${\Sigma}$ is generic. We follow the notation in \cite{gan2011theta} to describe the non-discrete series representations of $\GSp_4(E)$. Thanks to \cite[Proposition 5.3]{gan2011theta}, the non-discrete series representations of $\GSp_4(E)$ fall into the following three families:
\begin{itemize}
	\item ${\tau}\hookrightarrow I_{Q(Z)}(\chi|-|_E^{-s},\pi)$ with $\chi$ a unitary character, $s\geq0$ and $\pi$ a discrete series representation of $\GL_2(E)$ up to twist;
	\item ${\tau}\hookrightarrow I_{P(Y)}(\pi|-|_E^{-s},\chi)$ with $\chi$ an arbitrary character, $s\geq0$ and
	$\pi$ a unitary discrete series representation of $\GL(Y)$;
	\item ${\tau}\hookrightarrow I_{B}(\chi_1|-|_E^{-s_1},\chi_2|-|_E^{-s_2};\chi)$ where $\chi_1,\chi_2$ are unitary and $s_1\geq s_2\geq0$. 
\end{itemize}
Note that if ${\tau}$ itself is generic and non-tempered, then those embeddings are in fact isomorphisms due to the standard module conjecture for $\GSp_4$, except
\[\tau\hookrightarrow I_{Q(Z)}(\mathbf{1},\pi). \]
 For instance, $\tau=J_{P(Y)}(\pi|-|_E^s,\chi)$ with $s\geq0$. If $\tau$ is generic, then $I_{P(Y)}(\pi|-|_E^{s},\chi)$ is irreducible and so 
$$\tau=I_{P(Y)}(\pi|-|_E^{s},\chi)\cong I_{P(Y)}(\pi^\vee|-|_E^{-s},\chi\omega_{\pi}|-|_E^{2s}) $$
 with $s\geq0$. (See \cite[Lemma 5.2]{gan2011theta}.)

 If the big theta lift $\Theta_6^+({\tau})$ to $\GO_{3,3}(E)$ of $\tau$ is irreducible, then the restricted representation $\Theta_6^+({\tau})|_{\GSO_{3,3}(E)}$ is irreducible due to \cite[\S5, Page 282]{prasad1993local}. We will use $\Theta_6({\tau})$ to denote the big theta lift to $\GSO_{3,3}(E)$ of ${\tau}$.
\begin{proposition}\label{big:GSp4GO6}
	Let ${\tau}$ be a generic representation of $\GSp_4(E)$. Then the big theta lift $\Theta_6({\tau})$ to $\GSO_{3,3}(E)$ of $\tau$ is an irreducible representation unless ${\tau}=I_{Q(Z)}(|-|_E,\pi)$ with $\pi$ essentially square-integrable. If $\tau=I_{Q(Z)}(|-|_E,\pi)$, then 
$\Theta_6(\tau)=I_P(\pi|-|_E,\pi)\boxtimes\omega_{\pi}|-|_E$ is reducible.
\label{bigtheta}
\end{proposition}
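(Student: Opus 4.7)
The plan is to combine Theorem \ref{genericthetalift} (extended to similitudes via \cite{roberts1996theta}) with an explicit case analysis. Since $\Theta_6^+(\tau)$ is irreducible over $\Oo_{3,3}(E)$ whenever $\tau$ is generic and $L(s,\phi_\tau,Std)$ is holomorphic at $s=1$, and since irreducibility on $\GO_{3,3}(E)$ descends to the index-two subgroup $\GSO_{3,3}(E)$, the problem reduces to pinpointing those generic $\tau$ for which $\phi_\tau$ contains $|-|_E^{-1}$ as a subrepresentation (equivalently, for which $L(s,\phi_\tau,Std)$ has a pole at $s=1$).

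I would then read off $\phi_\tau$ in each of the three Langlands families of non-tempered generic representations listed after \cite[Proposition 5.3]{gan2011theta}. Tempered $\tau$ are automatic. For the Siegel family $\tau \hookrightarrow I_{P(Y)}(\pi|-|_E^{-s}, \chi)$ and the Borel family, the positivity of the exponents $s, s_1, s_2 \geq 0$ together with unitarity of the tempered inducing data precludes a $|-|_E^{-1}$-summand, except in degenerate configurations that coincide (after a Weyl-group intertwining) with a standard Klingen form. In the Klingen family $\tau \hookrightarrow I_{Q(Z)}(\chi|-|_E^{-s},\pi)$, the pole forces $\chi = \mathbf{1}$ together with the appropriate exponent, giving exactly the exception $\tau = I_{Q(Z)}(|-|_E,\pi)$ with $\pi$ essentially square-integrable (which is also the condition needed for the theta lift to be nonzero and for the Klingen data to be in standard form).

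For this exceptional $\tau$, the plan is to compute $\Theta_6(\tau)$ directly using Kudla's filtration of the Weil representation for the pair $(\GSp_4(E), \GO_{3,3}(E))$ restricted to the Klingen parabolic $Q(Z)$. This presents the big theta lift of a Klingen-induced representation as a parabolically induced representation on the $\GSO_{3,3}$ side with respect to the parabolic $P$ with Levi $\GL_2(E) \times \GL_2(E)$, together with the $E^\times$-factor from the similitude. Tracking similitude and central characters, and using the $\GL_1$-theta lift that absorbs the exponent $|-|_E$, yields $\Theta_6(\tau) = I_P(\pi|-|_E,\pi) \boxtimes \omega_\pi|-|_E$. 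Reducibility is then the classical statement that $\pi|-|_E \times \pi$ on $\GL_4(E)$ has length two whenever $\pi$ is essentially square-integrable, decomposing into a Langlands quotient together with a Speh-type subrepresentation.

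The main obstacle is verifying that the induced representation $I_P(\pi|-|_E,\pi) \boxtimes \omega_\pi|-|_E$ \emph{equals} $\Theta_6(\tau)$ rather than being merely a quotient. This requires controlling all subquotients appearing in Kudla's filtration, which I would do via a Jacquet-module computation combined with the conservation relation, pinning down the first occurrence in the smaller Witt tower and ruling out extraneous contributions on cuspidal-support grounds. Once this identification is in hand, the reducibility assertion is immediate.
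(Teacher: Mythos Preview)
Your proposal is correct and follows essentially the same strategy as the paper, but with a different organization that is worth noting.

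The paper does \emph{not} apply Theorem \ref{genericthetalift} uniformly. Instead it argues case by case through the Langlands classification: tempered $\tau$ via \cite[Proposition~C.4]{gan2014formal}; Borel-induced $\tau$ by checking directly that the corresponding $\GL_4$ principal series has a generic quotient and invoking the standard module conjecture for $\GL_4$; Siegel-induced $\tau$ by the $L$-function criterion (this is the one case that matches your plan exactly); and Klingen-induced $\tau=I_{Q(Z)}(\chi,\pi)$ by citing \cite[Proposition~13.2]{gan2011theta}, which gives irreducibility of $I_P(\pi\chi,\pi)$ whenever $I_{Q(Z)}(\chi,\pi)$ is irreducible and $\chi\neq|-|_E$. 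Your approach of extracting the exceptional case as exactly the locus where $L(s,\phi_\tau,Std)$ acquires a pole at $s=1$ is cleaner and avoids the separate citations, at the cost of having to pass carefully from the $\Sp_4/\Oo_{3,3}$ statement of Theorem~\ref{genericthetalift} to the similitude setting and then down to $\GSO_{3,3}$. On this last point you are a bit quick: irreducibility does \emph{not} in general descend along the index-two inclusion $\GSO\subset\GO$; the paper invokes \cite[\S5, p.~282]{prasad1993local} to justify it here, and you should too.

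For the exceptional $\tau=I_{Q(Z)}(|-|_E,\pi)$, the paper simply records (via the Klingen-side computation) that there is a surjection $I_P(\pi|-|_E,\pi)\boxtimes\omega_\pi|-|_E\twoheadrightarrow\Theta_6(\tau)$ and asserts equality, with $\theta_6(\tau)$ the Langlands quotient; it does not spell out the Jacquet-module/conservation argument you describe, presumably relying on the detailed computations in \cite{gan2011theta}. Your plan to pin this down via Kudla's filtration and cuspidal support is the right one and would make this step more self-contained than the paper's treatment.
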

\begin{proof}
	If ${\tau}$ is a tempered representation, then $\Theta_6^+({\tau})$ is irreducible due to \cite[Proposition C.4]{gan2014formal}(which holds even for $p=2$ since the Howe duality conjecture holds) and so $\Theta_6({\tau})$ is irreducible. Assume that the generic representation ${\tau}$ is not essentially tempered. There are $4$ cases:
	\begin{itemize}
		\item If $\tau=I_{B}(\chi_1,\chi_2;\chi)$ is irreducible, then none of the characters $\chi_1,\chi_2,\chi_1/\chi_2,\chi_1\chi_2$ is $|-|_E^{\pm1}$ and so $I_{B_0}(\mathbf{1},\chi_2,\chi_1,\chi_1\chi_2)$ has a generic quotient where $B_0$ is a Borel subgroup of $\GL_{4}(E)$. Thus $\Theta_6(\tau)=I_{B_0}(\mathbf{1},\chi_2,\chi_1,\chi_1\chi_2)\cdot\chi\boxtimes\chi^2\chi_1\chi_2$ is irreducible due to the standard module conjecture for $\GL_4$.
		\item If $\tau=I_{P(Y)}(\pi,\chi)$, then $\Theta_6(\tau)$ is a quotient of 
		\[I_{Q}(\mathbf{1},\pi,\omega_\pi )\cdot\chi\boxtimes \chi^2\omega_\pi \]
		where $Q$ is  a parabolic subgroup of $\GL_4(E)$ with Levi subgroup $\GL_1(E)\times\GL_2(E)\times\GL_1(E)$. Since the standard $L$-function $L(s,\tau,Std)$ is a factor of $L(s,Ad\circ \phi_{\tau} )$, $L(s,\tau,Std)$ is regular at $s=1$. Then $I_Q(\mathbf{1},\pi,\omega_\pi)$ is irreducible and so $\Theta_6(\tau)=I_{Q}(\mathbf{1},\pi,\omega_\pi )\cdot\chi\boxtimes \chi^2\omega_\pi $ is irreducible.
		\item If $\tau=I_{Q(Z)}(\chi,\pi)$ with $\chi\neq\mathbf{1}$, then there is an epimorphism
		\[\xymatrix{ I_{P}(\pi\cdot\chi,\pi)\boxtimes\omega_\pi\chi \ar@{->>}[r]& \Theta_6(\tau)} \]
		of $\GSO_{3,3}(E)$-representations, where  $P$ is a parabolic subgroup of $\GL_4(E)$ with Levi subgroup $\GL_2(E)\times\GL_2(E)$. Gan-Takeda \cite[Proposition 13.2]{gan2011theta}  have proved that $I_P(\pi\cdot\chi,\pi)$ is irreducible if $I_{Q(Z)}(\chi,\pi)$ is irreducible and $\chi\neq|-|_E$. If $\chi=|-|_E$ and $\pi$ is essentially square-integrable, then $\Theta_6(\tau)=I_P(\pi\cdot\chi,\pi)\boxtimes\omega_{\pi}\chi$ and $\theta_6(\tau)=J_P(\pi\cdot\chi,\pi)\boxtimes\omega_{\pi}\chi$ is the Langlands quotient.
		\item If $\tau\hookrightarrow I_{Q(Z)}(\mathbf{1},\pi)$, then $\Theta_6(\tau)$ is either zero or $I_P(\pi,\pi)\boxtimes\omega_{\pi}$, where $P$ is a parabolic subgroup of $\GL_4(E)$ with Levi subgroup $\GL_2(E)\times\GL_2(E)$. In fact, $\Theta_6(\tau)=0$ only when $\tau$ is a nongeneric constituent representation of $I_{Q(Z)}(\mathbf{1},\pi)$.
	\end{itemize}
	This finishes the proof of Proposition \ref{bigtheta}. 
\end{proof}
\begin{remark}
	Similarly one can prove that if $\Sigma$ is a generic representation of $\GSO_{2,2}(E)$ and $L(s,\Sigma,Std)$ is regular at $s=1$, then the big theta lift $\Theta_2(\Sigma)$ to $\GSp_4(E)$ is an irreducible representation. 
\end{remark}
Let us turn the table around. The rest in this subsection focuses on the computation of local theta lifts to $\GO_{2,2}(E)$ from $\GSp_4(E)$.
\begin{proposition} Let $\tau$ be a generic representation of $\GSp_4(E)$. Assume that $\theta_4^+(\tau)\neq0$.
	\begin{enumerate}[(i)]
		\item
		If $\tau=I_{Q(Z)}(\mathbf{1},\pi(\mu_1,\mu_2))$, then
		the big theta lift
		$\Theta_4^+(\tau)$ to $\GO_{2,2}(E)$ of $\tau$  is  $\Ext^1_{\GO_{2,2}(E)}(\Sigma^+,\Sigma^-)$, 
		where $\Sigma^\pm$ are two distinct extensions of $\pi(\mu_1,\mu_2)\boxtimes\pi(\mu_1,\mu_2)$ from $\GSO_{2,2}(E)$ to $\GO_{2,2}(E)$;
		\item 	If   $\tau\neq I_{Q(Z)}(\mathbf{1},\pi(\mu_1,\mu_2))$, then 
		$\Theta_4^+(\tau)$ is an irreducible  representation of $\GO_{2,2}(E)$. 
	\end{enumerate}\label{GSp:GO(4)}
\end{proposition}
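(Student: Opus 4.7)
The plan is to parallel the proof of Proposition \ref{big:GSp4GO6}, swapping the target $\GSO_{3,3}(E)$ for $\GO_{2,2}(E)$. First, the tempered case is immediate from \cite[Proposition C.4]{gan2014formal}, so only the non-tempered generic $\tau$ needs attention; Theorem \ref{genericthetalift} gives the $\Sp_4$-to-$\Oo_{2,2}$ irreducibility whenever $L(s,\phi_\tau,Std)$ is regular at $s=1$, and the passage to similitudes is a central-character bookkeeping step.

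For non-tempered generic $\tau$, by Theorem \ref{stand} I would write $\tau$ as a full parabolic induction from one of $B$, $P(Y)$ or $Q(Z)$, with the sole potential exception of the Klingen embedding $\tau\hookrightarrow I_{Q(Z)}(\mathbf{1},\pi)$. Applying the Kudla filtration and computing theta lifts on Levi subgroups as in \cite[\S7]{gan2011theta}, one exhibits a surjection from a parabolic induction on $\GO_{2,2}(E)$ whose Levi-type factor sits in $\frac{\GL_2(E)\times\GL_2(E)}{\{(t,t^{-1})\}}$ onto the $\GSO_{2,2}(E)$-restriction of $\Theta_4^+(\tau)$. Irreducibility of this induction on $\GL_2(E)\times\GL_2(E)$ reduces to checking that no two inducing data differ by a twist by $|-|_E$, and genericity of $\tau$ together with the regularity of $L(s,\phi_\tau,Std)$ at $s=1$ rules out the bad reducibility points, settling case (ii) outside the Klingen exception.

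For the exceptional case $\tau=I_{Q(Z)}(\mathbf{1},\pi(\mu_1,\mu_2))$ of case (i), the Levi computation yields the $\GSO_{2,2}(E)$-representation $\pi(\mu_1,\mu_2)\boxtimes\pi(\mu_1,\mu_2)$. Because the two factors coincide, it admits exactly two non-isomorphic extensions $\Sigma^\pm$ to $\GO_{2,2}(E)$, and the naive $\GO_{2,2}(E)$-induction decomposes as $\Sigma^+\oplus\Sigma^-$. The main obstacle is to show that $\Theta_4^+(\tau)$ is not simply one of the $\Sigma^\pm$ summands but a genuine non-split $\GO_{2,2}(E)$-extension between them, realising a non-zero class in $\Ext^1_{\GO_{2,2}(E)}(\Sigma^+,\Sigma^-)$. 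I would establish this by tracking indecomposability along the Kudla filtration: since $\tau$ itself is a length-$2$ Klingen induction rather than a direct sum of its two constituents, functoriality of the theta-lift construction transports this indecomposability to the $\GO_{2,2}(E)$-side, gluing the two potential $\GO_{2,2}(E)$-structures on $\pi(\mu_1,\mu_2)\boxtimes\pi(\mu_1,\mu_2)$ into a single non-split extension, which is the desired $\Ext^1$ class.
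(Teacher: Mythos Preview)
Your proposal has a structural gap: both \cite[Proposition~C.4]{gan2014formal} and Theorem~\ref{genericthetalift} concern the almost-equal-rank lift $\Sp_{2n}\to\Oo_{n+1,n+1}$, i.e.\ $\GSp_4\to\GO_{3,3}$, not the going-down lift $\GSp_4\to\GO_{2,2}$. The $L$-function/standard-module mechanism you invoke relies on exhibiting $\Theta$ as a standard module on the target group, and $\GO_{2,2}$ is too small for this: there is no parabolic of $\GO_{2,2}$ whose Levi receives the inducing data in the way you describe, so the reducibility check ``no twist by $|-|_E$'' does not get off the ground. The paper instead controls $\Theta_4^+(\tau)$ through its Whittaker dimension, using the identity
\[
\dim\Hom_U(\Theta_4^+(\tau),\psi_U)=\dim\Hom_{H(\mathcal{W}_1)\rtimes\Sp(\mathcal{W}_1)}(\tau,\omega_\psi),
\]
and then computes the right-hand side via the double coset $Q(Z)\backslash\GSp_4(E)/H(\mathcal{W}_1)\rtimes\Sp(\mathcal{W}_1)$. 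For $I_{Q(Z)}(\mu_0,\pi(\mu_1,\mu_2))$ with $\mu_0\neq\mathbf{1}$ and for $I_{P(Y)}(\pi,\chi)$ this gives Whittaker dimension~$1$, forcing irreducibility; the discrete series case uses \cite[Proposition~5.4]{atobe2016local}, and the tempered constituent of $I_{Q(Z)}(\mathbf{1},\pi)$ with $\pi$ square-integrable uses temperedness of $\Theta_4^+(\tau)$ from \cite[Proposition~5.5]{atobe2016local} plus projectivity of the discrete series $\theta_4^+(\tau)$.

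For part~(i), your ``functoriality transports indecomposability'' is not an argument: the Kudla filtration does not carry the extension structure of $\tau$ over to $\GO_{2,2}(E)$ in any direct way. The paper establishes that $\Theta_4^+(\tau)$ has Whittaker dimension~$2$ (hence is reducible of length~$2$ with socle either $\Sigma^+$ or $\Sigma^-$), then computes $\dim\Ext^1_{\GO_{2,2}(E)}(\Sigma^+,\Sigma^+\oplus\Sigma^-)=1$ via Frobenius reciprocity and \cite[Theorem~1]{dipendra2012extensions}, and finally pins down which of the two possible $\Ext^1$'s is nonzero by exhibiting an explicit non-split extension $\Ind_{\tilde{B}}^{\GO_{2,2}(E)}\Ext^1_{\tilde{B}}(\chi^+,\chi^-)$ built from the Borel. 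None of these ingredients appears in your sketch.
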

\begin{proof}
	\begin{enumerate}[(i)]
		\item If $\tau=I_{Q(Z)}(\mathbf{1},\pi(\mu_1,\mu_2))$, then the small theta lift $\theta_4^{+}(\tau)=\Sigma^+$ by the Howe duality, where $\Sigma^+$ is the extension to $\GO_{2,2}(E)$ of $\pi(\mu_1,\mu_2)\boxtimes\pi(\mu_1,\mu_2)$. Let $\psi_U$ be a non-degenerate character of the standard unipotent subgroup $U$ of $\GO_{2,2}(E)$. Then 
		\begin{equation}\label{whittakerdimension}
		\dim\Hom_U(\Theta_4^+(\tau),\psi_U)=\dim\Hom_{H(\mathcal{W}_1)\rtimes \Sp(\mathcal{W}_1)}(\tau,\omega_\psi)=2  \end{equation}
		where $\mathcal{W}_2=Z\oplus \mathcal{W}_1\oplus Z^\ast$, $H(\mathcal{W}_1)$ is the Heisenberg group of $\mathcal{W}_1$ equipped with the Weil representation $\omega_\psi$ and $\tau$ is the representation of $\GSp(\mathcal{W}_2)$. Thus the big theta lift $\Theta_4^+(\tau)$ to $\GO_{2,2}(E)$ is reducible. There is a short exact sequence of $\GO_{2,2}(E)$-representations
		\begin{equation}\label{bigthetaexact}
		\xymatrix{\Sigma^-\oplus\Sigma^+\ar[r]&\Theta_4^{+}(\tau)\ar[r]&\Sigma^+\ar[r]&0.}
		\end{equation}
		However, we can not determine $\Theta_4^+(\tau)$ at this moment.
		Note that
		\[\dim\Ext^1_{\GSO_{2,2}(E)}(\pi(\mu_1,\mu_2)\boxtimes\pi(\mu_1,\mu_2),\pi(\mu_1,\mu_2)\boxtimes\pi(\mu_1,\mu_2))=1\]
		due to \cite[Theorem 1]{dipendra2012extensions}. Here $\Ext^1$ is the extension functor defined on the category of all smooth representations with a fixed central character.
		Then $\dim\Ext^1_{\GO_{2,2}(E)}(\Sigma^+,\Sigma^-\oplus\Sigma^+)=1 $
		by Frobenius Reciprocity, which implies that either $\Ext^1_{\GO_{2,2}(E)}(\Sigma^+,\Sigma^-)$ or $\Ext^1_{\GO_{2,2}(E)}(\Sigma^+,\Sigma^+)$ is zero. Assume that $B$ is the Borel subgroup of $\GSO_{2,2}(E)$. Set $\tilde{B}=B\rtimes\mu_2$ to be a subgroup of $\GO_{2,2}(E)$ and $\tilde{B}\cap\GSO_{2,2}(E)=B$. Since $$\pi(\mu_1,\mu_2)\boxtimes\pi(\mu_1,\mu_2)=\Ind_B^{\GSO_{2,2}(E)}\chi~~(\mbox{normalized induction}), $$
		there are two extensions $\chi^\pm$  to $\tilde{B}$ of $\chi$ of $B$. We may assume without loss of generality that $\Sigma^+=\Ind_{\tilde{B}}^{\GO_{2,2}(E)}\chi^+$ and $\Sigma^-=\Ind_{\tilde{B}}^{\GO_{2,2}(E)}\chi^-$. Note that $\Ext^1_{\tilde{B}}(\chi^+,\chi^-)\neq0$. Then there is a short exact sequence of $\GO_{2,2}(E)$-representations
		\[\xymatrix{0\ar[r]&\Sigma^-\ar[r]&\Ind_{\tilde{B}}^{\GO_{2,2}(E)}(\Ext^1_{\tilde{B}}(\chi^+,\chi^-))\ar[r]&\Sigma^+\ar[r]&0, } \]
		which is not split. Hence $\Ext^1_{\GO_{2,2}(E)}(\Sigma^+,\Sigma^-)\neq0$. Together with
		\eqref{whittakerdimension} and \eqref{bigthetaexact}, one can obtain the desired equality	$\Theta_4^+(\tau)=\Ext^1_{\GO(2,2)(E)}(\Sigma^+,\Sigma^-)$.
		\item If $\tau$ is a (essentially) discrete series representation, then it follows from \cite[Proposition 5.4]{atobe2016local}. 
		\begin{itemize}
			\item If $\tau=I_{Q(Z)}(\mu_0,\pi(\mu_1,\mu_2))$ with $\mu_0\neq\mathbf{1}$, then there is only one orbit in the double coset  $Q(Z)\backslash \GSp_4(E)/ H(\mathcal{W}_1)\rtimes\Sp(\mathcal{W}_1)$  that contributes to the multiplicity
			\[\dim\Hom_{H(W_1)\rtimes\Sp(\mathcal{W}_1)}(\tau,\omega_\psi)  \]
			and so $\Theta_4^+(\tau)$ is irreducible.
			\item If $\tau\subset I_{Q(Z)}(\mathbf{1},\pi)$ with $\pi$ square-integrable, then $\tau$ is tempered. Due to \cite[Proposition 5.5]{atobe2016local}, $\Theta_4^+(\tau)$ is tempered. Note that $\theta_4^+(\tau)$ is a discrete series representation which is projective in the category of the tempered representations. Thus $\Theta_4^+(\tau)=\theta_4^+(\tau)$ is irreducible. Otherwise, it will contradict the Howe duality conjecture (see Theorem 2.1).
			\item If $\tau=I_{P(Y)}(\pi,\chi)$, then $\dim\Hom_U(\Theta_4^+(\tau),\psi_U) =1$ and so $\Theta^+_4(\tau)$ is irreducible.
		\end{itemize}
	\end{enumerate}
This finishes the proof of Proposition \ref{GSp:GO(4)}.
\end{proof}

\section{The $\GSp_4(F)$-distinguished representations}\label{sect:GSp(4)}
\subsection{Notation}
\begin{itemize}
	\item $\mathbb{C}$ or $\mathbf{1}$ is the trivial representation.
	\item $\mathbb{H}$ (resp. $\mathbb{H}_E$) is the split $2$-dimensional quadratic space over $F$ (resp. $E$).
	\item $(-,-)_E$ is the Hilbert symbol on $E^\times\times E^\times$.
	\item $\Res_{E/F}V$ is a quadratic space over $F$ while $V$ is a quadratic space over $E$.
	\item $\GSp(W_n)=\GSp_{2n}(F)$ is the symplectic similitude group.
	\item $\GU_2(D)=\GSp_{1,1}$ is the unique inner form of $\GSp_4$.
	\item $\lambda_W$ (resp. $\lambda_V$) is the similitude character of $\GSp_{4}(E)$ (resp. $\GO(V)$).
	\item $\GSp_4(E)^\natural=\{g\in\GSp_4(E)|\lambda_W(g)\in F^\times \}$ is the subgroup of $\GSp_4(E)$ and similarly for $\GO_{2,2}(E)^\natural$.
	\item $P'$ (resp. $P^\natural$) is a parabolic (resp. Siegel parabolic) subgroup of $\GSp_4(E)^\natural$ and $Q^\natural$ is the Siegel parabolic subgroup of $\GO_{2,2}(E)^\natural$.
	And $R_{\bar{P'}}$ (resp. $R_{\bar{P}^\natural}$) is the Jacquet functor with respect to the parabolic subgroup opposite to $P'$ (resp. $P^\natural$).
	\item $R_r(\mathbf{1})$ is the big theta lift to $\GO_{4,4}(F)$ of the trivial representation of $\GSp(W_r)$.
	\item $R^{m,n}(\mathbf{1})$ is the big theta lift to $\GSp_{8}(F)$ of the trivial representation of $\GO_{m,n}(F)$.
	\item $\Sigma$ is a generic representation of $\GO(V)$.
		\item $Q_r$ is the Siegel parabolic subgroup of $H_r=\GO_{r,r}(F)$.
	\item $I_{Q_r}^{H_r}(s)$ is the degenerate Siegel principal series of $H_r$.
	\item $X_4=Q_4\backslash H_4$ is the projective variety.
	\item $\mathcal{I}(s)$ is the degenerate Siegel principal series of $\GSp_{8}(F)$.
	\item $Mat_{m,n}(F)$ is the matrix space over $F$ consisting of all $m\times n$ matrices.
\end{itemize}
\subsection{See-saw identity for orthogonal-symplectic dual pairs}
Following the notation in \cite{prasad1996some},
for a quadratic space $(V,q)$ of even dimension over $E,$
let $\Res_{E/F}V$ be the same space $V$ but now thought of as a vector space over $F$ with a quadratic form
$$q_F(v)=\frac{1}{2}tr_{E/F}q(v).$$
If $W_0$ is a symplectic vector space over $F,$
then $W_0\otimes_F E$ is a symplectic vector space over $E.$
Then we have the following isomorphism of symplectic spaces over $F$
\[\Res_{E/F}[(W_0\otimes_FE)\otimes_EV]\cong W_0\otimes_F \Res_{E/F}V=:\mathbf{W}. \]
There is a pair
\[\Big(\GSp(W_0),\GO(\Res_{E/F}V)\Big)\mbox{  and  }\Big(\GSp(W_0\otimes_F E),\GO(V)\Big) \]
of similitude dual reductive pairs in the symplectic similitude group $\GSp(\mathbf{W}).$
A pair $(G,H)$ and $(G',H')$ of dual reductive pairs in a symplectic similitude group is called a see-saw pair if $H\subset G'$
and $H'\subset G.$ The following lemma is quite useful in this section. See \cite[Lemma p. 6]{prasad1996some}.

\begin{lem}\label{localseesaw}
	For a see-saw pair of dual reductive pairs $(G,H)$
	and $(G', H')$,
	let $\pi$ be an irreducible representation of $H$
	and $\pi'$ of $H'$. Then we have the following isomorphism:
	\[\Hom_H(\Theta_\psi(\pi'),\pi)\cong \Hom_{H'}(\Theta_\psi(\pi),\pi'). \]
\end{lem}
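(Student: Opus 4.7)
The plan is to show that both sides of the claimed isomorphism equal a common expression, namely $\Hom_{H \times H'}(\Omega, \pi \boxtimes \pi')$, where $\Omega$ is the (extended) Weil representation of the ambient symplectic similitude group. The structural observation that powers this identification is that in a see-saw of dual reductive pairs the inclusions $H \subset G'$ and $H' \subset G$ imply that the product $H \times H'$ sits inside both $G \times H$ and $G' \times H'$, so $\Omega$ may be restricted to $H \times H'$ via either of the two dual-pair actions.

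First I would invoke the universal property defining the big theta lift recalled in \S 2: by construction, the maximal $\pi'$-isotypic quotient of $\Omega$ (viewed as a $G' \times H'$-representation) under the $H'$-action has the form $\Theta_\psi(\pi') \boxtimes \pi'$. Since $\pi'$ is irreducible, Schur's lemma yields a canonical isomorphism $\Hom_{H'}(\Omega, \pi') \cong \Theta_\psi(\pi')$ of $G'$-representations, the right-hand side being interpreted as the multiplicity space. Restricting the $G'$-action to the subgroup $H \subset G'$ and then applying $\Hom_H(-,\pi)$, one arrives at the identification
\[\Hom_{H \times H'}(\Omega, \pi \boxtimes \pi') \cong \Hom_H(\Theta_\psi(\pi'), \pi).\]
The completely symmetric computation, using instead the dual pair $(G, H)$ and the inclusion $H' \subset G$, produces
\[\Hom_{H \times H'}(\Omega, \pi \boxtimes \pi') \cong \Hom_{H'}(\Theta_\psi(\pi), \pi').\]
Matching the two identifications gives the desired isomorphism.

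The main technical point requiring care is the passage from the isometry case, where the argument is classical and goes back to \cite{prasad1996some}, to the similitude case, where $\Omega = \mathrm{ind}_{R_0}^R \omega_\psi$ is a compactly induced representation rather than the Weil representation itself. One must verify that the constraint $\lambda_V(h)\lambda_W(g)=1$ cutting out $R_0$ inside $R$ is consistent across the two dual-pair structures in the see-saw diagram, so that the restriction of $\Omega$ to $H \times H'$ obtained by regarding it as a $G \times H$-representation agrees with the restriction obtained by regarding it as a $G' \times H'$-representation. This is essentially a bookkeeping matter built into the conventions of \S 2 and \cite{roberts2001global}; once it is recorded, the formal manipulation above goes through verbatim. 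As emphasized in Remark \ref{counterforseesaw} (referenced later in the paper), one does have to be vigilant about this compatibility when similitude factors are involved.
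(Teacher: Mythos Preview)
Your argument is correct and is precisely the standard proof: both sides are identified with $\Hom_{H\times H'}(\Omega,\pi\boxtimes\pi')$ by the defining property of the big theta lift. The paper does not give its own proof of this lemma; it simply refers the reader to \cite[Lemma p.~6]{prasad1996some}, whose proof is exactly the manipulation you have written out, so there is nothing further to compare.
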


Let $\GSp(W_0\otimes_F E)^\natural$
to be the subgroup of $\GSp(W_0\otimes_F E)$ where the similitude factor takes values in $F^\times$.
Similarly we define $$\GO(V)^\natural=\{h\in \GO(V)|\lambda_V(h)\in F^\times \} .$$

Then we have a see-saw diagram
\[\xymatrix{\GSp(W_0\otimes_F E)^\natural\ar@{-}[rrd]&& \GO(\Res_{E/F}V)\ar@{-}[lld]\\
	\GSp(W_0)\ar@{-}[u]&& \GO(V)^\natural.\ar@{-}[u] } \]

Replace $W_0$ by a $4$-dimensional symplectic space $W_2$ over $F$ with a symplectic similitude  group $\GSp_4(F)$.
Then there is a see-saw pair
\[\Big(\GSp_4( E)^\natural,\GO(V)^\natural \Big)\mbox{   and   }\Big(\GSp_4(F),\GO(\Res_{E/F}V) \Big) \]
in the similitude symplectic group $\GSp(\mathbf{W})$ where $\mathbf{W}=\Res_{E/F}((W_2\otimes_F E)\otimes_E V)$ and
\[\GSp_4(E)^\natural=\{g\in\GSp_4(E)|\lambda_W(g)\in F^\times \}. \]

In order to use Lemma \ref{localseesaw},
we need to figure out the discriminant and Hasse invariant of the quadratic space $\Res_{E/F}V$ over $F.$

Assume that  $E=F(\sqrt{d})$ is a quadratic field extension of $F$,
where $d\in F^\times\setminus {F^\times}^2.$ Let $D_E=(\frac{a,b}{E})$ be the  nonsplit quaternion algebra with involution $\ast$ defined over $E$ with a norm map $N_{D_E},$ which is  a $4$-dimensional quadratic space $V$ over $E$. Then there is an isomorphism for
the vector space $\mbox{Res}_{E/F}V,$ 
$$\mbox{Res}_{E/F}D_E\cong\mbox{Span}_F\{1,\sqrt{d},i,\sqrt{d}i,j,\sqrt{d}j,ij,\sqrt{d}ij \}$$
as $F$-vector spaces,
where $i^2=a,j^2=b,ij=-ji.$ Given a vector $v\in V,$
set $$q_F(v)=\frac{1}{2}tr_{E/F}\circ N_{D_E}(v)\mbox{  and   } (v_i,v_j)=q(v_i+v_j)-q(v_i)-q(v_j).$$
\begin{lem}\label{quadraticrestriction}
	The quadratic space $\Res_{E/F}D_E$ with quadratic form $\frac{1}{2}tr_{E/F}\circ N_{D_E}$ over $F$ has dimension $8,$ discriminant $1$ and Hasse-invariant $-1.$
\end{lem}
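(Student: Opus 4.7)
My plan is to exploit the $E$-orthogonal basis $\{1,i,j,ij\}$ of $D_E$ together with the additivity of the Scharlau transfer $\tfrac12\tr_{E/F}(-)$. Since $N_{D_E}=\langle 1,-a,-b,ab\rangle_E$ is $E$-diagonal in this basis, one obtains the orthogonal decomposition of $F$-quadratic spaces
\[
(\Res_{E/F}D_E,q_F)\;=\;\Res_{E/F}\langle 1\rangle_E\perp\Res_{E/F}\langle -a\rangle_E\perp\Res_{E/F}\langle -b\rangle_E\perp\Res_{E/F}\langle ab\rangle_E,
\]
where each summand is a $2$-dimensional $F$-quadratic form; orthogonality over $F$ follows from that over $E$ since $\tfrac12\tr_{E/F}$ annihilates the purely imaginary part of $E$.

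For $c\in E^\times$ written as $c=c_1+c_2\sqrt d$ with $c_1,c_2\in F$, the Gram matrix of $\Res_{E/F}\langle c\rangle_E$ in the $F$-basis $\{1,\sqrt d\}$ is $\left(\begin{smallmatrix}c_1&c_2d\\c_2d&c_1d\end{smallmatrix}\right)$, with discriminant $d\cdot N_{E/F}(c)\in F^\times/(F^\times)^2$. Multiplying over $c\in\{1,-a,-b,ab\}$ yields total discriminant $d^4\cdot N_{E/F}(a)^2N_{E/F}(b)^2$, a square in $F^\times$; hence $\disc=1$, which is the first assertion.

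For the Hasse invariant I would iterate $h(q_1\perp q_2)=h(q_1)h(q_2)(\disc q_1,\disc q_2)_F$. It is harmless to choose a presentation with $a\in F^\times$, since the division quaternion $D_E$ contains every quadratic extension of $E$; in particular $E(\sqrt c)$ embeds in $D_E$ for any $c\in F^\times$ outside $(E^\times)^2$. With $a\in F^\times$, one has $\Res_{E/F}\langle -a\rangle_E=\langle -a,-ad\rangle_F$, and the $b$- and $ab$-pieces are diagonalised over $F$ by completing the square (with the degenerate case $\tfrac12\tr_{E/F}(c)=0$, where the piece is a hyperbolic plane, handled separately). A careful bookkeeping of the resulting Hilbert symbols uses: (i) $(N_{E/F}(c),d)_F=1$ for any $c\in E^\times$, since norms from $E$ lie in the kernel of $(-,d)_F$; (ii) the identity $(u,uv)_F=(u,-1)_F(u,v)_F$; and (iii) cancellation of squared factors. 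The product of the four $h_i$'s together with the gluing terms $(d_0,d_1)(d_0d_1,d_2)(d_0d_1d_2,d_3)_F$ collapses, after all these simplifications, to the single Hilbert symbol $(a,N_{E/F}(b))_F$.

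The final step invokes the Rosset--Tate projection formula $\mathrm{cor}_{E/F}\bigl((a,b)_E\bigr)=(a,N_{E/F}(b))_F$ in $\mathrm{Br}_2(F)$. Since $D_E$ is the division quaternion, $(a,b)_E=-1$ is the nontrivial class in $\mathrm{Br}_2(E)\cong\Z/2$; as corestriction to local $F$ preserves Hasse invariants, its image is the nontrivial class in $\mathrm{Br}_2(F)$, yielding $(a,N_{E/F}(b))_F=-1$, as required. The main technical obstacle is the careful Hilbert symbol bookkeeping in the penultimate paragraph: one must avoid conflating $(u,uv)_F$ with $(u,v)_F$ (they differ by $(u,-1)_F$) and must verify the degenerate case $\tfrac12\tr_{E/F}(b)=0$ separately, where both the $b$- and $ab$-pieces become hyperbolic planes but the final formula $(a,N_{E/F}(b))_F=-1$ still records the correct Hasse invariant.
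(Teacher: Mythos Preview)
Your approach is essentially the same as the paper's: both choose a presentation with $a\in F^\times$, use the $E$-orthogonal basis $\{1,i,j,ij\}$ to decompose $\Res_{E/F}D_E$ into four rank-$2$ pieces over $F$, read off the trivial discriminant, and reduce the Hasse invariant to $(a,N_{E/F}(b))_F=(a,b)_E=-1$. The paper does the Hilbert-symbol bookkeeping by writing out the full $8\times8$ Gram matrix and splitting into the cases $b_1=0$ and $b_1\neq0$, whereas you package the same computation via iterated $h(q_1\perp q_2)=h(q_1)h(q_2)(\disc q_1,\disc q_2)_F$ and cite the projection formula for corestriction; these are cosmetic differences, not substantive ones.
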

\begin{proof}
The nonsplit quaternion algebra over a nonarchimedean local field is unique. We may assume that
$$i^2=a\in F^\times$$ and $j^2=b=b_1+b_2\sqrt{d},N_{E/F}(b)=b_1^2-b_2^2d,~b_i\in F.$

For an element $v=x_1+x_2i+x_3j+x_4ij$ in $D_E$ with $x_i\in E$, we have
$$\frac{1}{2}(v,v)=N_{D_E}(v)=vv^\ast=x_1^2-ax_2^2-bx_3^2+abx_4^2$$ and the corresponding matrix for the
quadratic space $(\mbox{Res}_{E/F}D_E,q_F)$ is
\[\begin{pmatrix}
{2}&0&0&0&0&0&0&0\\
0&{2d}&0&0&0&0&0&0\\
0&0&-{2a}&0&0&0&0&0\\
0&0&0&-{2ad}&0&0&0&0\\
0&0&0&0&-2{b_1}&-2b_2d&0&0\\
0&0&0&0&-2b_2d&-2{b_1d}&0&0\\
0&0&0&0&0&0&2ab_1&2dab_2\\
0&0&0&0&0&0&2dab_2&2{dab_1}
\end{pmatrix}.
\]
The discriminant of $\Res_{E/F} D_E$ is trivial in $F^\times/{F^\times}^2$. 
If $b_1=0,$ then the Hasse-invariant is 
$$(-d,~a)=-1$$
since  $(b_2\sqrt{d},~ a)_E=-1$, where $(-,-)$ (resp. $(-,-)_E$) is the Hilbert symbol defined on $F^\times\times F^\times$ (resp. $E^\times\times E^\times$).
If $b_1\neq0,$ then the Hasse-invariant is
\[(d,d)(-a,-ad)(-b_1,\frac{N_{E/F}(b)d}{-b_1})(N_{E/F}(b)d,-1)(ab_1,\frac{N_{E/F}(b)d}{ab_1})=(a,N_{E/F}(b))=(a,b)_E=-1, \]
because $(a,b)_E=(a,N_{E/F}(b))$ for all $a\in F^\times$ and $b\in E^\times.$ 
\end{proof}

Now let $V$ be the split $2n$-dimensional quadratic space $\mathbb{H}_E^n$ over $E$.
There is a basis $\{e_i,e_j' \}_{1\leq i,j\leq n}$ for the quadratic space
$V$  satisfying $<e_i,e_j'>=\delta_{ij}$ and the other inner products are zero.
Then we fix the basis
\[\{e_i,\sqrt{d}e_i,e_j',e_j'/\sqrt{d} \}_{1\leq i,j\leq n} \]
for $\Res_{E/F}V.$ It is straightforward to check that the vector space $\Res_{E/F}V$ is isomorphic to the split $4n$-dimensional quadratic space $\mathbb{H}^{2n}$ over $F.$

\subsection{The Structure of Degenerate Principal Series}
In this subsection, we follow the notation in \cite{gan2011endoscopy,kudla1996notes}.
Let $H_n=\GO(\mathbb{H}^n)$ be the orthogonal similitude group.
Define the quadratic character $\nu$ to be
\[\nu(h)=\det(h)\cdot\lambda_V^{-n}(h)\mbox{  for  }h\in \GO(\mathbb{H}^n ) \]
so that $\nu|_{\Oo(\mathbb{H}^n )}=\det.$ Define
\[\GSO(\mathbb{H}^n )=\ker\nu=\{h\in \GO(\mathbb{H}^n )|\lambda(h)^n=\det(h) \}. \]
Assume that  $Q_n$ is the standard Siegel parabolic subgroup of $H_n,$ i.e.,
\[Q_n= \Bigg\{\begin{pmatrix}
A^{-1}\\&\lambda A^t
\end{pmatrix}\begin{pmatrix}
I&X\\&I
\end{pmatrix}\big|A\in \GL_n(F), X\in Mat_{n,n}(F) \mbox{ and }X+X^t=0 \Bigg\} \]
with  modular character $|\det A|_F^{1-n}|\lambda|_F^{-n(n-1)/2}$. Then $Q_n\backslash H_n$
is a projective variety and a homogenous space equipped with $H_n$-action.  Each
point on $Q_n\backslash H_n$ corresponds to an isotropic subspace in $\mathbb{H}^n$ of dimension $n.$
Set the degenerate normalized induced representation $I_{Q_n}^{H_n}(s)$ as follows
\[I_{Q_n}^{H_n}(s)=\{f:H_n\rightarrow\mathbb{C}|f(xg)=\delta_{Q_n}(x)^{\frac{1}{2}+\frac{s}{n-1}}f(g)\mbox{ for }x\in Q_n, g\in H_n \}. \]

Let $W_{r}$ be the symplectic space with a symplectic similitude  group 
$\GSp(W_{r}).$ Set $\mathbf{1}_W$ to be the trivial representation of $\GSp(W_{r}).$ Then the big theta lift $\Theta_r(\mathbf{1}_W)$ to $H_n$ of the trivial representation $\mathbf{1}_W$ is isomorphic to a subrepresentation of $I_{Q_n}^{H_n}(s_0)$
where $$s_0=r-\frac{n-1}{2}.$$ The image of $\Theta_r(\mathbf{1}_W)$ in $I_{Q_n}^{H_n}(s_0)$ is denoted by $R_r(\mathbf{1})$, i.e., 
\[\Theta_r(\mathbf{1}_W)=R_{r}(\mathbf{1})\subset I_{Q_n}^{H_n}(s_0). \]

Let us come back to the $\GSp_4$-cases. Assume that  $r=2$ and $n=4$. 

\begin{prop}\label{degenerateseries}
	There is an exact sequence of $H_4$-modules
	\[\xymatrix{0\ar[r]&R_2(\mathbf{1})\ar[r]& I_{Q_4}^{H_4}(1/2)\ar[r]&R_1(\mathbf{1})\otimes\nu\ar[r]&0 }. \]
	\end{prop}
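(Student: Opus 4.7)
The plan is to identify the asserted short exact sequence as arising from the standard intertwining operator on the degenerate Siegel principal series at the reducibility point $s=1/2$, combined with the Kudla--Rallis realization of theta lifts as sub-quotients of such induced representations. The embedding $R_2(\mathbf{1}) \hookrightarrow I_{Q_4}^{H_4}(1/2)$ is given by construction, since $s_0 = r - (n-1)/2 = 1/2$ for $r=2$, $n=4$: it arises as the image of the Rallis map
\[\omega_\psi \longrightarrow I_{Q_4}^{H_4}(1/2), \qquad \phi \longmapsto \bigl(h \mapsto (\omega_\psi(1,h)\phi)(0)\bigr),\]
which factors through the $\GSp(W_2)$-coinvariants of the Weil representation. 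It thus suffices to identify the cokernel $I_{Q_4}^{H_4}(1/2)/R_2(\mathbf{1})$ with $R_1(\mathbf{1})\otimes\nu$.

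To do so, I would consider the standard intertwining operator
\[M(s)\colon I_{Q_4}^{H_4}(s)\longrightarrow I_{Q_4}^{H_4}(-s)\]
defined by meromorphic continuation of an integral over the unipotent radical of the parabolic opposite to $Q_4$. The key claim is that at $s=1/2$ the kernel of $M(1/2)$ is precisely the submodule $R_2(\mathbf{1})$ (a standard vanishing-of-the-intertwiner argument for the Rallis image at the first reducibility point), while the image sits inside $I_{Q_4}^{H_4}(-1/2)$, which contains $R_1(\mathbf{1})$ naturally as the Rallis image for $r=1$. Matching the image of $M(1/2)$ with a twist of $R_1(\mathbf{1})$ then yields the desired short exact sequence; the twist by the sign character $\nu$ of $\GO(\mathbb{H}^4)/\GSO(\mathbb{H}^4)$ comes from the behaviour of $M(s)$ on the component group of $H_4$.

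The main obstacle is verifying that no additional composition factors appear and pinning down the $\nu$-twist. Both points can be handled by computing the Jacquet module $R_{\bar{Q}_4}\bigl(I_{Q_4}^{H_4}(1/2)\bigr)$ as a $\GL_4(F)$-module via the Bernstein--Zelevinsky geometric lemma applied to the $Q_4$-orbit stratification of $X_4=Q_4\backslash H_4$ by the dimension of intersection of pairs of Lagrangians, and matching the resulting semisimplification with that of $R_2(\mathbf{1})$ together with $R_1(\mathbf{1})\otimes\nu$ computed via the Kudla--Rallis filtration of the Weil representation. The delicate piece in the bookkeeping is that the nontrivial element of $\GO(\mathbb{H}^4)/\GSO(\mathbb{H}^4)$ interchanges orbit representatives on the open stratum, forcing the quotient to sit in the opposite extension of the underlying $\GSO(\mathbb{H}^4)$-module (whence the $\nu$) rather than in $R_1(\mathbf{1})$ itself.
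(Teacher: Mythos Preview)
Your approach is valid in outline but takes a genuinely different and considerably more labor-intensive route than the paper. The paper's proof is a two-line reduction: restrict everything to the isometry group $\Oo_{4,4}(F)$, observe via Mackey theory (there is a single $\Oo_{4,4}(F)$-orbit on $Q_4\backslash H_4$) that $I_{Q_4}^{H_4}(1/2)|_{\Oo_{4,4}(F)}$ is the usual degenerate principal series of $\Oo_{4,4}(F)$, and then simply cite the known structure theorem for that representation (Gan--Ichino, \emph{Formal degrees}, Proposition~7.2, which packages the Kudla--Rallis analysis). The passage back to the similitude group $H_4$ is then immediate from the construction of the extended Weil representation. In particular, the $\nu$-twist on the quotient is inherited directly from the $\det$-twist already present in the isometry-group statement; no separate argument is needed.

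By contrast, you propose to work directly at the similitude level with the intertwining operator $M(s)$ and a Jacquet-module count. This is essentially re-deriving the Kudla--Rallis structure theorem in this case rather than quoting it. Two of your ``standard'' steps are precisely the non-trivial content of that theorem: (a) that $\ker M(1/2)=R_2(\mathbf{1})$ exactly, and (b) that the image of $M(1/2)$ inside $I_{Q_4}^{H_4}(-1/2)$ is $R_1(\mathbf{1})\otimes\nu$ rather than $R_1(\mathbf{1})$ itself. For (b), note that since $\nu$ is trivial on $Q_4$ one has $I_{Q_4}^{H_4}(-1/2)\otimes\nu\cong I_{Q_4}^{H_4}(-1/2)$, so both $R_1(\mathbf{1})$ and $R_1(\mathbf{1})\otimes\nu$ embed there, and the orbit-swapping heuristic you offer does not by itself single out the correct twist; you would genuinely need the Jacquet-module bookkeeping (or an explicit computation of $M(1/2)$ on a test vector) to settle this. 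None of this is wrong, but it is substantially more work than the paper's reduction-and-cite argument, and the advantage of your route---self-containedness---is largely illusory since the ``standard'' inputs you invoke are of the same depth as the result being cited.
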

	\begin{proof}
		Note that
		$R_2(\mathbf{1})|_{\Oo_{4,4}(F)}$
		is isomorphic to the big theta lift of the trivial representation $\mathbf{1}_W$
		from $\Sp_4(F)$ to $\Oo_{4,4}(F),$ similar for the big theta lift $R_1(\mathbf{1}).$
		 Mackey theory implies that there is only one orbit for the double coset
		\[Q_4\backslash H_4/\Oo_{4,4}(F)=(Q_4\cap \Oo_{4,4}(F))\backslash \Oo_{4,4}(F)/\Oo_{4,4}(F) \]
		which implies $I_{Q_4}^{H_4}(1/2)|_{\Oo_{4,4}(F)}\cong I_{Q_4\cap \Oo_{4,4}(F)}^{\Oo_{4,4}(F)}(1/2)$.
		Then the sequence is still the same when restricted to the orthogonal group $\Oo_{4,4}(F).$
	 The sequence is exact when restricted to the orthogonal group $\Oo_{4,4}(F)$ 	due to the structure of degenerate principal series (see \cite[Proposition 7.2]{gan2014formal}).
		By the construction of the extended Weil representation, the sequence is exact as $H$-modules.
	\end{proof}
Similarly, let $P_4=M_4N_4$ be the Siegel parabolic subgroup of $\GSp(W_4)=\GSp_8(F)$ where $M_4\cong\GL_1(F)\times\GL_4(F)$. Let $\mathcal{I}(s)$ be the degenerate normalized induced representation of $\GSp_8(F)$ associated to $P_4$, i.e.,
\[\mathcal{I}(s)=\{f:\GSp_{8}(F)\rightarrow\mathbb{C}|f(pg)=\delta_{P_4}(p)^{\frac{1}{2}+\frac{s}{5}}f(g)\mbox{ for }p\in P_4,g\in \GSp_{8}(F) \}. \]
 Then we have
\begin{prop}
	There is an exact sequence of $\GSp_{8}(F)$-modules
	\[\xymatrix{0\ar[r]& R^{3,3}(\mathbf{1})\ar[r]&\mathcal{I}(1/2)\ar[r]&R^{4,0}(\mathbf{1})\ar[r]&0  }, \]
	where $\mathcal{I}(s)$ is the degenerate normalized induced representation of $\GSp_8(F)$ and $R^{3,3}(\mathbf{1})$ (resp. $R^{4,0}(\mathbf{1})$) is the big theta lift to $\GSp_{8}(F)$ of the trivial representation of $\GO_{3,3}(F)$ (resp. $\GO_{4,0}(F)$).  
	\end{prop}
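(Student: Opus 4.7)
The plan is to mirror the proof of Proposition \ref{degenerateseries} with the roles of the orthogonal and symplectic groups exchanged. The strategy is to first pass to the isometry group $\Sp_8(F)$, where the statement reduces to the well-known structure of the degenerate Siegel principal series of $\Sp_{2n}(F)$ established by Kudla--Rallis and used in \cite[Proposition 7.2]{gan2014formal} and related references.

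First I would apply Mackey theory to the restriction $\mathcal{I}(s)|_{\Sp_8(F)}$. Since the similitude factor is surjective and $P_4 \cap \Sp_8(F)$ is the Siegel parabolic of $\Sp_8(F)$, there is only one orbit in the double coset $P_4\backslash\GSp_8(F)/\Sp_8(F)$, so that
\[
\mathcal{I}(s)|_{\Sp_8(F)}\;\cong\;\mathrm{Ind}_{P_4\cap \Sp_8(F)}^{\Sp_8(F)}(s),
\]
which is the usual degenerate Siegel principal series of $\Sp_8(F)$ at the same $s$. Similarly, by the construction of the extended Weil representation recalled in \S2, the restrictions of $R^{3,3}(\mathbf{1})$ and $R^{4,0}(\mathbf{1})$ to $\Sp_8(F)$ agree with the big theta lifts of the trivial representations from $\Oo_{3,3}(F)$ and $\Oo_{4,0}(F)$ respectively. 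Both lifts embed into the Siegel principal series at the same point $s_0 = (\dim V - n - 1)/2 = 1/2$ (with $n=4$): the split case $\dim V = 6$ gives $s_0 = 1/2$ directly, and the complementary Witt tower $\dim V = 4$ also contributes at $s_0 = 1/2$ after passing to the quotient, in accordance with the conservation relation $6+4 = 2n+2$.

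The crucial input is then the Kudla--Rallis analysis of the reducibility of the degenerate Siegel principal series of $\Sp_{2n}(F)$ at the regular point $s = 1/2$. For $n=4$ it gives the short exact sequence of $\Sp_8(F)$-modules
\[
0 \longrightarrow \Theta_{3,3}(\mathbf{1}) \longrightarrow I_{P_4\cap\Sp_8(F)}^{\Sp_8(F)}(1/2) \longrightarrow \Theta_{4,0}(\mathbf{1}) \longrightarrow 0,
\]
where the submodule is the image of the theta lift from the larger (split) member of the complementary Witt towers and the quotient is the image of the theta lift from the smaller (anisotropic) member. Exactly as in Proposition \ref{degenerateseries}, this sequence is automatically an $\Sp_8(F)$-module filtration.

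Finally, I would promote the filtration from $\Sp_8(F)$ back to $\GSp_8(F)$. Since $\mathcal{I}(1/2)$, $R^{3,3}(\mathbf{1})$ and $R^{4,0}(\mathbf{1})$ are naturally defined as $\GSp_8(F)$-modules through the extended Weil representation, and their restrictions to $\Sp_8(F)$ fit into the exact sequence above, the morphisms $R^{3,3}(\mathbf{1}) \hookrightarrow \mathcal{I}(1/2)$ and $\mathcal{I}(1/2) \twoheadrightarrow R^{4,0}(\mathbf{1})$ intertwine the $\GSp_8(F)$-action because the similitude group is generated by $\Sp_8(F)$ together with the central similitude torus, on which all three representations have matching central characters (namely those dictated by the Weil representation formula $\omega_{\Theta_\psi(\pi)} = \chi_V^{\dim W/2}\omega_\pi$ with $\pi = \mathbf{1}$). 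The main (and only real) obstacle is verifying this compatibility with the similitude action cleanly, in particular checking that no character twist appears on the quotient, in contrast to the $\nu$-twist on the orthogonal side in Proposition \ref{degenerateseries} (this is to be expected, since $\GSp_8/\Sp_8$ admits no nontrivial character trivial on the center).
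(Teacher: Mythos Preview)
Your proposal is correct and follows essentially the same approach as the paper: restrict to the isometry group $\Sp_8(F)$ via Mackey theory, invoke the Kudla--Rallis structure of the degenerate Siegel principal series (as in \cite[Proposition 7.2]{gan2014formal}), and then promote back to $\GSp_8(F)$ using the extended Weil representation. The paper in fact gives no separate proof for this proposition, simply indicating ``Similarly'' after Proposition~\ref{degenerateseries}, so your write-up is a faithful elaboration of the intended argument.
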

Now we use Mackey theory to study $I_{Q_4}^{H_4}(1/2)|_{\GO_{2,2}(E)^\natural}$ which involves the computation for the double coset $Q_4\backslash H_4/\GO_{2,2}(E)^\natural$.
Denote $X_4=Q_4\backslash H_4$ as the projective variety. 
\subsubsection{Double cosets}
Now let us consider the double coset 
\[Q_4\backslash H_4/\GO_{2,2}(E)^\natural. \] 
Assume that  $V=\mathbb{H}_E^2$
 with basis $\{e_i,e_j' \}_{1\leq i,j\leq 2}$
and
$<e_i,e_j'>=\delta_{ij}.$ 
Fix the basis
$$\{e_1,\sqrt{d}e_1,e_2,\sqrt{d}e_2,e_1',e_1'/\sqrt{d},e_2',e_2'/\sqrt{d} \}$$ for $V_F=\Res_{E/F}V$. 
 The inner product $\langle\langle-,-\rangle\rangle$ on $V_F$ is given by
\[\langle\langle x,y\rangle\rangle:=\frac{1}{2}tr_{E/F}(<x,y>)\]
for $x,y\in V$.
Let us fix an embedding  $i:\GO_{2,2}(E)^\natural\rightarrow \GSO_{4,4}(F)$. 

The double coset decomposition for the case at hand can be obtained from
 more general case.
Assume that  $\mathbf{V}$ is a symplectic space or a split quadratic space over $E$ of dimension $2n,$
with a non-degenerate bilinear form 
$B:\mathbf{V}\times \mathbf{V}\rightarrow E .$
Let $U(\mathbf{V})$ be the isometry group, i.e.,
\[U(\mathbf{V})=\{g\in \GL(\mathbf{V})| B(gx,gy)=B(x,y)\mbox{ for all } x,y\in \mathbf{V} \} \]
which is a symplectic group or an orthogonal group.
Then $\Res_{E/F}\mathbf{V}$ is a vector space over $F$ of dimension $4n$
with a non-degenerate bilinear form $\frac{1}{2}tr_{E/F}\circ B.$
\begin{lem}\label{orbitdecomp}
	Let $P$ be a Siegel parabolic subgroup of $U(\Res_{E/F}\mathbf{V})$. Then each point in the homogeneous space
	$X=P\backslash U(\Res_{E/F}\mathbf{V})$ corresponds to a $2n$-dimensional maximal isotropic subspace in $\Res_{E/F}\mathbf{V}$ and  the finite double cosets $X/U(\mathbf{V})$ can be parametrized by a pair
	$$(\dim_E E\cdot L,B_L )$$ where $L\subset \Res_{E/F}\mathbf{V}$ is a maximal isotropic subspace with respect to the inner product $\langle\langle-,-\rangle\rangle$ over $F$, 
	$$E\cdot L:=\{e\cdot x|e\in E,x\in L \}$$ is a linear $E$-subspace in $\mathbf{V}$ and $$B_L:L/L_0\times L/L_{0}\rightarrow \sqrt{d}\cdot F$$
	  is a non-degenerate bilinear form inherited from $V$, where $$L_0=\{x\in L: B(x,y)=0 \mbox{ for all } y\in L \}.$$
	Moreover, if $ L=L_0,$ then $L$ lies in the closed orbit.
	 If $L_0=0,$ then $L$ lies in the open orbit.
	\end{lem}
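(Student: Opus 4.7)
The plan is to identify the flag variety $X = P\backslash U(\Res_{E/F}\mathbf{V})$ with the set $\mathcal{L}$ of $2n$-dimensional $F$-subspaces of $\Res_{E/F}\mathbf{V}$ that are totally isotropic for $\langle\langle x,y\rangle\rangle = \tfrac{1}{2}\mathrm{tr}_{E/F}B(x,y)$, attach to each $L\in\mathcal{L}$ the pair of invariants $(k,B_L)$ with $k = \dim_E(E\cdot L)$, and then use Witt's extension theorem for $U(\mathbf{V})$ to prove that these invariants classify the $U(\mathbf{V})$-orbits.

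First I would verify the auxiliary fact that $L_0 = L \cap \sqrt{d}L$, i.e., the radical of $B|_{L\times L}$ coincides with the maximal $E$-stable subspace of $L$. The inclusion $\supset$ is immediate from $\sqrt{d}L_0 \subset L$ and $B(\sqrt{d}x,y) = \sqrt{d}B(x,y)$; conversely, if $x \in L_0$ then $L + F\cdot(\sqrt{d})^{-1}x$ is still isotropic for $\langle\langle-,-\rangle\rangle$, and maximality of $L$ forces $(\sqrt{d})^{-1}x \in L$. Isotropy of $L$ for $\langle\langle-,-\rangle\rangle$ gives $B(L,L) \subset \ker\mathrm{tr}_{E/F} = \sqrt{d}\cdot F$, so $B$ descends to a non-degenerate form $B_L: L/L_0\times L/L_0\to \sqrt{d}\cdot F$, symmetric or alternating according to the type of $B$. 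Counting $F$-dimensions in the identity $E\cdot L = L + \sqrt{d}L$ yields $\dim_E L_0 = 2n-k$ and $n \leq k \leq 2n$. Both invariants are preserved by $U(\mathbf{V})$ since it is $E$-linear and preserves $B$.

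For completeness, given $L,L' \in \mathcal{L}$ with the same pair $(k, B_L)$, I would use Witt's theorem over $E$ to produce $g_1 \in U(\mathbf{V})$ sending $(E\cdot L, L_0)$ onto $(E\cdot L', L'_0)$; this is possible because the $E$-valued form induced on $E\cdot L/L_0$ is determined, up to $E$-isometry, by $k$ together with the extension $B_L\otimes_F E$. A further adjustment by the stabilizer of this flag in $U(\mathbf{V})$ matches the two $F$-forms $L/L_0$ and $L'/L_0'$ of the $E$-space $E\cdot L/L_0$, using the hypothesis that $B_L$ and $B_{L'}$ are $F$-isometric. The finiteness of the orbit set then follows because, over a local field, only finitely many isometry classes of non-degenerate $\sqrt{d}\cdot F$-valued forms exist in a fixed dimension, and $k$ takes only finitely many values. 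Finally, the two extreme cases read off directly: $L = L_0$ forces $B|_{L\times L} = 0$, hence $B|_{E\cdot L\times E\cdot L} = 0$, which forces $E\cdot L = L$ to be a maximal $B$-isotropic $E$-subspace of $\mathbf{V}$ (the unique closed $U(\mathbf{V})$-orbit); while $L_0 = 0$ forces $B_L$ to be non-degenerate on all of $L$ with $k = 2n$ (the open orbit).

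The main obstacle I anticipate is the final adjustment in the completeness step: matching two $F$-forms of a common $E$-space that realize a prescribed $\sqrt{d}\cdot F$-valued form. This requires verifying that the stabilizer of the flag $L_0 \subset E\cdot L \subset \mathbf{V}$ inside $U(\mathbf{V})$ surjects onto the $F$-isometry group of the induced form on $L/L_0$; once this surjectivity is established, every other ingredient is formal, but the $E/F$-linear bookkeeping makes this the only non-routine point of the argument.
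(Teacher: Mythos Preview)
Your proposal is correct and follows essentially the same strategy as the paper, with a slight difference in organization. You split the completeness step into two stages (first Witt over $E$ to match the flags $L_0\subset E\cdot L$, then an adjustment inside the stabilizer to match the $F$-forms $L/L_0$), whereas the paper does it in a single explicit move: it writes $\mathbf{V}=L_0\oplus W\oplus L_0^*$ with $W\cong E\cdot L/L_0$ non-degenerate, extends an $F$-isometry $L_1/L_{1,0}\to L_2/L_{2,0}$ $E$-linearly to $g_2\in U(W)$, picks any $g_1\in\GL(L_0)$ matching the radicals, and sets $g_E=\mathrm{diag}(g_1,g_2,g_1^*)$, an element of the Levi of the parabolic stabilizing $L_0$. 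This Levi construction is exactly what dissolves the obstacle you flag: the surjectivity of the stabilizer onto the $F$-isometry group of $B_L$ is immediate once one observes that any $F$-isometry of $(L/L_0,B_L)$ extends $E$-linearly to an element of $U(W)$, and $U(W)$ sits inside the Levi. Your auxiliary identification $L_0=L\cap\sqrt d\,L$ and the dimension count $\dim_E L_0=2n-k$ are cleaner than the paper's matrix formulation ($B|_L\leftrightarrow\sqrt d\cdot T$, $\dim_E E\cdot L=n+\tfrac12\mathrm{rank}\,T$), but the content is the same.
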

\begin{proof}
	Under a suitable basis for $L,$ the bilinear form for $B|_{ L}$ corresponds to a matrix $\sqrt{d}\cdot T$, where $T\in M_{2n}(F).$ Moreover,
	we can choose $T$ such that it is a diagonal (resp. an anti-diagonal) matrix if $B(x,y)=B(y,x)$ (resp. $B(y,x)=-B(x,y)$).
	Then $$\dim_E E\cdot L=n+\frac{1}{2}\cdot rank(T)$$ which is invariant under $U(\mathbf{V})$-action. The bilinear form $B_L$ corresponds to a matrix $\sqrt{d}\cdot T',$ i.e.,
$$T=	\begin{pmatrix}
	0&0&0\\0&	T'&0\\0&0&0
	\end{pmatrix}$$
	 where $T'$ is invertible and $rank (T)=rank (T')$.
	
	Assume that there are two isotropic subspaces $L_1$ and $L_2$ satisfying
	\[\dim_E E\cdot L_1=\dim_E E\cdot L_2=l\mbox{  and  }B_{L_1}\cong B_{L_2}. \]
	This means that there exists $g\in \GL_l(E)$ such that $g:E\cdot L_1\rightarrow E\cdot L_2$ satisfying $$B_{L_1}(x,y)=B_{L_2}(gx,gy).$$
	It is easy to lift $g$ to $g_E\in U(\mathbf{V})$ such that $g_EL_1=L_2.$
	
	In fact, $g=\begin{pmatrix}
	g_1&0\\0&g_2
	\end{pmatrix}$ lies in a subgroup  of $\GL_l(E),$ which can be regarded as a Levi subgroup of  $U(\mathbf{V}),$ and
	\[B_L(gx,gy)=B_L(g_2x',g_2y')\]
	 when $x-x',y-y'\in L_0.$  Then $g_E=\begin{pmatrix}
	g_1\\&g_2\\&& g_1^\ast
	\end{pmatrix}\in U(\mathbf{V}),$ where $g_1^\ast$ depends on $g_1$ and $\mathbf{V}$.
\end{proof}
\begin{rem}
	In fact, there is only one closed orbit in the double coset $P\backslash U(\Res_{E/F}\mathbf{V} )/U(\mathbf{V}).$
	\end{rem}
Consider  the double coset
 $$Q_4\backslash H_4/\GO_{2,2}(E)^\natural.$$
There are several $\GO_{2,2}(E)^\natural$-orbits in $Q_4\backslash H_4/\GO_{2,2}(E)^\natural$.
By Lemma \ref{orbitdecomp}, there are two invariants for the orbit $\GO_{2,2}(E)^\natural\cdot L:$
\begin{itemize}
	\item the dimension $\dim_E(E\cdot L)$ and
	\item  the quadratic form $q_E|_{L}$ up to scaling in $F^\times.$
\end{itemize}  

By the classification of $4$-dimensional quadratic spaces over $F,$
there are $4$ elements  lying in the kernel
\[\ker \{H^1(F,\Oo_4)\rightarrow H^1(E,\Oo_4)\}, \]
which are 
\begin{itemize}
	\item the split quaternion algebra $Mat_{2,2}(F)$  with $q(v)=\det(v)$ for $v\in Mat_{2,2}(F);$
	\item  the quaternion division algebra $D(F)$ with the norm map $N_{D/F};$
	\item the non-split $4$-dimensional quadratic space $V_3=E\oplus\mathbb{H}$ with $q(e,x,y)=N_{E/F}(e)-xy$ and
	\item $V_4=\epsilon V_3$ with $\epsilon\in F^\times\setminus N_{E/F}(E^\times).$
\end{itemize}
 However, we consider the double coset 
\[Q_4\backslash H_4/\GO_{2,2}(E)^\natural\]
for the similitude groups and we observe that
$V_3$ and $V_4$ are in the same $\GO_{2,2}(E)^\natural$-orbit in $Q_4\backslash H_4/\GO_{2,2}(E)^\natural$.
\begin{prop}
	Pick a point $L\in X_4/\GO_{2,2}(E)^\natural $ lying in an open orbit.
 Then the stabilizer of $L$ in $\GO_{2,2}(E)^\natural$ is isomorphic to the similitude group $\GO(L).$
 \end{prop}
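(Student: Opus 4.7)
The plan is to exploit the fact that being in the open orbit forces $L$ to be an $F$-form of $V$, after which the stabilizer computation reduces to base-change of similitude groups.

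First, I would unpack the meaning of the open-orbit hypothesis using Lemma \ref{orbitdecomp}. The open orbit corresponds to $L_0 = 0$, so $\dim_E(E\cdot L) = \dim_F L/2 + n = 4$, i.e.\ $E\cdot L = V$. Combined with $\dim_F L = 4 = \dim_E V$, this says exactly that the natural map $E \otimes_F L \to V$, $e\otimes x \mapsto ex$, is an $E$-linear isomorphism; in other words $L$ is an $F$-form of $V$ and $V = L \oplus \sqrt{d}\, L$ as $F$-vector spaces. By the computation in the proof of Lemma \ref{orbitdecomp}, the restriction $B|_L$ takes values in $\sqrt{d}\cdot F$, so $q_L := \tfrac{1}{\sqrt{d}} B|_L$ endows $L$ with the structure of a non-degenerate $4$-dimensional quadratic space over $F$, and the associated similitude group $\GO(L)$ makes sense.

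Next, I would construct the isomorphism between the stabilizer and $\GO(L)$ by showing the two natural maps are mutually inverse. For one direction, take $g \in \GO_{2,2}(E)^\natural$ with $gL = L$. Since $g$ is $E$-linear it is in particular $F$-linear, and the similitude identity $B(gx,gy) = \lambda_V(g) B(x,y)$ with $\lambda_V(g) \in F^\times$ restricts to $q_L(gx,gy) = \lambda_V(g)\, q_L(x,y)$ on $L$, so $g|_L \in \GO(L)$. For the converse, given $h \in \GO(L)$, the $E$-linear extension $\tilde h := \id_E \otimes_F h$ acts on $V = E \otimes_F L$; it preserves the $E$-bilinear form $B$ (since $B$ is the $E$-linear extension of $\sqrt{d}\, q_L$) with similitude factor $\lambda(h) \in F^\times$, hence $\tilde h \in \GO_{2,2}(E)^\natural$ and clearly $\tilde h L = L$. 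The two assignments $g \mapsto g|_L$ and $h \mapsto \tilde h$ are visibly inverse to each other.

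The only real subtlety is bookkeeping the similitude factors: one must check that the $E$-similitude factor of $g$, by hypothesis in $F^\times$, matches the $F$-similitude factor of $g|_L$ with respect to the rescaled form $q_L$, and conversely that base-changing an $F$-similitude yields an element of $\GO_{2,2}(E)^\natural$ rather than merely $\GO_{2,2}(E)$. Both points follow immediately from the fact that $B|_L = \sqrt{d}\, q_L$ and that scalar extension commutes with the formation of similitude groups for split forms, so there is no genuine obstacle. I would close by remarking that $L$ itself is the underlying quadratic space, and the resulting $\GO(L)$ is one of the four $4$-dimensional quadratic similitude groups enumerated just before the proposition (split quaternion, division quaternion, or the spaces $V_3$, $V_4$), consistent with the orbit classification.
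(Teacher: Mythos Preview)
Your proof is correct and follows essentially the same approach as the paper: both arguments use the open-orbit condition $L_0=0$ to realize $L$ as an $F$-form of $V$ carrying the quadratic form $\tfrac{1}{\sqrt{d}}\,q_E|_L$, and then identify the stabilizer with $\GO(L)$ via the mutually inverse maps $g\mapsto g|_L$ and $h\mapsto \id_E\otimes_F h$. Your write-up is in fact slightly more explicit than the paper's in justifying why $E\cdot L=V$ and why the similitude factors match, but there is no substantive difference in strategy.
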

 \begin{proof}
 	For $g\in \GO_{2,2}(E)^\natural$ with $g(L)=L,$ we have
 	\[<gl_1,gl_2>=\lambda(g)\cdot<l_1,l_2>\]
 	 and so $\langle\langle gl_1,gl_2\rangle\rangle=\lambda(g)\cdot \langle \langle l_1,l_2\rangle\rangle$. 
 	This means $g\in \GO(L).$ Conversely, if $h\in \GO(L,\frac{1}{\sqrt{d}}q_E|_L),$ set $$h_E:x\otimes e\mapsto h(x)\otimes e$$
 	for $x\otimes e\in L\otimes E\cong L\cdot E=V$. Then $h_E(L)=L$ and
 	\[<h_E(x_1\otimes e_1),h_E(x_2\otimes e_2)>=e_1e_2\lambda(h)\langle \langle x_1,x_2\rangle\rangle=\lambda(h)<x_1\otimes e_1,x_2\otimes e_2>, \]
 	i.e. $h_E\in \GO_{2,2}(E)^\natural.$ Then we get a bijection between the similitude orthogonal group $\GO(L)$ and the stabilizer  of $L$ in $\GO_{2,2}(E)^\natural.$
 	Observe that the map $h\mapsto h_E$ is a group homomorphism. Then $\GO(L)$
 	is isomorphic to the stabilizer of $L$ via the map $h\mapsto h_E.$
 \end{proof}
 There are three $F$-rational open orbits, whose  stabilizers are $\GO_{2,2}(F),\GO_{4,0}(F)$ and $\GO_{3,1}(F)$ respectively.
There is one closed orbit  
which has   stabilizer  $$\GO_{2,2}(E)^\natural\cap Q_4=:Q^\natural\cong\Bigg\{\begin{pmatrix}
A^{-1}&\ast\\0&\lambda A^t
\end{pmatrix}|A\in \GL_2(E),\lambda\in F^\times \Bigg\}.$$
There are two intermediate  orbits with representatives $L_1,L_2$ and $\dim_E(E\cdot L_i)=3$.
The stabilizers are isomorphic to $$(\GL_1(E)\times \GO_{1,1}(F))\cdot Mat_{2,2}(F)\mbox{  and } (\GL_1(E)\times \GO(\mathcal{V}_E))\cdot Mat_{2,2}(F),$$ where 
 $\mathcal{V}_E$ is the $2$-dimensional quadratic space over $F$ with nontrivial discriminant which corresponds to $E.$
\begin{rem}
For $(g,t)\in \GL_2(E)\times F^\times, $ we set $$\beta((g,t))=(g,\bar{g}\cdot t)\in \GL_2(E)\times \GL_2(E).$$
Then $\beta:\GSO_{3,1}(F)\rightarrow \GSO_{2,2}(E)^\natural$ is an embedding due to the following exact sequences
\[\xymatrix{1\ar[r]& E^\times\ar[r]^-{i_1}\ar@{=}[d]&\GL_2(E)\times F^\times\ar[r]\ar[d]^\beta&\GSO_{3,1}(F)\ar[d]\ar[r]&1\\
1\ar[r]&E^\times\ar[r]^-{i_2}&\GL_2(E)\times\GL_2(E)\ar[r]&\GSO_{2,2}(E)\ar[r]&1 } \]
where $i_1(e)=(e,N_{E/F}(e)^{-1})$ and $i_2(e)=(e,e^{-1})$ for $e\in E^\times$.
\end{rem}
There are several orbits for $X_4/\GO_{2,2}(E)^\natural$.
So there is a decreasing filtration of $\GO_{2,2}(E)^\natural$-modules for  $I_{Q_4}^{H_4}(s)|_{\GO_{2,2}(E)^\natural}.$
\subsubsection{Filtration} Consider the filtration 
\[I_{Q_4}^{H_4}(s)= I_2(s)\supset I_1(s)\supset I_0(s)\supset 0\]
of the degenerate principal series $I_{Q_4}^{H_4}(s)|_ {\GO_{2,2}(E)^\natural}$
 with a sequence of sub-quotients
 $$I_0(s)=ind_{\GO_{2,2}(F) }^{\GO_{2,2}(E)^\natural}\mathbb{C}\oplus ind_{\GO_{4,0}(F)}^{\GO_{2,2}(E)^\natural}\mathbb{C}\oplus ind_{\GO_{3,1}(F)}^{\GO_{2,2}(E)^\natural}\mathbb{C},$$ 
$$I_2(s)/I_1(s)\cong ind_{Q^\natural}^{\GO_{2,2}(E)^\natural}\delta_{Q^\natural}^{s+1}$$  where  $Q^\natural$  is the Siegel parabolic subgroup of $\GO_{2,2}(E)^\natural$ with modular character $\delta_{Q^\natural}$
and
\[I_1(s)/I_0(s)\cong ind_{(\GL_1(E)\times \GO_{1,1}(F))\cdot N }^{\GO_{2,2}(E)^\natural}\delta_Q^{\frac{1}{2}+\frac{s}{3}}\delta_1^{-\frac{1}{2}}\oplus ind_{Q'}^{\GO_{2,2}(E)^\natural }\delta_Q^{\frac{1}{2}+\frac{s}{3}}\delta_2^{-\frac{1}{2}}  \]
where $Q'=(\GL_1(E)\times \GO(\mathcal{V}_E))\cdot N,~N\cong  Mat_{2,2}(F)$ and \[\delta_i\Big(t,h\Big)=|N_{E/F}(t^2) \cdot \lambda_V (h)^{-2}|_F\]
   for  $ t\in \GL_1(E)$ and $h\in\GO_{1,1}(F)$ or $\GO(\mathcal{V}_E)$, where $\mathcal{V}_E$ is the non-split $2$-dimensional quadratic space.
\begin{rem}
We would like to highlight the fact  that on the open orbits related to $I_0(s),$ the group embedding $\GO_{2,2}(F)\hookrightarrow\GO_{2,2}(E)^\natural$ (similar for the other two group embeddings) is not induced from the geometric embedding $i:\GO(L)\hookrightarrow\GO(L\otimes_FE),$ but the composite map $Ad_{h^\delta}\circ i$ of the adjoint map $Ad_{h^\delta}$ and the geometric embedding $i$ where $$h^\delta=\begin{pmatrix}
\sqrt{d}\\&1
\end{pmatrix}\in\GO(2,2)(E).$$ However, it does not affect the results when we consider the distinction problems  for the similitude groups. We will show that the results on the open orbits determine the distinction problems $\dim\Hom_{\GO_{2,2}(E)}(I_{Q_4}^{H_4}(1/2),\Sigma) $ when $\Sigma$ is a generic representation. (See \cite[Theorem 2.8]{blanc-delorme}.)
\end{rem}

Recall that $\GSp_4(E)^\natural=\{g\in\GSp_4(E)|\lambda_W(g)\in F^\times \}.$
When we deal with the case
\[\Ind_{P_4}^{\GSp_8(F)}\delta_{P_4}^{\frac{s}{5} }|_{\GSp_4(E)^\natural }, \]
where $P_4$ is the Siegel parabolic of $\GSp_{8}(F)$ with modular character $\delta_{P_4}$, the above results still hold. More precisely, set
\[\mathcal{I}(s)=\{f:\GSp_8(F)\rightarrow\mathbb{C}|f(xg)=\delta_{P_4}(x)^{\frac{1}{2}+\frac{s}{5}}f(g)\mbox{ for }x\in P_4, g\in \GSp_8(F) \}. \]
There is a filtration 
\[\mathcal{I} _0(s)\subset\mathcal{I}_1(s)\subset \mathcal{I}_2(s)=\mathcal{I}(s)|_{\GSp_4(E)^\natural }   \]
of $\mathcal{I}(s)|_{\GSp_4(E)^\natural}$ such that
\begin{itemize}
	\item $\mathcal{I}_0(s)\cong ind_{\GSp_4(F) }^{\GSp_4(E)^\natural }\mathbb{C},$
	\item $\mathcal{I}_1(s)/\mathcal{I}_0(s)\cong ind_{M'N' }^{\GSp_4(E)^\natural }\delta_{P_4}^{\frac{1}{2}+\frac{s}{5}}\delta_{M'N'}^{-\frac{1}{2}}$ and
	\item $\mathcal{I}_2(s)/\mathcal{I}_1(s)\cong ind_{P^\natural }^{\GSp_4(E)^\natural }\delta_{P^\natural}^{\frac{s+1}{3}}, $
\end{itemize}  
where $P^\natural$ is the Siegel parabolic subgroup of $\GSp_4(E)^\natural$, $M'\cong\GL_1(E)\times \GL_2(F),$  $N'\cong Mat_{1,1}(E)\oplus Mat_{2,2}(F)$ and $$\delta_{M'N'}(t,g)=|N_{E/F}(t)^4\cdot \lambda_W(g)^{-4}|_F $$
for $(t,g)\in\GL_1(E)\times\GL_2(F)$. Here the group embedding $\GSp_4(F)\hookrightarrow\GSp_4(E)^\natural$ in $\mathcal{I}_0(s)$ is the composition map $Ad_{g^\delta}\circ i'$ where $i':\GSp(W_2)\hookrightarrow\GSp(W_2\otimes_FE)$ is the geometric embedding and 
\[g^\delta=\begin{pmatrix}
\sqrt{d}\\&1
\end{pmatrix}\in\GSp_4(E). \]

\begin{defn}
	An irreducible representation $\tau$ of $\GSp_{4}(E)^\natural$
	occurs on the boundary of $\mathcal{I}(s)$ if $$\Hom_{\GO(2,2)(E)^\natural}(\mathcal{I}_{i+1}(s)/\mathcal{I}_i(s),\tau)\neq0
	\mbox{ for } i=0\mbox{  or  }1.$$
	\end{defn}
In \cite{hengfei2017}, we have proved that the tempered representation $\tau$ does not occur on the boundary of $\mathcal{I}(1/2)$. Then we can verify the Prasad conjecture for $\GSp_4$ when $\tau$ is a tempered representation. After discussing with Dmitry Gourevitch, we realized that
\cite[Proposition 4.9]{gurevich2015non}  can imply the Prasad conjecture for $\GSp_4$ when the $L$-packet $\Pi_{\phi_\tau }$ is  generic. Thus we will give a slightly different proof of Theorem \ref{maingsp(4)theorem} from the one in \cite{hengfei2017}.

			\subsection{The distinction problem for  $\GSp_4$}\label{subsect:proofof1.1}
			Let us recall what we have obtained.
			Let $\tau$ be an irreducible  representation of
			$\GSp_4(E).$ Since $\tau|_{\Sp_4(E)}$ is multiplicity-free due to \cite[Theorem 1.4]{adler2006on},  $\tau|_{\GSp_4(E)^\natural}$ is multiplicity-free. 
			Assume that  $\tau=\theta(\pi_1\boxtimes\pi_2)$ participates in the theta correspondence with $\GSO_{2,2}(E)$.
			Then  the see-saw identity implies that
			\[\Hom_{\GSp_4(F)}(\tau,\mathbb{C} )\subset \Hom_{\GSp_4(F)}(\Theta_2(\Sigma),\mathbb{C})\cong \Hom_{\GO_{2,2}(E)^\natural}(R_2(\mathbf{1}),\Sigma ) \]
			where $R_2(\mathbf{1} )$ is the image of the big theta lift to $H_4$ of the trivial representation of $\GSp_4(F) $
			in $I_{Q_4}^{H_4}(1/2)$ and $\Sigma$ is the irreducible representation of $\GO_{2,2}(E)$ such that $\tau=\theta(\Sigma)$. In fact, if $\pi_1\ncong\pi_2$, then $\Sigma=\Ind_{\GSO_{2,2}(E)}^{\GO_{2,2}(E)}(\pi_1\boxtimes\pi_2)$. If $\pi_1\cong\pi_2$, then there are two extensions to $\GO_{2,2}(E)$ of $\pi_1\boxtimes\pi_2$. The representation 
			$\Sigma$ is the unique extension of $\pi_1\boxtimes\pi_2$ which participates into the theta correspondence with $\GSp_4(E)$, denoted by $(\pi_1\boxtimes\pi_2)^+$.
			
		
				\begin{lem}
					Assume that  $\pi_1\boxtimes\pi_2$ is an irreducible representation of $\GSO_{2,2}(E),$ associated with an irreducible representation $\Sigma^+$ of $\GO_{2,2}(E)$ that has a nonzero theta lift to $\GSp_4(E)$. Then 
					\[\dim \Hom_{\GO(L)}(\Sigma^+,\mathbb{C})=\dim \Hom_{\GSO(L)}(\pi_1\boxtimes\pi_2,\mathbb{C} )\]
					where $\GO(L) \hookrightarrow \GO(L\otimes_F E)=\GO_{2,2}(E)$ and  the $4$-dimensional quadratic space $L$ is one of the following options: $Mat_{2,2}(F)$,  $D(F)$ and $E\oplus\mathbb{H}_F$
				\end{lem}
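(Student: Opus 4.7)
My approach is Mackey theory combined with Frobenius reciprocity, with the cases $\pi_1 \not\cong \pi_2$ and $\pi_1 \cong \pi_2$ handled separately. The starting point is a group-theoretic observation: I claim that $\GO(L) \cdot \GSO_{2,2}(E) = \GO_{2,2}(E)$ and $\GO(L) \cap \GSO_{2,2}(E) = \GSO(L)$. Indeed, any reflection in $\GO(L) \setminus \GSO(L)$ has determinant $-1$ when viewed inside $\GO_{2,2}(E)$, so the inclusion induces a surjection $\GO(L)/\GSO(L) \twoheadrightarrow \GO_{2,2}(E)/\GSO_{2,2}(E) \cong \mathbb{Z}/2$; the intersection statement is immediate. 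Consequently the double coset space $\GSO_{2,2}(E) \backslash \GO_{2,2}(E) / \GO(L)$ consists of a single point, so one can apply Mackey's restriction formula with a single orbit contribution.

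When $\pi_1 \not\cong \pi_2$ the representation $\Sigma^+$ coincides with $\Ind_{\GSO_{2,2}(E)}^{\GO_{2,2}(E)}(\pi_1 \boxtimes \pi_2)$, and the Mackey formula yields
\[\Sigma^+\big|_{\GO(L)} \cong \Ind_{\GSO(L)}^{\GO(L)}\bigl((\pi_1 \boxtimes \pi_2)\big|_{\GSO(L)}\bigr).\]
Frobenius reciprocity then delivers the desired equality $\Hom_{\GO(L)}(\Sigma^+, \mathbb{C}) \cong \Hom_{\GSO(L)}(\pi_1 \boxtimes \pi_2, \mathbb{C})$.

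When $\pi_1 \cong \pi_2 = \pi_0$ the situation is more delicate, because $\Ind_{\GSO_{2,2}(E)}^{\GO_{2,2}(E)}(\pi_0 \boxtimes \pi_0)$ splits as $\Sigma^+ \oplus \Sigma^-$, with $\Sigma^+$ singled out as the extension having nonzero theta lift to $\GSp_4(E)$. Running the same Mackey argument on the induced representation and applying Frobenius reciprocity gives
\[\dim \Hom_{\GO(L)}(\Sigma^+, \mathbb{C}) + \dim \Hom_{\GO(L)}(\Sigma^-, \mathbb{C}) = \dim \Hom_{\GSO(L)}(\pi_0 \boxtimes \pi_0, \mathbb{C}),\]
so the lemma reduces to showing $\Hom_{\GO(L)}(\Sigma^-, \mathbb{C}) = 0$. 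This vanishing is the main obstacle. My plan is to exploit the characterization of $\Sigma^+$ via theta correspondence: write $\Sigma^- = \Sigma^+ \otimes \nu$ for the nontrivial character $\nu$ of $\GO_{2,2}(E)/\GSO_{2,2}(E)$, whose restriction to $\GO(L)$ is the sign character $\det_L$, so that $\Hom_{\GO(L)}(\Sigma^-, \mathbb{C}) = \Hom_{\GO(L)}(\Sigma^+, \det_L)$. Then invoke the see-saw identity of Lemma~\ref{localseesaw} to transfer a hypothetical nonzero $(\GO(L), \det_L)$-equivariant functional on $\Sigma^+$ to a $(\GSp_4(F), \omega_{E/F})$-invariant on $\theta(\Sigma^+)$, and compare with the theta lift $\theta(\Sigma^-) = 0$ to force a contradiction. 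Identifying precisely which see-saw to run and handling the boundary contributions of the filtration on $R_2(\mathbf{1})$ will be the delicate point.
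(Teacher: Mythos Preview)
Your treatment of the case $\pi_1 \not\cong \pi_2$ is correct and identical to the paper's: a single Mackey orbit plus Frobenius reciprocity. Your reduction in the case $\pi_1 \cong \pi_2$ to the vanishing $\Hom_{\GO(L)}(\Sigma^-,\mathbb{C})=0$ is also exactly what the paper does.

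Where your proposal goes astray is in the plan to establish that vanishing. You propose to run a see-saw that would send a $(\GO(L),\nu)$-functional on $\Sigma^+$ to a $(\GSp_4(F),\omega_{E/F})$-invariant on $\theta(\Sigma^+)$ and then invoke $\theta(\Sigma^-)=0$. There are two problems. First, there is no see-saw in which the character $\nu$ on $\GO(L)$ transforms into $\omega_{E/F}$ on $\GSp_4(F)$; the $\omega_{E/F}$ is a red herring here. Second, the vanishing $\theta(\Sigma^-)=0$ is a statement about the lift to $\GSp_4(E)$, not $\GSp_4(F)$, and connecting it to a $\GSp_4(F)$-period would require the very filtration analysis of $R_2(\mathbf{1})$ that this lemma is meant to feed into --- so your route risks circularity, and in any case you acknowledge that ``identifying precisely which see-saw to run'' is still open.

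The paper's argument is cleaner and avoids all of this. It uses the see-saw pair
\[
\xymatrix{ \GO_{2,2}(E)^\natural\ar@{-}[rd]\ar@{-}[d] & \GSp_4(F)\ar@{-}[ld]\\ \GO(L)&\GSp_2(E)^\natural\ar@{-}[u]  }
\]
which immediately yields
\[
\Hom_{\GO(L)}(\Sigma^+,\nu)\;\cong\;\Hom_{\GSp_2(E)^\natural}\bigl(\Theta_2(\nu),\pi_1\bigr),
\]
where $\Theta_2(\nu)$ is the big theta lift of the determinant character $\nu$ from $\GO(L)$ to $\GSp_4(F)$. The conservation relation for $\GO(L)$ (a $4$-dimensional quadratic space over $F$) gives $n(\mathbf{1})+n(\nu)=4$; since the trivial representation already occurs at $\Sp_0$, one has $n(\nu)=4$, so $\Theta_2(\nu)=0$ and the Hom space vanishes. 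No filtration analysis, no $\omega_{E/F}$, no contradiction argument --- just one see-saw step and conservation.
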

					\begin{proof}
						If $\pi_1\neq\pi_2,$ then it follows from Frobenius Reciprocity. If $\pi_1=\pi_2,$ then
						we consider the following see-saw diagram
						\[\xymatrix{ \GO_{2,2}(E)^\natural\ar@{-}[rd]\ar@{-}[d] & \GSp_4(F)\ar@{-}[ld]\\ \GO(L)&\GSp_2(E)^\natural\ar@{-}[u]  } \]
						where $\GSp_2(E)^\natural=\{g\in\GSp_2(E)|\lambda_W(g)\in F^\times \}$.
					Then we have $$\Hom_{\GO(L)}(\Sigma^-,\mathbb{C})=\Hom_{\GO(L)}(\Sigma^+,\nu)=\Hom_{\GSp_2(E)^\natural}(\Theta_2(\nu),\pi_1 )=0,$$
						because the big theta lift $\Theta_2(\nu)$ to $\GSp_4(F)$ is zero by the conservation relation.
						Hence
						\[\Hom_{\GSO(L) }(\pi_1\boxtimes\pi_2 ,\mathbb{C})=\Hom_{\GO(L)}(\Sigma^+\oplus\Sigma^-,\mathbb{C} )=\Hom_{\GO(L)}(\Sigma^+,\mathbb{C}). \]
						This finishes the proof.
				\end{proof}
		
\begin{lem}
	Given an irreducible admissible   representation $\tau$ of $\GSp_4(E)$,  
	we have 
	\[\dim \Hom_{\GSp_4(F) }(\tau^g,\mathbb{C} )= \dim \Hom_{\GSp_4(F)}(\tau,\mathbb{C})=\dim \Hom_{\GSp_4(F)}(\tau^\vee,\mathbb{C}) \]
	where $\tau^g(x)=\tau(gxg^{-1})$ for $g\in \GSp_4(E).$
\end{lem}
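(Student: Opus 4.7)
The plan is to treat the two equalities separately.

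For the first, I would exhibit an explicit $\GSp_4(E)$-equivariant isomorphism $\tau\cong\tau^g$ via the map $\Phi\colon v\mapsto\tau(g)v$. A quick check gives $\Phi\circ\tau(x)=\tau(g)\tau(x)=\tau(gxg^{-1})\tau(g)=\tau^g(x)\circ\Phi$ for every $x\in\GSp_4(E)$, and $\Phi$ is bijective. Hence $\tau\cong\tau^g$ as $\GSp_4(E)$-representations, and restricting to $\GSp_4(F)$ yields the equality of Hom-dimensions at once.

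For the second equality, I would first reduce to the case $\omega_\tau|_{F^\times}=\mathbf{1}$. The scalar matrices $tI$ for $t\in F^\times$ lie in $\GSp_4(F)$ and act on $\tau$ (resp.\ $\tau^\vee$) through $\omega_\tau$ (resp.\ $\omega_\tau^{-1}$), so if $\omega_\tau|_{F^\times}\neq\mathbf{1}$ both sides vanish. Under this reduction, I would invoke the $\GSp_4$-analog of the MVW-involution identity
\[\tau^\vee\cong\tau\otimes(\omega_\tau^{-1}\circ\lambda_W),\]
valid for every irreducible representation of $\GSp_4(E)$. The conclusion is then immediate: since $\lambda_W(\GSp_4(F))\subset F^\times$, the twisting character $\omega_\tau^{-1}\circ\lambda_W$ is trivial on $\GSp_4(F)$, giving $\dim\Hom_{\GSp_4(F)}(\tau^\vee,\mathbb{C})=\dim\Hom_{\GSp_4(F)}(\tau,\mathbb{C})$.

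The main obstacle I expect is the justification of the identity $\tau^\vee\cong\tau\otimes(\omega_\tau^{-1}\circ\lambda_W)$. The cleanest derivation goes via the Chevalley involution $c$ of $\GSp_4$: because the Dynkin diagram of type $C_2$ admits no nontrivial automorphism, $c$ descends to an inner automorphism of $\PGSp_4$. Lifting an implementing element back to $\GSp_4(E)$, one obtains a factorization $c=\Ad(h)\circ\alpha$, where $\alpha$ is the automorphism of $\GSp_4(E)$ that fixes $\Sp_4(E)$ pointwise and inverts the center $Z\cong E^\times$. Since inner automorphisms preserve the isomorphism class of $\tau$, one has $\tau^\vee\cong\tau\circ c\cong\tau\circ\alpha$, and a direct calculation on $\GSp_4(E)=\Sp_4(E)\cdot Z$ identifies $\tau\circ\alpha$ with $\tau\otimes(\omega_\tau^{-1}\circ\lambda_W)$. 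As an alternative, one may simply cite this MVW-type formula from the literature on $\GSp_4$ representation theory.
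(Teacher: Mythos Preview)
Your proof is correct and follows essentially the same MVW-based approach as the paper. The paper's argument is terser, writing simply $\tau^\vee\cong\tau^{g_{-1}}$ for $g_{-1}\in\GSp_4(F)$ with $\lambda_W(g_{-1})=-1$; this is precisely your twist formula $\tau^\vee\cong\tau\otimes(\omega_\tau^{-1}\circ\lambda_W)$ once one accounts for the central character, which you do explicitly by first reducing to $\omega_\tau|_{F^\times}=\mathbf{1}$.
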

\begin{proof}Note that $\tau^g\cong\tau$
	and $\tau^\vee\cong\tau^{g_{-1}},$ where $g_{-1}\in\GSp_4(F)$ with similitude $\lambda_W(g_{-1})=-1$. 
	Then 
	\[\dim \Hom_{\GSp_4(F)}(\tau,\mathbb{C})=\dim \Hom_{\GSp_4(F)}(\tau^\vee,\mathbb{C}). \]
\end{proof}
\begin{rem}
		We have a similar statement for the  group $\GSO(V)$.
\end{rem}
	There is another key input for the $\GL_4$-distinction problems in our proof of Theorem \ref{maingsp(4)theorem}.
	\begin{thm}\cite[Theorem 5.2]{matringe2009distinction}
		Given a generic representation $\pi$ of $\GL_n(E)$ with a Langlands parameter $\phi_\pi=\triangle_1\oplus\triangle_2\oplus\cdots\oplus\triangle_t$ with $\triangle_i:WD_E\rightarrow \GL_{n_i}(\mathbb{C})$  irreducible and $\sum_{i=1}^t n_i=n,$ then $\pi$  is $\GL_n(F)$-distinguished if and only if  there is a reordering of $\triangle_i'$s and an integer $r$ between $1$ and $\frac{t}{2}$ such that $\triangle_{i+1}^\sigma=\triangle_i^\vee$ for $i=1,3,\cdots,2r-1 $ and $\triangle_i$ is conjugate-orthogonal for $i>2r.$ 
		\end{thm}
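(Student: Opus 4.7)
The plan is to prove both directions by combining the Bernstein--Zelevinsky geometric lemma for the Galois symmetric space $\GL_n(F) \backslash \GL_n(E)$ with the analytic theory of Asai $L$-functions, leveraging the prior characterization of distinction for essentially square-integrable representations due to Kable and Anandavardhanan--Kable--Tandon. Since $\pi$ is generic, the standard module conjecture (Theorem~\ref{stand}) applied to $\GL_n$ realizes $\pi$ as an irreducible full induction $\Ind_P^{\GL_n(E)}(\pi_1 \otimes \cdots \otimes \pi_t)$ where each $\pi_i$ is essentially square-integrable with parameter $\triangle_i$, and $P$ is the standard parabolic of type $(n_1,\ldots,n_t)$. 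The $\GL_n(F)$-orbits on $P \backslash \GL_n(E)$ are then parametrized (following Offen) by $\sigma$-twisted involutions in $S_t$, i.e.\ by partitions of $\{1,\ldots,t\}$ into ordered pairs and decorated singletons.

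For the ``if'' direction, assume the parameter has the stated shape. Each paired block $(\pi_{2k-1},\pi_{2k})$ with $\triangle_{2k}^\sigma \cong \triangle_{2k-1}^\vee$ admits a Flicker--Rallis invariant linear form on the diagonal $\GL_{n_{2k-1}}(E)$, and each unpaired block $\pi_i$ with $i > 2r$ is $\GL_{n_i}(F)$-distinguished by the Kable and Anandavardhanan--Kable--Tandon classification of distinguished discrete series. Realizing the tensor product of these local invariant forms on the open orbit of the Mackey filtration produces a nonzero element of $\Hom_{\GL_n(F)}(\pi, \mathbb{C})$, yielding the sufficiency.

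For the ``only if'' direction, suppose $\pi$ is $\GL_n(F)$-distinguished. Use the Flicker--Rallis interpretation, which bounds $\dim \Hom_{\GL_n(F)}(\pi, \mathbb{C})$ by $\operatorname{ord}_{s=0} L(s, \pi, \mathrm{As})$, together with the multiplicativity
\[
L(s, \pi, \mathrm{As}) = \prod_{i=1}^t L(s, \pi_i, \mathrm{As}) \cdot \prod_{i<j} L(s, \pi_i \times \pi_j^\sigma).
\]
A pole of $L(s, \pi_i, \mathrm{As})$ at $s=0$ forces $\triangle_i$ to be conjugate-orthogonal, while a pole of $L(s, \pi_i \times \pi_j^\sigma)$ at $s=0$ forces $\triangle_j^\sigma \cong \triangle_i^\vee$. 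Matching the pole orders on the two sides yields the combinatorial decomposition of $\phi_\pi$ stated in the theorem.

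The main obstacle is controlling the contributions from the non-open Mackey orbits in the necessity direction: a priori such orbits could contribute additional invariant forms corresponding to partial pairings or unpaired blocks that are not conjugate-orthogonal. Ruling this out requires an exactness/Ext-vanishing analysis of the Bernstein--Zelevinsky filtration of $\pi$ viewed as a $\GL_n(F)$-module, combined with an induction on the number of segments $t$ together with Casselman-type computations of Jacquet modules along $\sigma$-stable parabolics. This inductive unfolding, which matches the analytic pole counting against the geometric orbit filtration, is the technical core of the argument.
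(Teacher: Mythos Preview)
The paper does not prove this statement: it is quoted from Matringe \cite{matringe2009distinction} and used as a black box, so there is no in-paper proof to compare against. Your outline is broadly in the spirit of Matringe's actual argument, which is a Mackey/geometric-lemma analysis of the $P(E)\backslash \GL_n(E)/\GL_n(F)$ double cosets, but your ``only if'' paragraph contains a genuine gap that you should be aware of.

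The claimed inequality $\dim\Hom_{\GL_n(F)}(\pi,\mathbb{C})\le \operatorname{ord}_{s=0}L(s,\pi,\mathrm{As})$ is not a theorem in the literature and is not what Flicker--Rallis gives you; by Flicker's multiplicity-one the left side is always $\le 1$, so no ``pole-order matching'' is available. What one actually knows is that distinction of a generic $\pi$ forces a pole of $L(s,\pi,\mathrm{As})$, hence via multiplicativity a pole of \emph{some} factor $L(s,\pi_i,\mathrm{As})$ or $L(s,\pi_i\times\pi_j^\sigma)$. That only tells you \emph{one} $\triangle_i$ is conjugate-orthogonal or \emph{one} pair satisfies $\triangle_j^\sigma\cong\triangle_i^\vee$; it does not force the remaining $\triangle_k$ into the required dichotomy, which is the whole content of the theorem. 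So the Asai $L$-function argument as you wrote it cannot close the necessity direction.

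What does work, and what Matringe in fact does, is exactly what you push to your final paragraph: the invariant functional on $\pi=\Ind_P(\pi_1\otimes\cdots\otimes\pi_t)$ must be supported on some orbit in the Bernstein--Zelevinsky filtration, and a Jacquet-module/central-character analysis on each orbit shows that the only orbits that can carry a nonzero $\GL_n(F)$-invariant form are those whose stabilizer conditions translate precisely into ``$\triangle_{2k}^\sigma\cong\triangle_{2k-1}^\vee$ on the paired blocks and $\triangle_i$ conjugate-orthogonal on the singletons'' (the singleton case reducing to Kable and Anandavardhanan--Kable--Tandon). In other words, the Mackey analysis is not a technical afterthought to an $L$-function proof; it \emph{is} the proof, and the $L$-function heuristic you put front and center should be dropped or demoted to motivation.
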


\begin{lem}\label{GL:period}
	 Let $\pi$ be a square-integrable representation of $\GL_2(E)$. Then $\pi$ is $\GL_2(F)$-distinguished if and ony if $\pi$ is $D^\times(F)$-distinguished. If $\pi=\pi(\chi^{-1},\chi^\sigma)$, then $\pi$ is both $\GL_2(F)$-distinguished and $D^\times(F)$-distinguished.
	Let $\pi_0=\pi(\chi_1,\chi_2 )$ with $\chi_1\neq\chi_2,\chi_1|_{F^\times}=\chi_2|_{F^\times}=\mathbf{1}$ be an irreducible smooth representation of $\GL_2(E).$ Then
	$\pi_0 $ is $\GL_2(F)$-distinguished but not $D^\times(F)$-distinguished.  
\end{lem}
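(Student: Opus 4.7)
The plan is to characterize each distinction through local base change and the Jacquet--Langlands correspondence. By the Matringe--Flicker criterion just recalled, an irreducible generic $\pi$ of $\GL_2(E)$ is $\GL_2(F)$-distinguished iff $\pi\cong\pi^{\sigma\vee}$, equivalently iff $\pi=\BC_{E/F}(\pi_F)$ for some $\pi_F$ on $\GL_2(F)$. The analogous inner-form statement says that $\pi$ is $D^\times(F)$-distinguished iff $\pi=\BC_{E/F}(\sigma)$ for some representation $\sigma$ of $D^\times(F)$, where we identify $D^\times(E)\cong\GL_2(E)$ via $D\otimes_F E\cong Mat_{2,2}(E)$.

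For the square-integrable assertion, let $\pi$ be a discrete series of $\GL_2(E)$. If $\pi$ is $\GL_2(F)$-distinguished, then $\pi=\BC(\pi_F)$ with $\pi_F$ a discrete series of $\GL_2(F)$. The local Jacquet--Langlands correspondence produces a transfer $\pi_F^{JL}$ to $D^\times(F)$, and compatibility of base change with Jacquet--Langlands gives $\BC(\pi_F^{JL})=\pi$; hence $\pi$ is $D^\times(F)$-distinguished. The reverse direction is entirely symmetric.

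For $\pi=\pi(\chi^{-1},\chi^\sigma)$, a direct computation on principal series yields $\pi^{\sigma\vee}\cong\pi$, giving $\GL_2(F)$-distinction. The parameter $\chi^{-1}\oplus\chi^\sigma$ on $W_E$ extends to the dihedral parameter $\Ind_{W_E}^{W_F}\chi^{-1}$ on $W_F$, which is irreducible because $\chi\neq\chi^\sigma$; this corresponds to a supercuspidal $\widetilde{\pi}_F$ of $\GL_2(F)$, whose Jacquet--Langlands transfer to $D^\times(F)$ base-changes back to $\pi$, so $\pi$ is also $D^\times(F)$-distinguished. By contrast, for $\pi_0=\pi(\chi_1,\chi_2)$ with $\chi_i|_{F^\times}=\mathbf{1}$ and $\chi_1\neq\chi_2$, each $\chi_i$ extends to a character $\widetilde{\chi}_i$ of $W_F$ by local class field theory, and the parameter of $\pi_0$ extends reducibly as $\widetilde{\chi}_1\oplus\widetilde{\chi}_2$, corresponding on $\GL_2(F)$ to a principal series. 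This yields $\GL_2(F)$-distinction, but since $D^\times(F)$ admits no representation whose Langlands parameter is a sum of two distinct characters, $\pi_0$ is not base-changed from $D^\times(F)$ and therefore not $D^\times(F)$-distinguished.

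The principal technical subtlety is making precise the $D^\times(F)$-analog of the Matringe--Flicker criterion invoked above, namely that $D^\times(F)$-distinction is equivalent to being a local base change from $D^\times(F)$. For the third case this amounts to ruling out exotic sources of $D^\times(F)$-invariant linear forms on a principal series of $\GL_2(E)$; one can settle this either by importing the corresponding inner-form distinction result from the literature, or by a direct Mackey-type orbit analysis on $B(E)\backslash\GL_2(E)/D^\times(F)$ together with the fact that $D^\times(F)$ is compact modulo center, which precludes invariant forms in this case.
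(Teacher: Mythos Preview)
Your base-change framework has genuine errors, not just unfilled details.

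First, the equivalence ``$\pi$ is $\GL_2(F)$-distinguished $\Leftrightarrow$ $\pi\cong\pi^{\sigma\vee}$ $\Leftrightarrow$ $\pi=\BC_{E/F}(\pi_F)$'' is false on both counts. Flicker's direction gives only $\pi^\sigma\cong\pi^\vee$ as a \emph{necessary} condition; Matringe's theorem (which you invoke) says that for an irreducible $2$-dimensional parameter the precise condition is that $\phi_\pi$ be conjugate-\emph{orthogonal}, which is strictly stronger than conjugate-self-dual. And the image of ordinary base change from $\GL_2(F)$ is characterized by $\pi^\sigma\cong\pi$, not by $\pi^\sigma\cong\pi^\vee$; these are different conditions. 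So neither your $\GL_2(F)$ criterion nor its $D^\times(F)$ analogue is available in the form you state, and the square-integrable argument built on them does not go through.

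Second, your dihedral-induction step for $\pi=\pi(\chi^{-1},\chi^\sigma)$ is wrong as written: $\Ind_{W_E}^{W_F}\chi^{-1}$ restricts to $\chi^{-1}\oplus(\chi^{-1})^\sigma=\chi^{-1}\oplus(\chi^\sigma)^{-1}$, not to $\chi^{-1}\oplus\chi^\sigma$. So even granting a base-change criterion for $D^\times(F)$, you have not exhibited the right lift.

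The paper avoids all of this. For the square-integrable statement it simply cites Prasad's Theorem~C. For the principal series it carries out exactly the Mackey analysis you mention only as a fallback: $D^\times(F)$ acts transitively on $B(E)\backslash\GL_2(E)$ with stabilizer $E^\times$ (embedded via $e\mapsto\mathrm{diag}(e,\bar e)$), and since $\delta_B$ is trivial on this copy of $E^\times$ one gets
\[
\dim\Hom_{D^\times(F)}(\pi(\chi_1,\chi_2),\mathbb{C})=\dim\Hom_{E^\times}(\chi_1\chi_2^\sigma,\mathbb{C}).
\]
For $\pi(\chi^{-1},\chi^\sigma)$ this character is $\chi^{-1}(\chi^\sigma)^\sigma=\mathbf{1}$, giving $D^\times(F)$-distinction; for $\pi_0$ one has $\chi_2^\sigma=\chi_2^{-1}$ (since $\chi_2|_{F^\times}=\mathbf{1}$), so $\chi_1\chi_2^\sigma=\chi_1\chi_2^{-1}\neq\mathbf{1}$, and $\pi_0$ is not $D^\times(F)$-distinguished. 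The $\GL_2(F)$-distinction of both $\pi$ and $\pi_0$ then follows because their parameters are conjugate-orthogonal. This direct route is what you should write up.
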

\begin{proof}
	If $\pi$ is square-integrable, then it follows from \cite[Theorem C]{prasad1992gl(2)}. Let $\pi_0=\pi(\chi_1,\chi_2)$.
	By  Mackey theory, we know that
	\[\dim\Hom_{D^\times(F) }(\pi_0,\mathbb{C} )=\dim\Hom_{E^\times}(\chi_1\chi_2^\sigma,\mathbb{C} )=\begin{cases}
	1&\mbox{ if }\chi_1\chi_2^\sigma=\mathbf{1};\\
	0&\mbox{ otherwise.}
	\end{cases}  \]
	 If $\chi_1\neq\chi_2$ and $\chi_1|_{F^\times}=\chi_2|_{F^\times}=\mathbf{1}$, then $\chi_1\chi_2^\sigma\neq\mathbf{1}$. Thus $\pi_0$ is not $D^\times(F)$-distinguished. Since the Langlands parameter $\phi_{\pi}=\chi^{-1}\oplus\chi^\sigma$ (resp. $\phi_{\pi_0}$)  is conjugate-orthogonal in the sense of \cite[\S3]{gan2011symplectic}, $\pi$ (resp. $\pi_0$) is $\GL_2(F)$-distinguished due to \cite[Theorem 6.2]{gan22arithmeticity} or \cite[Theorem 5.2]{matringe2009distinction}.
\end{proof}
\begin{lem} \label{arnab}
	Let $\pi$ be an essentially discrete series representation of $\GL_2(E)$.
	Let $\Pi=J_{P}(\pi|-|_E,\pi)$ be the nongeneric representation of $\GL_4(E)$. Then the following statements are equivalent:
	\begin{enumerate}[(i)]
		\item 	$\Pi$ is both $\GL_4(F)$-distinguished and $(\GL_4(F),\omega_{E/F})$-distinguished;
		\item $\Pi^\vee\cong\Pi^\sigma$.
	\end{enumerate}
\end{lem}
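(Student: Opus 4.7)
The implication (i)$\Rightarrow$(ii) is immediate from the Gelfand-Kazhdan involution $g\mapsto {}^tg^{-1}$ on $\GL_4$: either of the two distinction properties in (i) already yields $\Pi^\vee\cong\Pi^\sigma$, so it is the reverse direction that requires work.

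For (ii)$\Rightarrow$(i), I would first translate the hypothesis into a condition on $\pi$. The Langlands parameter of the Speh representation $\Pi=J_P(\pi|-|_E,\pi)$ is the semisimple Weil-Deligne representation $\phi_\Pi=\phi_\pi|-|_E\oplus\phi_\pi$, and matching irreducible summands in $\phi_\Pi^\vee\cong\phi_\Pi^\sigma$ yields two possibilities; the alternative pairing would force $\pi\cong\pi|-|_E^2$, which is excluded by comparing central characters of the essentially discrete series $\pi$. Thus (ii) is equivalent to
\[
\pi^\vee\cong\pi^\sigma|-|_E,
\]
equivalently $\tilde\pi:=\pi|-|_E^{1/2}$ is conjugate self-dual.

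The next step exploits the standard short exact sequence
\[
0\longrightarrow \Pi_0\longrightarrow I_P(\pi|-|_E,\pi)\longrightarrow \Pi\longrightarrow 0,
\]
in which the generic constituent $\Pi_0=\mathrm{St}(\pi,2)$ has Langlands parameter $\phi_\pi\otimes S_2$. This parameter is conjugate self-dual iff $\pi^\vee\cong\pi^\sigma$, which combined with the displayed translation of (ii) would force $\pi\cong\pi|-|_E^2$, a contradiction. Hence $\Pi_0^\vee\not\cong\Pi_0^\sigma$, so $\Hom_{\GL_4(F)}(\Pi_0,\chi)=0$ for every character $\chi$ of $\GL_4(F)$, and applying $\Hom_{\GL_4(F)}(-,\chi)$ to the above sequence produces an isomorphism
\[
\Hom_{\GL_4(F)}(\Pi,\chi)\cong \Hom_{\GL_4(F)}(I_P(\pi|-|_E,\pi),\chi).
\]

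Finally, a Mackey analysis of $P(E)\backslash \GL_4(E)/\GL_4(F)$, stratified by $\dim_F(W\cap F^4)$ for a $2$-plane $W\subset E^4$, shows that the open orbit has stabilizer $\GL_2(E)$ diagonally embedded into $\GL_4(F)$ via Weil restriction, with $\det=N_{E/F}\circ\det_E$. Since both $\mathbf{1}$ and $\omega_{E/F}$ vanish on $N_{E/F}(E^\times)$, the open-orbit contribution to $\Hom_{\GL_4(F)}(I_P(\pi|-|_E,\pi),\chi)$ equals $\Hom_{\GL_2(E)}(\pi|-|_E\otimes\pi^\sigma,\mathbf{1})$ for \emph{both} $\chi\in\{\mathbf{1},\omega_{E/F}\}$, and Schur's lemma makes this space one-dimensional precisely under (ii). Hence $\dim\Hom_{\GL_4(F)}(\Pi,\chi)\geq 1$ for both characters, giving (i). The main obstacle is the Mackey bookkeeping at the open orbit — specifically checking the determinant formula for Weil restriction, the compatibility of the modular character, and the simultaneous triviality of $\mathbf{1}$ and $\omega_{E/F}$ on the stabilizer; it is this double vanishing that forces the Speh representation $\Pi$ to be distinguished by both characters at once, in contrast with the usual dichotomy for generic representations.
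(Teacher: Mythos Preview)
The paper does not prove this lemma at all --- it simply cites \cite[Theorem 6.5]{arnab2018}. So there is no internal argument to compare against, only the question of whether your sketch actually goes through.

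Your strategy is sound when $\pi$ is \emph{supercuspidal}, but it breaks down when $\pi$ is a twisted Steinberg. The assertion that the generic constituent $\Pi_0$ of $I_P(\pi|-|_E,\pi)$ is $\mathrm{St}(\pi,2)$ with parameter $\phi_\pi\otimes S_2$ is only correct for supercuspidal $\pi$. If $\pi=\mathrm{St}_\chi$, the Zelevinsky segments $[\chi|-|_E^{-1/2},\chi|-|_E^{1/2}]$ and its shift merge to a single length-$4$ segment, so $\Pi_0=\mathrm{St}_{\GL_4}\cdot(\chi|-|_E^{1/2})$ with parameter $\chi|-|_E^{1/2}\otimes S_4$, not $\phi_\pi\otimes S_2$. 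Now condition (ii), which you correctly translated to $\chi\chi^\sigma=|-|_E^{-1}$, says exactly that $\chi|-|_E^{1/2}$ is conjugate self-dual, hence $\Pi_0$ \emph{is} conjugate self-dual under (ii) --- the opposite of what you claim. By the dichotomy for discrete series, $\Pi_0$ is then distinguished with respect to exactly one of $\mathbf{1},\omega_{E/F}$, so $\Hom_{\GL_4(F)}(\Pi_0,\chi)\neq0$ for one choice of $\chi$ and the isomorphism $\Hom_{\GL_4(F)}(\Pi,\chi)\cong\Hom_{\GL_4(F)}(I_P,\chi)$ fails for that $\chi$. The argument as written therefore does not establish (i) in the Steinberg case.

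A secondary issue you flag yourself: even granting the Mackey bookkeeping, passing from ``the open-orbit term is one-dimensional'' to ``$\dim\Hom_{\GL_4(F)}(I_P,\chi)\geq1$'' requires knowing that an invariant functional on the open stratum extends to all of $I_P$. This is not automatic; elsewhere in the paper such extensions are justified via \cite[Proposition 4.9]{dima2018analytic}, and you would need an analogous input here.
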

\begin{proof}
	See \cite[Theorem 6.5]{arnab2018}.
\end{proof}
\subsubsection{The Langlands correspondence for $\GSp_4$}
In this part, we will recall the Langlands correspondence for $\GSp_4$ which has been set up by Gan-Takeda in \cite{gan2011locallanglands}.

Let $\Pi(\GSp_4)$ be the set of (equivalence classes of) irreducible smooth representation of $\GSp_4(F)$. Let $\Hom(WD_F,\GSp_4(\mathbb{C}))$ be the set of (equivalence classes of) admissible homomorphisms
\[WD_F\longrightarrow \GSp_4(\mathbb{C}). \]
\begin{thm}[Gan-Takeda]
	There is a surjective finite to one map
	\[L:\Pi(\GSp_4)\longrightarrow \Hom(WD_F,\GSp_4(\mathbb{C})) \]
	with the following properties:
	\begin{enumerate}[(i)]
		\item $\tau$ is a (essentially) discrete series representation of $\GSp_4(F)$ if and only if its $L$-parameter $\phi_\tau=L(\tau) $ does not factor through any proper Levi subgroup of $\GSp_4(\mathbb{C})$.
		\item For an $L$-parameter $\phi\in \Hom(WD_F,\GSp_4(\mathbb{C}))$, its fiber $\Pi_\phi $ can be naturally parametrized by the set of irreducible characters of the component group
		\[\pi_0(Z(Im(\phi))/Z_{\GSp_4(\mathbb{C})} ). \]
		This component group is either trivial or equal to $\mathbb{Z}/2\mathbb{Z}$. When it is $\mathbb{Z}/2\mathbb{Z}$, exactly one of the two representations in $\Pi_\phi$ is generic and it is the one indexed by the trivial character of $\pi_0(Z(Im(\phi))/Z_{\GSp_4(\mathbb{C})} )$. 
		\item The similitude character sim$(\phi_\tau)$ of $\phi_\tau $ is equal to the central character $\omega_\tau$ of $\tau$. Here sim:$\GSp_4(\mathbb{C})\longrightarrow \mathbb{C}^\times$ is the similitude character of $\GSp_4(\mathbb{C})$.
		\item The $L$-parameter of $\tau\otimes(\chi\circ\lambda_W)$ is equal to $\phi_{\tau}\otimes\chi$. Here $\lambda_W:\GSp_4(F)\longrightarrow F^\times$ is the similitude character of $\GSp_4(F)$, and we have regarded $\chi$ as both a character of $F^\times$ and a character $W_F$ by local class field theory.
	\end{enumerate}
\end{thm}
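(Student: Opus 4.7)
The natural strategy, essentially the one carried out by Gan--Takeda, is to construct the map $L$ via the local theta correspondence for similitude groups, thereby reducing the problem to the already-established local Langlands correspondence for $\GL_4(F)$ together with the local Jacquet--Langlands correspondence for the inner form. The starting point is the dichotomy: for any irreducible $\tau$ of $\GSp_4(F)$, exactly one of the two big theta lifts $\Theta(\tau)$ to $\GO_{3,3}(F)$ or to $\GO_{4,0}(F)$ is nonzero, by the conservation relation of Kudla--Rallis and Sun--Zhu. In the $\GO_{3,3}$ case one has $\GSO_{3,3}(F) \cong (\GL_4(F) \times \GL_1(F))/\{(t^{-1},t^2)\}$, so the lift has the form $\Pi \boxtimes \mu$ with $\omega_{\Pi} = \mu^2$; in the $\GO_{4,0}$ case one has $\GSO_{4,0}(F) \cong (D^\times(F) \times D^\times(F))/\Delta F^\times$, and the lift has the form $\pi_1 \boxtimes \pi_2$ with $\omega_{\pi_1}=\omega_{\pi_2}$.

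I would then define $\phi_\tau$ as follows. In the $\GO_{3,3}$ case, invoke the LLC for $\GL_4(F)$ to obtain $\phi_{\Pi}:WD_F \to \GL_4(\mathbb{C})$; the condition $\omega_{\Pi}=\mu^2$ translates via LLC to $\det \phi_\Pi = \mu^2$, which together with the symplectic/orthogonal nature of the lift forces $\phi_\Pi$ to preserve a symplectic form with similitude $\mu$, giving the desired parameter $\phi_\tau:WD_F\to\GSp_4(\mathbb{C})$ with $\mathrm{sim}(\phi_\tau)=\mu$. In the $\GO_{4,0}$ case, use the LJLC to transfer $\pi_i$ to representations of $\GL_2(F)$, take their $L$-parameters, and assemble them as a direct sum of two two-dimensional symplectic-similitude parameters sharing a common similitude character. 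Property (iv) is immediate from how twisting by $\chi\circ\lambda_W$ moves through the theta correspondence ($\Pi\boxtimes\mu\mapsto \Pi\chi\boxtimes\mu\chi^2$) combined with the $\GL_4$ LLC, and property (iii) follows from the central-character computation in the extended Weil representation.

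For properties (i) and (ii), the main tool is the explicit behavior of the theta lift with respect to parabolic induction (Kudla's tower/ induction principle) together with the standard-module conjecture proved in \cite{Heiermann2013standard}. A representation $\tau$ is essentially discrete series precisely when its theta lift cannot be obtained as a quotient of a representation parabolically induced from a proper Levi compatible with the orthogonal side, which on the Galois side means $\phi_\tau$ does not factor through a proper Levi of $\GSp_4(\mathbb{C})$. The component group is either trivial or $\mathbb{Z}/2\mathbb{Z}$ because $Z(\mathrm{Im}(\phi))/Z_{\GSp_4(\mathbb{C})}$ is a $2$-torsion subgroup of a torus of rank at most one in the discrete-series case; the two elements of a non-singleton packet correspond under theta lift to the two representations of the pair $(\GO_{3,3},\GO_{4,0})$ or to the two extensions $(\pi_1\boxtimes\pi_1)^\pm$ of Section 3. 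The generic member is identified with the one whose lift is ``split'' and hence generic after matching Whittaker models through the theta correspondence (Gan--Ichino's formal-degree/Whittaker compatibility).

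The most delicate point — and the anticipated main obstacle — is verifying that the parameter $\phi_\tau$ so produced is well defined and independent of all auxiliary choices, and that the bijection actually lands in the claimed component-group parametrization with the correct identification of the generic constituent. This requires matching of local factors under theta lift (Lapid--Rallis doubling method, Gan--Takeda compatibility of $\gamma$-factors) to show that the $L$- and $\epsilon$-factors computed on the two sides agree, so that the assignment $\tau\mapsto\phi_\tau$ is compatible with the Plancherel formula and with parabolic induction in all remaining reducibility cases. Once these compatibilities are established, the surjectivity and finite-to-one property follow from the exhaustion of $\GL_4$ parameters of symplectic type together with the explicit computation of theta lifts for non-tempered parameters using the standard-module machinery of Section 3.
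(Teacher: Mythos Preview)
The paper does not prove this theorem at all: it is stated as a black-box result of Gan--Takeda, with a bare citation to \cite{gan2011locallanglands} and no argument supplied. So there is no ``paper's own proof'' against which to compare your proposal.

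That said, your sketch is a reasonable high-level summary of the Gan--Takeda approach via theta correspondence to $\GSO_{3,3}$ and $\GSO_{4,0}$, and it correctly identifies the main inputs (LLC for $\GL_4$, Jacquet--Langlands, Kudla's induction principle, matching of local factors via doubling). If your intent was to outline how one would prove the cited result, the shape is right; but be aware that several of the steps you list (especially the well-definedness of $\phi_\tau$, the precise identification of when the image lands in $\GSp_4(\mathbb{C})$ rather than just $\GL_4(\mathbb{C})$, and the component-group labeling of the generic member) are substantial and occupy most of the actual Gan--Takeda paper. For the purposes of the present paper, however, none of this is needed: the theorem is invoked, not proved.
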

We repeat the statements of Theorem 1.1 as below.
	\begin{thm}\label{localgspperiod} Assume that $\tau$ 
		is an irreducible representation of $\GSp_4(E)$ with
		a central character $\omega_\tau$ satisfying $\omega_\tau|_{F^\times}=\mathbf{1}.$
		\begin{enumerate}[(i)]
			\item If $\tau=\theta(\Sigma)$ is an irreducible representation of $\GSp_4(E),$ where $\Sigma$ is an irreducible representation of $\GO_{4,0}(E)$, then  $\tau$ is not $\GSp_4(F)$-distinguished.
			\item Suppose that  $\Sigma=(\pi_1\boxtimes\pi_1)^+$ is an irreducible representation of $\GO_{2,2}(E)$ and  $\Sigma=\Ind_{\GSO_{2,2}(E)}^{\GO_{2,2}(E)}(\pi_1\boxtimes\pi_2)$ if $\pi_1\neq\pi_2.$ If $\tau=\theta(\Sigma)$ is generic, 
			%
			then 
			\[\dim \Hom_{\GSp_4(F)}(\tau,\mathbb{C})=\begin{cases}
		2,&\mbox{ if }\pi_i\ncong\pi_0 \mbox{  are both }\GL_2(F)\mbox{-distinguished};\\
		1,&\mbox{ if }\pi_1\ncong\pi_2\mbox{ but }\pi_1^\sigma\cong\pi_2^\vee;\\
		1,&\mbox{ if }\pi_1\cong\pi_2 \mbox{ is }\GL_2(F)\mbox{-distinguished but not }(\GL_2(F),\omega_{E/F})\mbox{-distinguished};\\
		1,&\mbox{ if }\pi_2 \mbox{ is }\GL_2(F)\mbox{-distinguished and }\pi_1\cong\pi_0;
		\\
		0,&\mbox{the other cases.}
		\end{cases}\]
			Here $\pi_0=\pi(\chi_1,\chi_2)$ with $\chi_1\neq\chi_2,\chi_1|_{F^\times}=\chi_2|_{F^\times}=1.$ 
				\item Assume that $\tau$ is not in case (i) or (ii), so that $\tau=\theta(\Pi\boxtimes\chi),$ where $\Pi\boxtimes\chi$ is a representation of $\GSO_{3,3}(E)$. If $\tau$ is generic,		then
			\[\dim \Hom_{\GSp_4(F) }(\tau,\mathbb{C})=\begin{cases}
			1,&\mbox{ if }\Pi\mbox{ is }\GL_4(F)\mbox{-distinguished;}\\
			0,&\mbox{ otherwise.}
			\end{cases} \]
		\end{enumerate}
	\end{thm}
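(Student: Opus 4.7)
The plan is to reduce the $\GSp_4(F)$-period on $\tau$ to a period problem on the complementary orthogonal group via the see-saw identity of Lemma \ref{localseesaw}. For $\Sigma$ an irreducible representation of $\GO(V)$ with $\theta(\Sigma) = \tau$, the see-saw pair
\[\xymatrix@=1.2em{\GSp_4(E)^\natural \ar@{-}[rd] \ar@{-}[d] & \GO(V)^\natural \ar@{-}[ld] \ar@{-}[d] \\ \GSp_4(F) & \GO(\Res_{E/F}V)}\]
gives $\Hom_{\GSp_4(F)}(\tau,\mathbb{C}) \cong \Hom_{\GO(V)^\natural}(R(\mathbf{1}),\Sigma)$, where $R(\mathbf{1})$ is the image inside the appropriate degenerate Siegel principal series of the big theta lift to $\GO(\Res_{E/F}V)$ of the trivial representation of $\GSp_4(F)$.

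For case (i) with $V = D_E$, Lemma \ref{quadraticrestriction} identifies $\Res_{E/F}D_E$ as the \emph{nonsplit} $8$-dimensional quadratic space over $F$ (trivial discriminant, Hasse invariant $-1$). One checks that the analog of $R_2(\mathbf{1})$ for this nonsplit tower cannot pair nontrivially with a representation $\Sigma$ of $\GO_{4,0}(E)$ that has nonzero theta lift to $\GSp_4(E)$ (by the conservation relation and an orbit analysis), forcing the Hom space to vanish. For case (iii) with $V = \H_E^3$, one has $\Res_{E/F}\H_E^3 \cong \H^6$ split, and the see-saw translates the $\GSp_4(F)$-period on $\tau = \theta(\Pi \boxtimes \chi)$ into the $\GL_4(F)$-period on $\Pi$ twisted by a character determined by $\chi$ and the constraint $\omega_\tau|_{F^\times} = \mathbf{1}$; Matringe's criterion \cite[Theorem 5.2]{matringe2009distinction} then yields the stated $1$ vs.\ $0$ alternative.

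The main work is case (ii), where $V = \H_E^2$ and $\Res_{E/F}V \cong \H^4$, so the problem becomes $\Hom_{\GO_{2,2}(E)^\natural}(R_2(\mathbf{1}),\Sigma)$ with $R_2(\mathbf{1}) \subset I_{Q_4}^{H_4}(1/2)$. I would filter $I_{Q_4}^{H_4}(1/2)|_{\GO_{2,2}(E)^\natural}$ along the $\GO_{2,2}(E)^\natural$-orbits on $X_4 = Q_4\backslash H_4$, producing $I_0 \subset I_1 \subset I_2$ whose top graded pieces correspond to the closed and intermediate orbits, and whose bottom $I_0$ is the sum of three compactly induced representations from the stabilizers $\GO_{2,2}(F), \GO_{4,0}(F), \GO_{3,1}(F)$ of the three open $F$-rational orbits. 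The crucial vanishing step invokes \cite[Proposition 4.9]{gurevich2015non}: when $\Sigma$ is generic, $\Hom_{\GO_{2,2}(E)^\natural}(I_i/I_{i-1},\Sigma) = 0$ for $i = 1, 2$, so only the three open orbits contribute. By Frobenius reciprocity the problem reduces to computing $\dim \Sigma^{\GO(L)}$ for $L \in \{\H^2, D(F), E \oplus \H\}$, and the auxiliary lemma (using that the big theta lift $\Theta_2(\nu)$ to $\GSp_4(F)$ vanishes by conservation) identifies this with $\dim \Hom_{\GSO(L)}(\pi_1 \boxtimes \pi_2, \mathbb{C})$.

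Finally, each $\GSO(L)$-period decomposes as a pair of (possibly twisted) $\GL_2(F)$- or $D^\times(F)$-distinction conditions on $\pi_1, \pi_2$, which I would classify using Prasad's dichotomy (Lemma \ref{GL:period}, \cite{prasad1992gl(2)}) together with Mackey analysis for the principal series $\pi_0$. Multiplicity two appears exactly when two distinct open orbits (say $\GO_{2,2}(F)$ and $\GO_{4,0}(F)$) both contribute, i.e.\ when $\pi_1 \ncong \pi_2$ are both $\GL_2(F)$-distinguished and hence simultaneously $D^\times(F)$-distinguished; the borderline rows $\pi_1 \cong \pi_2$ and $\pi_1 \cong \pi_0$ each isolate a single orbit because $\pi_0$ fails $D^\times(F)$-distinction and because only $(\pi_1\boxtimes\pi_1)^+$ survives into the theta correspondence. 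The main obstacles I anticipate are (a) verifying the boundary vanishing rigorously from \cite{gurevich2015non} in the similitude setting, and (b) the bookkeeping around the twisted embeddings $Ad_{h^\delta} \circ i: \GO(L) \hookrightarrow \GO_{2,2}(E)^\natural$ appearing on the open orbits (the Remark after the filtration), which must be shown not to alter the resulting period count — this relies on \cite[Theorem 2.8]{blanc-delorme}.
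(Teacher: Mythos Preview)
Your overall strategy---see-saw plus an orbit decomposition of a degenerate principal series---is indeed the paper's, and case (i) matches essentially verbatim. But for cases (ii) and (iii) there is both a genuine gap in your outline and a real difference of method from the paper.

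\textbf{The gap.} You identify $\Hom_{\GSp_4(F)}(\tau,\mathbb{C})$ with $\Hom_{\GO_{2,2}(E)^\natural}(R_2(\mathbf{1}),\Sigma)$ and then filter $I_{Q_4}^{H_4}(1/2)$. But $R_2(\mathbf{1})$ is only a \emph{submodule} of $I_{Q_4}^{H_4}(1/2)$: Proposition~\ref{degenerateseries} gives the exact sequence $0\to R_2(\mathbf{1})\to I_{Q_4}^{H_4}(1/2)\to R_1(\mathbf{1})\otimes\nu\to 0$, and the quotient contributes $\Hom_{\GO_{2,2}(E)^\natural}(R_1(\mathbf{1})\otimes\nu,\Sigma)\cong \Hom_{\GSp_2(F)}(\Theta_1(\Sigma),\mathbb{C})$, which is nonzero exactly when $\pi_1\cong\pi_2$. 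Your computation on the open orbits would then give $3$ (all three stabilizers $\GO_{2,2}(F),\GO_{4,0}(F),\GO_{3,1}(F)$ contribute) when $\pi_1=\pi_2$ is $D^\times(F)$-distinguished, whereas the correct multiplicity is $2$; the missing $1$ is precisely this $\GSp_2$-period correction. Your outline never mentions it. Relatedly, the result you cite as \cite[Proposition~4.9]{gurevich2015non} (in the paper it is \cite[Proposition~4.9]{dima2018analytic}) yields only the \emph{inequality} $\dim\Hom(I,\Sigma)\geq\dim\Hom(I_0,\Sigma)$, not boundary vanishing.

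\textbf{The difference in method.} The paper does \emph{not} attempt to prove boundary vanishing on the $\GO_{2,2}(E)^\natural$ side. Instead it runs a sandwich: the lower bound \eqref{key:inequality} comes from the open orbits plus the $\GSp_2$ correction just described; the upper bound \eqref{reverse:ineq} comes from a \emph{second}, reversed see-saw on the $\GSp_8(F)$ side, bounding $\dim\Hom_{\GSp_4(F)}(\tau,\mathbb{C})$ by $\dim\Hom_{\GO_{3,3}(F)}(\Theta_6^+(\tau),\mathbb{C})+\dim\Hom_{\GO_{4,0}(F)}(\Theta_4^+(\tau),\mathbb{C})$. These are then matched case by case using the irreducibility of $\Theta_6^+(\tau)$ (Proposition~\ref{big:GSp4GO6}) and the known $\GL_4(F)$-distinction criterion. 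Your proposal omits this entire upper-bound mechanism.

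\textbf{Case (iii) is substantially harder than you indicate.} The ``see-saw translates to a $\GL_4(F)$-period'' is not one step: the paper works on the $\GSp_4(E)^\natural\subset\GSp_8(F)$ side with both $\mathcal{I}(1/2)$ and $\mathcal{I}(-1/2)$, uses the exact sequences involving $R^{3,3}(\mathbf{1})$, $R^{4,0}(\mathbf{1})$, $R^{5,1}(\mathbf{1})\cap R^{3,3}(\mathbf{1})$, and must show $\tau$ does not occur on the boundary of $\mathcal{I}(-1/2)$ via Casselman's criterion (for discrete series) or explicit Jacquet-module computation (for $\tau=J_{Q(Z)}(\chi,\pi)$, $\chi\neq\mathbf{1}$). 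The exceptional case $\tau=I_{Q(Z)}(|-|_E,\pi)$, where $\Theta_6(\tau)$ is reducible, requires a separate argument via Lemma~\ref{arnab}. None of this appears in your sketch.
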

	\begin{proof}
		\begin{enumerate}[(i)]
			\item If $\Sigma$ is a representation of $\GSO_{4,0}(E)$,
			then $\tau=\theta(\Sigma)=\Theta(\Sigma)$ and $$\Hom_{\GSp_4(F)}(\Theta(\Sigma),\mathbb{C})\cong \Hom_{\GO_{4,0}(E)^\natural }(\Theta_{W,D',\psi}(\mathbf{1}),\Sigma^+),$$
			where $D'=\mbox{Res}_{E/F}D_E=D(F)\oplus\mathbb{H}^2 $ is the 8-dimensional quadratic vector space over $F$ with determinant $1$ and Hasse invariant $-1$ due to Lemma \ref{quadraticrestriction} and $\Theta_{W,D',\psi}(\mathbf{1})$ is the big theta lift to $\GO(V')$ of the trivial representation $\mathbf{1}$.
			Note that the first occurrence of the trivial representation is $\dim_F W=4$ in the Witt tower $D\oplus\mathbb{H}^{r}$, which is bigger than $2$. Thus  $\Theta_{W,D',\psi}(\mathbf{1})=0.$  Hence,  $$\Hom_{\GSp_4(F)}(\Theta(\Sigma),\mathbb{C})=0$$ 
			and so $\tau=\theta(\Sigma)$ is not $\GSp_4(F)$-distinguished.
			\item By Proposition \ref{degenerateseries}, there is an exact sequence of $H_4$-representations 
			\begin{equation}\label{I(1/2)}
			\xymatrix{0\ar[r]&R_2(\mathbf{1})\ar[r]&I_{Q_4}^{H_4}(1/2)\ar[r]&\nu\otimes R_1(\mathbf{1})\ar[r]&0 }
			\end{equation}
			where $R_i(\mathbf{1} )$ is the big theta lift to $H_4$ of the trivial representation $\mathbf{1}$ of $\GSp_{2i}(F) $. We take the right exact contravariant functor $\Hom_{\GO_{2,2}(E)^\natural}(-,\Sigma)$
			 with respect to \eqref{I(1/2)} 
		and get a short exact sequence
			\begin{equation}\label{injection}
			0\rightarrow \Hom_{\GO_{2,2}(E)^\natural}(R_1(\mathbf{1}))\otimes\nu,\Sigma)\rightarrow \Hom_{\GO_{2,2}(E)^\natural}(I_{Q_4}^{H_4}(1/2),\Sigma)\rightarrow \Hom_{\GO_{2,2}(E)^\natural}(R_2(\mathbf{1}),\Sigma).
			\end{equation} 
			Consider the following double see-saw diagrams
				$$	\xymatrix{\GSp_4(E)^\natural\ar@{-}[d]\ar@{-}[rd]&H_4\ar@{-}[d]\ar@{-}[ld]\ar@{-}[rd]&\GSp_2(E)^\natural\ar@{-}[ld]\ar@{-}[d] \\\GSp_4(F)&\GO_{2,2}(E)^\natural&\GL_2(F). }$$
		Note that $\Hom_{\GO_{2,2}(E)^\natural}(R_2(\mathbf{1}),\Sigma)\cong\Hom_{\GSp_4(F)}(\tau,\mathbb{C})$.			 Since $\GO_{2,2}(E)^\natural$ is a subgroup of $\GSO_{4,4}(F),$ one has
\begin{equation*}\Hom_{\GO_{2,2}(E)^\natural}(R_1(\mathbf{1})\otimes\nu,\Sigma)= \Hom_{\GO_{2,2}(E)^\natural}(R_1(\mathbf{1}),\Sigma)\cong \Hom_{\GSp_2(F)}(\Theta_1(\Sigma),\mathbb{C}) .\end{equation*}
Here $\Theta_1(\Sigma)$ is the big theta lift  to $\GSp_2(E)$ of $\Sigma$ and it is zero unless $\pi_1=\pi_2.$
Then 
\begin{equation}\label{exact:inequ}
\dim\Hom_{\GSp_4(F)}(\tau,\mathbb{C})+\dim\Hom_{\GSp_2(F)}(\Theta_2(\Sigma),\mathbb{C})\geq\dim\Hom_{\GO_{2,2}(E)^\natural}(I_{Q_4}^{H_4}(1/2),\Sigma)  . 
\end{equation}
Observe that $\GO_{2,2}(E)^\natural$ is the fixed point of a involution on $H_4$, which is given by the scalar matrix $$h=\sqrt{d}\in \GO_{2,2}(E)^\natural\subset H_4$$ acting on $H_4$ by conjugation.
Due to \cite[Theorem 2.5]{olafsson1987fourier}, there exists a polynomial $f$ on $H_4$  such that 
the complements of the open orbits in the double coset $Q_4\backslash H_4/\GO_{2,2}(E)^\natural$ is the zero set of $f$. Thanks to \cite[Proposition 4.9]{dima2018analytic}, the multiplicity $\dim\Hom_{\GO_{2,2}(E)^\natural}(I_{Q_4}^{H_4}(1/2),\Sigma) $ is at least $\dim\Hom_{\GO_{2,2}(E)^\natural}(I_0(1/2),\Sigma)$ where the submodule $I_0$ corresponds to the open orbits. More precisely, $I_0(1/2)\cong ind_{\GO_{4,0}(F)}^{\GO_{2,2}(E)^\natural}\mathbb{C}\oplus ind_{\GO_{2,2}(F)}^{\GO_{2,2}(E)^\natural}\mathbb{C}\oplus ind_{\GO_{3,1}(F)}^{\GO_{2,2}(E)^\natural}\mathbb{C}$ and 
\begin{equation}\label{openorbitindentity}
\dim\Hom_{\GO_{2,2}(E)^\natural}(I_{Q_4}^{H_4}(1/2),\Sigma)\geq \dim\Hom_{\GO_{2,2}(E)^\natural}(ind_{\GO_{4,0}(F)}^{\GO_{2,2}(E)^\natural}\mathbb{C}\oplus ind_{\GO_{2,2}(F)}^{\GO_{2,2}(E)^\natural}\mathbb{C}\oplus ind_{\GO_{3,1}(F)}^{\GO_{2,2}(E)^\natural}\mathbb{C},\Sigma ) .\end{equation}
Together with \eqref{exact:inequ}, the sum
\begin{equation}
\begin{split}\label{key:inequality}
	&\dim\Hom_{\GSp_{4}(F)}(\Sigma,\mathbb{C})+\dim\Hom_{\GSp_2(F)}(\Theta_2(\Sigma),\mathbb{C})\\
\geq&\dim\Hom_{\GO_{2,2}(E)^\natural}(I_{Q_4}^{H_4}(1/2),\Sigma)\\
\geq&\Hom_{\GO_{2,2}(E)^\natural}(ind_{\GO_{4,0}(F)}^{\GO_{2,2}(E)^\natural}\mathbb{C}\oplus ind_{\GO_{2,2}(F)}^{\GO_{2,2}(E)^\natural}\mathbb{C}\oplus ind_{\GO_{3,1}(F)}^{\GO_{2,2}(E)^\natural}\mathbb{C},\Sigma ) \\
=&\dim\Hom_{\GO_{4,0}(F)}(\Sigma,\mathbb{C})+ 
\dim\Hom _{\GO_{2,2}(F)}(\Sigma,\mathbb{C})+\dim\Hom_{\GO_{3,1}(F)}(\Sigma,\mathbb{C}).	
	\end{split}
	\end{equation}

Let us turn the table around. There is an exact sequence of $\GSp_{8}(F)$-representations
	 \[\xymatrix{0\ar[r]&R^{3,3}(\mathbf{1})\ar[r]&\mathcal{I}(1/2)\ar[r]&R^{4,0}(\mathbf{1})\ar[r]&0 } \]
	 where $\mathcal{I}(s)$ is the degenerate principal series of $\GSp_8(F)$ and $R^{m,n}(\mathbf{1})$
	 is the big theta lift to $\GSp_8(F)$ of the trivial representation $\mathbf{1}$ of $\GO_{m,n}(F)$.
	 There is only one open orbit in the double coset decomposition $P_4\backslash\GSp_8(F)/\GSp_4(E)^\natural$.
	 In a similar way, thanks to \cite[Theorem 2.5]{olafsson1987fourier} and \cite[Proposition 4.9]{dima2018analytic}, 
	 \begin{eqnarray}
	 \begin{split}\label{reverse:ineq}
	 \dim\Hom_{\GSp_4(F)}(\tau,\mathbb{C})=&\dim\Hom_{\GSp_4(E)^\natural}(\mathcal{I}_0(1/2),\tau)\\
	  \leq&\dim\Hom_{\GSp_4(E)^\natural}(\mathcal{I}(1/2),\tau )\\
	  \leq&\dim\Hom_{\GSp_4(E)^\natural }(R^{3,3}(\mathbf{1}),\tau )+\dim\Hom_{\GSp_4(E)^\natural}(R^{4,0}(\mathbf{1}),\tau)\\
	  =&\dim\Hom_{\GO_{3,3}(F)}(\Theta_{6}^+(\tau),\mathbb{C})+\dim\Hom_{\GO_{4,0}(F)}(\Theta^+_4(\tau),\mathbb{C}) .   
	 \end{split}
	 \end{eqnarray}
Now we separate them into two cases:  $\pi_1\ncong\pi_2$ and $\pi_1\cong\pi_2$.
\begin{enumerate}[(A).]
	\item  If $\pi_1\ncong\pi_2,$ then the theta lift to $\GSp_2(E)$ of $\Sigma$ is zero,
	 $$\Hom_{\GO_{2,2}(E)^\natural}(R_1(\mathbf{1})\otimes\nu,\Sigma )=\Hom_{\GSp_2(F)}(\Theta_1(\Sigma),\mathbb{C})=0$$ and $\Sigma=\Ind_{\GSO(2,2)(E)}^{\GO(2,2)(E)}(\pi_1\boxtimes\pi_2).$ There are several subcases:
	\begin{enumerate}[label=(A\arabic*)]
		\item 
	If $\pi_i(i=1,2)$ are both $D^\times(F)$-distinguished, which implies
	that $\phi_{\pi_i}$ are conjugate-orthogonal and so that $\pi_i$ are both $\GL_2(F)$-distinguished, then $\pi_1^\vee\not\cong\pi_2^\sigma.$ Otherwise,  $\pi_1^\sigma\cong\pi_1^\vee\cong\pi_2^\sigma,$ which contradicts the assumption $\pi_1\ncong\pi_2.$  Then the inequality \eqref{key:inequality} can be rewritten as \begin{eqnarray}\label{lowbound2}
	\dim \Hom_{\GSp_4(F)}(\tau,\mathbb{C})\geq \dim \Hom_{\GO_{2,2}(E)^\natural}(I_{Q_4}^{H_4}(1/2),\Sigma)
	\geq 2.
	\end{eqnarray} 
	Flicker \cite{flicker1991ondist} proved that
	$(\GL_n(E),\GL_n(F))$ is a Gelfand pair, which implies that
	$$1\geq \Hom_{\GSO_{3,3}(F)}(\Theta_6^+(\tau),\mathbb{C})=\Hom_{\GO_{3,3}(F)}(\Theta^+_6(\tau),\mathbb{C}).$$ 
	Thus \begin{equation}\label{upperbound2}
	\dim \Hom_{\GSp_4(F)}(\tau,\mathbb{C})\leq1+1
	\end{equation} due to the upper bound \eqref{reverse:ineq}.  Then \eqref{lowbound2} and \eqref{upperbound2} imply  $$\dim\Hom_{\GSp_4(F)}(\tau,\mathbb{C})=2.$$
	\item	If $\pi_1=\pi(\chi_1,\chi_2),\chi_1\neq\chi_2,\chi_1|_{F^\times}=\chi_2|_{F^\times}=\mathbf{1}$ and $\pi_2$ is $\GL_2(F)$-distinguished, then both $\phi_{\pi_1}$ and $\phi_{\pi_2}$ are conjugate-orthogonal, $\pi_1^\vee\ncong\pi_2^\sigma$ and $$\Hom_{\GO_{4,0}(F)}(\Sigma,\mathbb{C})=0=\Hom_{\GO_{3,1}(F)}(\Sigma,\mathbb{C}).$$ 
	Moreover, $\Hom_{\GSO_{3,3}(F)}(\Theta_6^{+}(\tau),\mathbb{C})\neq0.$
	Since 
	$$\dim \Hom_{\GO_{2,2}(E)^\natural}(I_{Q_4}^{H_4}(1/2),\Sigma)\geq\dim \Hom_{\GO(2,2)(F)}(\Sigma,\mathbb{C})+0=1, $$ 
	the desired equality $\dim \Hom_{\GSp_4(F)}(\tau,\mathbb{C})=1$ follows from \eqref{key:inequality} and \eqref{reverse:ineq}.
\item	If $\pi_1^\sigma\cong\pi_2^\vee,$ then $$\dim \Hom_{\GO_{3,1}(F)}(\Sigma,\mathbb{C})=1.$$  
 By the previous arguments, we know that $\Hom_{\GO_{2,2}(F) }(\Sigma,\mathbb{C})=0$ in this case.
Therefore $$\dim \Hom_{\GSp_4(F)}(\tau,\mathbb{C} )=1.$$
\end{enumerate}
In other cases, if $\pi_1^\sigma\ncong \pi_2^\vee$ and either $\phi_{\pi_1}$ or $\phi_{\pi_2}$  is not conjugate-orthogonal, then 
 $$\dim \Hom_{\GSp_4(F)}(\tau,\mathbb{C})=0. $$ 
  If not, then
\[\dim \Hom_{\GSp_4(F)}(\tau,\mathbb{C})=\dim \Hom_{ \GSO_{3,3}(F)}(\Theta_6^+(\tau),\mathbb{C})=1. \]
	Set $\Pi\boxtimes\chi=\Theta_6^+(\tau)|_{\GSO_{3,3}(E)}$ as a representation of $\GSO_{3,3}(E)$, which is irreducible due to Proposition \ref{big:GSp4GO6}. Then $\Pi$ is $\GL_4(F)$-distinguished and so $\phi_\Pi$ is conjugate-orthogonal. 
	\par
	We consider the following cases:
	\begin{itemize}
		\item If $\phi_{\pi_1}$ is conjugate-orthogonal, then $\phi_{\pi_2}$ is conjugate-orthogonal by \cite[Theorem 5.2]{matringe2009distinction}.
			\item If $\phi_{\pi_1}$ is irreducible, then $\phi_{\pi_1}$ is conjugate-orthogonal, which will imply that $\phi_{\pi_2}$ is conjugate-orthogonal as well.
		\item Now suppose that both $\phi_{\pi_1} $ and $\phi_{\pi_2}$ are reducible	and that neither $\phi_{\pi_1}$ nor $\phi_{\pi_2}$ is conjugate-orthogonal.
		Assume that $\phi_{\pi_i}=\chi_{i1}+\chi_{i2}~ (i=1,2)$. Then
			$$\phi_\Pi=\chi_{11}+\chi_{12}+\chi_{21}+\chi_{22},~\chi_{11}\chi_{12}=\chi_{21}\chi_{22}:E^\times/F^\times\rightarrow\mathbb{C}^\times.$$
	 Thanks to  \cite[Theorem 5.2]{matringe2009distinction},
		$\chi_{11}\chi_{21}^\sigma=\mathbf{1}$ and $\chi_{12}\neq\chi_{22} $ but $\chi_{12}|_{F^\times}=\mathbf{1}=\chi_{22}|_{F^\times}.$ Moreover, $\chi_{21}\chi_{22}\cdot(\chi_{21}\chi_{22})^\sigma=\mathbf{1}$ implies 
		\[\chi_{21}^\sigma\chi_{21}=\mathbf{1}. \]
		Similarly $\chi_{11}^\sigma\chi_{11}=\mathbf{1}$.
		 Thus,
		$\chi_{21}^\sigma=\chi_{21}^{-1}$ and $\chi_{11}=\chi_{21}.$
		This implies that $\chi_{12}=\chi_{22}$ which contradicts  the condition $\chi_{12}\neq\chi_{22}.$
	\end{itemize}
	Hence the Langlands parameter $\phi_{\Pi}$ can not be conjugate-orthogonal. Thus $\Hom_{\GSp_4(F)}(\tau,\mathbb{C} )=0$ if $\pi_1^\sigma\ncong\pi_2^\vee$ and either $\phi_{\pi_1}$ or $\phi_{\pi_2}$ is not conjugate-orthogonal.
\item If $\pi_1=\pi_2$, then $\Theta_1(\Sigma)=\pi_1,$ which is  obvious except for $\pi_1=\pi(\chi,\chi).$
 The exact sequence \eqref{injection} implies the following inequality
 \begin{equation}\label{lowerinequality}
 \dim \Hom_{\GSp_4(F)}(\tau,\mathbb{C} )\geq \dim \Hom_{\GO_{2,2}(E)^\natural}(I_{Q_4}^{H_4}(1/2),\Sigma)-\dim \Hom_{\GSp_2(F)}(\pi_1,\mathbb{C}). 
 \end{equation}
 We separate them into  following cases:
\begin{enumerate}[label=(B\arabic*)]
	\item If $\pi_1$ is $D^\times(F)$-distinguished, then $\dim \Hom_{\GO_{2,2}(E)^\natural}(I_0(1/2),\Sigma)=3. $ Again, we consider
	the upper bound \eqref{reverse:ineq} and the lower bound \eqref{lowerinequality}  to obtain the equality $$\dim \Hom_{\GSp_4(\mathbb{C})}(\tau,\mathbb{C})=2.$$
	\item If $\pi_1\cong\pi_0=\pi(\chi_1,\chi_2)$ with $\chi_1\neq\chi_2$ and $\chi_1|_{F^\times}=\chi_2|_{F^\times}=1,$ then $$\dim \Hom_{\GO_{4,0}(F)}(\Sigma,\mathbb{C})=0.$$   
	In a similar way,
	we can get $\dim \Hom_{\GSp_4(F)}(\tau,\mathbb{C})=1.$
	\item If  $\pi_1$ is   not $\GL_2(F)$-distinguished but $(\GL_2(F),\omega_{E/F})$-distinguished, then $$\Hom_{\GSp_2(F)}(\pi_1,\mathbb{C})=0 
\mbox{  and  }	\Hom_{\GO_{3,1}(F)}(\Sigma,\mathbb{C})\neq0, $$
	which implies that $\dim \Hom_{\GO_{2,2}(E)^\natural}(I_{Q_4}^{H_4}(1/2 ),\Sigma)\geq1=\dim \Hom_{\GSO_{3,3}(F)}(\Theta_6^{+}(\tau),\mathbb{C}). $
	Therefore we can deduce that $\dim \Hom_{\GSp_4(F)}(\tau,\mathbb{C})=1.$
\end{enumerate}
If $\pi_1=\pi(\chi,\chi),$ then
there is an exact sequence
 $$\xymatrix{\pi_1\ar[r]&\Theta_2(\pi_1\boxtimes\pi_1)\ar[r]&\pi_1\ar[r]& 0}$$
  of $\GL_2(E)$-representations,
where we can not deduce $\Theta_2(\pi_1\boxtimes\pi_1)$ directly. Let $N=\big\{\begin{pmatrix}
1&n\\0&1
\end{pmatrix}|n\in F \big\}$ be the subgroup of $\GSp_2(F)$. Let $\psi_N$ be a nontrivial  character of $N$.
 Now we consider the Whittaker model of $\Theta_1(\pi_1\boxtimes\pi_1)$,
 \[\dim \Hom_N(\Theta_1(\pi_1\boxtimes\pi_1 ),\psi_N )=\dim\Hom_{\PGL_2(E)}(\pi_1\boxtimes\pi_1,\mathbb{C} )\leq1 \]
 which implies that $\Theta_1(\Sigma)=\pi_1.$
\end{enumerate}
			\item  If $\tau$ is not in case (i) or (ii), then
			  the first occurence index of $\tau$ of $\GSp_4(E)$ in the Witt Tower $\mathbb{H}_E^r$ is $3$. Observe that
			 $\Theta_6^+(\tau)|_{\GSO_{3,3}(E)}$ is irreducible unless $\tau=\Ind_{Q(Z)}^{\GSp_4(E)}(\chi,\pi)$ with $\chi=|-|_E$. 
			 \par
			 Suppose that $\tau\neq\Ind_{Q(Z)}^{\GSp_4(E)}(|-|_E,\pi)$.
			 Consider the double see-saw diagrams 
			 \[\xymatrix{\GO_{2,2}(E)^\natural\ar@{-}[d]\ar@{-}[rd]&\GSp_8(F)\ar@{-}[d]\ar@{-}[rd]& \GO_{3,3}(E)^\natural\ar@{-}[d]\ar@{-}[ld]\\ \GO_{4,0}(F)\ar@{-}[ru] &\GSp_4(E)^\natural& \GO_{3,3}(F). } \]
			 By \cite[Page 211]{Kudla1992} and Proposition \ref{degenerateseries}, there are two exact sequences of $\GSp_8(F)$-modules
			 \[\xymatrix{0\ar[r]&R^{3,3}(\mathbf{1})\ar[r]&\mathcal{I}(1/2)\ar[r]&R^{4,0}(\mathbf{1})\ar[r]&0 } \]
			 and \[\xymatrix{0\ar[r]&R^{4,0}(\mathbf{1})\oplus R^{2,2}(\mathbf{1})\ar[r]&\mathcal{I}(-1/2)\ar[r]&R^{5,1}(\mathbf{1})\cap R^{3,3}(\mathbf{1})\ar[r]&0 }  \]
			 where $\mathcal{I}(s)$ is the degenerate principal series of $\GSp_8(F)$ and $R^{m,n}(\mathbf{1})$
			 is the big theta lift to $\GSp_8(F)$ of $\mathbf{1}$ of $\GO_{m,n}(F)$.  Assume that  $\tau$ is generic and its theta lift to $\GO_{2,2}(E)$ is zero.
			 Then  $$\Hom_{\GSp_4(E)^\natural}(R^{4,0}(\mathbf{1}),\tau)=\Hom_{\GO_{4,0}(F)}(\Theta_4^+(\tau),\mathbb{C})=0,$$
			  so that  
			  $$\dim\Hom_{\GSp_4(E)^\natural}(\mathcal{I}(-1/2),\tau)=\dim\Hom_{\GSp_4(E)^\natural }(R^{5,1}(\mathbf{1})\cap R^{3,3}(\mathbf{1}),\tau ).$$ 
			  Thus
			 \begin{equation}\label{openorbitforGSp4}
			 \begin{split}
			\dim \Hom_{\GSp_4(F)}(\tau^\vee,\mathbb{C})
			 &=\dim \Hom_{\GSp_4(E)^\natural}(\mathcal{I}_0(1/2),\tau )\\
			 &\leq \dim\Hom_{\GSp_4(E)^\natural}(\mathcal{I}(1/2),\tau)\\
			 &\leq \dim\Hom_{\GSp_4(E)^\natural}(R^{3,3}(\mathbf{1}),\tau)\\
			 &=\dim \Hom_{\GO_{3,3}(F)}(\Theta_6^+(\tau),\mathbb{C} )\\
			 &=\dim \Hom_{\GO_{3,3}(F)}((\Pi\boxtimes\chi)^+,\mathbb{C})
			 \end{split}
			 \end{equation}
where 	$(\Pi\boxtimes\chi)^\pm$ are two extensions to $\GO_{3,3}(E)$ of $\Pi\boxtimes\chi$.		On the other hand, one has
			\[\Hom_{\GO_{3,3}(F)}((\Pi\boxtimes\chi)^-,\mathbb{C})=\Hom_{\GO_{3,3}(F)}(\Theta_6^+(\tau)\otimes\nu,\mathbb{C})\cong \Hom_{\GSp_4(E)^\natural}(\Theta(\nu),\tau)=0. \]  Then we have an inequality
			 $$	\dim	 \Hom_{\GSp_4(F)}(\tau,\mathbb{C})	\leq \dim \Hom_{\GSO_{3,3}(F)}(\Pi\boxtimes\chi,\mathbb{C})=\dim \Hom_{\GL_4(F)}(\Pi,\mathbb{C}).$$
Now we want to obtain the reverse inequality.
Note that $$\xymatrix{ 1\ar[r]& R^{5,1}(\mathbf{1})\cap R^{3,3}(\mathbf{1})\ar[r]& R^{3,3}(\mathbf{1})\ar[r]& R^{2,2}(\mathbf{1})\ar[r]& 1 }$$ is exact due to \cite[Proposition 7.2]{gan2014formal}.
There is an injection \begin{equation}\label{reverseinequality}
\Hom_{\GSp_4(E)^\natural}(R^{3,3}(\mathbf{1}),\tau)\hookrightarrow \Hom_{\GSp_4(E)^\natural}(R^{5,1}(\mathbf{1})\cap R^{3,3}(\mathbf{1}),\tau )=\Hom_{\GSp_4(E)^\natural}(\mathcal{I}(-1/2),\tau) \end{equation}
since the theta lifts to $\GO_{2,2}(E)$ and $\GO_{4,0}(E)$ of $\tau$  are both zero by the assumption.

We will show that $\tau$ does not occur on the boundary of $\mathcal{I}(-1/2)$ under the assumptions.
If $\tau$ is non-discrete, then $\tau=J_{Q(Z)}(\chi,\pi),\chi\neq\mathbf{1},$
due to \cite[Table 1]{gan2011theta}.
Note that $$\mathcal{I}_1(s)/\mathcal{I}_0(s)=ind_{(E^\times \times \GSp_2(F))N'}^{\GSp_4(E)^\natural}\chi' $$ where $N'\cong E\oplus Mat_{2,2}(F)$ and
$\chi'(t,g)=|N_{E/F}(t)^{s+\frac{1}{2} }\cdot\lambda(g)^{-2s-3}|_F$.
Set $$P'=(\GL_1(E)\times \GSp_2(E)^\natural )\cdot N'.$$ 
Thanks to the second adjoint theorem due to Bernstein,
we have
\[ \Hom(\mathcal{I}_1(-1/2)/\mathcal{I}_0(-1/2),\tau )=\Hom_{E^\times\times \Sp_2(E)\times F^\times }(\mathbf{1}\otimes ind_{\Sp_2(F)}^{\Sp_2(E)}\mathbb{C}\otimes|-|_F^{-2},R_{\bar{P'}}(J_{Q(Z)}(\chi,\pi))  ) =0, \]
because $R_{\bar{P'}}(J(\chi,\pi))=\chi\otimes\pi+\chi^{-1}\otimes\pi\chi$ and $\chi\neq\mathbf{1}.$
Moreover, the cuspidal supports of $J_{Q(Z)}(\chi,\pi)$ and $\mathcal{I}_2(-1/2)/\mathcal{I}_1(-1/2)$ are disjoint. Therefore $\tau=J_{Q(Z)}(\chi,\pi)$ does not occur on the boundary of $\mathcal{I}(-1/2)$ and so
\[\dim\Hom_{\GSp_4(E)^\natural}(\mathcal{I}(-1/2),\tau)\leq\dim\Hom_{\GSp_4(E)^\natural}(\mathcal{I}_0(-1/2),\tau)=\dim\Hom_{\GSp_4(F)}(\tau,\mathbb{C}).   \]
Note that if $\tau$ is a discrete series representation. Then we have $$\Hom_{\GSp_4(E)^\natural}(\mathcal{I}_{i+1}(-1/2)/\mathcal{I}_i(-1/2),\tau )=0$$ for $i=0,1$. If not, then
we will get a contradiction. Suppose that $$\Hom_{\GSp_4(E)^\natural}(\mathcal{I}_1(-1/2)/\mathcal{I}_0(-1/2),\tau)\neq0. $$ 
Then 
$\Hom_{\GL_1(E)}(\mathbf{1},R_{\bar{P'}}(\tau))\neq0  $
which contradicts Casselman's criterion \cite{casselman82duke} for the discrete series representation  that 
\[\Hom_{\GL_1(E)}(|-|_E^{s},R_{\bar{P'}}(\tau))\neq0  \]
implies $s<0$. Similarly,
\[\Hom_{\GSp_4(E)^\natural}(\mathcal{I}_2(-1/2)/\mathcal{I}_1(-1/2),\tau )=\Hom_{\GL_2(E)\times F^\times}(\delta_{P^\natural}^{\frac{1}{6}},R_{\bar{P}^\natural}(\tau) )=0   \]
  and so
\begin{equation}\label{-halfopen}\dim \Hom_{\GSp_4(E)^\natural }(\mathcal{I}(-1/2),\tau )\leq \dim \Hom_{\GSp_4(E)^\natural }(\mathcal{I}_0(-1/2),\tau ). \end{equation}
Therefore one can combine \eqref{openorbitforGSp4},\eqref{reverseinequality} and \eqref{-halfopen} to obtain that
\[\dim \Hom_{\GSp_4(F)}(\tau,\mathbb{C})=\dim
\Hom_{\GSp_4(E)^\natural}(\mathcal{I}_0(-1/2),\tau)=\dim \Hom_{\GSO_{3,3}(F)}(\Theta_6^+(\tau),\mathbb{C}) . \]
The right hand side is $1$ if and only if $\Pi$ is $\GL_4(F)$-distinguished.
\par
If $\tau=\Ind_{Q(Z)}^{\GSp_4(E)}(|-|_E,\pi)$ is irreducible, then
$\theta_6(\tau)=J_P(\pi|-|_E,\pi)\boxtimes\omega_{\pi}|-|_E$. It suffices to show that 
$I_P(\pi|-|_E,\pi)$ is $\GL_4(F)$-distinguished if and only if $J_P(\pi|-|_E,\pi)$ is $\GL_4(F)$-distinguished. It follows from Lemma \ref{arnab}.
	\end{enumerate}
	Then we have finished the proof.
			 \end{proof}
		
			 	\begin{rem}
			 		In fact, we can show that if  $\tau=\theta(\pi_1\boxtimes\pi_2)$ with $\pi_1^\vee\cong\pi_2^\sigma$ is generic, then $\phi_\Pi=\phi_{\pi_1}\oplus\phi_{\pi_2}$ is not only conjugate-orthogonal but also conjugate-symplectic. Keeping this fact in mind   will be helpful when we verify the Prasad conjecture for $\GSp_4$ in \S\ref{subsect:GSp(4)conj}.
			 		\end{rem}

\begin{coro}
	The pair $(\GSp_4(E)^\natural,\GSp_4(F))$ is not a Gelfand pair.
\end{coro}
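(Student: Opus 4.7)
The plan is to exhibit one irreducible admissible representation $\pi$ of $\GSp_4(E)^\natural$ with $\dim\Hom_{\GSp_4(F)}(\pi,\mathbb{C})\geq 2$; this already contradicts the Gelfand property, namely the commutativity of the convolution algebra of $\GSp_4(F)$-bi-invariant compactly supported functions on $\GSp_4(E)^\natural$.

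To produce such a $\pi$, I would apply Theorem~\ref{localgspperiod}(ii), case (A1): pick two distinct $\GL_2(F)$-distinguished representations $\pi_1,\pi_2$ of $\GL_2(E)$, neither isomorphic to $\pi_0$, with $\omega_{\pi_1}=\omega_{\pi_2}$ trivial on $F^\times$, and form $\tau=\theta(\pi_1\boxtimes\pi_2)$, an irreducible representation of $\GSp_4(E)$ satisfying
\[\dim\Hom_{\GSp_4(F)}(\tau,\mathbb{C})=2.\]
Since $\GSp_4(F)\subset\GSp_4(E)^\natural$, the same multiplicity is computed from $\tau|_{\GSp_4(E)^\natural}$. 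This restriction decomposes as a multiplicity-free direct sum $\bigoplus_i\tau_i'$ of irreducible representations of $\GSp_4(E)^\natural$ (as noted at the start of \S\ref{subsect:proofof1.1}), and the number of summands equals the order of the stabilizer $\{\chi\in(E^\times/F^\times)^\vee:\tau\otimes(\chi\circ\lambda_W)\cong\tau\}$.

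Twisting $\tau$ by $\chi\circ\lambda_W$ corresponds, via the theta correspondence and the identification of similitude characters on the two sides, to an appropriate simultaneous twist of $\pi_1\boxtimes\pi_2$; hence the stabilizer is trivial as soon as neither $\pi_i$ admits a self-twist by a nontrivial character of $E^\times/F^\times$. This is a generic condition, satisfied for instance when $\pi_1,\pi_2$ are supercuspidal $\GL_2(F)$-distinguished representations (which can be produced by base change from suitable compact inducing data). For such a generic choice, $\tau|_{\GSp_4(E)^\natural}$ is itself irreducible and supplies the desired representation of $\GSp_4(E)^\natural$ with $\GSp_4(F)$-multiplicity equal to $2$. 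The main obstacle is precisely this irreducibility verification; as a fallback, if it cannot be ensured directly, one either passes to another member of the family or appeals instead to case (B1) of Theorem~\ref{localgspperiod}, where $\tau=\theta((\pi_1\boxtimes\pi_1)^+)$ with $\pi_1$ $D^\times(F)$-distinguished yields the same multiplicity~$2$ and the parallel restriction argument applies.
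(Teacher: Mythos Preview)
Your approach matches the paper's, which simply records the corollary immediately after Theorem~\ref{localgspperiod} with no separate proof: the intended argument is exactly to exhibit a $\tau$ with $\dim\Hom_{\GSp_4(F)}(\tau,\mathbb{C})=2$ and pass to an irreducible constituent of $\tau|_{\GSp_4(E)^\natural}$.

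There is one small gap in your irreducibility check. You claim the stabilizer $\{\chi\in(E^\times/F^\times)^\vee:\tau\otimes(\chi\circ\lambda_W)\cong\tau\}$ is trivial once neither $\pi_i$ has a nontrivial self-twist by such a $\chi$. But since $\theta(\pi_1\boxtimes\pi_2)\cong\theta(\pi_2\boxtimes\pi_1)$, one must also exclude the possibility $\pi_1\chi\cong\pi_2$ and $\pi_2\chi\cong\pi_1$, which forces $\chi^2$ to be a self-twist of $\pi_2$ and, for generic $\pi_2$, gives $\chi^2=\mathbf{1}$ with $\pi_2\cong\pi_1\chi$. So your hypothesis should read: neither $\pi_i$ admits a nontrivial self-twist by a character of $E^\times/F^\times$, \emph{and} $\pi_2\not\cong\pi_1\chi$ for every nontrivial $\chi\in(E^\times/F^\times)^\vee$ of order at most $2$. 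These are finitely many extra conditions, and since there are infinitely many $\GL_2(F)$-distinguished supercuspidals of $\GL_2(E)$ with a fixed central character, a pair $(\pi_1,\pi_2)$ satisfying all of them is easily produced. With that correction, your argument goes through.
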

For a generic representation $\tau$ of $\GSp_4(E)$ with $\omega_\tau|_{F^\times}=\chi_F^2,$
we may consider the muliplicity $$\dim\Hom_{\GSp_4(F)}(\pi,\chi_F ) $$
which is equal to $\dim\Hom_{\GSp_4(F)}(\pi\otimes\chi_E^{-1},\mathbb{C}) ,$ where $\chi_E|_{F^\times}=\chi_F.$
 We will focus on the case $\chi_F=\omega_{E/F}$ when we verify the Prasad conjecture for $\GSp_4$ in \S\ref{subsect:GSp(4)conj}.

\section{The $\GSp_{1,1}(F)$-distinguished representations}	\label{sect:GSp(1,1)}
\subsection{Notation}
\begin{itemize}
	\item $D$ (resp. $D_E$) is a quaternion divison algebra over $F$ (resp. $E$) with a standard involution $\ast$.
	\item $\pi^{D_E}$ is the Jacquet-Langlands lift to $D_E^\times(E)$ of $\pi$ and $\pi^{D_E}\boxtimes\pi^{D_E}$ is a representation of $\GSO_{4,0}(E)$.
	\item $\mathfrak{W}$ (resp. $\mathfrak{V}$) is a right skew-Hermitian (resp. left Hermitian) $D$-vector space with isometry group $U(\mathfrak{W})$ (resp. $U(\mathfrak{V})$).
	\item $\mathfrak{U}^\ast$ is the dual $D$-vector space of $\mathfrak{U}$ in $\Res_{R/D}V_R$.
	\item $\mathfrak{W}\otimes_D\mathfrak{V}$ is a symplectic $F$-vector space.
	\item $\GO_{3,0}^\ast$ (resp. $\GO_{r,r}^\ast$) is the inner form of $\GO_{3,3}$ (resp. $\GO_{2r,2r}$) defined over $F$.
		\item $\mathfrak{I}(s)$ (resp. $I(s)$) is the degenerate principal series of $\GSp_{2,2}(F)$ (resp. $\GO_{2,2}^\ast(F)$).
		\item $\GSO^\ast_{2,0}$ is the inner form of $\GSO_{3,1}$ defined over $F$.
	\item $\GO_{5,1}$  is the pure inner form of $\GO_{3,3}$ defined over $E$ and $\Pi^D\boxtimes\chi$ is a representation of $\GSO_{5,1}(E)$.
	\item $B_1$ is the minimal parabolic subgroup of $\GL_2(D_E)(E)$.
	\item $\GSp_{1,0}=D^\times$ (resp. $\Sp_{1,0}$) is the inner form of $\GL_2$ (resp. $\SL_2$).
			\item $P(Y_D)$ (resp. $\mathfrak{Q}$) is the Siegel parabolic subgroup of $\GU(\mathfrak{V})$ (resp. $\GO_{2,2}^\ast(F)$).
	\item $\mathfrak{R}^3(\mathbf{1})$ (resp. $\mathfrak{R}^2(\mathbf{1})$) is the big theta lift to $\GSp_{2,2}(F)$ of the trivial representation of $\GO_{3,0}^\ast(F)$ (resp. $\GO_{1,1}^\ast(F)$) and $\mathfrak{R}^{1,j}(\mathbf{1})$ is the big theta lift to $\GO_{2,2}^\ast(F)$ from $\GSp_{1,j}(F)$.
	\item $\theta_2^-(\tau)$ (resp. $\Theta_{2}^-(\tau)$) is the small (resp. big) theta lift to $\GO_{5,1}(E)$ of $\tau$ of $\GSp_4(E)$.
	\item $\Theta_{\mathfrak{W},\mathfrak{V},\psi}(\pi)$ is the big theta lift to $\GU(\mathfrak{V})$ of $\pi$ of $\GU(\mathfrak{W})$.
	\item $\gamma_F$ is the Weil index and $\gamma_F(\psi\circ q)\in\mu_8$ for the character of second degree $x\mapsto \psi(q(x,x))$, where $q$ is a non-degenerate symmetric $F$-bilinear form.
\end{itemize}
\subsection{Theta lifts for quaternionic unitary groups}
In order to study the $\GSp_{1,1}$-distinction problems,
we need to introduce the local theta lift for quaternionic unitary groups, following \cite{gan2014inner,gurevich2015non,yamana2011deg}.
\subsubsection{Morita equivalence}
Let $R=Mat_{2,2}(E)$ be the split quaternion algebra over $E.$ Any left Hermitian (resp. right skew-Hermitian) free $R$-module $(W_R,h_R)$  corresponds to a symplectic (resp. orthogonal) space $(W_E,h_E)$ over $E$ and 
\[\dim_E W_E=2\cdot\dim_R W_R , ~Aut(W_R,h_R)=Aut(W_E,h_E).\] 
(See \cite[\S2.1]{gurevich2015non} for more details.)

\subsubsection{Dual pairs}
Let $D$ be the unique nonsplit quaternion algebra over $F,$ with a standard involution $\ast$. Then
$D\otimes_F E\cong R.$ There is a $D$-linear map
\[tr_{R/D}:R\longrightarrow D \]
such that $tr_{R/D}(d)=2d$ for $d\in D.$ Given a $4$-dimensional symplectic space
$(\mathcal{W}_2,h_E)$ over $E,$ corresponding to a $2$-dimensional left Hermitian space $(W_R,h_R),$ we set
\[h_D(x,y)=\frac{1}{2}tr_{R/D}(h_R(x,y))\in D\]
 for all $x,y\in W_R$.
Then $h_D$ is a nondegenerate Hermitian form on  $\mathfrak{V}=\Res_{R/D}W_R$ and $\dim_D \mathfrak{V}=4.$

Given a left Hermitian space $(\mathfrak{V},h_D)$  and a right skew-Hermitian space $(\mathfrak{W},s_D),$ the tensor product space $\mathfrak{W}\otimes_D \mathfrak{V}$
admits a symplectic form defined by
\[<w\otimes v,w'\otimes v'>:=\frac{1}{2}tr_{D/F}((w,w')\cdot(v,v')^\ast) . \]
This gives an embedding of $F$-groups
\[U(\mathfrak{W})\times U(\mathfrak{V})\longrightarrow \Sp(\mathfrak{W}\otimes_D \mathfrak{V}). \]
Then we can define the Weil representation $\omega_\psi$ on $U(\mathfrak{W})\times U(\mathfrak{V}),$ using the complete polarization  $\mathfrak{V}=Y_D+Y_D^\ast$ of $\mathfrak{V}$. 
\begin{thm}\cite[Theorem 1.2]{gan2015howe}
	The Howe duality conjecture holds for the dual pair $U(\mathfrak{W})\times U(\mathfrak{V})$.
\end{thm}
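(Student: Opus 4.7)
The plan is to adapt the strategy of Gan--Takeda's resolution of the orthogonal--symplectic Howe duality to the quaternionic unitary setting, establishing both the irreducibility of $\theta_\psi(\pi)$ and the injectivity of the small theta lift simultaneously, by a double induction on $\dim_D \mathfrak{W}+\dim_D \mathfrak{V}$ and on the cuspidal depth of the representations involved.

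The first step is to set up Kudla's filtration of the Jacquet module $R_P(\omega_\psi)$ of the Weil representation along a maximal parabolic $P=MN$ of $U(\mathfrak{W})$ with Levi component $M\cong \GL_k(D)\times U(\mathfrak{W}')$, and the analogous filtration on the $U(\mathfrak{V})$ side. This expresses $R_P(\omega_\psi)$ in terms of Weil representations attached to smaller quaternionic dual pairs, twisted by principal series data for $\GL_k(D)$. Combined with Bernstein's second adjointness theorem and standard module arguments in the spirit of the proof of Theorem \ref{genericthetalift}, this reduces the two assertions for a non-supercuspidal $\pi$ --- realized as the Langlands quotient of $\Ind_P^{U(\mathfrak{W})}(\rho\otimes \pi_0)$ --- to the same assertions for the supercuspidal datum $\pi_0$, by the induction hypothesis.

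For supercuspidal $\pi$, the standard matrix-coefficient argument (analogous to the one following Theorem \ref{localhowe} for symplectic--orthogonal pairs) forces $\Theta_\psi(\pi)$ to be either zero or irreducible, settling statement (i). Statement (ii) is the core issue: given supercuspidal $\pi_1,\pi_2$ with $\theta_\psi(\pi_1)\cong \theta_\psi(\pi_2)\neq 0$, one must conclude $\pi_1\cong \pi_2$. The approach would combine (a) the quaternionic analog of the MVW involution, which intertwines $\Theta_\psi(\pi)$ with a twist of $\Theta_\psi(\pi^\vee)$, with (b) a conservation relation for first occurrence indices along the quaternionic Witt tower of $U(\mathfrak{V})$, in order to recover the cuspidal support of $\pi$ from $\theta_\psi(\pi)$.

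The main obstacle I expect is precisely this last step: establishing the correct conservation relation in the quaternionic setting (the analog of the Sun--Zhu theorem, with the appropriate shift in the relevant indices), and then ruling out residual ambiguity between distinct supercuspidal $\pi_i$ whose theta lifts first occur at the same level. This is the place where the quaternionic case genuinely departs from Gan--Takeda's arguments, since the non-splitness of $D$ affects both the parametrization of the Witt tower and the behavior of types under the theta correspondence, so the uniqueness at the first occurrence has to be argued via a more delicate compatibility with the structure of supercuspidal types for $U(\mathfrak{W})$.
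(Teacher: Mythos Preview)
The paper does not prove this theorem at all: it is stated with the citation \cite[Theorem 1.2]{gan2015howe} and no proof environment follows. This is a black-box input from the literature (Gan--Sun's resolution of the quaternionic case of the Howe duality conjecture), used later to set up the similitude theta correspondence and the see-saw identities for the $\GSp_{1,1}$-period computations. There is therefore nothing in the paper to compare your proposal against.

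As a separate matter, your sketch is a plausible high-level outline of one route to the result, but it is not the argument of the cited reference. Gan--Sun do not proceed by an inductive reduction via Kudla's filtration together with a quaternionic conservation relation in the way you describe; rather, they reduce the quaternionic case to the already-established orthogonal--symplectic Howe duality by relating the quaternionic Weil representation to the ordinary one through the natural inclusions $U(\mathfrak{W})\times U(\mathfrak{V})\hookrightarrow \Sp(\mathfrak{W}\otimes_D\mathfrak{V})$ and an analysis of restriction of representations, combined with the structure of the doubling see-saw. The conservation relation you invoke (the Sun--Zhu theorem) is in fact proved uniformly for all type~I dual pairs, including the quaternionic ones, so the ``main obstacle'' you flag is not actually open; but the way Gan--Sun use it is not the inductive bootstrap you outline. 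If you wish to supply a proof rather than a citation here, you should either follow the cited source or make explicit that you are giving an alternative argument and fill in the genuine content of the supercuspidal injectivity step, which in your proposal remains only a statement of intent.
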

We extend it to the similitude group
$\GU(\mathfrak{W})\times \GU(\mathfrak{V})$ following Roberts. (See \cite[\S 3]{gan2014inner}.)

\subsubsection{The see-saw diagram}
Let us fix the polarization $W_R=Y_R+Y_R^\ast$.
Then $$\mathfrak{V}=\mbox{Res}_{R/D}W_R=Y_D+Y^\ast_D.$$
Consider the following see-saw diagram
\[\xymatrix{\GU(\mathfrak{V})\ar@{-}[rrd]\ar@{-}[d] && \GO_{2,2}(E)^\natural\ar@{-}[lld]\ar@{-}[d]\\ \GU(W_R)^\natural&& \GO^\ast_{1,1}(F) .} \]
Here  $\GU(W_R)^\natural=\GSp_4(E)^\natural$. 
\begin{prop}\cite[Theorem 8.2]{gurevich2015non}
	Let $\tau$ be an irreducible representation of $\GSp(\mathcal{W}_2)\cong \GU(W_R).$ Assume that  $\pi$ is an irreducible representation of $\GO_{1,1}^\ast(F)$. Then
	\[\Hom_{\GU(W_R)^\natural }(\Theta_{\mathfrak{W},\mathfrak{V},\psi}(\pi),\tau )=\Hom_{\GO_{1,1}^\ast(F)}(\Theta_4^+(\tau),\pi ). \]  
\end{prop}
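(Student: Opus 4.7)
The plan is to derive this as a standard see-saw identity, but with care because see-saw identities for quaternionic similitude pairs are known to require a nontrivial compatibility verification (as flagged in Remark \ref{counterforseesaw}).

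First, I would fix the identification of the two ambient symplectic $F$-spaces. Via Morita equivalence, the Hermitian $R$-module $W_R$ equals the Hermitian $D$-module $\mathfrak{V} = \Res_{R/D} W_R$ as an abelian group, the $D$-action being the restriction of the $R$-action, and the Hermitian forms related by $h_D = \tfrac12 \mathrm{tr}_{R/D} \circ h_R$. Under a dual Morita-type identification, the one-dimensional skew-Hermitian $D$-space $\mathfrak{W}$ with $\GU(\mathfrak{W}) \cong \GO^\ast_{1,1}(F)$ corresponds, after extension of scalars to $R$, to a four-dimensional split quadratic $E$-space $V \cong \H_E^2$ with $\GO(V) = \GO_{2,2}(E)$, realizing $\mathfrak{W}$ as a skew-Hermitian $D$-form on $\Res_{E/F} V$. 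These identifications make the $F$-symplectic space $\mathfrak{W}\otimes_D \mathfrak{V}$ canonically isomorphic to $\Res_{E/F}(V \otimes_E W_R)$, so that both dual pairs $(\GU(\mathfrak{W}), \GU(\mathfrak{V}))$ and $(\GSp_4(E), \GO_{2,2}(E))$ embed into the same ambient $\GSp$, with the chains $\GSp_4(E)^\natural \subset \GU(\mathfrak{V})$ and $\GO_{1,1}^\ast(F) \subset \GO_{2,2}(E)^\natural$ induced by these inclusions.

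Second, I would verify that the extended Weil representation of the ambient symplectic similitude group restricts to the two dual-pair similitude Weil representations. The restriction to $\GU(\mathfrak{W}) \times \GU(\mathfrak{V})$ realizes the quaternionic Weil representation, while the restriction to $\GSp_4(E)^\natural \times \GO_{2,2}(E)^\natural$ realizes the $E$-Weil representation of \cite{roberts2001global}, the $\natural$ conditions matching exactly the requirement that the similitude factors lie in $F^\times$. The compatibility to be checked is that the normalizing twist $|\lambda_V|_F^{-(\dim V)(\dim W)/8}$ and the quadratic character $\chi_V^{\dim W/2}$ from the two recipes agree on the common subgroup $\GSp_4(E)^\natural \times \GO_{1,1}^\ast(F)$: since both similitudes take values in $F^\times$ on this subgroup, and the two discriminant characters match (both $\mathfrak{V}$ and the corresponding $E$-quadratic space are split), the two extensions yield the same representation of the common subgroup.

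Third, with these identifications in place, the see-saw identity follows from Frobenius reciprocity applied twice. Both sides of the claimed equality compute
\[
\Hom_{\GSp_4(E)^\natural \times \GO_{1,1}^\ast(F)}\!\left(\omega_\psi,\, \tau \boxtimes \pi\right),
\]
where $\omega_\psi$ is the ambient Weil representation restricted to the common subgroup: the left-hand side factors this through $\Theta_{\mathfrak{W},\mathfrak{V},\psi}(\pi)$ as a $\GU(\mathfrak{V})$-module, restricted along $\GSp_4(E)^\natural \hookrightarrow \GU(\mathfrak{V})$, while the right-hand side factors it through $\Theta_4^+(\tau)$ as a $\GO_{2,2}(E)^\natural$-module, restricted along $\GO_{1,1}^\ast(F) \hookrightarrow \GO_{2,2}(E)^\natural$. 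The hard part will be the second step: because the similitude-extended Weil representations are defined independently in the two pictures, matching them on the common subgroup requires a careful comparison of the $\chi_V$-characters and Weil-index factors $\gamma_F$ entering the two extensions. This is exactly where see-saw identities can silently fail for quaternionic pairs, so I would spell out the matching at the level of Schr\"odinger models associated to a mixed polarization of $\mathfrak{W}\otimes_D\mathfrak{V}$ compatible with both dual pairs, rather than treat it as automatic.
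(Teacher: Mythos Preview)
The paper does not prove this proposition; it simply cites \cite[Theorem~8.2]{gurevich2015non}. Your outline is the correct one and is essentially what that reference carries out.

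One slip: $\mathfrak{W}$ is a \emph{two}-dimensional skew-Hermitian $D$-space, not one-dimensional. A rank-$n$ skew-Hermitian $D$-space carries a form of $\Oo_{2n}$, and $\GO^\ast_{1,1}$ is by definition the $D$-inner form of $\GO_{2,2}$, so $n=2$; a rank-one space would yield $D^\times=\GSp_{1,0}(F)$ instead. Extension to $R$ then produces a rank-two skew-Hermitian $R$-module, which under Morita is the four-dimensional quadratic $E$-space $\mathbb{H}_E^2$, as you intend.

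Your second step is, in this particular case, much easier than you suggest, and it is worth saying why. Just before the proposition the paper fixes a polarization $W_R=Y_R+Y_R^\ast$, which induces $\mathfrak{V}=\Res_{R/D}W_R=Y_D+Y_D^\ast$; tensoring with $\mathfrak{W}$ gives a \emph{single} polarization $\mathfrak{W}\otimes_D Y_D \oplus \mathfrak{W}\otimes_D Y_D^\ast$ of the ambient symplectic $F$-space that is simultaneously adapted to both dual pairs. With one Schr\"odinger model there is no Rao cocycle to compare, and the Kudla splittings of \cite{kudla1994splitting} for the two pairs coincide on $\GSp_4(E)^\natural\times\GO^\ast_{1,1}(F)$ automatically. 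This is precisely what distinguishes the present see-saw from the two lemmas that follow it in the paper (the Local Siegel--Weil identity and the lemma for $\GSp_{1,j}$): there no common polarization exists, one must compare two different Schr\"odinger models via the explicit functions $\beta_{\mathbb{Y}}$, $\beta_{\mathbb{Y}'}$, and that comparison is what produces the twist by $\nu^{1+j}$. So the ``hard part'' you flag is a genuine obstruction in general---as Remark~\ref{counterforseesaw} records---but it is absent here, and the substance of the proof is recognizing that the common polarization exists.
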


	Assume that  $V_R$ is a skew-Hermitian free module over $R$ of rank $2$, associated to the anisotropic $4$-dimensional quadratic space over $E$ given by $(D_E, N_{D_E})$ such that
	\[ \GU(V_R)\cong\GO_{4,0}(E). \]
	 Then $\Res_{R/D}V_R$ is a $4$-dimensional skew-Hermitian $D$-vector space with trivial discriminant. There is a natural embedding
\[\mathrm{SU}(V_R)\cong\SO_{4,0}(E)\hookrightarrow \SO^\ast_{2,2}(F)=\mathrm{SU}(\Res_{R/D} V_R).  \]
Given a $1$-dimensional Hermitian vector space $\mathfrak{V}_1$ over $D,$ we consider the theta lift from $\GU(\mathfrak{V}_1)=\GSp_{1,0}(F)$ to $\GO^\ast_{2,2}(F)$ and the theta lift from $\GSO_{4,0}(E)$ to $\GU(R\otimes_D\mathfrak{V}_1)=\GL_2(E)$.
Consider the following see-saw diagram
\[\xymatrix{\GU(\Res_{R/D}V)\ar@{-}[d]\ar@{-}[rd] & \GL_2(E)^\natural\ar@{-}[d]\ar@{-}[ld]\\ \GSO_{4,0}(E)^\natural& \GSp_{1,0}(F) } \]
which is different from the situation in  \cite[Theorem 8.2]{gurevich2015non}, since there does not exist a natural polarization in the symplectic $F$-vector space $\mathbb{V}=(\Res_{R/D}V_R)\otimes_D \mathfrak{V}_1.$ 

Assume that  $\mathbb{V}=\mathbb{X}\oplus\mathbb{Y}$ is a polarization. Set the group
\[\Mp(\mathbb{V})_{\mathbb{Y}}=\Sp(\mathbb{V})\times\mathbb{C}^\times \]
with group law
\[(g_1,z_1)(g_2,z_2)=(g_1g_2,z_1z_2\cdot z_{\mathbb{Y}}(g_1,g_2)) \]
where $z_{\mathbb{Y}}(g_1,g_2)=\gamma_F(\frac{1}{2}\psi\circ q(\mathbb{Y},g_2^{-1}\mathbb{Y},g_1\mathbb{Y} ))$ is a $2$-cocycle (called Rao cocyle) associated to $\mathbb{Y}$ and $q(\mathbb{Y},g_2^{-1}\mathbb{Y},g_1\mathbb{Y})$ is the Leray invariant. (See \cite[\S I.3]{kudla1996notes}.)

Suppose that $\mathbb{V}=\mathbb{X}'\oplus\mathbb{Y}'$ is another polarization of $\mathbb{V}.$ There is an isomorphism 
\[\mathcal{S}(\mathbb{X})\cong\mathcal{S}(\mathbb{X}'). \]
Given $\varphi\in \mathcal{S}(\mathbb{X})$ and $\varphi'\in\mathcal{S}(\mathbb{X}'),$ due to \cite[Lemma 3.3]{ichino2015periods},  we have
\[\varphi(x)=\int_{\mathbb{Y}\cap\mathbb{Y}'\backslash\mathbb{Y} }\psi(\frac{1}{2}<x',y'>-\frac{1}{2}<x,y> )\varphi'(x')dy \]
where $x'\in\mathbb{X}'$ and $y'\in\mathbb{Y}'$ are given by $x'+y'=x+y\in\mathbb{V}.$

	\begin{lem}[Local Siegel-Weil identity]
	Assume that  $\pi$ is an irreducible discrete series representation of $\GL_2(E)$ so that the big theta lift $\Theta(\pi)$  to $\GSO_{4,0}(E)$ is isomorphic to $\pi^{D_E}\boxtimes\pi^{D_E}$, where $\pi^{D_E}
	$ is the Jacquet-Langlands lift to $D_E^\times(E)$ of $\pi.$ Let $\varrho$ be an irreducible representation of $\GSp_{1,0}(F)$. Then
	\[\dim \Hom_{\GSO_{4,0}(E)^\natural }(\Theta(\varrho),\pi^{D_E}\boxtimes\pi^{D_E})=\dim \Hom_{\GSp_{1,0}(F)}(\pi,\varrho ) \]
	where $\Theta(\varrho)$ is the big theta lift to $\GO^\ast_{2,2}(F)$ of $\varrho$. 
\end{lem}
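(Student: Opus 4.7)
The plan is to apply the see-saw identity (Lemma 2.1) to the see-saw diagram just displayed and reduce the result to Howe duality for the dual pair $(\GL_2(E),\GSO_{4,0}(E))$. Before that can be done, however, one must define the Weil representation on the symplectic $F$-space $\mathbb{V}=(\Res_{R/D}V_R)\otimes_D\mathfrak{V}_1$ in a way that is simultaneously compatible with both dual pairs. Since no natural polarization of $\mathbb{V}$ is preserved by both pairs, I will fix an arbitrary polarization $\mathbb{V}=\mathbb{X}\oplus\mathbb{Y}$, realize $\omega_\psi$ on the Schr\"odinger model $\mathcal{S}(\mathbb{X})$ via the Rao cocycle $z_{\mathbb{Y}}$, and use the intertwining formula recalled just above the lemma to see that the resulting abstract representation of $\Sp(\mathbb{V})$ is independent of the choice of polarization.

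Once this is in place, the extension of $\omega_\psi$ to similitudes (following Roberts) restricts to an action of both $\GU(\Res_{R/D}V_R)\times\GSp_{1,0}(F)$ and $\GL_2(E)\times\GSO_{4,0}(E)$ on the same space, and the equal-similitude-factor conditions cut out the $\natural$-subgroups. The see-saw identity of Lemma 2.1 then yields
\[\Hom_{\GSO_{4,0}(E)^\natural}(\Theta(\varrho),\pi^{D_E}\boxtimes\pi^{D_E})\;=\;\Hom_{\GSp_{1,0}(F)}\bigl(\Theta(\pi^{D_E}\boxtimes\pi^{D_E}),\varrho\bigr),\]
where on the right $\Theta(\pi^{D_E}\boxtimes\pi^{D_E})$ denotes the big theta lift from $\GSO_{4,0}(E)^\natural$ to $\GL_2(E)^\natural$.

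By the hypothesis $\Theta(\pi)\cong\pi^{D_E}\boxtimes\pi^{D_E}$ together with the Howe duality for the similitude dual pair $(\GL_2(E),\GSO_{4,0}(E))$---this is exactly the theta realization of the local Jacquet--Langlands correspondence---the reverse lift satisfies $\Theta(\pi^{D_E}\boxtimes\pi^{D_E})\cong\pi$. Substituting into the see-saw equality produces the desired identity.

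The main technical obstacle is the setup described in the first paragraph. The two realizations of $\Sp(\mathbb{V})$ arising from the two dual pairs would naturally employ different polarizations, so the Rao cocycles $z_{\mathbb{Y}}$ and $z_{\mathbb{Y}'}$ attached to them differ by a coboundary that must be tracked to produce an honest intertwiner between the corresponding Schr\"odinger models. This is precisely where the argument departs from \cite[Theorem 8.2]{gurevich2015non}; once the polarization bookkeeping is settled, the rest of the proof is a formal application of see-saw combined with the Howe duality statement of Theorem 2.1.
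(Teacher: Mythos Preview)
Your outline is correct and matches the paper's strategy: the see-saw identity plus the identification $\Theta(\pi^{D_E}\boxtimes\pi^{D_E})\cong\pi$ is exactly how the argument concludes. The issue is that what you call ``polarization bookkeeping'' is not a preliminary to the proof but \emph{is} the proof, and your proposal does not carry it out.

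Concretely, the two dual pairs come with splittings $i_{\mathbb{Y}}$ and $i_{\mathbb{Y}'}$ into $\Mp(\mathbb{V})$ attached to the two polarizations, and the see-saw identity is only valid once one checks that these splittings agree on the common subgroup $\SO_{4,0}(E)\times\Sp_{1,0}(F)$. The paper does this by an explicit computation: writing $i_{\mathbb{Y}}(h)=(h,\beta_{\mathbb{Y}}(h))$ following Kudla's formulas, one must verify $\beta_{\mathbb{Y}}(y)=1$ for $y\in\SO_{4,0}(E)$ and $\beta_{\mathbb{Y}'}(z)=1$ for $z\in\Sp_{1,0}(F)$. The first is obtained from the Bruhat decomposition $\Oo^\ast_{2,2}(F)=\bigsqcup_i\mathfrak{P}\omega_i\mathfrak{P}$ by showing $\SO_{4,0}(E)\cap\mathfrak{P}\omega_1\mathfrak{P}=\emptyset$ (a determinant argument), and the second is a direct Weil-index calculation $\gamma_F(x(g),\tfrac12\psi)^4\gamma_F(\tfrac12\psi\circ N_{D_E})^4=1$.

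This step is not routine: the very next remark in the paper (Remark~\ref{counterforseesaw}) records that the see-saw identity \emph{fails} if $\SO_{4,0}(E)$ is replaced by $\Oo_{4,0}(E)$, precisely because $\beta_{\mathbb{Y}}$ takes the value $-1$ on the odd Bruhat cell. So the compatibility you defer is a genuine computation with a nontrivial outcome, and without it the see-saw step is not justified. Your proposal should include this verification rather than treat it as settled.
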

\begin{proof} It suffices to show that two splittings of $\SO_{4,0}(E)\times \Sp_{1,0}(F)$ in $\Mp(\mathbb{V})$ are compactible. 
	Let us fix two polarizations $\mbox{Res}_{R/D}V_R=\mathfrak{U}\oplus \mathfrak{U}^\ast$ and $R\otimes_D\mathfrak{V}_1= X\oplus Y$. Then
	\[\mathbb{V}=\mathbb{X}\oplus\mathbb{Y}= (\mathfrak{U}\otimes_D \mathfrak{V}_1)\oplus (\mathfrak{U}^\ast\otimes_D \mathfrak{V}_1)\mbox{  and  } \mathbb{V}=\mathbb{X}'\oplus\mathbb{Y}'=(D_E\otimes_E X)\oplus (D_E\otimes_E Y). \]
	Choose a fixed element $h_0\in \Sp(\mathbb{V})$ such that $$\mathbb{X}'=h_0\mathbb{X}\mbox{  and  } \mathbb{Y}'=h_0\mathbb{Y}.$$
	By \cite[Appendix B.4]{ichino2015periods}, there is an isomorphism $\alpha_0:\Mp(\mathbb{V})_{\mathbb{Y}'}\rightarrow \Mp(\mathbb{V})_{\mathbb{Y}}$ via
	$$(h,z)\mapsto(\alpha_0(h),z)$$
	  where  $
	\alpha_0(h)=h^{-1}\cdot g\cdot h$ for all $h\in \Sp(\mathbb{V}). $ Moreover,
	\[z_{\mathbb{Y}'}(h_1,h_2 )=z_{\mathbb{Y}}(\alpha_0(h_1),\alpha_0(h_2) ). \]	
	Now we fix the splitting $i_\mathbb{Y}:\Oo^\ast_{2,2}(F)\times \Sp_{1,0}(F)\hookrightarrow \Mp(\mathbb{V})_{\mathbb{Y}}$ and
	\[i_{\mathbb{Y}'}:\SO_{4,0}(E)\times \Sp_2(E)\hookrightarrow \Mp(\mathbb{V})_{\mathbb{Y}'} \]
	where the splitting $i_\mathbb{Y}(y,z)=((y,z),\beta_\mathbb{Y}(z))$ is  defined in \cite[Theorem 3.1]{kudla1994splitting}.
	
	We will show that $ i_{\mathbb{Y}}(h)=\alpha_0\circ i_{\mathbb{Y}'}(h)$ for all $h=(y,z)\in \SO_{4,0}(E)\times \Sp_{1,0}(F)$. Consider
	\[\xymatrix{\SO_{4,0}(E)\times \Sp_{1,0}(F)\ar@{^{(}->}[r]\ar@{=}[d] & \Oo^\ast_{2,2}(F)\times \Sp_{1,0}(F)\ar[r]^-{i_{\mathbb{Y}}} & \Mp(\mathbb{V})_{\mathbb{Y}} \\
		\SO_{4,0}(E)\times \Sp_{1,0}(F)\ar@{^{(}->}[r] & \SO_{4,0}(E)\times \Sp_2(E)\ar[r]^-{i_{\mathbb{Y}'}}& \Mp(\mathbb{V})_{\mathbb{Y}'}.\ar[u]^-{\alpha_0} } \]
	Set $i_\mathbb{Y}(h)=(h,\beta_\mathbb{Y}(h))$. Then $\beta_\mathbb{Y}(z)=1$ for all $z\in \Sp_{1,0}(F)$. Similarly, we have
	$$\beta_{\mathbb{Y}'}(y)=1$$
	for all $y\in \SO_{4,0}(E).$ In order to show that \[\beta_\mathbb{Y}(h)=\beta_{\mathbb{Y}'}(h)\]
	for all $h=(y,z)\in \SO_{4,0}(E)\times \Sp_{1,0}(F)$,
	we will show that $\beta_{\mathbb{Y}}(y)=1=\beta_{\mathbb{Y}'}(z).$
	\begin{itemize}
		\item If $y\in \SO_{4,0}(E)\subset  \Oo^\ast_{2,2}(F)=\bigsqcup_{i=0}^2 \mathfrak{P}\omega_i\mathfrak{P},$ say $y\in \mathfrak{P}\omega_{i}\mathfrak{P},$ where $\mathfrak{P}$ is the Siegel parabolic subgroup of $\Oo^\ast_{2,2}(F),\omega_0=\mathbf{1}_4$ (the identity matrix in $\Oo_{2,2}^\ast(F)$),$$\omega_1=\begin{pmatrix}
		&&1\\&1\\1&\\&&&1
		\end{pmatrix}\mbox{  and  }\omega_2=\begin{pmatrix}
		&&1\\&&&1\\1\\&1
		\end{pmatrix},$$ 
		then $\beta_{\mathbb{Y}}(y)=(-1)^i.$
		Since $\omega_1$ switches a pair of vectors $e_1$ and $e_1'$ in a basis $\{e_1,e_2,e_1',e_2' \}$, which corresponds to an element $h\in \Oo_{4,0}(E)$ with  determinant  $-1,$ where  $\mathfrak{P}$ stabilizes the maximal isotropic subspace $\{e_1,e_2 \},$ it follows that \[\SO_{4,0}(E)\cap \mathfrak{P}\omega_1\mathfrak{P}=\emptyset,\]
		   i.e. , $\beta_\mathbb{Y}(y)=1.$
		\item If $z\in \Sp_{1,0}(F)$ and so $z=g\in \SL_2(E),$ then $\beta_{\mathbb{Y}'}(z)=\gamma_F(x(g),\frac{1}{2}\psi )^4\cdot\gamma_F(\frac{1}{2}\psi\circ N_{D_E} )^4=1$, where
		\[x(g)=\begin{cases}
		N_{E/F}(a_{21})\pmod{{F^\times}^2},&\mbox{ if }g=\begin{pmatrix}
		a_{11}&a_{12}\\a_{21}&a_{22}
		\end{pmatrix}\mbox{ with }a_{21}\neq0;\\
		N_{E/F}(a_{22})\pmod{{F^\times}^2} ,&\mbox{otherwise.}
		\end{cases} \]
	\end{itemize}
	Therefore we have finished the proof.
\end{proof}
\begin{rem}\label{counterforseesaw}
	From the proof above, we can see that the see-saw identity does not hold if one replaces $\SO_{4,0}(E)$ by $\Oo_{4,0}(E)$ in this case.
\end{rem}
Let $V$ be a free $R$-module of rank $2$ corresponding to the quadratic space $\mathbb{H}_E^2$ by the Morita equivalence. Then $\Res_{R/D}V$ is a skew-Hermitian $D$-vector space of dimension $4$.
\begin{lem}
	Let $\Sigma$ be an irreducible representation of $\GO_{2,2}(E)$. Let $\varrho$ be a representation of $\GSp_{1,j}(F)$ for $j=0$ or $1$. Then 
	\[\dim\Hom_{\GO_{2,2}(E)^\natural}(\Theta(\varrho),\Sigma)=\dim\Hom_{\GSp_{1,j}(F)}(\Theta_{1+j}(\Sigma\cdot\nu^{1+j}),\varrho)    \]
	where $\nu$ is the nontrivial character of $\GO_{2,2}(E)/\GSO_{2,2}(E)$ and $\nu|_{\Oo_{2,2}(E)}=\det$.
\end{lem}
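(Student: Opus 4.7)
The plan is to deduce this identity from a see-saw for quaternionic unitary dual pairs, exactly in the spirit of the local Siegel-Weil identity established just above. Let $V$ be the rank-$2$ right skew-Hermitian $R$-module corresponding under Morita equivalence to $\mathbb{H}_E^2$, so that $\GU(V) = \GO_{2,2}(E)$ and $\GU(\Res_{R/D}V) = \GO_{2,2}^\ast(F)$; let $\mathfrak{V}$ be a Hermitian $D$-module of dimension $1+j$, so that $\GU(\mathfrak{V}) = \GSp_{1,j}(F)$ and $\GU(R \otimes_D \mathfrak{V}) = \GSp_{2(1+j)}(E)$. I would work inside $\GSp(\mathbb{V})$ for $\mathbb{V} = \Res_{R/D}V \otimes_D \mathfrak{V}$ and use the see-saw diagram
\[\xymatrix{\GU(\Res_{R/D}V) \ar@{-}[d]\ar@{-}[rrd] & & \GU(R \otimes_D \mathfrak{V})^\natural \ar@{-}[d]\ar@{-}[lld] \\ \GU(V)^\natural = \GO_{2,2}(E)^\natural & & \GU(\mathfrak{V}) = \GSp_{1,j}(F),}\]
where both vertical inclusions arise because an $R$-linear form-preserving endomorphism is automatically $D$-linear.

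The key step is to verify compatibility of two splittings of the metaplectic cover of $\Sp(\mathbb{V})$: the splitting $i_{\mathbb{Y}}$ adapted to a polarization for the $F$-pair $(\GU(\mathfrak{V}),\GU(\Res_{R/D}V))$, and the splitting $i_{\mathbb{Y}'}$ adapted to the polarization for the $E$-pair $(\GU(R \otimes_D \mathfrak{V}),\GU(V))$. Choosing an element $h_0 \in \Sp(\mathbb{V})$ interchanging these polarizations yields an isomorphism $\alpha_0\colon\Mp(\mathbb{V})_{\mathbb{Y}'}\to\Mp(\mathbb{V})_{\mathbb{Y}}$ preserving the Rao cocycle, and a Weil-index computation entirely analogous to the one just carried out for the $\GO_{4,0}$-case shows that $i_{\mathbb{Y}}$ and $\alpha_0\circ i_{\mathbb{Y}'}$ agree on the subgroup $\mathrm{SU}(V) \times \mathrm{SU}(\mathfrak{V})$.

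Extending to the full similitude pair, the two splittings differ on $\GU(V)^\natural$ by a character that is trivial on the identity component $\GSO_{2,2}(E)^\natural$ and restricts to $\det^{1+j}$ on a representative of the nontrivial component of $\GO_{2,2}(E)^\natural/\GSO_{2,2}(E)^\natural$; this is precisely the character $\nu^{1+j}$. Granted this, the see-saw identity takes the twisted form
\[\Hom_{\GO_{2,2}(E)^\natural}(\Theta(\varrho),\Sigma) = \Hom_{\GSp_{1,j}(F)}(\Theta_{1+j}(\Sigma\cdot\nu^{1+j}),\varrho),\]
which is exactly the claim. Note that when $j=1$ the twist disappears since $\nu^2=\mathbf{1}$, whereas for $j=0$ the twist is genuine, in line with the phenomenon noted in Remark \ref{counterforseesaw} that a see-saw passing from $\GSO$ to the full $\GO$ must be corrected.

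The principal obstacle is pinpointing the component-group character as exactly $\nu^{1+j}$ rather than some other power of $\nu$. This is a Weyl-element parity calculation of the same nature as the verification that $\SO_{4,0}(E)\cap\mathfrak{P}\omega_1\mathfrak{P}=\emptyset$ in the preceding proof, but now one must follow the dependence on the rank $1+j$ of $\mathfrak{V}$; the exponent in the relevant product of Weil indices $\gamma_F(x(g),\tfrac{1}{2}\psi)^{2(1+j)}$ depends linearly on this rank, forcing the answer $\nu^{1+j}$.
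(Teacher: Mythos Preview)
Your proposal is correct and follows essentially the same approach as the paper: set up the see-saw between the $D$-pair $(\GO_{2,2}^\ast(F),\GSp_{1,j}(F))$ and the $E$-pair $(\GO_{2,2}(E),\GSp_{2+2j}(E))$, compare the Kudla splittings $\beta_{\mathbb{Y}}$ and $\beta_{\mathbb{Y}'}$ for the two natural polarizations, and read off the discrepancy as $\nu^{1+j}$ via a Bruhat-cell parity count.

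One minor simplification relative to your write-up: here, unlike in the preceding Siegel--Weil lemma for $\GO_{4,0}(E)$, \emph{both} polarizations are of tensor-product type (coming from $\mathfrak{W}=\mathfrak{U}\oplus\mathfrak{U}^\ast$ and $\mathbb{H}_E^2=Y\oplus Y^\ast$ respectively), so Kudla's explicit formula applies directly to each and the intertwiner $h_0$ and the transport $\alpha_0$ are not really needed. The paper simply observes $\beta_{\mathbb{Y}'}(y)=1$ for $y\in\Oo_{2,2}(E)$ while $\beta_{\mathbb{Y}}(y)=(-1)^{(1+j)i}$ for $y$ in the $i$-th Bruhat cell of $\Oo_{2,2}^\ast(F)$, and the parity of $i$ for $y\in\Oo_{2,2}(E)$ is exactly $\nu(y)$; this is the computation you flag as the ``principal obstacle,'' and you have identified it correctly.
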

\begin{proof}
	Consider the following see-saw diagram
	\[\xymatrix{ \GO_{2,2}^\ast(F)\ar@{-}[rd]\ar@{-}[d] &\GSp_{2+2j}(E)^\natural\ar@{-}[d]\ar@{-}[ld] \\
		\GO_{2,2}(E)^\natural&\GSp_{1,j}(F).
	} \]
Assume that $\mathfrak{W}=\Res_{R/D}V$.
Let us fix the polarization $\mathfrak{W}=\mathfrak{U}+\mathfrak{U}^\ast$ and $\mathbb{H}_E^2=Y+Y^\ast$, where $Y^\ast$ is the dual space of $Y$. Let $\mathfrak{V}$ be a Hermitian $D$-vector space with isometric group $\GSp_{1,j}(F)$. Then
there exists a natural polarization
\[\mathfrak{W}\otimes_D\mathfrak{V}=\mathfrak{U}\otimes_D\mathfrak{V}+\mathfrak{U}^\ast\otimes_D\mathfrak{V}. \]
Similarly, $\mathbb{H}_E^2\otimes_E \mathcal{W}_{1+j}=Y\otimes_E\mathcal{W}_{1+j}+Y^\ast\otimes_E\mathcal{W}_{1+j}, $
where $\mathcal{W}_r$ is the symplectic vector space over $E$ of dimension $2r$. Set $\mathbb{Y}=\mathfrak{U}^\ast\otimes_D \mathfrak{V}$ and $\mathbb{Y}'=Y^\ast\otimes_E\mathcal{W}_{1+j}$.
Then we have the splittting $i_{\mathbb{Y}}$ and $i_{\mathbb{Y}'}$ defined in \cite[Theorem 3.1]{kudla1994splitting}. For instance, $i_{\mathbb{Y}'}(y,z)=((y,z),\beta_{\mathbb{Y}'}(y) )$ for $(y,z)\in\Oo_{2,2}(E)\times\Sp_{2+2j}(E) $ and
\[ i_{\mathbb{Y}}(y,z)=((y,z),\beta_{\mathbb{Y}}(y) )\in \Mp(\mathfrak{W}\otimes_D\mathfrak{V} )_\mathbb{Y}
  \]
  for $y\in \Oo_{2,2}^\ast(F)$ and $z\in\Sp_{1,j}(F)$. Note that $\beta_{\mathbb{Y}'}(y)=1$ for $y\in\Oo_{2,2}(E) $ and
  $$\beta_{\mathbb{Y}}(y)=(-1)^{(1+j)i}$$ if $y\in\mathfrak{P}\omega_i\mathfrak{P} $ where $\Oo^\ast_{2,2}(F)=\bigcup_i \mathfrak{P}\omega_i\mathfrak{P}$ and $\mathfrak{P}$ is the Siegel parabolic subgroup of $\Oo_{2,2}^\ast(F)$.
 Thus
\[\beta_{\mathbb{Y}}(h)=\beta_{\mathbb{Y}'}(h)\cdot (\nu(h))^{1+j} \]
for $h\in\Oo_{2,2}(E)$. Hence
\begin{align*}
\dim\Hom_{\GO_{2,2}(E)^\natural}(\Theta(\varrho),\Sigma)&=\dim\Hom_{\GO_{2,2}(E)^\natural\times\GSp_{1,j}(F) }(\omega_{\psi,\mathbb{Y}},\Sigma\otimes\varrho) \\
&=\dim\Hom_{\GO_{2,2}(E)^\natural\times\GSp_{1,j}(F) }(\omega_{\psi,\mathbb{Y}'},\Sigma\cdot\nu^{1+j}\otimes\varrho  )\\
 &=\dim\Hom_{\GSp_{1,j}(F)}(\Theta_{1+j}(\Sigma\cdot\nu^{1+j}),\varrho ) 
\end{align*}
where $\omega_{\psi,\mathbb{Y}}$ (resp. $\omega_{\psi,\mathbb{Y}'}$) is the Weil representation on $\Mp(\mathfrak{W}\otimes_D\mathfrak{V})$  emphasizing  the splitting $\mathbb{Y}+\mathbb{Y}^\ast$ (resp. $\mathbb{Y}'+{\mathbb{Y}'}^\ast$). This finishes the proof.
\end{proof}

\subsubsection{Degenerate principal series} Let us fix the complete polarization 
\[\mathfrak{V}=Y_D+Y_D^\ast. \]
Suppose $\dim_D\mathfrak{V}=4$.
Assume that  $\mathfrak{I}(s)$ is the degenerated principal series of $\GU(\mathfrak{V})=\GSp_{2,2}(F)$ associated to a Siegel parabolic subgroup $P(Y_D)$, i.e.,
\[\mathfrak{I}(s)=\{f:\GU(\mathfrak{V})\rightarrow \mathbb{C}|~f(pg)=\delta_{P(Y_D)}(p)^{\frac{1}{2}+\frac{s}{5}}f(g)\mbox{ for all } p\in P(Y_D),g\in \GU(\mathfrak{V}) \} \]
where $\delta_{P(Y_D)}$ is the modular character.
Similar to Proposition \ref{degenerateseries}, we have
\begin{lem} \label{GSp:GU}
	Assume that  $\mathfrak{R}^{3}(\mathbf{1})$ is the big theta lift to $\GU(\mathfrak{V})$ of the trivial representation of $\GO_{3,0}^\ast(F)$. Then there is an exact sequence
	\[\xymatrix{0\ar[r]& \mathfrak{R}^{3}(\mathbf{1})\ar[r]& \mathfrak{I}(\frac{1}{2})\ar[r]& \mathfrak{R}^{2}(\mathbf{1})\ar[r]&0 } \]
	where $\mathfrak{R}^{2}(\mathbf{1})$ is the big theta lift to $\GU(\mathfrak{V})$ of the trivial representation
	of $\GO^\ast_{1,1}(F)$. 
\end{lem}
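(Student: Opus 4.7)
The plan is to imitate the proof of Proposition \ref{degenerateseries} essentially verbatim, replacing the symplectic/orthogonal dual pair $\Sp(W_r) \times \Oo(\mathbb{H}^n)$ with the quaternionic Hermitian/skew-Hermitian dual pair $\Sp_{2,2}(F) \times \Oo^\ast_{r,s}(F)$. Since the similitude theta correspondence is built on top of the isometry correspondence by induction, I would first establish the exact sequence at the isometry level and then pass to similitudes.

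First I would restrict everything to the isometry group $U(\mathfrak{V})=\Sp_{2,2}(F)$. By Mackey theory, the double coset
\[P(Y_D)\backslash\GU(\mathfrak{V})/\Sp_{2,2}(F)=\bigl(P(Y_D)\cap\Sp_{2,2}(F)\bigr)\backslash\Sp_{2,2}(F)/\Sp_{2,2}(F)\]
has only one orbit, so $\mathfrak{I}(1/2)|_{\Sp_{2,2}(F)}$ coincides with the usual degenerate Siegel principal series of $\Sp_{2,2}(F)$ at $s=1/2$. Likewise $\mathfrak{R}^{3}(\mathbf{1})|_{\Sp_{2,2}(F)}$ and $\mathfrak{R}^{2}(\mathbf{1})|_{\Sp_{2,2}(F)}$ are (by definition, since the extension to similitudes is by compact induction from the $R_0$ subgroup) the images of the big theta lifts to $\Sp_{2,2}(F)$ of the trivial representations of $\Oo^\ast_{3,0}(F)$ and $\Oo^\ast_{1,1}(F)$ respectively.

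Next I would invoke the structure theorem for degenerate Siegel principal series for quaternionic Hermitian symplectic groups, i.e.\ the analog of \cite[Proposition 7.2]{gan2014formal} for the dual pair $\Sp_{2,2}(F)\times\Oo^\ast$. This statement is available through the work of Yamana \cite{yamana2011deg}, and its formal content is exactly the same: at the first reducibility point $s=1/2$, the degenerate principal series fits into a short exact sequence whose submodule is the image of the theta lift from the anisotropic member of the Witt tower and whose quotient is the image of the theta lift from the next member. The conservation relation for the tower of skew-Hermitian $D$-spaces places these two members at ranks $3$ (i.e.\ $\Oo_{3,0}^\ast$) and $2$ (i.e.\ $\Oo_{1,1}^\ast$), which gives the desired exact sequence
\[0 \longrightarrow \mathfrak{R}^{3}(\mathbf{1})|_{\Sp_{2,2}(F)} \longrightarrow \mathfrak{I}(1/2)|_{\Sp_{2,2}(F)} \longrightarrow \mathfrak{R}^{2}(\mathbf{1})|_{\Sp_{2,2}(F)} \longrightarrow 0\]
of $\Sp_{2,2}(F)$-modules.

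Finally, since the extended Weil representation for the similitude groups is defined by compact induction from the $R_0$-subgroup of the isometry correspondence, this construction preserves short exact sequences. Hence the isometry-level exact sequence above lifts to an exact sequence of $\GU(\mathfrak{V})=\GSp_{2,2}(F)$-modules, which is exactly the content of the lemma. The main obstacle in writing this up carefully is locating in \cite{yamana2011deg} (or else directly verifying) the precise isometry-level short exact sequence for the quaternionic Siegel principal series at $s=1/2$, and pinning down that the two Witt-tower members appearing are indeed $\Oo^\ast_{3,0}$ and $\Oo^\ast_{1,1}$ with the stated order (submodule versus quotient); once these facts are in hand, both the Mackey reduction and the similitude extension are routine.
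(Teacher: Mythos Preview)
Your proposal is correct and follows essentially the same route as the paper: the paper simply says ``By \cite[Theorem 1.4]{yamana2011deg}, we may give a similar proof as in Proposition \ref{degenerateseries},'' which is exactly your plan of restricting to isometry groups, invoking Yamana's structure theorem for the quaternionic degenerate principal series, and then passing back to similitudes. Your identification of the main technical input (\cite{yamana2011deg}) and of the remaining bookkeeping (pinning down which Witt-tower member appears as submodule versus quotient) matches what the paper leaves implicit.
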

\begin{proof}
	By \cite[Theorem 1.4]{yamana2011deg}, we may give a similar proof as in Proposition \ref{degenerateseries}. So we omit it here.
\end{proof}

	\subsubsection{Double cosets}
Assume that  $P(Y_D)$ is the Siegel parabolic subgroup of $\GU(\mathfrak{V})=\GSp_{2,2}(F)$. Then the homogeneous space
$X_D=P(Y_D)\backslash \GSp_{2,2}(F)$ corresponds to the set of maximal isotropic subspaces in $
\mathfrak{V}$. We consider the double coset $X_D/\GU(W_R)^\natural=X_D/\GSp_4(E)^\natural,$ similar to Lemma \ref{orbitdecomp}.
\begin{prop}
	In the double cosets $X_D/\GSp_4(E)^\natural,$
	there are
	\begin{itemize}
		\item   one closed orbit with  stabilizer
		$P(Y_D)\cap \GSp_4(E)^\natural, $
		\item  one open orbit with  stabilizer $\GU_2(D)(F)=\GSp_{1,1}(F)\subset \GSp_4(E)^\natural$ and
		\item one intermediate orbit with a representative
		\[L=Dr(\sqrt{d}e+f)+D(e-\frac{1}{\sqrt{d}}f )\in X_D,\]
		which is a non-free $R$-module with stabilizer $(\GL_1(E)\times \GSp_{1,0}(F))\cdot N,N\cong E\oplus D$, 	where $r=\begin{pmatrix}
		1&0\\0&0
		\end{pmatrix}=r^2\in R  $ and $W_R=Re+Rf$ with $h_R(e,f)=1$.
	\end{itemize}
\end{prop}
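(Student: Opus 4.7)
The plan is to follow the strategy of Lemma \ref{orbitdecomp}, adapted to the Hermitian setting at hand. A point of $X_D$ corresponds to a maximal isotropic $D$-subspace $L \subset \mathfrak{V}$ of $D$-dimension $2$. The group $\GSp_4(E)^\natural = \GU(W_R)^\natural$ acts through its action on $W_R$, preserving both the left $R$-module structure and the Hermitian form $h_R$ up to a similitude factor in $F^\times$. The natural invariants of a $\GSp_4(E)^\natural$-orbit on $X_D$ are therefore the pair $(\dim_D(R\cdot L),\, h_R|_L)$, with the form taken modulo its radical and modulo $F^\times$-scaling, exactly as in Lemma \ref{orbitdecomp}.

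Since $R$ is Morita equivalent to $E$, the $R$-submodules of $W_R$ have $D$-dimension $2$ or $4$. Set $L_0 = \{v \in L : h_R(v,w) = 0 \text{ for all } w \in L\}$. One checks that $L_0 = L$ holds precisely when $L = R\cdot L$ is an isotropic rank-$1$ $R$-submodule; otherwise $R\cdot L = W_R$ is forced, and $\dim_D L_0 \in \{0,1\}$. The form $h_R|_L$ takes values in the hyperplane $\ker(tr_{R/D}) \subset R$, which is a free rank-$1$ $D$-module; combined with the similitude scaling this leaves essentially no further invariant, so each value of $\dim_D L_0 \in \{0,1,2\}$ yields exactly one orbit. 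The explicit vector $L = Dr(\sqrt{d}e+f) + D(e - f/\sqrt{d})$ for the case $\dim_D L_0 = 1$ is verified to have a one-dimensional $h_R$-radical.

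It remains to compute the stabilizers. In the closed case ($L_0 = L$), Morita equivalence identifies $L$ with an $E$-Lagrangian in the symplectic space $\mathcal{W}_2$, whose stabilizer is the Siegel parabolic of $\GSp_4(E)$ intersected with $\GSp_4(E)^\natural$, i.e. $P(Y_D) \cap \GSp_4(E)^\natural$. In the open case ($L_0 = 0$), the non-degenerate form $h_R|_L$ makes $L$ into a $2$-dimensional Hermitian $D$-vector space, and the canonical map $L \otimes_D R \to W_R$ is an isomorphism of Hermitian $R$-spaces; the stabilizer of the $F$-structure $(L,h_R|_L)$ is thus the quaternionic unitary group $\GU(L) = \GSp_{1,1}(F)$. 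In the intermediate case, the flag $0 \subset L_0 \subset L$ is preserved, yielding a parabolic whose Levi factor acts as $\GL_1(E)$ on the $R$-line $R\cdot L_0$ (the $E$-scalars of the corresponding simple $R$-submodule) and as $\GSp_{1,0}(F) = D^\times(F)$ on the $1$-dimensional Hermitian $D$-quotient $L/L_0$. A direct computation with the vectors $r(\sqrt{d}e+f)$ and $e - f/\sqrt{d}$ then identifies the unipotent radical $N$ with $E \oplus D$.

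The most delicate step will be the intermediate case, where one must pin down both the Levi factor and the unipotent radical via a careful analysis of how the idempotent $r \in R$ and the element $\sqrt{d} \in E$ interact with the Hermitian and $R$-module structures on $W_R$; the choice of representative is tailored to make this verification transparent, but one also has to confirm that the similitude constraint $\lambda_W \in F^\times$ cuts out exactly the stated semidirect product rather than an index-$2$ subgroup or overgroup.
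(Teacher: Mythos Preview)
The paper does not give an explicit proof of this proposition; it only says the double coset is handled ``similar to Lemma~\ref{orbitdecomp}''. Your proposal is precisely that adaptation, so the strategy is the intended one.

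There is, however, a concrete error in your second paragraph. The claim ``the $R$-submodules of $W_R$ have $D$-dimension $2$ or $4$'' is false: the simple $R$-module $E^2$ has $\dim_F E^2 = 4 = \dim_F D$, hence $D$-dimension $1$, so $R$-submodules of $W_R$ can have any $D$-dimension between $0$ and $4$. In particular, for the intermediate representative $L = Dr(\sqrt{d}e+f) + D(e - f/\sqrt{d})$ one computes $R\cdot L = Rr(\sqrt{d}e+f) + R(e - f/\sqrt{d})$, which is a direct sum of $D$-dimension $1+2 = 3$; thus ``$R\cdot L = W_R$ is forced'' is wrong. The correct relation, parallel to $\dim_E(E\cdot L) = n + \tfrac{1}{2}\operatorname{rank}(T)$ in Lemma~\ref{orbitdecomp}, is $\dim_D(R\cdot L) + \dim_D L_0 = 4$. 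This does not break your parametrization by $\dim_D L_0 \in \{0,1,2\}$, which is the right invariant, but the sentence as written is incorrect and should be fixed.

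Your assertion that the form $h_R|_{L/L_0}$ yields no further invariant also needs one line of justification: the induced form is $D$-Hermitian, and since $D$ is a division quaternion algebra one has $N_{D/F}(D^\times) = F^\times$, so all nondegenerate $D$-Hermitian forms of a given rank are isometric, a fortiori similar. This is exactly why there is a single open orbit here, in contrast with the three open orbits appearing in the orthogonal analogue $Q_4\backslash H_4/\GO_{2,2}(E)^\natural$ treated earlier in the paper.
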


	\begin{lem}\label{degpos1/2}
	Let $\tau$ be an irreducible representation of $\GU(W_R)^\natural=\GSp_4(E)^\natural$ and $\GSp_4(E)^\natural\hookrightarrow \GSp_{2,2}(F) $ be a natural embedding. Then \[\dim \Hom_{\GSp_4(E)^\natural }(\mathfrak{I}(1/2),\tau )\geq \dim \Hom_{\GSp_{1,1}(F)}(\tau^\vee,\mathbb{C} ). \]
\end{lem}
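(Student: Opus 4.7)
My plan is to adapt, almost verbatim, the argument used in the proof of Theorem \ref{localgspperiod}(iii) (the inequality chain culminating in \eqref{openorbitforGSp4}), using this time the $\GSp_{2,2}(F)$-side instead of the $\GSp_8(F)$-side. By the preceding proposition, the homogeneous space $X_D = P(Y_D)\backslash \GSp_{2,2}(F)$ decomposes into three $\GSp_4(E)^\natural$-orbits, the unique open one having stabilizer exactly $\GSp_{1,1}(F)$. The Mackey geometric lemma then produces a decreasing filtration
\[
0\;\subset\;\mathfrak{I}_0(1/2)\;\subset\;\mathfrak{I}_1(1/2)\;\subset\;\mathfrak{I}_2(1/2)=\mathfrak{I}(1/2)\big|_{\GSp_4(E)^\natural},
\]
in which the open-orbit submodule takes the form $\mathfrak{I}_0(1/2)\cong\mathrm{ind}_{\GSp_{1,1}(F)}^{\GSp_4(E)^\natural}(\chi_0)$, where $\chi_0$ is the character of $\GSp_{1,1}(F)$ obtained by pulling $\delta_{P(Y_D)}^{3/5}$ back through the conjugation action of a fixed open-orbit representative $g\in\GSp_{2,2}(F)$.

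The next step is to verify that $\chi_0$ is the trivial character. Since the derived subgroup $\Sp_{1,1}(F)$ is semisimple, any smooth character of $\GSp_{1,1}(F)$ factors through the similitude $\lambda_W:\GSp_{1,1}(F)\to F^\times$, so only a central computation is needed. Using the Morita equivalence $\mathfrak{V}=\mathrm{Res}_{R/D}W_R$ and the explicit open-orbit representative coming from the preceding proposition, a direct check (completely analogous to the one that yielded $\mathcal{I}_0(1/2)\cong\mathrm{ind}_{\GSp_4(F)}^{\GSp_4(E)^\natural}\mathbb{C}$ in the split case treated earlier) gives $\chi_0=\mathbf{1}$. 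Frobenius reciprocity then identifies
\[
\Hom_{\GSp_4(E)^\natural}\bigl(\mathfrak{I}_0(1/2),\tau\bigr)\;\cong\;\Hom_{\GSp_{1,1}(F)}(\mathbb{C},\tau)\;\cong\;\Hom_{\GSp_{1,1}(F)}(\tau^\vee,\mathbb{C}),
\]
where the second isomorphism is the standard identification of $H$-fixed vectors in an admissible $\tau$ with $H$-invariant linear functionals on $\tau^\vee$.

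Finally, because $\GSp_4(E)^\natural$ appears as the fixed points of the involution on $\GSp_{2,2}(F)$ given by conjugation by the scalar $\sqrt{d}$, the complement $X_D\setminus(\text{open orbit})$ is the zero set of a single polynomial function by \cite[Theorem 2.5]{olafsson1987fourier}. The analyticity/extension principle of Aizenbud--Gourevitch \cite[Proposition 4.9]{dima2018analytic} therefore guarantees that every $\GSp_4(E)^\natural$-equivariant map $\mathfrak{I}_0(1/2)\to\tau$ extends to the full $\mathfrak{I}(1/2)$, i.e.\ the restriction map
\[
\Hom_{\GSp_4(E)^\natural}(\mathfrak{I}(1/2),\tau)\longrightarrow\Hom_{\GSp_4(E)^\natural}(\mathfrak{I}_0(1/2),\tau)
\]
is surjective. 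Combined with the Frobenius identification this yields the claimed inequality. The genuine technical point — and the one I expect to need the most care — is the verification $\chi_0=\mathbf{1}$: although the principle is identical to the $\GSp_4(F)$-case already treated, it has to be redone with bookkeeping through the quaternionic Hermitian structure, because the open-orbit stabilizer here is $\GSp_{1,1}(F)$ sitting inside $\GSp_{2,2}(F)$ via Morita-plus-conjugation rather than directly.
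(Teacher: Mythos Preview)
Your proposal is correct and follows essentially the same approach as the paper: the three-step filtration from Mackey theory with $\mathfrak{I}_0(1/2)=\mathrm{ind}_{\GSp_{1,1}(F)}^{\GSp_4(E)^\natural}\mathbb{C}$, followed by the involution/Olafsson/Aizenbud--Gourevitch argument to extend functionals off the open orbit. The only difference is that you spell out the verification $\chi_0=\mathbf{1}$ and the Frobenius step more explicitly, whereas the paper simply asserts $\mathfrak{I}_0(1/2)=\mathrm{ind}_{\GSp_{1,1}(F)}^{\GSp_4(E)^\natural}\mathbb{C}$ and the final equality without further comment.
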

\begin{proof}
	Note that there are three orbits for $P(Y_D) \backslash \GSp_{2,2}(F)/\GSp_4(E)^\natural$.  There is a filtration for
	$\mathfrak{I}(1/2)|_{\GSp_4(E)^\natural}$ as follows
	\[ind_{\GSp_{1,1}(F)}^{\GSp_4(E)^\natural }\mathbb{C}=\mathfrak{I}_0(1/2)\subset\mathfrak{I}_1(1/2)\subset \mathfrak{I}_2(1/2)=\mathfrak{I}(1/2)|_{\GSp_4(E)^\natural } \]
	where $\mathfrak{I}_2(1/2)/\mathfrak{I}_1(1/2)\cong ind_{P^\natural}^{\GSp_4(E)^\natural }\delta_{P^\natural}^{\frac{1}{2}} $ and $\mathfrak{I}_1(1/2)/\mathfrak{I}_0(1/2)\cong ind_{MN}^{\GSp_4(E)^\natural }\delta_{P(Y_D)}^{\frac{3}{5}}\delta_3^{-\frac{1}{2}} ,$ where
	\[M\cong \GL_1(E)\times \GSp_{1,0}(F),N\cong D\oplus E\mbox{  and  }\delta_3(t,x)=|N_{E/F}(t)^4\cdot \lambda(x)^{-4} |_F \]
	for $(t,d)\in M$.
	There exists an involution on $\GSp_{2,2}(F)$ such that the fixed points coincides with $\GSp_4(E)^\natural$. So applying \cite[Theorem 2.5]{olafsson1987fourier} and \cite[Proposition 4.9]{dima2018analytic}, we obtain the following inequality
	\[\dim \Hom_{\GSp_4(E)^\natural}(\mathfrak{I}(1/2),\tau )\geq\dim \Hom_{\GSp_4(E)^\natural}(\mathfrak{I}_0(1/2),\tau )=\dim \Hom_{\GSp_{1,1}(F)}(\tau^\vee,\mathbb{C}). \]
	It finishes the proof.
\end{proof}

\subsection{The distinction problem for $\GSp_{1,1}$}	
Let $\GU_2(D)=\GSp_{1,1}$ be  the inner form of $\GSp_4$ defined over $F,$ whose $E$-points coincide with $\GSp_4(E).$
	Assume that  $\tau$ is an irreducible  representation of $\GSp_4(E)$
	with $\omega_\tau|_{F^\times}=\mathbf{1}$. In this subsection, we will study the multiplicity 
	\[\dim\Hom_{\GSp_{1,1}(F)}(\tau,\mathbb{C} ). \]
	

	\begin{thm}Let $\tau$ be a representation of $\GSp_4(E)$ such that $\Pi_{\phi_{\tau}}$ is generic. Then
		\begin{enumerate}[(i)]
			\item If $\tau=\theta(\pi_1\boxtimes\pi_2)$
			is a nongeneric tempered representation of $\GSp_4(E),$
			where $\pi_1\boxtimes\pi_2$ is an irreducible smooth representation of $\GSO_{4,0}(E),$ then
			$\dim \Hom_{\GSp_{1,1}(F)}(\tau,\mathbb{C})=1$ if and only if one of the following holds:
			\begin{itemize}
				\item $\pi_1\ncong\pi_2$ but $\pi_1^\vee\cong\pi_2^\sigma$;
				\item $\pi_1\cong\pi_2$ are both $(D^\times(F),\omega_{E/F})$-distinguished.
			\end{itemize}
			\item If $\tau=\theta(\pi_1\boxtimes\pi_2)=\theta(\pi_2\boxtimes\pi_1)$ is generic, then
			\[\dim\Hom_{ \GSp_{1,1}(F)}(\tau,\mathbb{C})=\begin{cases}
			2,&\mbox{if }\pi_1=\pi_2=\pi(\chi^{-1},\chi^\sigma);\\
			1,&\mbox{if }\pi_1=\pi_2\mbox{ are square-integrable and }D^\times(F)\mbox{-distinguished};\\
			1,&\mbox{if }\pi_1\mbox{ is }D^\times(F)\mbox{-distinguished and }\pi_2=\pi_0;\\
			2,&\mbox{if }\pi_1\neq\pi_2\mbox{ are both }D^\times(F)\mbox{-distinguished};\\
			0,&\mbox{the other cases}.
			\end{cases} \]
	Here $\pi_0=\pi(\chi_1,\chi_2 )$ with $\chi_1\neq\chi_2,\chi_1|_{F^\times}=\chi_2|_{F^\times}=\mathbf{1}.$	Note that these conditions are mutually exclusive.
			\item Assume that $\tau$ is not as in case (i) or (ii), so that $\tau=\theta(\Pi^{D}\boxtimes\chi )$ is generic, where $\Pi^{D}\boxtimes\chi$ is an irreducible representation of $\GSO_{5,1}(E)$. Then
			$\dim \Hom_{\GSp_{1,1}(F)}(\tau,\mathbb{C})=1$ if and only if one of the following holds:
			\begin{itemize}
				\item $\phi_\Pi$ is irreducible and conjugate-orthogonal or
				\item $\phi_\Pi=\phi_\rho+\phi_\rho\mu$ with $\rho^\sigma\cong \rho^\vee\mu^{-1}$
			\end{itemize}   where $\Pi=JL(\Pi^{D})$ is the Jacquet-Langlands lift to $\GL_4(E)$ of $\Pi^D$. 
		\end{enumerate}\label{innerformperiod}
		\end{thm}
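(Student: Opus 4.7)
The plan is to repeat the proof of Theorem \ref{localgspperiod} mutatis mutandis, with the symplectic-orthogonal see-saws of Section \ref{sect:GSp(4)} replaced throughout by the quaternionic-unitary see-saws of Section \ref{sect:GSp(1,1)}. The embedding $\GSp_4(F) \hookrightarrow \GSp_4(E)^\natural$ used there is swapped for the embedding $\GSp_{1,1}(F) \hookrightarrow \GSp_4(E)^\natural$ obtained via Morita equivalence, under which $\GSp_{1,1}(F)$ appears as the stabilizer of the open orbit in $P(Y_D)\backslash \GSp_{2,2}(F)/\GSp_4(E)^\natural$. The split orthogonal groups $\GO_{m,n}(F)$ on the $F$-side are correspondingly replaced by the quaternionic inner forms $\GO^\ast_{\cdot,\cdot}(F)$, and the split degenerate principal series $I_{Q_4}^{H_4}(s)$ and $\mathcal{I}(s)$ are replaced by the series $\mathfrak{I}(s)$ on $\GSp_{2,2}(F)$ from Lemma \ref{GSp:GU}, together with the lower bound of Lemma \ref{degpos1/2}.

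Case (iii) illustrates the template. Applying $\Hom_{\GSp_4(E)^\natural}(\cdot,\tau)$ to the exact sequence of Lemma \ref{GSp:GU} and invoking the see-saw
\[\xymatrix{\GO_{3,0}^\ast(F)\ar@{-}[rrd]\ar@{-}[d] && \GSp_{2,2}(F)\ar@{-}[lld]\ar@{-}[d] \\ \GO_{5,1}(E)^\natural && \GSp_4(E)^\natural,}\]
I would translate the $\GSp_{1,1}(F)$-period of $\tau$ into a $\GO_{3,0}^\ast(F)$-period of the small theta lift $\theta^-_2(\tau) = \Pi^D\boxtimes\chi$ to $\GO_{5,1}(E)$. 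Ruling out boundary contributions to the companion series $\mathfrak{I}(-1/2)$ by Casselman's criterion, exactly as in the split case, identifies the multiplicity with $\dim\Hom_{D^\times(F)}(\Pi^D,\mathbf{1})$. Via Jacquet--Langlands and Matringe's classification applied to the Jacquet--Langlands lift $\Pi$, this nonvanishing corresponds precisely to the two listed types of $\phi_\Pi$. Cases (i) and (ii) proceed by the same template, with the see-saws involving $\GO_{4,0}(E)^\natural$ for case (i) and $\GO_{2,2}(E)^\natural$ for case (ii); the analysis of the open orbits and the quaternionic orthogonal stabilizers reduces the $\GSp_{1,1}(F)$-period to a sum of $D^\times(F)$- or $(D^\times(F),\omega_{E/F})$-distinction problems on $\pi_1,\pi_2$, handled by Lemma \ref{GL:period} and Prasad's theorem \cite[Theorem C]{prasad1992gl(2)}. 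In the sub-case $\pi_1\cong\pi_2\cong\pi(\chi^{-1},\chi^\sigma)$, the multiplicity $2$ arises exactly as in case (ii) of Theorem \ref{localgspperiod}: both of the orbits contributing open-orbit terms pick up a distinguished linear functional.

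The main technical obstacle is the careful determination of the double coset decomposition $X_D/\GSp_4(E)^\natural$ and the identification of its orbit stabilizers --- parallel to the orbit analysis preceding Proposition \ref{GSp:GO(4)} in the split case --- together with the bookkeeping of the Weil-representation splittings for the different polarizations entering each see-saw identity; as Remark \ref{counterforseesaw} warns, the see-saw identity can fail here unless the sign comparison $\beta_\mathbb{Y}$ versus $\beta_{\mathbb{Y}'}$ is verified in each instance. Once the correct orbit stabilizers and splittings are in place, the remaining case-by-case verification parallels Theorem \ref{localgspperiod} nearly verbatim.
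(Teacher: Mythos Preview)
Your proposal is correct and follows essentially the same route as the paper: the same quaternionic see-saw pairs, the same degenerate principal series $\mathfrak{I}(\pm\tfrac12)$ and $I(\pm\tfrac12)$ with the same boundary elimination via Casselman, and the same reduction to distinction for $\pi_i$ or for $\Pi^D$. Two small points to fix: in case~(iii) the period on the $\GO_{3,0}^\ast(F)$-side lands on $D_4^\times(F)$ (the degree-$4$ division algebra, since $\GSO_{3,0}^\ast(F)\cong (D_4^\times(F)\times F^\times)/\{(t,t^{-2})\}$), not on the quaternion $D^\times(F)$, and the vertical placement in your see-saw diagram is inverted ($\GO_{3,0}^\ast(F)\subset\GO_{5,1}(E)^\natural$, not the other way around); also, for the subcase $\pi_1=\pi_2=\pi(\chi^{-1},\chi^\sigma)$ the multiplicity~$2$ in the paper comes from the reducibility of $\Theta_4^+(\tau)$ (Proposition~\ref{GSp:GO(4)}(i)) feeding into $\Hom_{\GSO_{1,1}^\ast(F)}(-,\mathbb{C})$, rather than from two distinct open orbits.
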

		\begin{proof}
			The proof is very similar to the proof of Theorem \ref{localgspperiod}.
			\begin{enumerate}[(i)]
				\item Assume that  $V_R$ is a  skew-Hermitian free module over $R$ of rank $2$,  corresponding to $D_E$ by the Morita equivalence.
				Then $\Res_{R/D}V_R$ is a $4$-dimensional skew-Hermitian vector space over $D$ with trivial discriminant. Fix a polorization $\Res_{R/D}V=\mathfrak{U}\oplus \mathfrak{U}^\ast$.
				Consider the following diagram
				\[\xymatrix{\GSp_4(E)^\natural\ar@{-}[d]\ar@{-}[rd] & \GO_{2,2}^\ast(F)\ar@{-}[ld]\ar@{-}[d]\ar@{-}[rd] & \GL_2(E)^\natural\ar@{-}[ld]\ar@{-}[d] \\
					\GSp_{1,1}(F)& \GO_{4,0}(E)^\natural& \GSp_{1,0}(F). } \]
				There is an exact sequence of $\GO_{2,2}^\ast(F)$-representations
				\[\xymatrix{0\ar[r]& \mathfrak{R}^{1,1}(\mathbf{1})\ar[r]& I(\frac{1}{2})\ar[r]& \mathfrak{R}^{1,0}(\mathbf{1})\ar[r]&0, } \]
			where $I(s)$ is the degenerate principal series of $\GO^\ast_{2,2}(F)$ and $\mathfrak{R}^{1,j}(\mathbf{1})$ is the theta lift to $\GO_{2,2}^\ast(F)$ the trivial representation  of $\GSp_{1,j}(F)$.  Set $\tau=\Theta_{2}(\Sigma^+)$, where
		\[\Sigma^+=\begin{cases}
		\Ind_{\GSO_{4,0}(E)}^{\GO_{4,0}(E)}(\pi_1\boxtimes\pi_2),&\mbox{ if }\pi_1\ncong\pi_2;\\
		(\pi_1\boxtimes\pi_1)^+,&\mbox{ if }\pi_1\cong\pi_2.
		\end{cases} \]	
		Note that $\GO_{4,0}(E)$ is an anisotropic group.	Using the contravariant exact functor $$\Hom_{\GO_{4,0}(E)^\natural}(-,\Sigma^+),$$ we obtain the short exact sequence
			\[0\rightarrow\Hom_{\GO_{4,0}(E)^\natural}(\mathfrak{R}^{1,0}(\mathbf{1} ),\Sigma^+)\rightarrow \Hom_{\GO_{4,0}(E)^\natural}(I(\frac{1}{2}),\Sigma^+ )\rightarrow \Hom_{\GO_{4,0}(E)^\natural}(\mathfrak{R}^{1,1}(\mathbf{1}),\Sigma^+ )\rightarrow0 .\]
			With the see-saw identities, we have 
			\begin{equation}\label{nongenericquaternion}
		0\rightarrow	\Hom_{\GSp_{1,0}(F)}(\Theta_1(\Sigma^+\otimes\nu),\mathbb{C} )\rightarrow \Hom_{\GO_{4,0}(E)^\natural}(I(\frac{1}{2}),\Sigma^+)\rightarrow \Hom_{\GSp_{1,1}(F)}(\tau,\mathbb{C})\rightarrow 0 ,
			\end{equation}
			where
			$\Theta_{1}(\Sigma^+\otimes\nu)$ is the big theta lift to $\GL_2(E)$ of $\Sigma^+\otimes\nu$.
				There is no $F$-rational points on the non-identity connected component of $\GO_{2,2}^\ast$ \cite[Page 21-22]{moeglin1987correspondances}, so that $$\GO^\ast_{2,2}(F)=\GSO^\ast_{2,2}(F)=\mathfrak{Q}\cdot \GO_{4,0}(E)^\natural,$$
where $\mathfrak{Q}$ is the Siegel parabolic subgroup of $\GO_{2,2}^\ast(F)$. Then
				\begin{equation}\label{quaternionicopenorbit}
				\Hom_{\GO_{4,0}(E)^\natural}(I(\frac{1}{2}),\Sigma^+ )=\Hom_{\GO_{4,0}(E)^\natural}(ind_{\GO^\ast_{2,0}(F)}^{\GO_{4,0}(E)^\natural }\mathbb{C},\Sigma^+ )=\Hom_{\GO^\ast_{2,0}(F)}(\Sigma^+,\mathbb{C}) .\end{equation}
				Here $\GSO^\ast_{2,0}(F)$ sits in the following exact sequence
				\[\xymatrix{1\ar[r]& E^\times\ar@{=}[d]\ar[r]^-i& D_E^\times(E) \times F^\times\ar[r]\ar[d]&\GSO^\ast_{2,0}(F)\ar@{^{(}->}[d]\ar[r]&1\\
				1\ar[r]&E^\times\ar[r]& D_E^\times(E)\times D_E^\times(E)\ar[r]&\GSO_{4,0}(E)\ar[r]&1 } \]
				where $i(e)=(e,N_{E/F}(e)^{-1})$ and the embedding $\GSO^\ast_{2,0}(F)\hookrightarrow \GSO_{4,0}(E)$ is given by $$(x,t)\mapsto(x,t\cdot x^\sigma )$$
				for $x\in D_E^\times(E)$ and $t\in F^\times$.
				The $\sigma$-action on $D_E^\times(E)$ is induced from the isomorphism $D_E(E)\cong D_E(E)\otimes_E(E,\sigma).$ There are two subcases:
			\begin{itemize}
				\item If $\pi_1\ncong\pi_2,$ then $\pi_1\boxtimes\pi_2$ does not participate in theta correspondence with $\GL_2(E)$. The short exact sequence \eqref{nongenericquaternion} implies that
				\begin{equation}\label{nonsplitquaternionperiod}
			\dim	\Hom_{\GSp_{1,1}(F)}(\tau,\mathbb{C})=\dim \Hom_{\GO_{4,0}(E)^\natural}(I(\frac{1}{2}),\Sigma^+)=\dim\Hom_{\GSO_{2,0}^\ast(F)}(\pi_1\boxtimes\pi_2,\mathbb{C}) . 
				\end{equation}
				Hence one can  get 
				 $$\dim \Hom_{\GSp_{1,1}(F)}(\tau,\mathbb{C})=\dim \Hom_{D_E^\times(E)}(\pi_2^\vee,\pi_1^\sigma )$$ where
				$\pi_1^\sigma=JL^{-1}(JL(\pi_1)^\sigma). $
				\item If $\pi_1=\pi_2,$ then the short exact sequence \eqref{nongenericquaternion} implies that 
				\[\dim \Hom_{\GSp_{1,1}(F)}(\tau,\mathbb{C})=\dim \Hom_{\GO_{4,0}(E)^\natural}(I(\frac{1}{2}),\Sigma^+) \]
				because $\Theta_{1}(\Sigma^+\otimes\nu)=0$. Note that
				\[\dim\Hom_{ \GSO^\ast_{2,0}(F)}(\pi_1\boxtimes\pi_1,\mathbb{C} )=\dim\Hom_{\GO_{2,0}^\ast(F)}(\Sigma^+,\mathbb{C})+\dim\Hom_{ \GO^\ast_{2,0}(F)}(\Sigma^+\otimes\nu,\mathbb{C}) . \]
				In a similar way, we have $\dim\Hom_{\GO_{2,0}^\ast(F)}(\Sigma^+\otimes\nu,\mathbb{C} )=\dim\Hom_{\GSp_{1,0}(F)}(\pi_1,\mathbb{C}).  $ Therefore, if $\pi_1$ is $D^\times(F)$-distinguished, then $\pi_1^\sigma\cong\pi_1^\vee $ and so 
				\[\dim\Hom_{\GSO_{2,0}^\ast(F)}(\pi_1\boxtimes\pi_1,\mathbb{C} )=1=\dim\Hom_{\GO_{2,0}^\ast(F)}(\Sigma^+\otimes\nu,\mathbb{C} ) .  \]
				Then $\dim\Hom_{\GSp_{1,1}(F)}(\tau,\mathbb{C})=\dim\Hom_{ \GO^\ast_{2,0}(F)}(\Sigma^+,\mathbb{C})=0 $ if $\pi_1$ is $D^\times(F)$-distinguished.
				Furthermore, $\tau$ is $\GSp_{1,1}(F)$-distinguished if and only if $\pi_1^\vee\cong\pi_1^\sigma$ which is not $D^\times(F)$-distinguished. Thus $\tau$ is $\GSp_{1,1}(F)$-distinguished if and only if $\pi_1$ is $(D^\times(F),\omega_{E/F})$-distinguished, in which case $\phi_{\pi_1}$ is conjugate-symplectic.
			\end{itemize}
			(Similarly, one can show that $$\dim \Hom_{\GSp_{1,1}(F)}(\tau,\omega_{E/F})=\dim \Hom_{D_E^\times(E)}(\pi_2^\vee,\pi_1^\sigma)-\dim \Hom_{D^\times(F)}(\Theta_1(\Sigma^+\otimes\nu),\omega_{E/F}).$$
				Here we use the fact $$\omega_{E/F}\circ\lambda_V |_{\GO^\ast_{2,0}(F)}=\mathbf{1}.$$ Hence $\dim \Hom_{\GSp_{1,1}(F)}(\tau,\omega_{E/F})=1$ if and only if either $\pi_1=\pi_2$ are both $D^\times(F)$-distinguished or $\pi_1\ncong\pi_2$ but
				 $\pi_1^\vee=\pi_2^\sigma$. It will be useful when we verify the Prasad conjecture for $\PGSp_4$ in \S\ref{secpgsp}.)
				\item We will use a similar argument. Assume that $V_R$ corresponds to $\mathbb{H}_E^2$ by the Morita equivalence. Then via the see-saw diagrams
				\[\xymatrix{\GO_{5,1}(E)^\natural\ar@{-}[d]\ar@{-}[rd] & \GSp_{2,2}(F)\ar@{-}[ld]\ar@{-}[d]\ar@{-}[rd] & \GO_{2,2}(E)^\natural\ar@{-}[ld]\ar@{-}[d] \\ \GO^\ast_{3,0}(F)& \GSp_4(E)^\natural&\GO^\ast_{1,1}(F) } \]
				we have $\theta_2^-(\tau)=0$. So Lemma \ref{GSp:GU} implies that
				\[\dim \Hom_{\GSp_4(E)^\natural}(\mathfrak{I}(\frac{1}{2}),\tau)=\dim\Hom_{\GSp_4(E)^\natural}(\mathfrak{R}^2(\mathbf{1}),\tau) = \dim \Hom_{\GO_{1,1}^\ast(F)}(\Theta_{4}^+(\tau),\mathbb{C} ) ,\] where $\mathfrak{I}(s)$ is the degenerate principal series of $\GSp_{2,2}(F)$.  
Due to Lemma \ref{degpos1/2}, 
 $$\dim \Hom_{\GSp_{1,1}(F)}(\tau,\mathbb{C})\leq\dim\Hom_{\GO_{1,1}^\ast(F)}(\Theta_{4}^+(\tau),\mathbb{C}) .$$
We want to get the reverse inequality.
Consider the following diagrams
\[\xymatrix{ \GSp_4(E)^\natural\ar@{-}[d]\ar@{-}[rd] & \GO_{2,2}^\ast(F)\ar@{-}[rd]\ar@{-}[d]\ar@{-}[ld]& \GL_2(E)^\natural\ar@{-}[d]\ar@{-}[ld]
	\\ \GSp_{1,1}(F)& \GO_{2,2}(E)^\natural& \GSp_{1,0}(F).
} \]
	There is an exact sequence of $\GO_{2,2}^\ast(F)$-representations
\[\xymatrix{0\ar[r]& \mathfrak{R}^{1,0}(\mathbf{1})\ar[r]& I(-\frac{1}{2})\ar[r]& \mathfrak{R}^{1,1}(\mathbf{1})\ar[r]&0. } \]
Note that $\dim\Hom_{\GO_{2,2}(E)^\natural}(\mathfrak{R}^{1,0}(\mathbf{1}),\Sigma^+ )=\dim\Hom_{\GSp_{1,0}(F)}(\Theta_{1}(\Sigma^+\otimes\nu),\mathbb{C})=0 .  $
Thanks to \cite[Theorem 2.5]{olafsson1987fourier} and \cite[Proposition 4.9]{dima2018analytic}, we have
\begin{align*}
\dim\Hom_{\GSp_{1,1}(F)}(\tau,\mathbb{C})&=\dim\Hom_{\GO_{2,2}(E)^\natural}(\mathfrak{R}^{1,1}(\mathbf{1}),\Sigma^+)\\  
&=\dim\Hom_{\GO_{2,2}(E)^\natural}(I(-\frac{1}{2}),\Sigma^+)\\ 
&\geq\dim\Hom_{\GO_{2,2}(E)^\natural}(ind_{\GO_{1,1}^\ast(F)}^{\GO_{2,2}(E)^\natural}\mathbb{C}, \Sigma^+ )\\
&=\dim\Hom_{\GO_{1,1}^\ast(F)}(\Sigma^+,\mathbb{C}).   
\end{align*} 
		Therefore $\dim\Hom_{\GSp_4(F)}(\tau,\mathbb{C})=\dim\Hom_{\GO_{1,1}^\ast(F)}(\Theta_4^+(\tau),\mathbb{C}) $ unless $\Theta_{4}^+(\tau)$ is reducible.
		There is no $F$-rational points on the non-identity connected component of $\GO_{1,1}^\ast$ \cite[Page 21-22]{moeglin1987correspondances}, so that
		\[\GO_{1,1}^\ast(F)=\GSO_{1,1}^\ast(F). \]
			Assume that $\pi_1\ncong\pi_2$.	Since $$\GO_{1,1}^\ast(F)=\GSO^\ast_{1,1}(F)\cong\frac{ \GL_2(F)\times D^\times(F)}{\{(t,t^{-1}) \}},$$ one can obtain that for $\pi_1\neq\pi_2$, $\Theta_{4}^+(\tau)=\Ind_{\GSO(2,2)(E)}^{\GO(2,2)(E)}(\pi_1\boxtimes\pi_2)$ and
				\begin{equation}\label{quaternionsum}
				\Hom_{\GO^\ast_{1,1}(F) }(\Sigma^+,\mathbb{C} )=\Hom_{\GO^\ast_{1,1}(F)}(\pi_1\boxtimes\pi_2,\mathbb{C}) \oplus \Hom_{\GO_{1,1}^\ast(F)}(\pi_2\boxtimes\pi_1,\mathbb{C} ).
				\end{equation} 
		There are two subcases:
			\begin{itemize}
				\item 	If $\pi_i~(i=1,2)$ are both $D^\times(F)$-distinguished, then \eqref{quaternionsum} implies that
				\[\dim \Hom_{\GSp_{1,1}(F)}(\tau,\mathbb{C})=\dim \Hom_{\GO^\ast_{1,1}(F)}(\Sigma^+,\mathbb{C} )=2. \]
			\item 	If $\pi_1$ is $D^\times(F)$-distinguished and $\pi_2=\pi(\chi_1,\chi_2)$ with $\chi_1\neq\chi_2,\chi_1|_{F^\times}=\chi_2|_{F^\times}=1,$ then $\pi_2$ is $\GL_2(F)$-distinguished but not $D^\times(F)$-distinguished. So \eqref{quaternionsum} implies that
				\[\dim \Hom_{\GSp_{1,1}(F)}(\tau,\mathbb{C})=1. \]
			\end{itemize}
				If  $\pi_1=\pi_2$ are both square-integrable representations, then \[\Hom_{\GO^\ast_{1,1}(F)}(\Sigma^+,\mathbb{C})=\Hom_{\GSO^\ast_{1,1}(F)}(\pi_1\boxtimes\pi_1,\mathbb{C})=\begin{cases}
				1,&\mbox{ if }\pi_1\mbox{ is }D^\times(F)\mbox{-distinguished};\\
				0,&\mbox{ otherwise.}
				\end{cases} \]

				If $\pi_1=\pi_2=\pi(\chi^{-1},\chi^\sigma),$ then $\Theta_{4}^+(\tau)$ is reducible. We will show that $\tau=I_{Q(Z)}(\mathbf{1},\pi_1)$ does not occure on the boundary of $\mathfrak{I}(\frac{1}{2})$ and so that
				$$\dim\Hom_{\GSp_{1,1}(F)}(\tau,\mathbb{C})=\dim\Hom_{\GO_{1,1}^\ast(F)}(\Theta_{4}^+(\tau),\mathbb{C}).  $$
		There is a filtration 
		\[ind_{\GSp_{1,1}(F)}^{\GSp_4(E)^\natural}\mathbb{C}=\mathfrak{I}_0(s)\subset\mathfrak{I}_1(s)\subset\mathfrak{I}_2(s)=\mathfrak{I}(s)|_{\GSp_4(E)^\natural} \]
		of $\mathfrak{I}(s)|_{\GSp_4(E)^\natural }$		such that $\mathfrak{I}_2(s)/\mathfrak{I}_1(s)=ind_{P^\natural}^{\GSp_4(E)^\natural}\delta_{P^\natural}^{\frac{s+1}{3}}$ and $$\mathfrak{I}_1(s)/\mathfrak{I}_0(s)=ind_{MN}^{\GSp_4(E)^\natural}\delta_{P(Y_D)}^{\frac{1}{2}+\frac{s}{5}}\delta_3^{-\frac{1}{2}}$$
				where $\delta_3(t,x)=|N_{E/F}(t)^4\lambda(d)^{-4} |_F$ for $(t,x)\in M=\GL_1(E)\times\GSp_{1,0}(F)$. If
				\[\Hom_{\GSp_4(E)^\natural}(\mathfrak{I}_1(\frac{1}{2})/\mathfrak{I}_0(\frac{1}{2}),\tau )
				\neq0,\]
				then
				\[\Hom_{\GL_1(E) }(|-|_E,R_{\bar{P}''}(I_{Q(Z)}(\mathbf{1},\pi_1)) )\neq 0,  \]
				which is impossible,
		where $P''=(\GL_1(E)\times\GL_2(E)^\natural)\ltimes N$ is a parabolic subgroup of $\GSp_4(E)^\natural$ and $R_{\bar{P}''}$ denotes the Jacquet funtor associate to the parabolic opposite to $P''$. So 	$$\Hom_{\GSp_4(E)^\natural}(\mathfrak{I}_1(\frac{1}{2})/\mathfrak{I}_0(\frac{1}{2}),\tau )
		=0.$$
			It is quite straightforward to see that $$\Hom_{\GSp_4(E)^\natural}(ind_{P^\natural}^{\GSp_4(E)^\natural}\delta_{P^\natural}^{\frac{1}{2}},I_{Q(Z)}(\mathbf{1},\pi_1))=0.$$ Hence $\tau=I_{Q(Z)}(\mathbf{1},\pi_1)$ does not occur on the boundary of $\mathfrak{I}(\frac{1}{2})$.
			\par 
				 The big theta lift to  $\GSO_{2,2}(E)$ of $\tau$ of $\GSp_4(E)$  is $$\Ext^1_{\GSO(2,2)(E)}(\pi_1\boxtimes\pi_1,\pi_1\boxtimes\pi_1 ).$$
				From the following see-saw pairs diagram
					\[\xymatrix{\GSO_{5,1}(E)^\natural\ar@{-}[d]\ar@{-}[rd] & \GSp_{2,2}(F)\ar@{-}[ld]\ar@{-}[d]\ar@{-}[rd] & \GSO_{2,2}(E)^\natural\ar@{-}[ld]\ar@{-}[d] \\ \GO^\ast_{3,0}(F)& \GSp_4(E)^\natural&\GO^\ast_{1,1}(F) } \]
					one can use the fact $\theta_2^{-}(\tau)=0$ to obtain that
				\[\dim\Hom_{\GSp_{1,1}(F)}(\tau,\mathbb{C})=\dim\Hom_{\GSO_{1,1}^\ast(F)}(\Ext_{\GSO_{2,2}(E)}^1(\pi_1\boxtimes\pi_1,\pi_1\boxtimes\pi_1 ),\mathbb{C} )=2. \]
				\item Assume that $\theta_4^{+}(\tau)=0$.
				Note that $0\rightarrow\mathfrak{R}^2(\mathbf{1})\rightarrow \mathfrak{I}(-\frac{1}{2})\rightarrow \mathfrak{R}^3(\mathbf{1})\rightarrow0$ is exact. Then we can use the same method appearing in (ii) to show that
				\begin{align*}\dim \Hom_{\GO_{3,0}^\ast(F) }(\Theta_2^-(\tau),\mathbb{C} )&=\dim \Hom_{\GSp_4(E)^\natural}(\mathfrak{R}^{3}(\mathbf{1}),\tau)\\
				&=\dim \Hom_{\GSp_{4}(E)^\natural}(\mathfrak{I}(-\frac{1}{2}),\tau)\\
				&\geq\dim\Hom_{\GSp_{1,1}(F)}(\tau,\mathbb{C}) . \end{align*}
				We will show that $\tau$ does not occur on the boundary of $\mathfrak{I}(-\frac{1}{2})$ in this case. Then $$\dim\Hom_{\GSp_4(E)^\natural}(\mathfrak{I}(-\frac{1}{2}),\tau)\leq \dim\Hom_{\GSp_4(E)^\natural}(\mathfrak{I}_0(-\frac{1}{2}),\tau ) =\dim\Hom_{\GSp_{1,1}(F)}(\tau,\mathbb{C})  $$ and so
				\[\dim\Hom_{ \GSp_{1,1}(F)}(\tau,\mathbb{C})=\dim\Hom_{\GSp_4(E)^\natural}(\mathfrak{I}(-\frac{1}{2}),\tau).  \]
			In order to show that $\tau$ does not occur on the boundary of $\mathfrak{I}(-\frac{1}{2})$,	we separate them into two cases.
				\begin{itemize}
					\item If $\tau=I_{Q(Z)}(\chi,\pi)$ with $\chi\neq\mathbf{1}$,
					then $$\Hom_{\GSp_{4}(E)^\natural}(\mathfrak{I}_2(-\frac{1}{2})/\mathfrak{I}_1(-\frac{1}{2}),\tau )=\Hom_{\GSp_4(E)^\natural}(ind_{P^\natural}^{\GSp_4(E)^\natural}\delta_{P^\natural}^{\frac{1}{6}},\tau ) =0. $$
					If $\Hom_{\GSp_4(E)^\natural}(\mathfrak{I}_1(-\frac{1}{2})/\mathfrak{I}_0(-\frac{1}{2}),\tau )\neq 0 $, then
					$\Hom_{\GL_1(E)}(\mathbf{1},R_{\bar{P}''}(\tau) )\neq0 $ which is impossible since
					$R_{\bar{P}''}(\tau)=\chi\otimes\pi\oplus \chi^{-1}\otimes\pi\chi $ and $\chi\neq\mathbf{1}$, where $P''=(\GL_1(E)\times \GL_2(E)^\natural )\rtimes N$.
					\item If $\pi$ is square-integrable, then $\Hom_{\GL_1(E)}(\mathbf{1},R_{\bar{P}''}(\tau) )=0 $ due to the Casselman criterion in \cite{casselman82duke} for a discrete series representation that $\Hom_{\GL_1(E)}(|-|^s_E,R_{\bar{P}''}(\tau))\neq0$ implies that $s<0$. Hence
					$\Hom_{\GSp_4(E)^\natural}(\mathfrak{I}_1(-\frac{1}{2})/\mathfrak{I}_0(-\frac{1}{2}),\tau)=0 $. In a similar way,
					\[\Hom_{\GSp_{4}(E)^\natural}(\mathfrak{I}_2(-\frac{1}{2})/\mathfrak{I}_1(-\frac{1}{2}),\tau )=\Hom_{\GL_2(E)\times F^\times }(\delta_{P^\natural}^{\frac{1}{6}},R_{\bar{P}^\natural}(\tau))=0.  \]
				\end{itemize}
			Hence $\tau$ does not occur on the boundary of $\mathfrak{I}(-\frac{1}{6})$.
			Moreover, $\Theta_{2}^-(\tau)=\Pi^D\boxtimes\chi$ is irreducible.
				Then there exists an identity
				\[\dim \Hom_{\GSp_{1,1}(F) }(\tau,\mathbb{C})=\dim \Hom_{\GO^\ast_{3,0}(F)}(\Pi^{D}\boxtimes\chi,\mathbb{C})=\dim \Hom_{D_4^\times(F)}(\Pi^{D},\mathbb{C}), \]
				where $D_4$ is the division algebra over $F$ with invariance  $\frac{1}{4}\in\mathbb{Q}/\mathbb{Z}$. So
					\begin{itemize}
						\item
				 If $\Pi=JL(\Pi^D)$ is a square-integrable representation of $\GL_4(E)$, then \cite[Theorem 1]{beuzart2017distinguished} and \cite[Theorem 5.2]{matringe2009distinction} imply that
				\[\dim\Hom_{\GL_4(F)}(\Pi,\omega_{E/F})=\dim\Hom_{D_4^\times(F)}(\Pi^D,\omega_{E/F}) =\begin{cases}
				1,&\mbox{\mbox{ if }}\phi_{\Pi}\mbox{ is conjugate-symplectic};\\
				0,&\mbox{ otherwise.}
				\end{cases} \]
So $\dim\Hom_{D_4^\times(F)}(\pi^D,\mathbb{C})=1$ if and only if $\phi_{\Pi}$ is conjugate-orthogonal.			

				\item 	 If $\Pi^{D}$ is an induced representation $\pi(\rho_D,{(\rho_D)}^\vee\otimes\mu)$ with $\mu\neq\omega_{\rho_D}$, then we use the orbit decomposition $B_1\backslash \GL_2(D_E)(E)/\GL_1(D_4)(F)$ and Mackey theory to get that
					\[\Hom_{D_4^\times(F)}(\pi^{D},\mathbb{C} )=\Hom_{D^\times_E(E) }(\rho_D^\sigma\otimes\rho_D^\vee\cdot \mu,\mathbb{C} )=\Hom_{D_E^\times(E)}(\rho_D^\sigma,\rho_D\cdot\mu^{-1})=\begin{cases}
					1,&\mbox{ if }\rho_D^\sigma\cong \rho_D\mu^{-1};\\
					0,&\mbox{ otherwise.}
					\end{cases} \]
			In this case,	 $\rho^\sigma=\rho\mu^{-1}$ where $\rho=JL(\rho_D)$ is the Jacquet-Langlands lift to $\GL_2(E)$
					and $\phi_\Pi=\phi_\rho\oplus\phi_{\rho}^\vee\cdot\mu,$
					which is 
					conjugate-orthogonal due to \cite[Theorem 5.2]{matringe2016distinction}.
				\end{itemize}
			\end{enumerate}
			Then we are done.
		\end{proof}

\section{The Prasad conjecture for $\GSp_4$}
\subsection{The Prasad conjecture}
\label{subsect:prasad}
In this subsection, we give a brief introduction to the Prasad conjecture, i.e. \cite[Conjecture 2]{prasad2015arelative}. One may refer to \cite[\S 16]{prasad2015arelative} for more details.
\par
Let $\mathbf{G}$ be a quasi-split reductive group defined over a local field $F$ with characteristic zero. Let $W_F$ be the Weil group of $F$ and $WD_F$ be the Weil-Deligne group of $F$. Let $E$
be a quadratic extension over $F$. Dipendra Prasad introduces
a quadratic character $\chi_\mathbf{G}$ in \cite[\S 10]{prasad2015arelative} and  another quasi-split reductive group $G^{op}$ defined over $F$ in \cite[\S 9]{prasad2015arelative}. Then there is a relation between the fibers of the base change map 
\[\Phi:\Hom(WD_F,{}^LG^{op})\longrightarrow\Hom(WD_E,{}^LG^{op})\]
 from the Galois side and the $\chi_\mathbf{G}$-distinction problems for $\mathbf{G}(E)/\mathbf{G}(F)$ from the automorphic side.
\par
More precisely, assume the Langlands-Vogan conjecture in \cite{vogan1993local}. Given an irreducible representation $\pi$ of $\mathbf{G}(E)$ with an enhanced L-parameter $(\phi_\pi,\lambda)$, where $\lambda$ is a character of the component group $\pi_0(Z(\phi_{\pi}))$ and  the $L$-packet $\Pi_{\phi_{\pi}}$ is generic, we have
\[\sum_{\alpha}\dim\Hom_{G_\alpha(F)}(\pi,\chi_\mathbf{G})=\sum_{i}m(\lambda,\tilde{\phi}_i)\deg\Phi(\tilde{\phi}_i)/d_0(\tilde{\phi}_i) \]
where
\begin{itemize}
	\item $\alpha\in H^1(W_F,\mathbf{G})$ runs over all pure inner forms of $\mathbf{G}$ satisfying $G_\alpha(E)=\mathbf{G}(E);$
	\item $\tilde{\phi}_i\in \Hom(WD_F,{}^LG^{op})$ runs over all parameters of ${}^LG^{op}$ satisfying $\tilde{\phi}_i|_{WD_E}=\phi_\pi;$
	\item $m(\lambda,\tilde{\phi})=\dim\Hom_{\pi_0(Z(\tilde{\phi}))}(\mathbf{1},\lambda) $ is the multiplicity of the trivial representation contained in the restricted representation $\lambda|_{\pi_0(Z(\tilde{\phi}) )}$ ;
	\item $d_0(\tilde{\phi})=|Coker\{\pi_0(Z(\tilde{\phi}))\longrightarrow\pi_0(Z(\phi_\pi))^{\Gal(E/F)} \}|.$
\end{itemize}
\begin{rem}
	If $H^1(F,\mathbf{G})$ is trivial such as $\mathbf{G}=\GSp_{2n},$ then the automorphic side contains only one term. The Prasad conjecture gives the precise formula for the multiplicity
	\[\dim\Hom_{\mathbf{G}(F)}(\pi,\chi_\mathbf{G} ).  \]
\end{rem}
\begin{rem}
	There exists a counterexample even for $\GL_2$ when $\Pi_{\phi_{\pi}}$ is not generic. Let $\mathbf{G}=\GL_2$ and $\pi=\mathbf{1}$ be the trivial representation. Then the automorphic side is zero however the Galois side is nonzero.
\end{rem}
\begin{rem}
	If $\tilde{\phi}$ comes from a square-integrable representation, then $\deg\Phi(\tilde{\phi})=1.$ 
\end{rem}
If $\pi$ is square-integrable, then we have a refined version, i.e. the formula for each dimension
\[\dim\Hom_{G_\alpha(F)}(\pi,\chi_\mathbf{G}). \] 
\par
Let $Z(\hat{G}^{op})$ be the center of the dual group $\hat{G}^{op}$.
There is a perfect pairing 
\[H^1(\Gal(E/F), Z(\hat{G}^{op}))\times H^1(\Gal(E/F),\mathbf{G}(E))\longrightarrow\mathbb{Q}/\mathbb{Z} \]
when Dipendra Prasad studies the character twists in \cite[\S 13]{prasad2015arelative}.
Set $\Omega_\mathbf{G}(E)=H^1(\Gal(E/F),Z(\hat{G}^{op})).$ Given a parameter $\tilde{\phi}\in H^1(W_F,\hat{G}^{op}),$ we consider the stabilizer $\Omega_\mathbf{G}(\tilde{\phi},E)\subset \Omega_\mathbf{G}(E)$ under the pairing
\[H^1(W_F,Z(\hat{G}^{op}))\times H^1(W_F,\hat{G}^{op})\longrightarrow H^1(W_F,\hat{G}^{op}). \] Set
$$A_\mathbf{G}(\tilde{\phi})\subset H^1(\Gal(E/F),\mathbf{G}(E))\cong\Omega_\mathbf{G}(E)^\vee $$ to be the annihilator of the stabilizer $\Omega_\mathbf{G}(\tilde{\phi},E).$ Then there is another perfect pairing
\[\Omega_\mathbf{G}(E)/\Omega_\mathbf{G}(\tilde{\phi},E)\times A_\mathbf{G}(\tilde{\phi})\longrightarrow\mathbb{Q}/\mathbb{Z}, \]
meaning that in the orbit $\Omega_\mathbf{G}(E)/\Omega_\mathbf{G}(\tilde{\phi},E)$ of character twists of $\tilde{\phi}$ (which go to a particular parameter under the basechange to $E$) there are exactly as many parameters as there are certain pure inner forms of $\mathbf{G}$ over $F$ which trivialize after basechange to $E.$
\par
Consider
 \[F(\phi_\pi)=\{\tilde{\phi}:WD_F\longrightarrow {}^LG^{op}|~\tilde{\phi}|_{WD_E}=\phi_\pi \}=\sqcup_{i=1}^r\mathcal{O}(\tilde{\phi}_i). \]
Each orbit $\mathcal{O}(\tilde{\phi}_i)$ of $\Omega_\mathbf{G}(E)$-action on $F(\phi_\pi)$ is associated to a coset  $\mathcal{C}_i$ of $A_\mathbf{G}(\tilde{\phi}_i,E)$ in $H^1(\Gal(E/F),\mathbf{G}(E))$ defining a set of certain pure inner forms $G_\alpha$ of $\mathbf{G}$ over $F$ such that $G_\alpha(E)=\mathbf{G}(E).$ Then
\[\dim \Hom_{G_{\alpha(F)}}(\pi,\omega_\mathbf{G})=\sum_{i=1}^r m(\lambda,\tilde{\phi}_i)\cdot 1_{\mathcal{C}_i}(G_\alpha)/ d_0(\tilde{\phi}_i) ,\]
where
\begin{itemize}
	\item $1_{\mathcal{C}_i}$ is the characteristic function of the coset $\mathcal{C}_i;$
	\item  $m(\lambda,\tilde{\phi} )$ is the multiplicity  for the trivial representation contained in the restricted representation  $\lambda|_ {\pi_0(Z(\tilde{\varphi}))},$ which may be zero; 
	\item  $d_0(\tilde{\phi}) =
	|Coker\{\pi_0(Z(\tilde{\phi})) \longrightarrow\pi_0(Z(\phi_\pi))^{\Gal(E/F)}\} |
	$.
\end{itemize}

\subsection{The Prasad conjecture for $\GL_2$}
Before we give the proof of Theorem \ref{thm1.2}, let us recall the Prasad conjecture for $\mathbf{G}=\GL_2=\GSp_2.$
Set $\mathbf{G}=\GL_2.$ Then $\chi_\mathbf{G}=\omega_{E/F}$ and $G^{op}=\mathrm{U(2,E/F)}$ is the quasi-split unitary group, where $E$ is a quadratic field extension over a p-adic field $F$. Denote $$^LG^{op}=\GL_2(\mathbb{C})\rtimes<\sigma>,$$ where $\sigma$-action on $\GL_2(\mathbb{C})$ is given by
\[\sigma(g)=\omega_0 {(g^t)}^{-1}\omega_0^{-1}=g\cdot\det(g)^{-1},\]$\omega_0=\begin{pmatrix}
&1\\-1
\end{pmatrix} \mbox{ and }g\in \GL_2(\mathbb{C}) $, $g^t$ denotes its transpose matrix.
Given an irreducible representation $\pi$ of
$\GL_2(E)$ with $\phi=\phi_\pi$ irreducible (for simplicity),  there is no other pure inner form for $\GL_2$. Then 
\[\dim \Hom_{\GL_2(F)}(\pi,\omega_{E/F})=|F(\phi)|, \]
where $F(\phi)=\{\tilde{\phi}:WD_F\longrightarrow {}^LG^{op}|~\tilde{\phi}|_{WD_E}=\phi \}$ and $|F(\phi)|$ denotes its cardinality.
\begin{prop} \label{conj:GL(2)}
	The following statements are equivalent:
	\begin{enumerate}[(i)]
		\item $\dim \Hom_{\GL_2(F)}(\pi,\omega_{E/F})=1;$
		\item the Langlands parameter $\phi$ is conjugate-symplectic;
		\item there is only one extension $\tilde{\phi}\in F(\phi).$ 
	\end{enumerate}
\end{prop}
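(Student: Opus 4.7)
I would establish the equivalences in the order (i) $\Leftrightarrow$ (ii) (automorphic side) and (ii) $\Leftrightarrow$ (iii) (Galois side).

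For (i) $\Leftrightarrow$ (ii), the plan is to invoke Matringe's classification of distinguished generic representations \cite[Theorem 5.2]{matringe2009distinction}, already cited in the paper as a key input to Theorem \ref{localgspperiod}. Twisting $\pi$ by a character $\chi$ of $E^\times$ with $\chi|_{F^\times}=\omega_{E/F}$ reduces $(\GL_2(F),\omega_{E/F})$-distinction to $\GL_2(F)$-distinction of $\pi\otimes\chi$; Matringe's criterion then translates this into conjugate-symplecticity of $\phi=\phi_\pi$ in the irreducible case. Multiplicity one is immediate from Flicker's Gelfand pair theorem \cite{flicker1991ondist} for $(\GL_n(E),\GL_n(F))$.

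For (ii) $\Leftrightarrow$ (iii), I would analyze the extensions explicitly. Fix a lift $s\in WD_F$ of the nontrivial element of $\Gal(E/F)$; an extension $\tilde{\phi}\in F(\phi)$ is determined by $\tilde{\phi}(s)=(M,\sigma)\in\GL_2(\mathbb{C})\rtimes\langle\sigma\rangle$ subject to the constraints (a) $M\,\sigma(\phi(w))\,M^{-1}=\phi(sws^{-1})$ for all $w\in WD_E$, and (b) $M\sigma(M)=\phi(s^2)$. Using $\sigma(g)=g/\det g$ together with the intertwiner $\omega_0$ realizing $\phi\otimes(\det\phi)^{-1}\cong\phi^\vee$, condition (a) is equivalent to the conjugate-self-duality $\phi^\sigma\cong\phi^\vee$; by Schur's lemma applied to the irreducible $\phi$, the intertwiner $M$ is then unique up to a nonzero scalar. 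Writing $M=N\omega_0$ with $N$ intertwining $\phi^\vee$ and $\phi^\sigma$, one has $N^t=\varepsilon N$ with $\varepsilon\in\{\pm1\}$, and the dichotomy $\varepsilon=+1$ (conjugate-orthogonal) versus $\varepsilon=-1$ (conjugate-symplectic) is the standard one. A short computation of $M\sigma(M)=M^2/\det M$ then shows that condition (b) is solvable precisely when $\varepsilon=-1$, i.e., when $\phi$ is conjugate-symplectic; this is the Gan--Gross--Prasad sign $(-1)^{n-1}$ for $^LU(n)$-parameters \cite[\S 3]{gan2011symplectic} specialized to $n=2$.

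Finally, once an extension exists, uniqueness will follow from the vanishing of $H^1(\Gal(E/F),Z(\phi))$: irreducibility of $\phi$ forces $Z(\phi)=\mathbb{C}^\times$, on which $\sigma$ acts by $\lambda\mapsto\lambda^{-1}$ (since $\sigma(\lambda I)=\lambda^{-1}I$), and the divisibility of $\mathbb{C}^\times$ kills the cohomology. Hence $|F(\phi)|\in\{0,1\}$, with $|F(\phi)|=1$ exactly in the conjugate-symplectic case. I expect the principal obstacle to be the sign computation in (ii) $\Leftrightarrow$ (iii), where the antisymmetric matrix $\omega_0$ built into the L-group structure must be tracked carefully to isolate the conjugate-symplectic class of parameters from the conjugate-orthogonal one.
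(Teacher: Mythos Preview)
Your proposal is correct and takes essentially the same approach as the paper: for (i)$\Leftrightarrow$(ii) both defer to the literature (the paper cites Flicker, you cite Matringe and Flicker), and for (ii)$\Rightarrow$(iii) both construct $\tilde{\phi}(s)$ from the intertwiner realizing $\phi^s\cong\phi^\vee$, check the relation $\tilde{\phi}(s)^2=\phi(s^2)$ via the sign of the conjugate-self-dual form, and deduce uniqueness from Schur's lemma (your $H^1(\Gal(E/F),Z(\phi))=0$ is just a cohomological repackaging of the paper's observation that $A_1A_2^{-1}\in Z(\phi)\cong\mathbb{C}^\times$). The only substantive difference is that you make (iii)$\Rightarrow$(ii) explicit through the sign dichotomy, whereas the paper sweeps this direction into the phrase ``the rest follows from Flicker's results,'' which is arguably less transparent since Flicker's work concerns the automorphic side rather than the extension problem.
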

\begin{proof}
	We only prove the direction (ii)$\Rightarrow$(iii) and the rest follows from Flicker's results \cite{flicker1991ondist}.
	If $\phi$ is conjugate-symplectic, then $$\phi^s=\phi^\vee=\phi(\det\phi)^{-1},$$
	where $s\in W_F\setminus W_E $ is fixed. There exists $A\in \GL_2(\mathbb{C})$ such that
	\[\phi(sts^{-1})=\phi^s(t)= A\cdot\phi(t)\det(\phi(t))^{-1}\cdot A^{-1}\]
	  for all $t\in WD_E$. 
	Pick $a\in\mathbb{C}^\times$ such that $a^2\cdot\det A=1,$ so that $aA\in \SL_2(\mathbb{C}).$ Set 
	\[\tilde{\phi}(s)=aA\cdot \sigma\]
	  and  $\tilde{\phi}(t)=\phi(t)$   for $t\in WD_E$.
	Then $$\tilde{\phi}(sts^{-1})=\tilde{\phi}(s)\cdot \phi(t)\cdot\tilde{\phi}(s)^{-1}$$ and $\tilde{\phi}(s^2)=\phi(s^2)=(\tilde{\phi}(s))^2$ due to the sign of $\phi.$ Therefore $\tilde{\phi}\in F(\sigma).$ If there are two extensions $\tilde{\phi}_i$ with $A_i\in \SL_2(\mathbb{C})$ such that $\tilde{\phi}_i|_{WD_E}=\phi,$ then $A_1A_2^{-1}\in Z(\phi)\cong\mathbb{C}^\times$ by Schur's lemma, so that $\phi_1=\phi_2.$ 
\end{proof}
\begin{rem}
	This method will appear again when we study the Prasad conjecture for $\mathbf{G}=\GSp_4$ in \S\ref{7.4.1}  The key idea is to choose a proper element $A$ such that the lift $\tilde{\phi}$ satisfies $\tilde{\phi}(s)=A\cdot\sigma$ and $\tilde{\phi}|_{WD_E}=\phi.$
\end{rem}

\subsection{The Prasad conjecture for  $\GSp_4$}\label{subsect:GSp(4)conj}
The aim of this subsection is to verify the Prasad conjecture for $\GSp_4$.
Now we consider the generic representation $\tau=\theta(\Pi\boxtimes\chi)$ of $\GSp_4(E)$, with $\phi_\Pi$ conjugate-symplectic and
$\chi|_{F^\times}=1.$  Note that the Langlands parameter $\phi_\Pi=i\circ\phi_\tau,$
where \[i:\GSp_4(\mathbb{C})\rightarrow\GL_4(\mathbb{C}) \]
is the embedding between $L$-groups.
 Moreover, $\chi$ is  the similitude character of $\phi_\tau.$ If $\phi_{\Pi}$ is conjugate-symplectic (resp. conjugate-orthogonal), we say that $\phi_\tau$ is conjugate-symplectic (resp. conjugate-orthogonal).
 
 \begin{lem}
 	Assume that $\tau=\theta(\Pi\boxtimes\chi)$ is a generic representation of $\GSp_4(E)$ and $\omega_\tau|_{F^\times}=\mathbf{1}$. Then $\tau$ is $(\GSp_4(F),\omega_{E/F})$-distinguished if and only if $\phi_{\Pi}$ is conjugate-symplectic.
 \end{lem}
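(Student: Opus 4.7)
The plan is to reduce this statement to Theorem \ref{localgspperiod}(iii) via a character twist, and then translate the resulting $\GL_4(F)$-distinction criterion into a condition on the sign of conjugate-duality using Matringe's theorem.

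First, fix a character $\mu$ of $E^\times$ extending $\omega_{E/F}$ from $F^\times$ (such an extension exists by Pontryagin duality). Set $\tau' := \tau \otimes (\mu^{-1} \circ \lambda_W)$, so that
\[
\Hom_{\GSp_4(F)}(\tau,\omega_{E/F}) \;\cong\; \Hom_{\GSp_4(F)}(\tau',\mathbb{C}).
\]
Because $\mu^2|_{F^\times} = \omega_{E/F}^2 = \mathbf{1}$, the central character condition is preserved: $\omega_{\tau'}|_{F^\times} = \mathbf{1}$. Moreover, tensoring by $\mu^{-1} \circ \lambda_W$ commutes with the theta correspondence and preserves first-occurrence indices in the Witt tower, so $\tau'$ still falls under case (iii) of Theorem \ref{localgspperiod}, with $\tau' = \theta(\Pi' \boxtimes \chi\mu^{-2})$ and $\Pi' = \Pi \otimes \mu^{-1}$ a generic representation of $\GL_4(E)$.

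Applying Theorem \ref{localgspperiod}(iii) to $\tau'$, we get $\dim\Hom_{\GSp_4(F)}(\tau',\mathbb{C}) = 1$ if and only if $\Pi'$ is $\GL_4(F)$-distinguished, and is zero otherwise. By Matringe's criterion \cite[Theorem 5.2]{matringe2009distinction} (together with the fact that $\phi_{\Pi'}$ is generic since $\Pi'$ is), $\Pi' = \Pi \otimes \mu^{-1}$ is $\GL_4(F)$-distinguished if and only if $\phi_{\Pi'} = \phi_\Pi \otimes \mu^{-1}$ is conjugate-orthogonal.

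Finally, the character $\mu$ is conjugate-symplectic because $\mu|_{F^\times} = \omega_{E/F}$ (characters of $E^\times$ restricting to $\omega_{E/F}$ on $F^\times$ are precisely the conjugate-symplectic ones). Since tensoring is multiplicative on the signs of conjugate-dual representations, $\phi_\Pi \otimes \mu^{-1}$ is conjugate-orthogonal if and only if $\phi_\Pi$ is conjugate-symplectic. Chaining these equivalences yields the claim. The main thing to check carefully is the first step, namely that the twist $\tau \mapsto \tau'$ preserves case (iii) of Theorem \ref{localgspperiod} and the central-character hypothesis; once this bookkeeping is in place, the remaining equivalences are formal consequences of Matringe's theorem and the standard sign calculus for conjugate-dual representations.
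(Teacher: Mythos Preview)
Your argument is correct and is essentially the same as the paper's own proof: fix a character of $E^\times$ extending $\omega_{E/F}$, twist $\tau$ by its inverse through the similitude character, apply Theorem \ref{localgspperiod} to the twist, and finish with the sign calculus for conjugate-dual parameters. The only minor discrepancy is that you restrict attention to case (iii) of Theorem \ref{localgspperiod}, whereas the lemma as stated covers every generic $\tau=\theta(\Pi\boxtimes\chi)$, including the endoscopic case (ii); the paper treats this uniformly by observing that Theorem \ref{localgspperiod} (together with Matringe's criterion) already yields the equivalence ``$\tau$ is $\GSp_4(F)$-distinguished $\Leftrightarrow$ $\Pi$ is $\GL_4(F)$-distinguished $\Leftrightarrow$ $\phi_\Pi$ is conjugate-orthogonal'' in both cases, so no separate argument is needed.
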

\begin{proof}
	Due to Theorem \ref{localgspperiod}, the following are equivalent:
	\begin{itemize}
		\item $\tau$ is $\GSp_4(F)$-distinguished;
		\item $\Pi$ is $\GL_4(F)$-distinguished;
		\item $\phi_{\Pi}$ is conjugate-orthogonal.
	\end{itemize}
Fix a character $\chi_E$ of $E^\times$ such that $\chi_E|_{F^\times}=\omega_{E/F}$. Then
$\tau$ is $(\GSp_4(F),\omega_{E/F})$-distinguished if and only if
$\tau\otimes\chi_E\circ\lambda_W$ is $\GSp_4(F)$-distinguished, which is equivalent to that $\phi_\Pi\otimes\chi_E$ is conjugate-orthogonal. Note that $\chi_E^{-1}$ is conjugate-symplectic. Hence $\tau$ is $(\GSp_4(F),\omega_{E/F})$-distinguished if and only if $\phi_{\Pi}$ is conjugate-symplectic.
\end{proof}

Recall that if $\mathbf{G}=\GSp_{2n}$, then $\chi_\mathbf{G}=\omega_{E/F}$ and $$G^{op}(F)=\{g\in \GSp_{2n}(E)|\sigma(g)=\theta(g)\} $$
where $\theta(g)=\lambda_W (g)^{-1}g $ is the involution. Note that the $\sigma$-actions on $\GSp_4(E)$ and $\GSp_4(\mathbb{C})$ are totally different. (Hope that it does not confuse the reader.)
Observe that $H^1(\Gal(E/F),Z(\hat{G}^{op})^{W_E})=1,$ which corresponds to the fact that the pure inner form of $\GSp_{2n}$ is trivial. 

According to Theorem \ref{localgspperiod}, we will divide the proof of Theorem \ref{thm1.2} into four parts:
\begin{itemize}
	\item $\phi_\tau $ is irreducible;
	\item $\phi_{\tau}=\rho\oplus\rho\nu$ with $\nu\neq\mathbf{1}$;
	\item the endoscopic case $\phi_\tau=\phi_{\pi_1}\oplus\phi_{\pi_2} $ and $\tau$ is generic;
	\item$ \phi_\tau=\phi_{\pi_1}\oplus\phi_{\pi_2} $ and $\tau$ is nongeneric.
\end{itemize}
 See \S\ref{7.4.1}--\S\ref{7.4.4}.

\subsubsection{The irreducible $L$-parameter $\phi_\tau $}\label{7.4.1}
Given a conjugate-symplectic $L$-parameter $\phi=\phi_\tau,$ which is irreducible, we want to extend $\phi$ to  $$\tilde{\phi}:WD_F\longrightarrow {}^LG_0= \GSp_4(\mathbb{C})\rtimes<\sigma>,$$ where $\sigma$ acts on $\GSp_4(\mathbb{C})$ by
\[\sigma(g)=g\cdot\text{sim}(g)^{-1}. \]

Let $s\in W_F\setminus W_E.$ The parameter $\phi$ is conjugate-symplectic, so that $\phi^\vee=\phi^s$
and $\phi^\vee=\phi\chi^{-1}$. Hence there exists an element $A\in \GSp_4(\mathbb{C})$ such that
\begin{equation}\label{conjdual}
\phi(sts^{-1})=\phi^s(t)=A\cdot \phi(t)\chi^{-1}(t) \cdot A^{-1}    \end{equation}
for  all  $t\in WD_E$.
Pick $a\in\mathbb{C}^\times$ such that $a^2=\text{sim} (A)^{-1}$. Then
$aA\in \Sp_4(\mathbb{C}).$ Set
\[\tilde{\phi}(s)=aA\cdot\sigma\mbox{  and }\tilde{\phi}(t)=\phi(t)\] for $t\in WD_E$. Then 
$\phi(sts^{-1})=A\phi(t)\chi^{-1}(t)A^{-1}=\tilde{\phi}(s)\cdot\phi(t)\cdot\tilde{\phi}(s)^{-1}$. Moreover, we will show that
\[\tilde{\phi}(s^2)=\phi(s^2)=(\tilde{\phi}(s))^2. \]
Then $\tilde{\phi}\in\Hom(WD_F, {}^LG_0)$ and $\tilde{\phi}|_{WD_E}=\phi.$
\par
Assume that  $<-,->$ is the $WD_E$-equivariant bilinear form associated to $$\phi_\tau:WD_E \rightarrow \GSp_4(\mathbb{C})=\GSp(V,<-,->).$$
 Set
$$B(v,w)=<v,A^{-1}w>$$
  for  $v,w\in V. $
Then \eqref{conjdual} implies that
\begin{align*}
B(\phi(t)v,\phi(sts^{-1})w)&=<\phi(t)v,\phi(t)\chi^{-1}(t)A^{-1}w>\\
&=\chi(t)\cdot<v,\chi^{-1}(t)A^{-1}w>\\
&=B(v,w).
\end{align*}
 So $B$ is a conjugate-self-dual bilinear form on $\phi$ and hence it has sign $-1$ by  Schur's lemma, i.e., $$-B(w,v)=B(v,\phi(s^2)w).$$
Therefore we have
\begin{align*}
<v,w>&=-<w,v>\\
&=-B(w,Av)\\
&=B(Av,\phi(s^2)w)\\
&=<Av,A^{-1}\phi(s^2)w>\\
&=<v,a^{-2}A^{-2}\phi(s^2)w>
\end{align*}
and so $\phi(s^2)=a^2A^2=(\tilde{\phi}(s))^2.$
\begin{prop}
	Assume that  $\tau=\theta(\Pi\boxtimes\chi)$ with $\phi_\Pi$ irreducible. Then there exists at most one extension $\tilde{\phi}:WD_F\longrightarrow {}^LG_0$
	such that $\tilde{\phi}|_{WD_E}=\phi_\tau.$
\end{prop}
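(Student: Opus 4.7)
The plan is to argue via Schur's lemma combined with the $2$-divisibility of $\mathbb{C}^\times$. Suppose $\tilde{\phi}_1,\tilde{\phi}_2:WD_F\to{}^LG_0$ are two extensions of $\phi=\phi_\tau$. Fix $s\in W_F\setminus W_E$ and write $\tilde{\phi}_i(s)=M_i\cdot\sigma$ with $M_i\in\GSp_4(\mathbb{C})$. The cocycle condition $\tilde{\phi}_i(sts^{-1})=\tilde{\phi}_i(s)\phi(t)\tilde{\phi}_i(s)^{-1}$ together with the prescribed $\sigma$-action $\sigma(g)=\mathrm{sim}(g)^{-1}g$ translates into
\[
M_i\,\phi(t)\,M_i^{-1}\;=\;\chi(t)\,\phi^s(t)\qquad\text{for all }t\in WD_E,
\]
and the condition $\tilde{\phi}_i(s^2)=\tilde{\phi}_i(s)^2$ becomes $\mathrm{sim}(M_i)^{-1}M_i^{\,2}=\phi(s^2)$.

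The first step is to pin down $M_1M_2^{-1}$. Since $M_1$ and $M_2$ both conjugate $\phi$ into $\chi\cdot\phi^s$, the product $M_1M_2^{-1}$ centralizes $\phi$. Now $\phi_{\Pi}=i\circ\phi$ is irreducible as a representation into $\GL_4(\mathbb{C})$, so Schur's lemma forces
\[
M_1M_2^{-1}\;=\;c\cdot I\quad\text{for some }c\in\mathbb{C}^{\times};
\]
in particular $M_1=cM_2$ inside $\GSp_4(\mathbb{C})$ (any scalar lies in $\GSp_4$).

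The second step is to exhibit an equivalence between $\tilde{\phi}_1$ and $\tilde{\phi}_2$. Choose $d\in\mathbb{C}^{\times}$ with $d^2=c$; this is possible since $\mathbb{C}^{\times}$ is $2$-divisible. Then $dI\in\GSp_4(\mathbb{C})$ centralizes $\phi$, so conjugation by $dI$ fixes $\tilde{\phi}_2|_{WD_E}=\phi$, while on $s$ it computes as
\[
(dI)\cdot(M_2\sigma)\cdot(dI)^{-1}\;=\;dM_2\cdot\sigma(d^{-1}I)\cdot\sigma\;=\;dM_2\cdot(dI)\cdot\sigma\;=\;d^{2}M_2\,\sigma\;=\;M_1\,\sigma,
\]
using $\sigma(d^{-1}I)=(d^{-1}I)\cdot\mathrm{sim}(d^{-1}I)^{-1}=dI$. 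Hence $\tilde{\phi}_1$ and $\tilde{\phi}_2$ are conjugate in ${}^LG_0$ by $dI$, i.e.~equivalent as $L$-parameters. This proves uniqueness.

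The step I expect to require the most care is the bookkeeping in the semidirect product: the $\sigma$-twist in ${}^LG_0$ is $\sigma(g)=\mathrm{sim}(g)^{-1}g$, and since $\mathrm{sim}(dI)=d^2$ the cancellation $\sigma(d^{-1}I)=dI$ is exactly what makes the conjugation reproduce $d^2M_2\sigma$ rather than $M_2\sigma$; without the squaring one would only see an ambiguity of order two. Aside from this, the irreducibility of $\phi_\Pi$ (which is where the hypothesis is used) is the essential input, and compatibility of $M=cM_2$ with the similitude constraint $\mathrm{sim}(M)^{-1}M^2=\phi(s^2)$ is automatic because the $c^{\pm2}$-factors cancel.
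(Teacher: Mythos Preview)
Your argument is correct and follows essentially the same route as the paper: both proofs use Schur's lemma (via the irreducibility of $\phi_\Pi$) to conclude that the two choices of lift at $s$ differ by a scalar, and then observe that this scalar ambiguity is absorbed by conjugation in $\GSp_4(\mathbb{C})$. The only cosmetic difference is that the paper first normalizes each $\tilde{\phi}_i(s)$ to lie in $\Sp_4(\mathbb{C})\cdot\sigma$ (so the scalar is forced to be $\pm1$) and then concludes, whereas you keep $M_i\in\GSp_4(\mathbb{C})$ and kill the arbitrary scalar $c$ directly by conjugating with $dI$, $d^2=c$; your version makes the role of the $\sigma$-twist $\sigma(dI)=d^{-1}I$ more explicit.
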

\begin{proof}
	If there are two extensions $\tilde{\phi}_i(i=1,2)$ associated to $A_i\in \Sp_4(\mathbb{C})$ satisfying 
	\[\tilde{\phi}_i(sts^{-1})=\tilde{\phi}_i(s)\cdot\phi(t)\cdot\tilde{\phi}_i(s)^{-1}\]
	for all $t\in WD_E$, 
	then $A_1A_2^{-1}$ commutes with $\phi.$ So  $A_1A_2^{-1}$ is a scalar by Schur's Lemma. Hence $\tilde{\phi}_1=\tilde{\phi}_2.$
\end{proof}
Hence, if $\tau=\theta(\Pi\boxtimes\chi)$ with $\phi_\Pi$ irreducible and conjugate-symplectic, then there is one extension $\tilde{\phi}\in F(\phi_\tau)$ and 
\[\dim \Hom_{\GSp_4(F)}(\tau,\omega_{E/F})=1. \] 

If $\phi=\phi_\tau$ is conjugate-symplectic and reducible, then there are several cases.

\subsubsection{$\phi_\tau=\rho+{\rho}\nu $ with $\nu\neq\mathbf{1}$ and $\rho $ irreducible } If $\phi_\Pi=\rho+\rho\nu$ with $\rho$ irreducible and $\chi=\nu\cdot\det\rho$ conjugate-orthogonal,  thanks to \cite[Theorem 5.2]{matringe2009distinction}, there are two subcases: 
\begin{itemize}
	\item $\rho$ and $\rho\nu$ are both conjugate-symplectic or
	\item  $\rho^s=\rho^\vee\nu^{-1}.$
\end{itemize}
\begin{enumerate}[(i)]
	\item If $\rho$ and $\rho\nu$ are both conjugate-symplectic, then $\nu$ is conjugate-orthogonal and  there exist  $$\tilde{\rho}_i:WD_F\longrightarrow \GL_2(\mathbb{C})\rtimes<\sigma>$$
	 such that $\tilde{\rho}_1|_{WD_E}=\rho,~  \tilde{\rho}_2|_{WD_E}=\rho\nu$ and $\tilde{\rho}_i(s)=A_i\cdot\sigma$ for $A_i\in\SL_2(\mathbb{C})$ due to Proposition \ref{conj:GL(2)}. 
	Note that $\rho$ is irreducible. Then for $t\in WD_E$,
	\[\tilde{\rho}_1^s(t)\nu^s(t)=\tilde{\rho}_2^s(t)=A_2\rho^\vee(t)A_2^{-1}\cdot \nu^{-1}(t) \]
	and so $A_1\rho^\vee(t)A_1^{-1}=A_2\rho^\vee(t)A_2^{-1}$ which implies
	 $A_1A_2^{-1}\in\mathbb{C}^\times$. So $A_1=A_2$ or $-A_2$. Set 
	\[\tilde{\phi}(s)= \begin{pmatrix}
	&A_1\\-A_1
	\end{pmatrix}\cdot\sigma\in \Sp_4(\mathbb{C})\rtimes<\sigma>\mbox{ and }\tilde{\phi}(t)=\begin{pmatrix}
	\rho(t)\\&\rho(t)\nu(t)
	\end{pmatrix} \]
	for $t\in WD_E$.	Then $\tilde{\phi}\in F(\phi)$ is the unique  extension of $\phi_\tau$.
	\item If $\rho^s\cong \rho^\vee\nu^{-1},$ then $\phi_\tau^s\cong\phi_\tau^\vee=(\det\rho)^{-1}\cdot\nu^{-1}\phi_\tau=\rho^\vee\nu^{-1}+\rho^\vee$ so that
	\[\rho^\vee\cong \rho^s\nu^s\cong \rho^s\nu. \]
	Set $\rho^s(t)\nu(t)=(\det\rho(t))^{-1}\cdot A\rho(t)A^{-1}$ for $t\in WD_E$.
	Note that $\rho$ is irreducible. Then
	$$A^{-2}\rho(s^2)\in\mathbb{C}^\times \mbox{  and  }
	\det\rho^s\cdot\det\rho\cdot\nu^2=\mathbf{1}, $$ 
	which implies that $\nu=\nu^s.$ Since 
	\[\rho(s^2)\nu(s^2)=A\rho(s^2)A^{-1}(\det \rho(s^2))^{-1}=\rho(s^2)(\det\rho(s^2))^{-1}, \]
we obtain	$\nu(s^2)\cdot \det(\rho(s^2))=1$.
	Set 
	\[\tilde{\phi}(s)=\begin{pmatrix}
	A\\&A\cdot\det(A^{-1})
	\end{pmatrix}\cdot \sigma\mbox{  and  }\tilde{\phi}(t)=\begin{pmatrix}
	\rho(t)\\&\rho(t)\nu(t)
	\end{pmatrix} \]
	for $t\in WD_E$.	Then $\tilde{\phi}(sts^{-1})=\tilde{\phi}(s)\cdot \phi(t)\cdot\tilde{\phi}(s)^{-1}$ and $\tilde{\phi}|_{WD_E}=\phi.$
\end{enumerate}
\subsubsection{Endoscopic case} If $\phi_\tau=\rho_1+\rho_2$ is the endoscopic case, then $\det\rho_1=\det\rho_2$ are both conjugate-orthogonal. There are several subcases. Assume that $\tau=\theta(\pi_1\boxtimes\pi_2)$ is generic, $\rho_i=\phi_{\pi_i}(i=1,2)$ and
$\rho_0=\chi_1+\chi_2,$ with $\chi_1\neq\chi_2$ and $\chi_1|_{F^\times}=\chi_2|_{F^\times}=\omega_{E/F}.$ Assume that $\rho_1\neq\rho_2.$ Then
\begin{enumerate}[(i)]
	\item If $\rho_1$ and $\rho_2$ are both conjugate-symplectic and $\rho_i\neq\rho_0~(i=1,2),$ so that both $\pi_1$ and $\pi_2$ are $(D^\times(F),\omega_{E/F})$-distinguished,
	then \[\dim \Hom_{\GSp_4(F)}(\tau,\omega_{E/F})=2. \]
	Thanks to Proposition \ref{conj:GL(2)}, there exist $\tilde{\rho}_1$ and $\tilde{\rho}_2$ of $\mathrm{U(2,E/F)}$ such that $\tilde{\rho}_i|_{WD_E}=\rho_i.$ So there are two lifts $\tilde{\phi}_1=\tilde{\rho}_1+\tilde{\rho}_2$ and $\tilde{\phi}_2=\tilde{\rho}_1\omega_{E/F}+\tilde{\rho}_2$ such that $\tilde{\phi}_i|_{WD_E}=\phi.$
	\par
	If $\rho_1$ and $\rho_2$ are both irreducible, then every lift of $\phi$ should be of the form
	\[s\mapsto\begin{pmatrix}
	\lambda_1\tilde{\rho}_1(s)&\\&\lambda_2\tilde{\rho}_2(s)
	\end{pmatrix}\in\GSp_4(\mathbb{C})\rtimes<\sigma>\]
	 with  $\lambda_i^2=1. $
	It is known that $\tilde{\phi}=-\tilde{\phi}$ as  parameters of ${}^LG_0.$
	\par
	If $\rho_1=\chi^{-1}+\chi^s,$ then the centralizer $Z_{\GL_2(\mathbb{C})}(\rho_1)$ is  $\mathbb{C}^\times\times\mathbb{C}^\times$ or $\GL_2(\mathbb{C}).$ Moreover, \[\tilde{\rho}_1(s)=a\begin{pmatrix}
	1\\&\chi(s^2)
	\end{pmatrix}\cdot
	\sigma\mbox{ with }a^2\chi(s^2)=1.\]
	In this case, $\tilde{\rho}_1+\tilde{\rho}_2\neq\tilde{\rho}_1\omega_{E/F}+\tilde{\rho}_2$, which will be a different story if $\rho_1=\rho_0.$
	\item If $\rho_1= \rho_0$ and $\rho_2$ is conjugate-symplectic, then $\tilde{\rho}_1(s)=\begin{pmatrix}
	&1\\-1
	\end{pmatrix}\cdot\sigma$ and
	$\tilde{\phi}_1= \tilde{\phi}_2.$
	\item If $\rho_1^\vee= \rho_2^s,$ then there exist $A\in \SL_2(\mathbb{C})$ such that
	\[A^{-1}\rho_1^\vee(t)A=\rho_2^s(t) \]
	for $t\in WD_E$.
	Set $\tilde{\phi}(s)=\begin{pmatrix}
	& A\rho_2(s^2)  \\A^{-1}
	\end{pmatrix}\cdot\sigma$. Then $\tilde{\phi}(sts^{-1})=\tilde{\phi}(s)\cdot\tilde{\phi}(t)\cdot\tilde{\phi}(s^{-1}).$
\end{enumerate}
Now we assume $\rho_1=\rho_2.$
\begin{enumerate}[(i)]
	\item If $\rho_1$ is conjugate-symplectic but $\rho_1\neq\rho_0$, then $\tilde{\phi}_1=\tilde{\rho}_1+\tilde{\rho}_1$ and $\tilde{\phi}_2=\tilde{\rho}_1+\tilde{\rho}_1\omega_{E/F}.$
	\item If $\rho_1=\rho_0,$ there is only one lift $\tilde{\phi}=\tilde{\rho}_1+\tilde{\rho}_1.$
	\item If $\rho_1$ is not conjugate-symplectic but  conjugate-orthogonal, set $$\tilde{\phi}(s)=\begin{pmatrix}
	&A\\A
	\end{pmatrix}\cdot\sigma\in \Sp_4(\mathbb{C})\rtimes<\sigma> $$
	where $A\in \SL_2(\mathbb{C})$ satisfies $A\rho_1^\vee(t)A^{-1}=\rho_1^s(t) .$ Let us verify $$\phi(s^2)=\tilde{\phi}(s^2)=\tilde{\phi}(s)^2 $$
	i.e., $A^2=\rho_1(s^2)$.
	\begin{itemize}
		\item If $\rho_1$ is irreducible, then $A^{-2}\rho_1(s^2)\in\mathbb{C}^\times$. Note that $\rho_1$ is conjugate-orthogonal. Then $A^{-2}\rho_1(s^2)=1$, i.e.  $A^2=\rho_1(s^2)$.
		\item If $\rho_1=\mu_1+\mu_2$ with $\mu_1\mu_2^s=\mathbf{1}$, then $\rho_1$ is conjugate-symplectic.
		\item If $\rho_1=\mu_1+\mu_2$ with $\mu_1\neq\mu_2$ and $\mu_1|_{F^\times}=\mu_2|_{F^\times}=\mathbf{1}$, then $A=1$ and $A^2=1=\rho_1(s^2)$.
	\end{itemize}
\end{enumerate}

\subsubsection{Non-generic tempered} \label{7.4.4}
Let
$\tau $ be an irreducible  nongeneric tempered representation of $\GSp_4(E)$ and  $\tau=\theta(\pi_1\boxtimes \pi_2 ),$ where each $\pi_i$ is an  irreducible representations of $D_E^\times(E)$.
If the enhanced $L$-parameter of $\tau$ is $(\phi_\tau,\lambda)$, where $\phi_\tau=\rho_1+\rho_2$, $\rho_i=\phi_{\pi_i}$ and $\lambda$ is a nontrivial character  of the component group $\pi_0(Z_{\phi_\tau}/Z_{\GSp_{4}(\mathbb{C})})$,  then
\[\dim \Hom_{\GSp_4(F)}(\tau,\omega_{E/F} )=0. \]
On the Galois side, if $\phi_\pi=\rho_1+\rho_2$, then for arbitrary parameter $\tilde{\phi}$ satisfying $\tilde{\phi}|_{WD_E}=\phi_\tau,$ the restricted representation $\lambda|_{\pi_0(Z(\tilde{\phi})) }$ does not contain the trivial character $\mathbf{1},$ i.e.
\[m(\lambda,\tilde{\phi})=0. \]
Finally we can prove Theorem \ref{thm1.2}.
\begin{proof}[Proof of Theorem \ref{thm1.2}] It is obvious if $\tau$ is a non-generic tempered representation  of $\GSp_4(E).$
	Since the Levi subgroup of a parabolic subgroup in $\GSp_4$ are $\GL$-type, \cite[Lemma 14]{prasad2015arelative} implies that $\deg\Phi(\tilde{\phi})=1$ in our case. By the above discussions, we know that if $\tau$ is generic, then 
	$\dim\Hom_{\GSp_4(F)}(\tau,\omega_{E/F} )$ equals to the number of inequivalent lifts $|F(\phi_{\tau})|. $
\end{proof}

\section{Proof of Theorem \ref{prasadfordisc}}\label{secpgsp}
This section focuses on the Prasad conjecture for  $\PGSp_4$.
Let $\tau$ be a representation of $\PGSp_4(E),$ i.e., a representation $\tau$ of $\GSp_4(E)$
with trivial central character.
If the multiplicity
\[\dim \Hom_{\PGSp_4(F)}(\tau,\omega_{E/F} )=\dim \Hom_{\GSp_4(F)}(\tau,\omega_{E/F} ) \]
is nonzero, then we say $\tau$ is $(\PGSp_4(F),\omega_{E/F})$-distinguished. Let $\PGSp_{1,1}=\PGU_2(D)$ be the pure inner form of $\PGSp_4$ defined over $F$. Similarly,
\[\dim\Hom_{\PGSp_{1,1}(F)}(\tau,\omega_{E/F})=\dim\Hom_{ \GSp_{1,1}(F)}(\tau,\omega_{E/F}).  \]

\subsection{Notation}
\begin{itemize}
		\item $\tau,\pi^{++},\pi^{--},\pi^+$ and $\pi^-$ are  representations of $\PGSp_4(E)$.
	\item $s\in W_F\setminus W_E$ and $\phi_{\tau}^s(t)=\phi_{\tau}(sts^{-1}) $ for $t\in WD_E$.
\item $S_\phi=\pi_0(Z(\phi))$ is the component group associated to $\phi$.
	\item $\tau':WD_F\longrightarrow\Sp_4(\mathbb{C})$ and $\tau_i'$ are  Langlands parameters of $\PGSp_4(F)$.
	\item $\mathcal{C}_i$ is a coset of $A_G(\tau_i')$ in $H^1(W_F,\PGSp_4)$ and $1_{\mathcal{C}_i}$ denotes its characteristic function.
	\item $\PGSp_{1,1}$ (resp. $PD^\times$) is the pure inner form of $\PGSp_4$ (resp. $\PGL_2$) defined over $F$.
\end{itemize}

\subsection{The Prasad conjecture for $\PGL_2$}
If $\mathbf{G}=\PGL_2$, then $\chi_{\mathbf{G}}=\omega_{E/F} $ and $G^{op}=\PGL_2$.
\begin{thm}
	Let $\pi$ be a generic representation of $\PGL_2(E)$. Then the following are equivalent:
	\begin{enumerate}[(i)]
		\item $\dim\Hom_{\PGL_2(F)}(\pi,\omega_{E/F}) =1 $;
		\item the Langlands parameter $\phi_{\pi}$ is conjugate-symplectic;
		\item there exists a parameter $\tilde{\phi}:WD_F\rightarrow\SL_2(\mathbb{C})$ such that $\tilde{\phi}|_{WD_E}=\phi_{\pi}$;
		\item $\pi$ is $(PD^\times(F),\omega_{E/F} )$-distinguished or $\pi=\pi(\chi_E,\chi_E^{-1})$ with $\chi_E|_{F^\times}=\omega_{E/F}$ and $\chi_E^2\neq\mathbf{1}$.
	\end{enumerate}
\end{thm}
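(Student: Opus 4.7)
My plan is to establish the four-way equivalence by running two parallel chains: a Galois-side argument for (ii) $\Leftrightarrow$ (iii) modeled on Proposition \ref{conj:GL(2)}, and an automorphic-side argument for (i) $\Leftrightarrow$ (iv) that combines Lemma \ref{GL:period} with a Mackey computation for principal series. The two chains are then joined via a case-by-case check of the bridge (ii) $\Leftrightarrow$ (iv).

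For (ii) $\Leftrightarrow$ (iii), I would note that an $\SL_2(\mathbb{C})$-valued parameter $\phi=\phi_\pi$ satisfies $\phi^\vee\cong\phi$ via the canonical symplectic form on $\mathbb{C}^2$, so being conjugate-symplectic amounts to the existence of an intertwiner $A\in\GL_2(\mathbb{C})$ with $\phi(sts^{-1})=A\phi(t)A^{-1}$ that preserves that form. Running the sign computation from Proposition \ref{conj:GL(2)} on $B(v,w)=\langle v,A^{-1}w\rangle$ gives sign $-1$ and yields $A^2=\phi(s^2)$ after rescaling $A$ to lie in $\SL_2(\mathbb{C})$; setting $\tilde{\phi}(s)=A$ and $\tilde{\phi}|_{WD_E}=\phi$ then produces the desired extension into $\SL_2(\mathbb{C})$. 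Conversely, any extension supplies such an intertwiner, forcing the conjugate-symplectic sign.

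For (i) $\Leftrightarrow$ (iv), I would separate discrete series from principal series. When $\pi$ is a discrete series with $\omega_\pi=\mathbf{1}$, twisting by a character $\chi_E$ of $E^\times$ with $\chi_E|_{F^\times}=\omega_{E/F}$ and invoking Lemma \ref{GL:period} identifies $(\PGL_2(F),\omega_{E/F})$-distinction with $(\mathrm{PD}^\times(F),\omega_{E/F}\circ\mathrm{Nrd})$-distinction, matching the first disjunct of (iv). For a principal series $\pi=\pi(\mu_1,\mu_2)$ with $\mu_1\mu_2=\mathbf{1}$ and $\mu_1\neq\mu_2$, the Mackey orbit decomposition for the double coset $B\backslash\GL_2(E)/\GL_2(F)$, carried out exactly as in the proof of Lemma \ref{GL:period}, shows that $\pi$ is $(\PGL_2(F),\omega_{E/F})$-distinguished precisely when $\mu_1|_{F^\times}=\omega_{E/F}$. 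Writing $\mu_1=\chi_E$ gives the principal series appearing in (iv), and the proviso $\chi_E^2\neq\mathbf{1}$ is enforced to keep $\pi$ irreducible and away from the Steinberg-type reducible boundary.

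Finally, the bridge (ii) $\Leftrightarrow$ (iv) is verified case by case. On the inner-form side, the parameter of a $(\mathrm{PD}^\times(F),\omega_{E/F})$-distinguished discrete series is conjugate-symplectic by the classical Saito-Tunnell-Prasad dichotomy. For the principal series $\pi(\chi_E,\chi_E^{-1})$, the identity $\chi_E(x)\chi_E(\bar{x})=\chi_E(N_{E/F}(x))=\omega_{E/F}(N_{E/F}(x))=1$ on $E^\times$ forces $\chi_E^s=\chi_E^{-1}$, whence $\phi_\pi=\chi_E\oplus\chi_E^{-1}$ is Galois-invariant with symplectic sign. The main obstacle I anticipate is the careful bookkeeping between characters of $F^\times$ and those of $E^\times$ under the twist by $\chi_E$, together with the delicate handling of the reducible boundary $\chi_E^2=\mathbf{1}$ in the Mackey computation, which is precisely the reason that case must be excluded in the statement of (iv).
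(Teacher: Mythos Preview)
Your proof has a genuine gap in the principal series analysis. The Mackey decomposition for $B(E)\backslash\GL_2(E)/\GL_2(F)$ is not ``exactly as in the proof of Lemma \ref{GL:period}'': that lemma treats $D^\times(F)$, where there is a single orbit on $\mathbb{P}^1(E)$. For $\GL_2(F)$ there are \emph{two} orbits, the closed one $\mathbb{P}^1(F)$ with stabiliser $B(F)$ and the open one $\mathbb{P}^1(E)\setminus\mathbb{P}^1(F)$ with stabiliser the non-split torus $E^\times\subset\GL_2(F)$. The closed orbit indeed contributes the condition $\chi_E|_{F^\times}=\omega_{E/F}$, but the open orbit contributes $\chi_E\cdot(\chi_E^{-1})^\sigma=\mathbf{1}$, i.e.\ $\chi_E^\sigma=\chi_E$. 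Thus $\pi(\chi_E,\chi_E^{-1})$ is $(\PGL_2(F),\omega_{E/F})$-distinguished precisely when $\chi_E|_{F^\times}=\omega_{E/F}$ \emph{or} $\chi_E$ is Galois-invariant, not only in the first case.

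This omission propagates: you treat the first disjunct of (iv) as relevant only for discrete series, but $\pi(\chi_E,\chi_E^{-1})$ with $\chi_E^\sigma=\chi_E$ is $(PD^\times(F),\omega_{E/F})$-distinguished (by the very Mackey computation of Lemma \ref{GL:period}), and on the Galois side it extends to $\SL_2(\mathbb{C})$ via $\chi_F\oplus\chi_F^{-1}$ for any $\chi_F$ with $\chi_F|_{W_E}=\chi_E$. So your chain (i) $\Leftrightarrow$ (iv) is broken for, e.g., $\chi_E=\chi_F|_{W_E}$ with $\chi_F^2\notin\{\mathbf{1},\omega_{E/F}\}$. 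A related issue: your (ii) $\Leftrightarrow$ (iii) argument invokes Schur's lemma through Proposition \ref{conj:GL(2)}, which requires $\phi_\pi$ irreducible; the reducible case must be handled separately (and the Galois-invariant case $\chi_E^\sigma=\chi_E$ is exactly where the diagonal extension $\chi_F\oplus\chi_F^{-1}$ enters). For comparison, the paper does not give an internal proof of this theorem at all; it simply cites \cite{gan22arithmeticity} and \cite{hengfei2016new}.
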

\begin{proof}
	See \cite[Theorem 6.2]{gan22arithmeticity} and \cite[the Main Theorem (Local)]{hengfei2016new}.
\end{proof}
\subsection{The Prasad conjecture for $\PGSp_4$}
Recall that if $\mathbf{G}=\PGSp_4$, then $\hat{\mathbf{G}}=\mathrm{Spin}_5(\mathbb{C})\cong \Sp_4(\mathbb{C}),~G^{op}=\PGSp_4$ and $\chi_\mathbf{G}=\omega_{E/F}.$
Let $\tau$ be a representation of $\PGSp_4(E)$ with enhanced $L$-parameter $(\phi_\tau,\lambda_\tau )$. Assume that the $L$-packet $\Pi_{\phi_\tau}$ is generic.
The Prasad conjecture for $\PGSp_4$ implies the following:
\begin{enumerate}[(i)]
	\item If $\tau$ is $(\PGSp_4(F),\omega_{E/F})$-distinguished, then 
	\begin{itemize}
		\item $\Pi_{\phi^s_\tau}=\Pi_{\phi^\vee_\tau} ,$ an equality of $L$-packets and
		\item $\phi_\tau={\tau'}|_{WD_E} $ for some parameter ${\tau' }:WD_F\longrightarrow \Sp_4(\mathbb{C}).$ 
	\end{itemize} 
	\item If $\tau$ is generic and there exists $\tau':WD_F\longrightarrow\Sp_4(\mathbb{C})$ such that $\tau'|_{WD_E}=\phi_\tau $, then $\tau$ is $(\PGSp_4(F),\omega_{E/F})$-distinguished.
	\item 	Assume that $\phi_\tau={\tau'}|_{WD_E}$ for some parameter ${\tau'}:WD_F\longrightarrow \Sp_4(\mathbb{C})$.
	If $\tau$ is a discrete series representation,
	then we set
	\[F(\phi_\tau)=\{\tau':\tau'|_{WD_E}=\phi_\tau \}=\bigcup_i \mathcal{O}(\tau_i') \]
	where $\mathcal{O}(\tau'_i)=\{\tau'_i,~\omega_{E/F}\cdot \tau'_i \},$ which may be a singleton.
	Given a parameter $\tau_i':W_F\longrightarrow \Sp_4(\mathbb{C})$ with $\phi_\tau$ its restriction to $WD_E$  and $\tau_i'\cdot\omega_{E/F}=\tau_i',$  there exists an element $g_i\in Z(\phi_\tau)$  such that
	\[(\tau_i'\cdot\omega_{E/F})(x)=g_i\tau_i'(x)g_i^{-1}\]
	for all $x\in WD_F$
	and so  $g_i$ normalizes $Z(\tau_i').$ Then
	$ \Hom_{\PGSp_4(F)}(\tau,\omega_{E/F} )\neq0$ if $\lambda_\tau(g_i)=1$ and
	$\Hom_{\PGSp_{1,1}(F)}(\tau,\omega_{E/F})\neq0$
	if  $\lambda_\tau(g_i)=-1.$
	In this case, $A_G(\tau_i')\subset H^1(W_F,\PGSp_4)$ is trivial and \[\mathcal{C}_i=\begin{cases}
	\{\PGSp_4\},& \mbox{ if }\lambda(g_i)=1,\\
	\{\PGSp_{1,1} \},&\mbox{ if }\lambda(g_i)=-1.
	\end{cases} \]
	If $\tau_i'\neq\tau_i'\omega_{E/F},$ then $A_G(\tau_i')=H^1(F,\PGSp_4)$ and $\mathcal{C}_i=\{\PGSp_4,\PGSp_{1,1} \}.$
	Set $G_\alpha$ to be $\PGSp_4$ or $\PGSp_{1,1}$.
	Then
	\[\dim \Hom_{G_\alpha(F)}(\tau,\omega_{E/F} )=\sum_im(\lambda,\tau_i') 1_{\mathcal{C}_i}(G_\alpha)/d_0(\tau'_i) ,\]
	where $m(\lambda,\tau_i')$ is the multiplicity of the trivial representation $\mathbf{1}$ contained in the restricted representation $\lambda|_{\pi_0(Z(\tau'_i))}.$
	\item If $\Pi_{\phi_{\tau}}$ is generic, then we have \eqref{equaforsum}, i.e.,
	\[\dim\Hom_{\PGSp_4(F)}(\tau,\omega_{E/F})+\dim \Hom_{\PGSp_{1,1}(F)}(\tau,\omega_{E/F})=\sum_{\varphi\in F(\phi_\tau)}m(\lambda,\varphi)\cdot\frac{\deg\Phi(\varphi)}{d_0(\varphi)}. \]
\end{enumerate}

Let us start to verify the Prasad conjecture for $\PGSp_4$.
\begin{thm}
	Let $\tau$ be a generic representation of $\PGSp_4(E)$. Then $\tau$ is $(\PGSp_4(F),\omega_{E/F})$-distinguished if and only if there exists a parameter $\tau':WD_F\rightarrow\Sp_4(\mathbb{C})$ such that $\tau'|_{WD_E}=\phi_{\tau}$.
\end{thm}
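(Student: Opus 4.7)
The plan is to derive this equivalence directly from Theorem \ref{thm1.2}, reducing everything to a purely Galois-side renormalization argument. Since $\tau$ is a representation of $\PGSp_4(E)$, its central character is trivial, so the hypothesis $\omega_\tau|_{F^\times}=\mathbf{1}$ of Theorem \ref{thm1.2} holds automatically. Moreover, the $L$-packet $\Pi_{\phi_\tau}$ contains the generic representation $\tau$, hence is generic in the sense required. Theorem \ref{thm1.2} then yields the identity
\[
\dim\Hom_{\GSp_4(F)}(\tau,\omega_{E/F})=|F(\phi_\tau)|,
\]
where $F(\phi_\tau)$ consists of extensions $\tilde{\phi}:WD_F\to {}^LG_0=\GSp_4(\mathbb{C})\rtimes\Gal(E/F)$ restricting to $\phi_\tau$ on $WD_E$. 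Thus $\tau$ is $(\PGSp_4(F),\omega_{E/F})$-distinguished if and only if $F(\phi_\tau)$ is non-empty, and the theorem reduces to a bijective correspondence between $F(\phi_\tau)$ and the set of lifts $\tau':WD_F\to\Sp_4(\mathbb{C})$ of $\phi_\tau$.

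The correspondence itself is essentially formal, and this is the step I would carry out next. One direction is transparent: given $\tau':WD_F\to\Sp_4(\mathbb{C})$ extending $\phi_\tau$, define $\tilde{\phi}|_{WD_E}=\phi_\tau$ and $\tilde{\phi}(s)=\tau'(s)\cdot\sigma$ for a fixed $s\in W_F\setminus W_E$. Since $\phi_\tau$ takes values in $\Sp_4(\mathbb{C})$ and the $\sigma$-action on $\Sp_4(\mathbb{C})$ is trivial (as $\text{sim}$ is trivial there), the cocycle condition $\tilde{\phi}(s)\phi_\tau(t)\tilde{\phi}(s)^{-1}=\phi_\tau(sts^{-1})$ is exactly the statement $\tau'(s)\phi_\tau(t)\tau'(s)^{-1}=\phi_\tau(sts^{-1})$, and $\tilde{\phi}(s)^2=\tau'(s)\sigma(\tau'(s))=\tau'(s)^2=\phi_\tau(s^2)$. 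So $\tilde{\phi}\in F(\phi_\tau)$.

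For the converse, given $\tilde{\phi}\in F(\phi_\tau)$, write $\tilde{\phi}(s)=A\cdot\sigma$ with $A\in\GSp_4(\mathbb{C})$. The cocycle relation in $^LG_0$ gives $\phi_\tau(s^2)=A\sigma(A)=A^2/\text{sim}(A)$, and conjugation by $A$ intertwines $\phi_\tau$ and $\phi_\tau^s$ on $WD_E$. Pick $a\in\mathbb{C}^\times$ with $a^2=\text{sim}(A)^{-1}$ (possible since $\mathbb{C}^\times$ is divisible), and set $B:=aA$, so that $\text{sim}(B)=a^2\text{sim}(A)=1$, i.e.\ $B\in\Sp_4(\mathbb{C})$. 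Then $B^2=a^2A^2=A^2/\text{sim}(A)=\phi_\tau(s^2)$ and $B\phi_\tau(t)B^{-1}=A\phi_\tau(t)A^{-1}=\phi_\tau(sts^{-1})$. Defining $\tau'|_{WD_E}=\phi_\tau$ and $\tau'(s)=B$ extends to the desired homomorphism $\tau':WD_F\to\Sp_4(\mathbb{C})$.

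The only point that requires a bit of care — and this is where I expect the mildest technical friction — is to ensure that the extension just constructed is a genuine admissible $L$-parameter, i.e.\ that semisimplicity and the correct behavior on the $\SL_2$-factor of $WD_F$ are preserved under the rescaling $A\mapsto aA$. This is however immediate since $aA$ and $A$ differ by a central scalar, so the Jordan decomposition and the image of the Deligne $\SL_2$ are unchanged. With this verified, the two-way correspondence above combined with Theorem \ref{thm1.2} yields the theorem.
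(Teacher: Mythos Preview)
Your proof is correct and takes a genuinely different route from the paper's. The paper establishes this theorem by direct case analysis: in the forward direction it invokes Theorem~\ref{localgspperiod} to deduce that $\phi_\Pi$ is conjugate-symplectic, then splits into cases according to the reducibility type of $\phi_\Pi$ (irreducible; $\rho_1\oplus\rho_2$ with $\rho_1$ irreducible; both $\rho_i$ reducible) and in each case writes down an explicit $\tau'$ (e.g.\ $\Ind_{WD_E}^{WD_F}\rho_1$ or $\tilde\rho_1\oplus\tilde\rho_1^\vee$). For the converse it constructs a conjugate-symplectic bilinear form $B(m,n)=\langle m,\tau'(s)^{-1}n\rangle$ directly from $\tau'$. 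You instead bypass all of this by appealing to Theorem~\ref{thm1.2} (already proved in \S\ref{subsect:GSp(4)conj}) and reducing to the purely group-theoretic equivalence ``$F(\phi_\tau)\neq\emptyset$ for $^LG_0$'' $\Leftrightarrow$ ``some $\tau':WD_F\to\Sp_4(\mathbb{C})$ extends $\phi_\tau$'', which you verify by the rescaling $A\mapsto aA$ with $a^2=\text{sim}(A)^{-1}$. Your argument is shorter and more conceptual, exploiting that the case analysis has already been packaged into Theorem~\ref{thm1.2}; the paper's proof is more hands-on and exhibits the lifts explicitly, which is useful for the finer counting in \S\ref{secpgsp}.

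One small inaccuracy: you call the correspondence \emph{bijective}, but the choice of square root $a$ is only determined up to sign, so the passage $\tilde\phi\mapsto\tau'$ is a priori two-valued (and the two resulting $\tau'$ need not be $\Sp_4(\mathbb{C})$-conjugate in general). This does not affect your argument, since the theorem is an existence statement and you only need non-emptiness on each side; but you should replace ``bijective correspondence'' with ``existence on one side implies existence on the other''.
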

\begin{proof}
	Assume that $\tau=\theta(\Pi\boxtimes\chi)$ with $\chi=\mathbf{1}$. Fix $s\in W_F\setminus W_E. $
	\begin{enumerate}[(i)]
		\item If $\tau$ is $(\PGSp_4(F),\omega_{E/F})$-distinguished, then $\phi_{\Pi}$ is conjugate-symplectic and so $\Pi_{\phi_{\tau}^s}=\Pi_{\phi^\vee_\tau}=\Pi_{\phi_\tau}$. If $\phi_{\Pi}$ is irreducible, then we can repeat the process in \S\ref{7.4.1} to obtain that there exists a parameter $\tau':WD_F\rightarrow\Sp_4(\mathbb{C}) $ such that $\tau'|_{WD_E}=\phi_\tau $. If $\phi_{\Pi}=\rho_1\oplus\rho_2$ is reducible and $\rho_1$ is irreducible, then either $\rho_1^s=\rho_2^\vee$ or both $\rho_1$ and $\rho_2$ are conjugate-symplectic.
		\begin{itemize}
			\item If $\rho_1^s=\rho_2^\vee$, then there are two subcases. If $\rho_2^\vee=\rho_2$, then  $\rho_1^s=\rho_2$. Set $\tau'=\Ind_{WD_E}^{WD_F}\rho_1$ if $\rho_1\neq\rho_2$. If $\rho_1=\rho_2=\rho_2^\vee$, then $\rho_1^s=\rho_1$ and so there exists a parameter $\tilde{\rho}_1:WD_F\rightarrow\GL_2(\mathbb{C})$ such that $\tilde{\rho}_1|_{WD_E}=\rho_1$. Set $\tau'=\tilde{\rho}_1\oplus\tilde{\rho}_1^\vee$. If $\rho_2^\vee\neq \rho_2$, then $\rho_2^\vee=\rho_1$. Thus $\rho_1^s=\rho_1$ and $\tau'=\tilde{\rho}_1\oplus\tilde{\rho}_1^\vee$.
			\item If both $\rho_1$ and $\rho_2$ are conjugate-symplectic, then 
			\[\tau'=\begin{cases}
			\Ind_{WD_E}^{WD_F}\rho_1,&\mbox{ if }\rho_1^s=\rho_2\neq\rho_1; \\
			\tilde{\rho}_1\oplus\tilde{\rho}_1^\vee,&\mbox{ if }\rho_1^s=\rho_1.
			\end{cases} \]
		\end{itemize}
	If neither $\rho_1$ nor $\rho_2$ is irreducible, then $\phi_{\tau}$ belongs to the endoscopic case. Thanks to Theorem \ref{localgspperiod}(ii), either $\rho_1^s=\rho_2^\vee$ or both $\rho_1$ and $\rho_2$ are conjugate-symplectic. The argument is similar and we omit it here.
	Therefore, there exists $\tau':WD_F\rightarrow\Sp_4(\mathbb{C})$ such that $\tau'|_{WD_E}=\phi_{\tau}$.
	\item Conversely, if there exists $\tau':WD_F\rightarrow\Sp_4(\mathbb{C})$ such that $\tau'|_{WD_E}=\phi_\tau $, then it suffices to show that $\phi_{\Pi}$ is conjugate-symplectic. Because the nongeneric member in the $L$-packet $\Pi_{\phi_{\tau}}$ is not $(\GSp_4(F),\omega_{E/F})$-distinguished due to Theorem \ref{localgspperiod}(i) if $\Pi_{\phi_{\tau}}$ contains more than one representation. Assume that
	\[\phi_\tau:WD_E\longrightarrow\Sp(V,<-,->)=\Sp_4(\mathbb{C}) \]
	and $$\phi_{\Pi}=i\circ\phi_\tau:WD_E\longrightarrow\GL(V)  $$
	where $i:\Sp_4(\mathbb{C})\rightarrow\GL(V)$ is the embedding between the $L$-groups.
	Then we set $$B(m,n)=<m,\tau'(s)^{-1}n>$$ for $m,n\in V$. It is easy to check that $B(\phi_{\Pi}(t)m,\phi_{\Pi}^s(t)n )=B(m,n)$ and
	\[B(m,\phi_{\Pi}(s^2)n )=<m,\tau'(s)n>=-<\tau'(s)n,m>=-<n,\tau'(s)^{-1}m>=-B(n,m). \]
	Therefore, the bilinear form $B$ on $V$ implies that $\phi_{\Pi}$ is conjugate-symplectic.
	\end{enumerate}
We have finished the proof.
\end{proof}
\subsection{The proof of Theorem \ref{prasadfordisc}}Before we give the proof of Theorem \ref{prasadfordisc}, we will use the results in Theorem \ref{localgspperiod} and Theorem \ref{innerformperiod} to study the equality \eqref{equaforsum} in detail. According to the Langlands parameter $\phi_\tau $, we divive them into three cases:
\begin{itemize}
	\item the endoscopic case,
	\item the discrete series but non-endoscopic case and
	\item $\phi_\tau=\rho+\rho\nu $ with $\nu\neq\mathbf{1}$ and $\nu\det\rho=\mathbf{1}$.
\end{itemize}
 Set $S_\phi=\pi_0(Z(\phi))$ to be the
component group.			We identify the characters of $W_F$ and the characters of $F^\times$ via the local class field theory. 
\subsubsection{ Endoscopic case} Given $\phi_\tau=\phi_{1}\oplus\phi_{2},$ there are two cases: $\phi_1=\phi_2$ and $\phi_1\neq\phi_2.$ 
\begin{enumerate}[(A)]
	\item If $\phi_1=\phi_2=\rho$ are irreducible, then the L-packet $\Pi_{\phi_\tau}=\{\pi^+,\pi^- \}$ and $S_{\phi_\tau}=\mathbb{Z}/2\mathbb{Z}$, where $\pi^-$ (resp. $\pi^+$) is a nongeneric (resp. generic) representation of $\PGSp_4(E)$. There are two subcases:
	\begin{enumerate}[label=(A\arabic*)]
\item	If $\rho$ is conjugate-orthogonal, then 
 \[\dim \Hom_{\PGSp_{1,1}(F)}(\pi^+,\omega_{E/F} )=0=\dim \Hom_{\PGSp_4(F)}(\pi^-,\omega_{E/F}) \]
 and
	\[\dim \Hom_{\PGSp_{1,1}(F)}(\pi^-,\omega_{E/F})=1=\dim \Hom_{\PGSp_4(F)}(\pi^+,\omega_{E/F}). \]
	On the Galois side, there is only one extension
	$\tilde{\phi}=\bar{\rho}\oplus\bar{\rho}\cdot\omega_{E/F}$ with $$\deg\Phi(\tilde{\phi})=2\mbox{  and  } S_{\tilde{\phi}}=\{\mathbf{1}\}\rightarrow S_{\phi_\tau},$$ where
	$\bar{\rho}:WD_F\rightarrow \GL_2(\mathbb{C})\times W_F$ with $\det\bar{\rho}=\omega_{E/F}.$ Note that $\tilde{\phi}=\tilde{\phi}\cdot\omega_{E/F}$. Then  $\pi^+$ supports a period on the trivial pure inner form and $\pi^-$ supports a period on a nontrivial pure inner form.
\item 	
	If $\rho$ is conjugate-symplectic, then 
	\[\dim \Hom_{\PGSp_{1,1}(F)}(\pi^-,\omega_{E/F})=0=\dim \Hom_{\PGSp_4(F)}(\pi^-,\omega_{E/F}) \]
	and \[\dim \Hom_{\PGSp_{1,1}(F)}(\pi^+,\omega_{E/F} )=1,~\dim \Hom_{\PGSp_4(F)}(\pi^+,\omega_{E/F} )=2. \]
	In this case, $\rho$ has two extensions $\bar{\rho} $ and $\bar{\rho}\cdot\omega_{E/F},$ where $\bar{\rho}:WD_F\longrightarrow \SL_2(\mathbb{C}).$
	There are three choices for the extension $\tilde{\phi}:WD_F\longrightarrow \Sp_4(\mathbb{C})$ with $\deg\Phi(\tilde{\phi})=1 $
	\begin{itemize}
		\item $\tilde{\phi}^{++}=\bar{\rho}\oplus\bar{\rho}$ with $S_{\tilde{\phi}^{++}}=\mathbb{Z}/2\mathbb{Z}\cong S_{\phi_\tau};$
		\item $\tilde{\phi}^{+-}=\bar{\rho}\oplus\bar{\rho}\cdot\omega_{E/F}$ with $S_{\tilde{\phi}^{+-}}=\mathbb{Z}/2\mathbb{Z}\times\mathbb{Z}/2\mathbb{Z}\longrightarrow S_{\phi_\tau} $(sum map);
		\item $\tilde{\phi}^{--}=\bar{\rho}\cdot\omega_{E/F}\oplus \bar{\rho}\cdot\omega_{E/F}$ with $S_{\tilde{\phi}^{--}}=\mathbb{Z}/2\mathbb{Z}\cong S_{\phi_\tau}.$
	\end{itemize}
	The parameters $\tilde{\phi}^{++}$ and $\phi^{--}$ are in the same orbit under the twisting by $\omega_{E/F},$ which corresponds to both pure inner forms. The parameter $\tilde{\phi}^{+-}$ is fixed under twisting by $\omega_{E/F},$ which supports a period on the trivial pure inner form.
	\item If $\rho$ is not conjugate-self-dual, then both the Galois side and the automorphic side are $0$.
\end{enumerate}
	\item\label{B} If $\phi_1\neq\phi_2$  are both irreducible, then the L-packet of $\PGSp_4$ is $\Pi_{\phi_\tau}=\{\pi^{++},\pi^{--} \}$ and $$S_{\phi_\tau}=\mathbb{Z}/2\mathbb{Z}\times\mathbb{Z}/2\mathbb{Z}.$$
\begin{enumerate}[label=(B\arabic*)]
	\item 
	If $\phi_1$ and $\phi_2$ both extend to $L$-parameters of $\PGL_2(F),$ i.e. both are conjugate-symplectic, then one has $\phi_1^s\neq\phi_2,$
	$$	\dim \Hom_{\PGSp_{1,1}(F)}(\pi^{++},\omega_{E/F})=2=\dim \Hom_{\PGSp_4(F)}(\pi^{++},\omega_{E/F})$$
	and $$\dim \Hom_{\PGSp_{1,1}(F)}(\pi^{--},\omega_{E/F})=0=\dim \Hom_{\PGSp_4(F)}(\pi^{--},\omega_{E/F}).$$
	On the Galois side, there are also four ways of extending $\phi_\tau.$ For each such extension $\tilde{\phi},$ one has $\deg\Phi(\tilde{\phi})=1$ and the equality of component group
	\[S_{\tilde{\phi}}=S_{\phi_\tau}=\mathbb{Z}/2\mathbb{Z}\times\mathbb{Z}/2\mathbb{Z}. \]
	So only the representation $\pi^{++}$ in the L-packet can support a period. And there are $2$ orbits in $F(\phi_\tau)$ under twisting by $\omega_{E/F},$ each of size $2.$
	\item 
	If $\phi_1$ and $\phi_2$ do not extend to $L$-parameters of
	$\PGL_2(F),$ but $\phi_1^s=\phi_2=\phi_2^\vee,$ then
	$$\dim \Hom_{\PGSp_{1,1}(F)}(\pi^{++},\omega_{E/F})=0=\dim \Hom_{\PGSp_4(F)}(\pi^{--},\omega_{E/F})$$ and
	\[\dim \Hom_{\PGSp_{1,1}(F)}(\pi^{--},\omega_{E/F})=1=\dim \Hom_{\PGSp_4(F)}(\pi^{++},\omega_{E/F}) \]
	There is a unique way of extending $\phi_\tau=\phi_1\oplus\phi_2$ to $\tilde{\phi}:WD_F\rightarrow \Sp_4(\mathbb{C})$. Namely, $\tilde{\phi}=\Ind_{WD_E}^{WD_F}\phi_1$ is an irreducible $4$-dimensional symplectic representation, with a component group
	\[S_{\tilde{\phi}}=\mathbb{Z}/2\mathbb{Z}\hookrightarrow S_{\phi_\tau}(\mbox{diagonal embedding}). \]
	And $S_{\phi_{\tau}}^{\Gal(E/F)}=S_{\tilde{\phi}}$.
	So $\pi^{++}$ supports a period on the trivial pure inner form and $\pi^{--}$ supports a period on the nontrivial pure inner form.
\end{enumerate}
	\item If $\phi_1=\chi_{1}\oplus\chi_{1}^{-1}$ is reducible, then there is only one element in the L-packet, i.e. $|\Pi_{\phi_\tau}|=1.$ There are two cases: $\phi_1=\phi_2$ and $\phi_1\neq\phi_2.$
	\begin{enumerate}[label=(C\arabic*) ]
		\item If $\phi_1=\phi_2,$  there are three subcases.
		\begin{enumerate}[label=(C1.\roman*)]
			\item If $\chi_{1}=\chi_{1}^s=\chi_F|_{W_E},$ then $S_{\phi_\tau}=1$ 
			and
			\[\dim \Hom_{\PGSp_{1,1}(F)}(\tau,\omega_{E/F})=2= \dim \Hom_{\PGSp_4(F)}(\tau,\omega_{E/F} ) .\]
		\begin{itemize}
			\item 	If $\chi_F^2\neq\omega_{E/F},$ then there are two ways to extend $L$-parameters of $\PGL_2(F),$ denoted by $\bar{\rho}$ and $\bar{\rho}\cdot\omega_{E/F}$. Thus there are $3$ ways of extending $\phi_\tau,$ which are $\tilde{\phi}^{++},\tilde{\phi}^{--}$ and $\tilde{\phi}^{+-}.$ 
			Moreover, $\deg\Phi(\tilde{\phi}^{++})=1=\deg\Phi(\tilde{\phi}^{--})$ and
			$\deg\Phi(\tilde{\phi}^{+-})=2.$
			\item 
				If $\chi_F^2=\omega_{E/F},$ then there is only one way to extend $\phi_\tau.$ Denote it by $\tilde{\phi}.$ Then $$\deg\Phi(\tilde{\phi})=4.$$
		\end{itemize}	
			\item If $\chi_{1}\neq\chi_{1}^{-1}$ but $\chi_{1}|_{F^\times}=\omega_{E/F},$ then $S_{\phi_\tau}=1$ and
			\[\dim \Hom_{\PGSp_{1,1}(F)}(\tau,\omega_{E/F} )=0\mbox{ and }\dim \Hom_{\PGSp_4(F)}(\tau,\omega_{E/F})=1. \]
			There is only one way to extend $\phi_1,$ denoted by 
			$$\bar{\rho}=\Ind_{WD_E}^{WD_F}\chi_{1}:WD_F\rightarrow\SL_2(\mathbb{C}).$$ 
			Then
			$\tilde{\phi}=\bar{\rho}\oplus\bar{\rho}$ with $S_{\tilde{\phi}}=\mathbb{Z}/2\mathbb{Z}$ and $\deg\Phi(\tilde{\phi})=1.$ Note that $\tilde{\phi}\cdot\omega_{E/F}=\tilde{\phi}$.
			 Then $\tilde{\phi}$ supports a period on the trivial pure inner form.
			\item If $\chi_{1}\neq\chi_{1}^{-1}$ but $\chi_{1}|_{F^\times}=\mathbf{1}$, then $S_{\phi_\tau}=1$ and
			\[\dim \Hom_{\PGSp_{1,1}(F)}(\tau,\omega_{E/F} )=0\mbox{ and }\dim \Hom_{\PGSp_4(F)}(\tau,\omega_{E/F})=1. \]
			On the Galois side, there is only one choice $\tilde{\phi}=\bar{\rho}\oplus\bar{\rho}$ and $S_{\tilde{\phi}}=\mathbf{1},$ where
			$$\bar{\rho}=\Ind_{WD_E}^{WD_F}\chi_{1}:WD_F\rightarrow \GL_2(\mathbb{C})$$ with $\det\rho=\omega_{E/F}.$
			Since $\tilde{\phi}=\tilde{\phi}\cdot\omega_{E/F},$ it picks up only the trivial pure inner form.
		\end{enumerate} 
		\item If $\phi_1\neq\phi_2,$  there are several subcases:
		\begin{enumerate}[label=(C2.\roman*)]
			\item If $\chi_{1}=\chi_{1}^s=\chi_F|_{W_E}$ and $\phi_2$ is irreducible and conjugate-symplectic , then  
			$S_{\phi_\tau}=\mathbb{Z}/2\mathbb{Z}$ and $$\dim \Hom_{\PGSp_{1,1}(F)}(\tau,\omega_{E/F})=2=\dim \Hom_{\PGSp_4(F)}(\tau,\omega_{E/F} ).$$
		\begin{itemize}
			\item 	If $\chi_F^2\neq\omega_{E/F},$ then there are four ways of extending ${\phi_\tau}$ and for each such extension $\tilde{\phi},$ one has 
			$S_{\tilde{\phi}}=\mathbb{Z}/2\mathbb{Z}\cong S_{\phi_\tau}.$
			There are two orbits under the twisting by $\omega_{E/F},$
			each of size $2.$
			\item 	If $\chi_F^2=\omega_{E/F},$ then there are two ways of extending $\phi_\tau.$ For each such extension $\tilde{\phi},$
			one has $\deg\Phi(\tilde{\phi})=2.$ There is one orbit under the twisting by $\omega_{E/F}.$		
		\end{itemize}		 
		In this case, the identity
	\begin{equation}\label{stableequation}
	\dim\Hom_{G_\alpha(F)}(\tau,\chi_G)=\sum_i m(\lambda,\tilde{\phi}_i)\mathbf{1}_{\mathcal{C}_i}(G_\alpha)\cdot\frac{\deg\Phi(\tilde{\phi}_i)}{d_0(\tilde{\phi}_i)} 
	\end{equation}
	holds for $G_\alpha=\PGSp_4$ and $\PGSp_{1,1}.$
			\item If $\chi_{1}=\chi_{1}^s=\chi_F|_{W_E}$ and $\chi_{2}=\chi_{2}^s=\chi_F'|_{W_E},$ where $\phi_2=\chi_{2}\oplus\chi_{2}^{-1},$ then $S_{\phi_\tau}=1$ and
			\[\dim \Hom_{\PGSp_{1,1}(F)}(\tau,\omega_{E/F})=2=\dim \Hom_{\PGSp_4(F)}(\tau,\omega_{E/F} ). \]
		\begin{itemize}
	\item 
			If neither $\chi_F^2$ nor ${\chi_F'}^2$ equals $\omega_{E/F},$ then	there are four ways of extending $\phi_{\tau}.$
			There are two orbits under the twisting by $\omega_{E/F},$
			each of size $2.$
			\item
			If $\chi_F^2=\omega_{E/F}$ and $\chi_F'^2\neq\omega_{E/F},$ then
			there are two ways to extend $\phi_\tau$ and for each such extension $\tilde{\phi},$ one has $S_{\tilde{\phi}}=1=S_{\phi_\tau}$ and $\deg\Phi(\tilde{\phi})=2.$ There is one orbit under the twisting by $\omega_{E/F},$ which corresponds to both pure inner forms.
			\item
			If $\chi_F^2=\chi_F'^2=\omega_{E/F},$ then there is only one way to extend $\phi_\tau.$ For this extension $\tilde{\phi},$ one has $\deg\Phi(\tilde{\phi})=4.$
		\end{itemize}
			\item If $\chi_{1}\neq\chi_{1}^{-1}$ but $\chi_{1}$ is conjugate-symplectic, and $\phi_2$ is irreducible and conjugate-symplectic, then $S_{\phi_\tau}=\mathbb{Z}/2\mathbb{Z}$ and
			\[\dim \Hom_{\PGSp_{1,1}(F)}(\tau,\omega_{E/F} )=1=\dim \Hom_{\PGSp_4(F)}(\tau,\omega_{E/F}). \]
			There are two extensions $\tilde{\phi}=\bar{\rho}_1\oplus\bar{\rho}_2$
			or $\bar{\rho}_1\oplus\bar{\rho}_2\omega_{E/F}$ with $S_{\tilde{\phi}}=\mathbb{Z}/2\mathbb{Z}\times\mathbb{Z}/2\mathbb{Z},$ where $\bar{\rho}_i:WD_F\rightarrow \SL_2(\mathbb{C})$ satisfies $\bar{\rho}_i|_{WD_E}=\phi_i.$ Here the map $S_{\tilde{\phi}}\rightarrow S_{\phi_\tau}$ is given by $$(x,y)\mapsto x+y.$$ 
			There is one orbit under the twisting by $\omega_{E/F},$
			which corresponds to both pure inner forms.
			\item If $\chi_{1}\neq\chi_{1}^{-1}$ but $\chi_{1}$ is conjugate-symplectic, and $\chi_{2}=\chi_{2}^s=\chi_F'|_{W_E}$
			where $\phi_2=\chi_{2}\oplus\chi_{2}^{-1},$ then $S_{\phi_\tau}=1$ and
			\[\dim \Hom_{\PGSp_{1,1}(F)}(\tau,\omega_{E/F} )=1=\dim \Hom_{\PGSp_4(F)}(\tau,\omega_{E/F}). \]
		\begin{itemize}
	\item		If $\chi_F'^2\neq\omega_{E/F},$ 
			then	there are two ways to extend $\phi_\tau$.
			 Set $\tilde{\phi}=\bar{\rho}_1\oplus\bar{\rho}_2$ or $\bar{\rho}_1\oplus\bar{\rho}_2\omega_{E/F}$ with $S_{\tilde{\phi}}=\mathbb{Z}/2\mathbb{Z} .$
			There is one orbit under the twisting by $\omega_{E/F},$
			which corresponds to both pure inner forms.
			\item
			If $\chi_F'^2=\omega_{E/F},$ there is one way to extend $\phi_{\tau}$. Set $\tilde{\phi}=\bar{\rho}_1\oplus\chi_F'\oplus\chi_F'\omega_{E/F}.$ And
			\[\deg\Phi(\tilde{\phi})=2. \]
			Note that the identity \eqref{stableequation} fails in this case while the identity \eqref{equaforsum}    still holds.
		\end{itemize}
			\item If $\phi_1$ and $\phi_2$ are reducible and four different characters $\chi_{1},\chi_{1}^{-1},\chi_2$ and $\chi_2^{-1}$ satisfy $$\chi_{1}|_{F^\times}=\omega_{E/F}=\chi_2|_{F^\times},$$ 
			then $S_{\phi_\tau}=1$ and
			\[\dim \Hom_{\PGSp_{1,1}(F)}(\tau,\omega_{E/F})=0,\]
			$\dim \Hom_{\PGSp_4(F)}(\tau,\omega_{E/F})=1. $
			There is only one extension $\tilde{\phi}=\bar{\rho}_1\oplus\bar{\rho}_2$
			with $S_{\tilde{\phi}}=\mathbb{Z}/2\mathbb{Z}\times\mathbb{Z}/2\mathbb{Z}. $
			Since $\tilde{\phi}=\tilde{\phi}\cdot\omega_{E/F},$ it picks up the trivial pure inner form.
			\item If $\phi_1^s=\phi_2^\vee=\phi_2$ and $\phi_1$ is not conjugate-symplectic, then  $S_{\phi_\tau }=1$ and
			\[\dim \Hom_{\PGSp_{1,1}(F)}(\tau,\omega_{E/F})=0,\dim \Hom_{\PGSp_4(F)}(\tau,\omega_{E/F}) =1.\]
			There is only one extension $$\tilde{\phi}= \Ind_{WD_E}^{WD_F}\phi_1:WD_F\rightarrow \Sp_4(\mathbb{C})$$
			with the component group $S_{\tilde{\phi}}=\mathbb{Z}/2\mathbb{Z}.$ Since $\tilde{\phi}=\tilde{\phi}\cdot\omega_{E/F},$ it picks up the trivial pure inner form.
		\end{enumerate}
		It is easy to check that the identity \eqref{equaforsum} holds when $\Pi_{\phi_{\tau}}$ is generic, i.e.,
		\[\dim\Hom_{\PGSp_4(F)}(\tau,\omega_{E/F})+\dim\Hom_{ \PGSp_{1,1}(F)}(\tau,\omega_{E/F})=\sum_{\tilde{\phi}\in F(\phi_\tau)}m(\lambda,\tilde{\phi})\cdot\frac{\deg\Phi(\tilde{\phi})}{d_0(\tilde{\phi})}.\]
	\end{enumerate}
\end{enumerate}
\subsubsection{ Discrete and non-endoscopic case}\label{subsec:non-end} Assume that $\phi_\tau $ is irreducible and so $\Pi_{\phi_{\tau}}$ is a singleton.
Given a parameter $\phi_\tau,$  which is non-endoscopic, the theta lift $\Theta_4^+(\tau)$ from $\PGSp_4(E)$ to $\rm PGSO_{2,2}(E)$ is zero. 

If $\phi_\tau$ is conjugate-symplectic, then
\[\dim \Hom_{\PGSp_{1,1}(F)}(\tau,\omega_{E/F})=1=\dim \Hom_{\PGSp_4(F)}(\tau,\omega_{E/F}) .\]
There are two extensions $\tilde{\phi}$ and $\tilde{\phi}\cdot\omega_{E/F}$ with a component group
$S_{\tilde{\phi}}=S_{\phi_\tau}=\mathbb{Z}/2\mathbb{Z}. $			
There is one orbit under the twisting by $\omega_{E/F},$
which corresponds to both pure inner forms.
\subsubsection{ Generic but neither discrete nor endoscopic case}
If $\phi_\tau=\rho\oplus\rho\nu,\det\rho=\nu^{-1}\neq\mathbf{1},$
then $S_{\phi_\tau}=1.$ 
There are two cases:
\begin{itemize}
	\item 			 If $\phi_\tau$ is conjugate-symplectic and $\rho^s=\rho$, then
	\[\dim \Hom_{\PGSp_{1,1}(F)}(\tau,\omega_{E/F})=1=\dim \Hom_{\PGSp_4(F)}(\tau,\omega_{E/F}) .\]
	There are two extensions
	$\tilde{\phi}=\tilde{\rho}+\tilde{\rho}^\vee$ and $\tilde{\phi}\cdot\omega_{E/F}$ where $\tilde{\rho}:WD_F\rightarrow \GL_2(\mathbb{C})$  satisfies $\tilde{\rho}|_{WD_E}=\rho.$
	\item If $\phi_\tau$ is conjugate-symplectic and $\rho^s\neq\rho,$ then
	\[\dim \Hom_{\PGSp_4(F)}(\tau,\omega_{E/F})=1\mbox{  and  } \dim \Hom_{\PGSp_{1,1}(F)}(\tau,\omega_{E/F})=0. \]
	There is only one extension $\tilde{\phi}=\Ind_{WD_E}^{WD_F}\rho$ such that $\tilde{\phi}|_{WD_E}=\phi_\tau.$
\end{itemize}
\begin{proof}
	[Proof of Theorem \ref{prasadfordisc}] It follows from the discussions in the endoscopic cases \ref{B} in \S7.4.1 
	and the discrete and non-endoscopic case in \S\ref{subsec:non-end}.
\end{proof}

\subsection{Further discussion} Let $E$ be a quadratic extension over a non-archimedean field $F.$ Let $\mathbf{G}$ be a quasi-split reductive group defined over $F.$
Let $\tau$ be an irreducible representation of $\mathbf{G}(E)$ with an enhanced $L$-parameter $(\phi_\tau,\lambda).$ 
Assume that $F(\phi_\tau)=\cup_i\mathcal{O}(\tilde{\phi}_i)$ where $\tilde{\phi}_i|_{WD_E}=\phi_\tau.$
\par
If for each orbit $\mathcal{O}(\tilde{\phi}_i),$  the coset $\mathcal{C}_i\subset H^1(W_F,\mathbf{G})$ contains all pure inner forms satisfying $G_\alpha(E)=\mathbf{G}(E),$ then  $\phi_\tau$ is called  a '$full$' L-parameter of $\mathbf{G}(E)$, in which case $1_{\mathcal{C}_i}(G_\alpha)\equiv1$ in \eqref{stableequation}. 
\par
Assume that $\tau$ belongs to a generic L-packet with Langlands parameter $\phi_\tau:WD_E\rightarrow {}^L\mathbf{G}$ and that $\phi_\tau$ is '$full$'. Then there is a conjectural identity
\begin{equation}\label{conjectureiden}\dim\Hom_{G_{\alpha}}(\tau,\chi_\mathbf{G})=\sum_im(\lambda,\tilde{\phi}_i)\cdot\frac{\deg\Phi_\ast(\tilde{\phi}_i)}{d_0(\tilde{\phi}_i)} \end{equation}
for any pure inner form $G_\alpha\in H^1(W_F,\mathbf{G})$ satisfying $G_\alpha(E)=\mathbf{G}(E).$ 
\par
If $H^1(W_F,\mathbf{G})$ is trivial, then any $L$-parameter $\phi_\tau$ is '$full$'. So the conjectural identity \eqref{conjectureiden} holds for   $\mathbf{G}=\GL_2.$ In fact, it holds for $\mathbf{G}=\PGL_2$  as well.

\bibliographystyle{amsplain}
\bibliography{GSp}

\end{document}